\definecolor{dblue}{rgb}{0,0,0.70}
\newtheorem{theorem}{Theorem}[section]	
\newtheorem*{theorem*}{Theorem}
\newaliascnt{lemma}{theorem}
\newtheorem{lemma}[lemma]{Lemma}
\newaliascnt{proposition}{theorem}
\newtheorem{proposition}[proposition]{Proposition}
\newaliascnt{corollary}{theorem}
\newtheorem{corollary}[corollary]{Corollary}
\theoremstyle{remark}
\newtheorem*{remark}{Remark}
\newaliascnt{definition}{theorem}
\newtheorem{definition}[definition]{Definition}
\newaliascnt{example}{theorem}
\renewcommand{\restriction}{\mathbin\upharpoonright}
\newcommand{\axiom}[1]{\mathsf{#1}} 
\newcommand{\ZFC}{\axiom{ZFC}}
\newcommand{\AC}{\axiom{AC}}
\newcommand{\DC}{\axiom{DC}}
\newcommand{\ZF}{\axiom{ZF}}
\newcommand{\Ord}{\mathrm{Ord}}
\newcommand{\HOD}{\mathrm{HOD}}
\newcommand{\GCH}{\axiom{GCH}}
\newcommand{\CH}{\axiom{CH}}
\newcommand{\IS}{\axiom{IS}}
\newcommand{\HS}{\axiom{HS}}
\newcommand{\KWP}{\axiom{KWP}}
\newcommand{\SVC}{\axiom{SVC}}
\DeclareMathOperator{\cf}{cf}
\DeclareMathOperator{\dom}{dom}
\DeclareMathOperator{\rng}{rng}
\DeclareMathOperator{\supp}{supp}
\DeclareMathOperator{\rank}{rank}
\DeclareMathOperator{\sym}{sym}
\DeclareMathOperator{\fix}{fix}
\DeclareMathOperator{\id}{id}
\DeclareMathOperator{\aut}{Aut}
\DeclareMathOperator{\Add}{Add}
\DeclareMathOperator{\SC}{SC}
\newcommand{\forces}{\mathrel{\Vdash}}
\newcommand{\nforces}{\mathrel{\not{\Vdash}}}
\newcommand{\gaut}[1]{{\textstyle\int_{#1}}}
\newcommand{\PP}{\mathbb{P}}
\newcommand{\power}{\mathcal{P}}
\newcommand{\QQ}{\mathbb{Q}}
\newcommand{\CC}{\mathbb C}
\newcommand{\cF}{\mathcal F}
\newcommand{\sF}{\mathscr F}
\newcommand{\cG}{\mathcal G}
\newcommand{\cB}{\mathcal B}
\newcommand{\sG}{\mathscr G}
\newcommand{\jbd}{\mathcal J_{bd}}
\newcommand{\da}{{\downarrow}}
\newcommand{\tup}[1]{\langle#1\rangle}
\newcommand{\middd}{\mathrel{}\middle|\mathrel{}}
\author{Asaf Karagila}
\thanks{This paper is part of the author's Ph.D.\ written in the Hebrew University of Jerusalem under the supervision of Prof.\ Menachem Magidor.}
\thanks{This research was partially done whilst the author was a visiting fellow at the Isaac Newton Institute for Mathematical Sciences in the programme `Mathematical, Foundational and Computational Aspects of the Higher Infinite' (HIF) funded by EPSRC grant EP/K032208/1.}
\address{Einstein Institute of Mathematics.
Edmond J. Safra Campus, Givat.
The Hebrew University of Jerusalem.
Jerusalem, 91904, Israel}
\curraddr{School of Mathematics, University of East Anglia. Norwich, NR4~7TJ, UK}
\email{karagila@math.huji.ac.il}
\urladdr{http://karagila.org}
\date{June 20, 2018}
\subjclass[2010]{Primary 03E40; Secondary 03E25}
\title[The Bristol Model]{The Bristol Model: an abyss called a Cohen real}
\begin{document}
\begin{abstract}
We construct a model $M$ of $\ZF$ which lies between $L$ and $L[c]$ for a Cohen real $c$ and does not have the form $L(x)$ for any set $x$. This is loosely based on the unwritten work done in a Bristol workshop about Woodin's HOD Conjecture in 2011. The construction given here allows for a finer analysis of the needed assumptions on the ground models, thus taking us one step closer to understanding models of $\ZF$, and the HOD Conjecture and its relatives. This model also provides a positive answer to a question of Grigorieff about intermediate models of $\ZF$, and we use it to show the failure of Kinna--Wagner Principles in $\ZF$.
\end{abstract}
\maketitle    
\setcounter{tocdepth}{1}
\tableofcontents
\section{Introduction}
\subsection{The Bristol workshop}
There are many ways to begin this introduction. This is certainly not one of them. In 2011, Philip Welch hosted a workshop in Bristol whose aim was to see whether or not Woodin's HOD Conjecture implies his Axiom of Choice Conjecture. The participants were Andrew Brooke-Taylor, James Cummings, Moti Gitik, Menachem Magidor, Ralf Schindler, Matteo Viale, Philip Welch, and Hugh Woodin (hence forth known as \textit{The Bristol Group}).

They began the workshop by trying to understand how would one refute the hypothesis which they were set to prove. This led them to the construction of a model $M$ which lies between $L$ and $L[c]$, where $c$ is a Cohen real over $L$, such that $M\neq L(x)$ for all $x$. Certainly, this is a strange model, and its construction show that adding a single Cohen real will have incredible consequences for the structure of intermediate models of $\ZF$, whereas the $\ZFC$ models are all defined from a single real, as follows from the intermediate model theorem (Lemma~15.43 in \cite{Jech:ST2003}). It also answers a question of Grigorieff from \cite[p.~471]{Grigorieff:1975}, whether or not can such an intermediate extension exist.\footnote{As remarked by Grigorieff, Solovay proved that from $0^\#$ we can define an $L$-generic filter for an Easton product violating $\GCH$ on a cofinal class of $L$-cardinals. However, $0^\#$ is certainly not generic over $L$.}

The Bristol group, however, did not fully write the details of the construction, and only sketched the basic ideas needed for the construction. Their approach used relative constructibility to define the models. Instead, in the author's Ph.D.\ dissertation, a technique for iterating symmetric extensions was developed, in part to describe the construction of this very model.

Roughly, the original idea can be described as doing a Jensen-coding in reverse from a Cohen real. Namely, when forcing with a Jensen-coding, we code the entire universe to be constructible from a single real, whereas in this case we begin with a Cohen real, and we ``decode'' information from it. The decoding process involves fixing almost disjoint families with particular properties, and using that to define an $\omega_1$-sequence of sets of reals, then immediately forgetting the sequence itself and only recalling parts of it, only to use that to define an $\omega_2$-sequence of sets of sets of reals, and so on. Limit steps are particularly challenging, as there is no obvious way to continue this construction without adding bounded subsets.

The Bristol group, however, noted that one can prove the existence of PCF-theoretic object from square principles, which can then be used to somehow ``condense the construction up to the limit step'', and so proceed with the construction through limit steps.
\subsection{In this paper}
As mentioned earlier, in this paper we take an entirely different approach to construct the model. Instead of relative constructibility and definability-related arguments, we construct the model as an iteration of symmetric extensions, using a framework developed by the author in \cite{Karagila:2016}.

The first part of the paper will cover the basic knowledge needed about symmetric extensions, as well as a detailed construction of the first two steps of the iteration: adding the Cohen real, defining the first intermediate model, and forcing over it to find the $\omega_1$-sequence of sets of reals.

We then move to describe the needed tools for the general construction. This means the relevant definitions generalized from the first symmetric system, the PCF-related objects, and of course an outline of the mechanisms of iterated symmetric extensions needed for this construction.

The paper ends with two general points of interest: the first is the exploration of Kinna--Wagner Principles in the Bristol model, as well as the needed proofs for their complete failure. The second is the observation that we did not use the fact that $V=L$ in the ground model to its full extent. We will then consider what sort of other ground models can be used, what sort of large cardinals can be preserved when moving to the Bristol model from the ground model, and of course how the HOD Conjecture and the Axiom of Choice Conjecture relate to this general construction. We then leave the reader with a few open questions which arise naturally from this construction.

\section{Baby step: taking a hard close look at the first two steps}
\subsection{Quick recap of symmetric extensions}
Let $\PP$ be a notion of forcing (i.e., a preordered set with a maximum element $1_\PP$), recall that $\PP$-names are defined by recursion $V^\PP_\alpha=\bigcup_{\beta<\alpha}\power(\PP\times V^\PP_\beta)$, and we denote $\PP$-names by $\dot x$. Any $x\in V$ has a canonical name defined recursively by $\check x=\{\tup{1_\PP,\check y}\mid y\in x\}$. 

Speaking of canonical names, if we have a set of $\PP$-names in $V$ and we want to turn it into a name, the easiest way to do so is by taking the name for the set of interpretations of these names. We will often want to do just that, and a uniform notation will be useful. If $\{\dot x_i\mid i\in I\}$ is a collection of $\PP$-names, then $\{\dot x_i\mid i\in I\}^\bullet$ is defined to be the $\PP$-name, $\{\tup{1_\PP,\dot x_i}\mid i\in I\}$. We also extend this notation to ordered pairs and sequences, to be interpreted appropriately. Using this notation we can, for example, redefine the canonical name $\check x$ as $\{\check y\mid y\in x\}^\bullet$.

We also set the terminology that a condition $p$ or a $\PP$-name $\dot y$ \textit{appears} in $\dot x$, if there is an ordered pair in $\dot x$ with $p$ in the left coordinate, or $\dot y$ in the right.

Suppose that $\pi$ is an automorphism of $\PP$. We can extend $\pi$ to a permutation $\widetilde{\pi}$ of the $\PP$-names by recursion,\[\widetilde{\pi}\dot x=\{\tup{\pi p,\widetilde{\pi}\dot y}\mid \tup{p,\dot y}\in\dot x\}.\]
From this point we will write $\pi$ instead of $\widetilde{\pi}$, since there is no notation ambiguity between conditions of $\PP$ and $\PP$-names.

\begin{lemma}[The Symmetry Lemma]
Suppose that $\pi$ is an automorphism of $\PP$, $p\in\PP$ and $\dot x$ is a $\PP$-name. Then for every formula in the language of forcing,
\[p\forces_\PP\varphi(\dot x)\iff\pi p\forces_\PP\varphi(\pi\dot x).\qed\]
\end{lemma}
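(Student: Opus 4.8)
The plan is to prove the Symmetry Lemma by induction on the complexity of the formula $\varphi$, reducing everything to the atomic cases, which in turn are handled by a simultaneous induction on the rank of the $\PP$-names involved. First I would recall that it suffices to treat formulas built from the atomic formulas $\dot x\in\dot y$ and $\dot x=\dot y$ using $\neg$, $\wedge$, and $\exists$, since the other connectives and quantifiers are definable from these. For the negation step one uses the fact that $p\forces\neg\psi$ iff no $q\le p$ forces $\psi$, together with the observation that $\pi$ is an order-automorphism of $\PP$, so $q\mapsto\pi q$ is a bijection of the conditions below $p$ onto the conditions below $\pi p$. For the conjunction step one simply applies the induction hypothesis to each conjunct. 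For the existential step, $p\forces\exists v\,\psi(v,\dot x)$ iff $\{q\le p\mid \exists \dot y\ q\forces\psi(\dot y,\dot x)\}$ is dense below $p$; applying $\pi$ to witnesses $\dot y$ and using that $\pi$ preserves density below a condition (again because it is an order-automorphism), one gets the equivalence for $\pi p$ and $\pi\dot x$.

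The main work is therefore the atomic case, and here I would proceed by induction on $\max(\rank\dot x,\rank\dot y)$, proving the two atomic statements together. Recall the forcing relation for atomic formulas: $p\forces \dot x\in\dot y$ iff the set of $q\le p$ for which there is $\tup{r,\dot z}\in\dot y$ with $q\le r$ and $q\forces\dot z=\dot x$ is dense below $p$; and $p\forces\dot x=\dot y$ iff for every $\tup{r,\dot z}\in\dot x$ the set of $q\le p$ such that $q\le r$ implies $q\forces\dot z\in\dot y$ is dense below $p$, and symmetrically with $\dot x$ and $\dot y$ exchanged. In each clause, apply $\pi$: a pair $\tup{r,\dot z}\in\dot y$ corresponds bijectively to the pair $\tup{\pi r,\pi\dot z}\in\pi\dot y$ by the definition of the extended action, the names $\dot z$, $\dot x$ appearing have strictly smaller rank so the induction hypothesis applies to $\dot z=\dot x$ and $\dot z\in\dot y$, and $\pi$ carries dense-below-$p$ sets to dense-below-$\pi p$ sets. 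Assembling these observations gives the atomic equivalences, and the induction on rank closes.

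The step I expect to be the main obstacle — or at least the one requiring the most care — is getting the bookkeeping exactly right in the atomic case: one must use the precise recursive definition of $\forces$ for $\in$ and $=$ (as opposed to the semantic characterization via generic filters, which would make the argument circular at this stage), verify that $\pi$ restricted to $\{q : q\le p\}$ is an order-isomorphism onto $\{q : q\le\pi p\}$ so that density is preserved in both directions, and check that the rank of $\pi\dot x$ equals the rank of $\dot x$ so that the induction is well-founded after applying $\pi$. A clean way to organize this is to prove once and for all the auxiliary facts that $\rank(\pi\dot x)=\rank(\dot x)$ and that $D$ is dense below $p$ iff $\pi[D]$ is dense below $\pi p$, and then the atomic induction becomes a routine unwinding of definitions. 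Finally, since the equivalence is symmetric in $\pi$ and $\pi^{-1}$, one direction of each biconditional suffices, which halves the work.
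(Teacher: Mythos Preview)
Your proposal is correct and is the standard textbook proof of the Symmetry Lemma. Note, however, that the paper does not actually prove this statement: the lemma is stated with a \qed\ symbol and no proof, treating it as a well-known folklore result (it appears, for instance, as Lemma~14.37 in Jech's \emph{Set Theory}). So there is no ``paper's own proof'' to compare against; your argument by induction on formula complexity, with the atomic cases handled by simultaneous induction on name rank, is exactly the expected justification and would be perfectly appropriate if a proof were required.
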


Suppose that $\sG$ is a group we say that $\sF$ is a \textit{filter of subgroups} over $\sG$ if every $H\in\sF$ is a subgroup of $\sG$, and $\sF$ is closed under supergroups, and finite intersection. We will usually require that $\sF$ be a proper filter, namely $\{1_\sG\}\notin\sF$. In other words, we take a filter over $\sG$ and replace each set in the filter by the subgroup it generates. We say that $\sF$ is a \textit{normal filter} if it is closed under conjugation, namely if $H\in\sF$ and $\pi\in\sG$, then $\pi^{-1}H\pi\in\sF$ as well.

\begin{definition}
We say that $\tup{\PP,\sG,\sF}$ is a \textit{symmetric system} if $\PP$ is a notion of forcing, $\sG$ is an automorphism group of $\PP$, and $\sF$ is a normal filter of subgroups of $\sG$.

Let $\dot x$ be a $\PP$-name. Write $\sym_\sG(\dot x)=\{\pi\in\sG\mid\pi\dot x=\dot x\}$. We say that $\dot x$ is \textit{$\sF$-symmetric} if $\sym_\sG(\dot x)\in\sF$. We say that $\dot x$ is \textit{hereditarily $\sF$-symmetric} if this property is hereditary. Finally, $\HS_\sF$ is the class of all hereditarily $\sF$-symmetric names.
\end{definition}
We can notice that we do not really need $\sF$ to be a filter, but rather a filter base (satisfying the normality clause). We will judiciously ignore this, and work with filter bases as though they were filters. As $\sG$ and $\sF$ will usually be clear from context, we will omit them from the notation, wherever possible.
\medskip

\begin{theorem}[\cite{Jech:AC1973}, Theorem~5.14]
Suppose that $\tup{\PP,\sG,\sF}$ is a symmetric system and $G$ is a $V$-generic filter for $\PP$. Let $N=\HS^G=\{\dot x^G\mid\dot x\in\HS\}$, then $N$ is a transitive class of $V[G]$ satisfying $V\subseteq N$, and $N\models\ZF$. $N$ is called a \textit{symmetric extension of $V$}.\qed 
\end{theorem}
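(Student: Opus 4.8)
The plan is to show that $N$ is a transitive class with $\Ord\subseteq N\subseteq V[G]$ which is closed under the G\"odel operations and almost universal, and then to invoke the classical fact --- the engine behind $L\models\ZF$ and $\HOD\models\ZF$ --- that a transitive class which is closed under the G\"odel operations and almost universal models $\ZF$.

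The structural part is quick. If $\dot x\in\HS$ and $a\in\dot x^G$, then $a=\dot y^G$ for some $\dot y$ appearing in $\dot x$ with a witnessing condition in $G$; since hereditary $\sF$-symmetry is by definition inherited by the names appearing in $\dot x$, we get $\dot y\in\HS$, hence $a\in N$, so $N$ is transitive. That $N\subseteq V[G]$ is immediate from $\HS\subseteq V^\PP$, and $V\subseteq N$ follows by $\in$-recursion: for $x\in V$ write $\check x=\{\check y\mid y\in x\}^\bullet$; every $\pi\in\sG$ fixes $1_\PP$ and, inductively, each $\check y$, so $\sym_\sG(\check x)=\sG\in\sF$ and $\check x$ is hereditarily symmetric, whence $\check x\in\HS$ and $\check x^G=x$. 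In particular $\Ord\subseteq N$.

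Closure under the G\"odel operations is handled operation by operation: given $\dot x,\dot y\in\HS$ one simply writes down an $\HS$-name for the output --- $\{\dot x,\dot y\}^\bullet$ for the unordered pair, $\{\tup{p,\dot z}\mid \dot z\text{ appears in }\dot x\text{ and }p\forces\dot z\in\dot x\setminus\dot y\}$ for the relative complement, and analogous names (using the $\bullet$-notation for ordered pairs where needed) for the Cartesian product, the domain, the $\in$-relation on $\dot x^G$, and so on. In each case the interpretation is the intended set by the forcing theorem; the constructed name is hereditarily symmetric since every sub-name occurring in it already appears in $\dot x$ or $\dot y$ and therefore lies in $\HS$; and its stabilizer contains $\sym_\sG(\dot x)\cap\sym_\sG(\dot y)\in\sF$ --- the one computation needing the Symmetry Lemma being that $\pi p\forces\pi\dot z\in\pi\dot x\setminus\pi\dot y$ follows from $p\forces\dot z\in\dot x\setminus\dot y$.

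Almost universality is the crux. Let $x\subseteq N$ be a set of $V[G]$. Since every element of $N$ is $\dot w^G$ for some $\dot w\in\HS$ and the names are stratified by the levels $V^\PP_\alpha$ of their defining recursion, Replacement in $V[G]$ gives an ordinal $\alpha$ with $x\subseteq\{\dot w^G\mid \dot w\in\HS\cap V^\PP_\alpha\}$. Put $Y=\HS\cap V^\PP_\alpha$, a set, and $\dot y=\{\dot w\mid \dot w\in Y\}^\bullet$. Then $\dot y$ is hereditarily symmetric: each $\dot w\in Y$ lies in $\HS$, and every $\pi\in\sG$ maps $V^\PP_\alpha$ onto itself (it permutes $\PP$, hence by recursion on the stages maps each $V^\PP_\beta$ onto itself) and maps $\HS$ onto itself --- the latter exactly because $\sF$ is a \emph{normal} filter --- so $\pi$ fixes $Y$ setwise and $\sym_\sG(\dot y)=\sG\in\sF$. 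Thus $x\subseteq\dot y^G\in N$, $N$ is almost universal, and the classical fact cited above gives $N\models\ZF$. (One could instead verify the axioms directly: Extensionality, Foundation, Pairing, Union and Infinity are routine, whereas Separation and Collection call for the forcing relation relativized to $\HS$ with its own Symmetry Lemma, and Power Set for a rank-reflection argument yielding a symmetric name for the power set of a given set --- the same normality point reappearing there.)
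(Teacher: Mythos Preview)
The paper does not prove this theorem at all: it is stated with a terminal \qed\ and attributed to Jech's book, so there is nothing to compare your argument against in the paper itself. Your proof is correct and follows one of the standard routes --- transitivity, closure under the G\"odel operations, and almost universality --- with the normality of $\sF$ entering exactly where it must, namely to guarantee that each $\pi\in\sG$ maps $\HS$ onto itself and hence that the ``universal'' name $\{\dot w\mid\dot w\in\HS\cap V^\PP_\alpha\}^\bullet$ is fixed by all of $\sG$. The only point worth tightening in exposition is the appeal to Replacement in $V[G]$ to bound the name-ranks: you are using that the map $a\mapsto\min\{\alpha\mid\exists\dot w\in\HS\cap V^\PP_\alpha\ (\dot w^G=a)\}$ is definable in $V[G]$ from the parameters $G$ and the symmetric system (which lives in $V\subseteq V[G]$); this is fine, but deserves a word. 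The parenthetical alternative you sketch --- verifying the axioms directly via $\forces^{\HS}$ --- is closer to how Jech actually argues, but your route is equally legitimate and arguably cleaner.
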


We also have a forcing relation $\forces^\HS$ which is defined as a relativization of the quantifiers and variables to $\HS$. It is not hard to check that $\forces^\HS$ is definable in $V$ (for each $\varphi$) and has the same forcing theorem as $\forces$. Namely $p\forces^\HS\varphi$ if and only if every symmetric extension generated by $G$ such that $p\in G$ satisfies $\varphi$. Moreover, if $\pi\in\sG$ used to define $\HS$, then we have a Symmetry Lemma for $\forces^\HS$.
\medskip

Finally, if $\sG$ is an automorphism group of $\PP$, we say that $\sG$ \textit{witnesses the homogeneity} of $\PP$ if for all $p,q\in\PP$ there is some $\pi\in\sG$ such that $\pi p$ is compatible with $q$. If $\tup{\PP,\sG,\sF}$ is a symmetric system such that $\sG$ witnesses the homogeneity of $\PP$, then we say that it is a \textit{homogeneous system}.

\subsection{Cohen forcing and its symmetric extension}
We are ready to begin with the first step of the construction of the Bristol model: adding the Cohen real and taking the first symmetric extension. For the remainder of the section we will assume $V=L$.

Let $\CC$ denote the Cohen forcing. Specifically, a condition in $\CC$ is a finite partial function from $\omega$ to $2$, it will be convenient to assume that the domain is any finite subset, rather than an initial segment. 

For $A\subseteq\omega$, let $\CC_A$ be the subforcing $\{p\in\CC\mid\dom p\subseteq A\}$. There is a canonical isomorphism between $\CC_A\times\CC_{\omega\setminus A}$ and $\CC$, given by $\tup{p,p'}=p\cup p'$. If $\pi$ is a permutation of $A$, it extends to a permutation of $\omega$ which is the identity on $\omega\setminus A$, and $\pi$ acts on $\CC$ by considering \[\pi p(\pi n)=p(n).\]

To establish the symmetric system we first need to talk about permutations of $\omega$ and $\omega_1$, and about almost disjoint families. 

\begin{definition}
We say that an almost disjoint family $\{A_\alpha\mid\alpha<\omega_1\}\subseteq[\omega]^\omega$ is a \textit{permutable family} if for every bounded $X\subseteq\omega_1$, there is a family $\{B_\alpha\mid\alpha\in X\}$ of pairwise disjoint sets such that $A_\alpha=^*B_\alpha$ for $\alpha\in X$, and for all $\xi<\omega_1$, $A_\xi\cap\bigcup\{B_\alpha\mid\alpha\in X\}$ is infinite if and only if $\xi\in X$.
\end{definition}

Note that the requirement that $A_\alpha=^*B_\alpha$ implies that $A_\xi\subseteq^*\bigcup\{B_\alpha\mid\alpha\in X\}$ for $\xi\in X$, and otherwise $A_\xi$ is almost disjoint from the union.

\begin{proposition}\label{prop:countable sep. family}
There exists a permutable family.
\end{proposition}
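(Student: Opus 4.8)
The plan is to build $\{A_\alpha\mid\alpha<\omega_1\}$ by recursion on $\alpha$, using that $V=L$ supplies $\axiom{CH}$, hence room at every step. The first move is a reduction to initial segments: it suffices to arrange that for each $\gamma<\omega_1$ the countable family $\{A_\alpha\mid\alpha<\gamma\}$ disjointifies into a pairwise disjoint family $\{B^\gamma_\alpha\mid\alpha<\gamma\}$ with $B^\gamma_\alpha=^*A_\alpha$, in such a way that $D_\gamma:=\bigcup_{\alpha<\gamma}B^\gamma_\alpha$ is almost disjoint from $A_\xi$ for every $\xi\geq\gamma$. Granting this, let $X\subseteq\omega_1$ be bounded, fix $\gamma$ with $X\subseteq\gamma$, and set $B_\alpha:=B^\gamma_\alpha$ for $\alpha\in X$: these are pairwise disjoint and equal mod finite to the $A_\alpha$, and it remains to check the trichotomy for $\xi<\omega_1$. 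If $\xi\in X$ then $A_\xi=^*B_\xi$, so $A_\xi$ meets $\bigcup_{\alpha\in X}B_\alpha$ in an infinite set. If $\xi\geq\gamma$ then $\bigcup_{\alpha\in X}B_\alpha\subseteq D_\gamma$ is almost disjoint from $A_\xi$. And if $\xi\in\gamma\setminus X$ then $B^\gamma_\xi$ belongs to the same pairwise disjoint family and so is disjoint from $B_\alpha$ for every $\alpha\in X$; hence $A_\xi\cap B_\alpha\subseteq A_\xi\setminus B^\gamma_\xi$ for each such $\alpha$, and thus $A_\xi\cap\bigcup_{\alpha\in X}B_\alpha$ is contained in the fixed finite set $A_\xi\setminus B^\gamma_\xi$. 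This is exactly the defining property of a permutable family.

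For the recursion I fix in advance, for each $\gamma<\omega_1$, an enumeration $\{\beta_0,\beta_1,\dots\}$ of $\gamma$ and take the standard disjointification, $B^\gamma_{\beta_n}:=A_{\beta_n}\setminus\bigcup_{m<n}(A_{\beta_n}\cap A_{\beta_m})$, so that $D_\gamma$ depends only on $\langle A_\alpha\mid\alpha<\gamma\rangle$. Then at stage $\xi$ all the sets $A_\eta$ ($\eta<\xi$) and $D_\gamma$ ($\gamma\le\xi$) are already determined, and there are only countably many of them; I want to choose $A_\xi\in[\omega]^\omega$ almost disjoint from each of them, after which $\{A_\alpha\}$ is automatically almost disjoint and has the property of the previous paragraph, since for every $\xi\geq\gamma$ we demanded that $A_\xi$ be almost disjoint from $D_\gamma$. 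Such an $A_\xi$ exists provided no finite union of these countably many sets is cofinite: list them and diagonalize, putting into $A_\xi$ an element avoiding the union of the first $n$ of them together with the $n$ elements chosen so far. So the construction reduces to keeping ``no finite union is cofinite'' true, which is maintained by a reservation argument: at each step one keeps an infinite set free for later use.

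The step I expect to be the real obstacle is the limit stage, and this is no accident. An increasing union of coinfinite sets may be cofinite, so nothing automatic prevents $\bigcup_{\alpha<\xi}A_\alpha$, and with it the sets $D_\gamma$ for $\gamma\le\xi$ (which sit inside unions of initial segments), from becoming cofinite at a limit $\xi$ and strangling the construction. The remedy I would use is to run the recursion against a bookkeeping that, at each limit $\xi$, fixes a cofinal sequence $\xi_n\uparrow\xi$ and deliberately enlarges the reserved room as the construction passes each $\xi_n$, so that a usable reservoir survives into stage $\xi$; under $\axiom{CH}$ a single real per step carries enough information to run this. This is precisely the $\omega_1$-level shadow of the ``condense the construction through the limit'' device that the Bristol group extracted from square principles to push the analogous recursion through limit cardinals; at $\omega_1$ a direct diagonal argument does the job. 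One can equally present the resulting family as arising from a coherent sequence of injections $e_\alpha\colon\alpha\to\omega$, almost-disjointness reflecting their injectivity and permutability their coherence, in which guise the limit obstacle reappears as the impossibility of the values $e_\alpha(\xi)$ stabilizing.
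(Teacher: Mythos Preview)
Your reduction in the first paragraph is correct and is essentially the same verification the paper carries out. The difficulty is entirely in the construction, and there your proposal has a real gap that you yourself flag but do not close.

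Observe that with your choice of disjointification, $D_\gamma=\bigcup_{\alpha<\gamma}B^\gamma_\alpha=\bigcup_{\alpha<\gamma}A_\alpha$, so the demand ``$A_\xi$ almost disjoint from each $D_\gamma$, $\gamma\le\xi$'' collapses to the single requirement that $A_\xi$ be almost disjoint from $\bigcup_{\alpha<\xi}A_\alpha$. This is much stronger than mere pairwise almost disjointness: it forces $\omega\setminus\bigcup_{\alpha<\xi}A_\alpha$ to remain infinite at every stage, including limits. Your diagonalization clause ``provided no finite union of these countably many sets is cofinite'' is therefore exactly the issue, since the single set $D_\xi$ may itself be cofinite. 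The ``reservation/bookkeeping'' sentence is not a proof; you have to actually exhibit the mechanism that keeps an infinite reservoir alive through all $\omega_1$ limit stages simultaneously, and the cofinal-sequence idea you sketch does not do this without further argument (reserving room for one limit may conflict with reserving room for another).

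The paper avoids this by noticing that the object you are trying to build by hand is precisely what falls out of a tower. Fix any strictly $\subseteq^*$-increasing sequence $\tup{T_\alpha\mid\alpha<\omega_1}$ in $[\omega]^\omega$ (this exists under $\axiom{CH}$; the limit step there is the standard pseudo\-intersection argument, which \emph{is} your ``reservation'' made precise) and set $A_\alpha=T_{\alpha+1}\setminus T_\alpha$. Given bounded $X$ with $\eta=\sup X+1$, refine $\{A_\alpha\mid\alpha<\eta\}$ to pairwise disjoint $B_\alpha\subseteq T_\eta$; then for $\xi\ge\eta$ one has $T_\eta\subseteq^* T_\xi$ and hence $A_\xi=T_{\xi+1}\setminus T_\xi$ is almost disjoint from $T_\eta\supseteq\bigcup_{\alpha\in X}B_\alpha$, while the case $\xi<\eta$ is your third case verbatim. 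So the entire recursive scaffolding, together with its limit-stage obstacle, is replaced by a one-line appeal to a tower. Your closing remark about coherent injections $e_\alpha\colon\alpha\to\omega$ is pointing at the same phenomenon; the tower is the clean packaging.
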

\begin{proof}
Let $\tup{T_\alpha\mid\alpha<\omega_1}$ be a strictly $\subseteq^*$-increasing sequence of subsets of $\omega$, and define $A_\alpha=T_{\alpha+1}\setminus T_\alpha$. Easily, $\{A_\alpha\mid\alpha<\omega_1\}$ is an almost disjoint family of sets. Suppose now that $X\subseteq\omega_1$ is a countable set, and let $\eta=\sup X+1$. Let $\{B_\alpha\mid\alpha<\eta\}$ be a refinement of the $A_\alpha$'s to a pairwise disjoint family of sets such that $B_\alpha\subseteq T_\eta$ for all $\alpha<\eta$. If $\xi\geq\eta$, then by the fact $T_\eta\subseteq^* T_\xi$, we get that $A_\xi$, namely $T_{\xi+1}\setminus T_\xi$, is almost disjoint from $T_\eta$ and therefore almost disjoint from $\bigcup\{B_\alpha\mid\alpha\in X\}$. If $\xi<\eta$, and $A_\xi\cap\bigcup\{B_\alpha\mid\alpha\in X\}$ is infinite, by the assumption that $A_\xi=^* B_\xi$ we get that it is necessarily the case that $\xi\in X$ as $\{B_\xi\mid \xi<\eta\}$ is a pairwise disjoint family.
\end{proof}

For the remainder of this section we fix a permutable family $\{A_\alpha\mid\alpha<\omega_1\}$.\medskip

Suppose that $\pi$ is a permutation of $\omega$ and $\Pi$ a permutation of $\omega_1$. We say that $\pi$ \textit{implements} $\Pi$ if for every $\alpha<\omega_1$, $\pi"A_\alpha=^* A_{\Pi(\alpha)}$.\footnote{This depends on the permutable family, of course.} We denote by $\iota(\pi)$ the permutation that $\pi$ implements, if it exists. It is easy to see that if $\pi_0$ and $\pi_1$ both implement permutations of $\omega_1$, then $\iota(\pi_0)\iota(\pi_1)=\iota(\pi_0\pi_1)$, and $\iota(\pi_0)^{-1}=\iota(\pi_0^{-1})$.

Let $\Pi$ be a permutation of $\omega_1$, we say that $\Pi$ is a \textit{bounded permutation}, if for some $\eta<\omega_1$, $\Pi$ is the identity above $\eta$. Namely, $\Pi$ is, for all practical purposes, a permutation of a bounded subset of $\omega_1$. We will say that $\eta$ is a domain of $\Pi$.

If $\{A_\alpha\mid\alpha\in I\}$ is a countable subfamily of our fixed permutable family, we will say that $\{B_\alpha\mid\alpha\in I\}$ is a \textit{disjoint approximation} if it is a family of pairwise disjoint sets, $A_\alpha=^* B_\alpha$ for all $\alpha\in I$, and it is satisfying the covering properties in the definition of a permutable family. If in addition $B_\alpha\subseteq A_\alpha$, then we will say it is a \textit{disjoint refinement}. We will often just say that $\cB$ is a disjoint approximation, or a disjoint refinement, if there is some countable $I\subseteq\omega_1$ such that $\cB$ is a disjoint approximation, or a disjoint refinement, of $\{A_\alpha\mid\alpha\in I\}$.
\begin{proposition}
Every bounded permutation of $\omega_1$ can be implemented.
\end{proposition}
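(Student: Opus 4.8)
The plan is to reduce the problem to the pairwise disjoint sets of a disjoint approximation, on which an implementing permutation can be written down by hand, and then push the conclusion back to the $A_\alpha$'s using that bijections preserve $=^*$.

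First I would fix, since $\sigma$ is a bounded permutation, a domain $\eta<\omega_1$ of $\sigma$; then $\sigma$ is the identity on $\omega_1\setminus\eta$ and restricts to a permutation of $\eta$. As $\eta$ is a bounded subset of $\omega_1$, the definition of a permutable family provides a disjoint approximation $\{B_\alpha\mid\alpha<\eta\}$ of $\{A_\alpha\mid\alpha<\eta\}$: the $B_\alpha$ are pairwise disjoint, $A_\alpha=^*B_\alpha$ for $\alpha<\eta$, and for every $\xi<\omega_1$ the set $A_\xi\cap\bigcup_{\alpha<\eta}B_\alpha$ is infinite exactly when $\xi<\eta$. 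Set $U=\bigcup_{\alpha<\eta}B_\alpha$. Each $B_\alpha$ is infinite (it is $=^*$ to the infinite set $A_\alpha$), so I can choose, for each $\alpha<\eta$, a bijection $\pi_\alpha\colon B_\alpha\to B_{\sigma(\alpha)}$; since $\sigma$ permutes $\eta$ and the $B_\beta$ ($\beta<\eta$) are pairwise disjoint, $\bigcup_{\alpha<\eta}\pi_\alpha$ is a bijection of $U$ onto $U$, and extending it by the identity on $\omega\setminus U$ gives a permutation $\pi$ of $\omega$.

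Then I would verify $\iota(\pi)=\sigma$, i.e.\ that $\pi"A_\alpha=^*A_{\sigma(\alpha)}$ for every $\alpha<\omega_1$. For $\alpha<\eta$ the symmetric difference $A_\alpha\sdiff B_\alpha$ is finite, hence so is its image under the bijection $\pi$, so $\pi"A_\alpha=^*\pi"B_\alpha=B_{\sigma(\alpha)}=^*A_{\sigma(\alpha)}$. For $\xi\geq\eta$ one has $\sigma(\xi)=\xi$, and the covering clause makes $A_\xi\cap U$ finite; thus $A_\xi=^*A_\xi\setminus U$, a set on which $\pi$ acts as the identity, while $\pi"(A_\xi\cap U)$ is finite, so $\pi"A_\xi=^*A_\xi\setminus U=^*A_\xi=A_{\sigma(\xi)}$.

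The only step carrying real content is the case $\xi\geq\eta$, which needs the tails $A_\xi$ to be almost disjoint from the region $U$ where $\pi$ is nontrivial — exactly the last clause of the definition of a permutable family. In this sense the hard work was front-loaded into \autoref{prop:countable sep. family}, and what remains is the bookkeeping that $=^*$ is preserved by bijections and that pairwise disjoint sets can be permuted freely.
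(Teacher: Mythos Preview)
Your proof is correct and follows essentially the same approach as the paper: pass to a disjoint approximation, permute the blocks according to $\sigma$, and extend by the identity off their union. The only cosmetic differences are that the paper opts for a disjoint \emph{refinement} and uses the unique order isomorphism between $B_\alpha$ and $B_{\sigma(\alpha)}$ rather than an arbitrary bijection; your treatment of the case $\xi\geq\eta$ is in fact more explicit than the paper's, which leaves that verification to the reader.
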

\begin{proof}
Suppose that $\eta<\omega_1$ and $\Pi$ is a bounded permutation of $\omega_1$ and $\eta$ is a domain of $\Pi$. Then $\{A_\alpha\mid\alpha<\eta\}$ is a countable family of almost disjoint sets, therefore there exists a disjoint refinement $\{B_\alpha\mid\alpha<\eta\}$. If $\Pi(\alpha)=\beta$, define $\pi$ on $B_\alpha$ to be the unique order isomorphism from $B_\alpha$ into $B_\beta$; this defines $\pi$ on $\bigcup_{\alpha<\eta}B_\alpha$, and take $\pi$ to be the identity elsewhere. It is clear that $\pi$ is a permutation of $\omega$, and by the very definition of $\pi$ we have that $\pi"A_\alpha=^*\pi"B_\alpha=B_{\Pi(\alpha)}=^*A_{\Pi(\alpha)}$. So $\iota(\pi)=\Pi$.
\end{proof}

Let $\sG$ be the group of all the permutations of $\omega$ which implement a bounded permutation of $\omega_1$. Let $I\subseteq\omega_1$ be a countable set, and let $\cB=\{B_\alpha\mid\alpha\in I\}$ be a disjoint approximation. We define \[\fix(\cB)=\left\{\pi\in\sG\middd\pi\restriction\bigcup\cB=\id\right\},\] and we define $\sF$ to be the filter generated by \[\{\fix(\cB)\mid\cB\text{ is  a disjoint approximation}\}.\]

Let $\pi\in\sG$ and $\cB=\{B_\alpha\mid\alpha\in I\}$ be a disjoint approximation. Define $\cB'$ to be $\{\pi"B_\alpha\mid\alpha\in I\}$, then $\pi\fix(\cB)\pi^{-1}=\fix(\cB')$. Note that $\pi"B_\alpha=^*A_{\iota(\pi)(\alpha)}$ for $\alpha\in I$, so indeed $\cB'$ is a disjoint approximation. And therefore $\sF$ is normal.

\begin{remark}
This is a natural point to stop and ponder the nature of the definition of $\sF$. Why did we opt to take the more complicated definition using countable families of pairwise disjoint sets? After all, if $\cB=\{B_\alpha\mid\alpha\in I\}$ is such family, then $\pi\in\cB$ is simply some function which implements $\id$ when restricted to $I$.

Why, then, did we not choose to take the simpler definition where $\fix(I)$ is $\{\pi\mid\iota(\pi)\restriction I=\id\}$, and then take the filter generated by $\{\fix(I)\mid I\in[\omega_1]^{<\omega_1}\}$? The answer is that implementing a permutation is insensitive to finite changes. This means that $\fix(I)$ defined here will also include all the finitary permutations of $\omega$. This causes the filter to become irrelevant, since it means that no condition (other than the trivial condition) is fixed by a large group. And this causes the symmetric system to become trivial and ``kill'' any new sets.
\end{remark}

Let $\dot c$ denote the canonical name for the subset of $\omega$ obtained from the Cohen real, and for every $\alpha<\omega_1$ let $\dot c_\alpha$ denote the restriction of $\dot c$ to $A_\alpha$ defined as $\{\tup{p,\check n}\mid p(n)=1\land\dom p\subseteq A_\alpha\}$. We shall omit the dot when referring to the interpretation of these names in the extension.
\begin{theorem}
Let $M$ be the symmetric extension obtained by the symmetric system $\tup{\CC,\sG,\sF}$. Then for every $\alpha<\omega_1$ we have that $c_\alpha\in M$, and since $M\models\lnot\AC$ it follows that $c\notin M$.
\end{theorem}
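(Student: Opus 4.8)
The two assertions are essentially independent, so I would handle them in turn.

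For $c_\alpha\in M$ I would simply compute $\sym_\sG(\dot c_\alpha)$. Unwinding the action of $\pi\in\sG$, one gets $\pi\dot c_\alpha=\{\tup{q,\check n}\mid \dom q\subseteq\pi"A_\alpha\text{ and }q(\pi n)=1\}$, and testing this against singleton conditions $q=\{\tup k1\}$ shows that $\pi\dot c_\alpha=\dot c_\alpha$ exactly when $\pi\restriction A_\alpha=\id$. Hence $\sym_\sG(\dot c_\alpha)=\fix(\{A_\alpha\})$, and this lies in $\sF$ because $\{A_\alpha\}$ is itself a disjoint approximation: pairwise disjointness is vacuous for a one-element index, and the covering clause of the definition of a permutable family is, for the index $\{\alpha\}$, precisely almost-disjointness of the family. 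The only names occurring in $\dot c_\alpha$ are the canonical names $\check n$, and every $\check n$ is hereditarily symmetric (it is fixed by all of $\sG$), so $\dot c_\alpha\in\HS$ and therefore $c_\alpha=\dot c_\alpha^G\in M$ by the theorem on symmetric extensions.

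For $M\models\lnot\AC$ I would show that some set in $M$ has no well-ordering in $M$; the natural target is $\power(\omega)^M$ (one must resist phrasing the failure through a family of nonempty sets, since any family that is uniformly definable from the fixed permutable family already has a choice function in $M$). Suppose toward a contradiction that $W\in M$ well-orders $\power(\omega)^M$, fix $\dot W\in\HS$ with $\sym_\sG(\dot W)\supseteq\fix(\cB)$ for a disjoint approximation $\cB=\{B_\delta\mid\delta\in I\}$ with $I$ countable, and fix $p\in G$ forcing that $\dot W$ is such a well-ordering. Pick $\alpha\notin I$; by the covering clause $A_\alpha\cap\bigcup\cB$ is finite, so choose a cofinite $A'\subseteq A_\alpha$ with $A'$ disjoint from $\bigcup\cB$ and from $\dom p$. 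The set $c_{A'}=\dot c_{A'}^G$ (with $\dot c_{A'}$ defined and shown hereditarily symmetric as in the first part) lies in $M$ and is an infinite subset of $A'$; using $W$, single out a definite $\pi$-sensitive element, e.g.\ $e:=$ the $W$-least infinite subset of $A'$ almost equal to $c_{A'}$, which has a name $\dot e$ built by a fixed definable operation from $\dot W$, $\check{A'}$ and $\dot c_{A'}$. Now take $\pi\in\fix(\cB)$ that permutes $A'$ nontrivially and is the identity elsewhere: then $\iota(\pi)=\id$, $\pi\in\sym_\sG(\dot W)$, $\pi p=p$, $\pi\check{A'}=\check{A'}$, while $\pi\dot c_{A'}\ne\dot c_{A'}$, so $\pi\dot e$ is the analogous object built from $\pi\dot c_{A'}$. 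A density argument controlling $c\restriction A'$, combined with the Symmetry Lemma, shows that the interpretations of $\dot e$ and $\pi\dot e$ can be forced (below $p$) to disagree, contradicting that $p$ forces $\dot W$ to be one fixed well-ordering. Hence $M\models\lnot\AC$, and since $L[c]=V[G]\models\AC$, the inclusions $L\subseteq L[c]\subseteq M\subseteq V[G]=L[c]$ would force $M=L[c]\models\AC$ if $c\in M$; therefore $c\notin M$. (The same kind of automorphism — a nontrivial permutation of a cofinite subset of an ``unsupported'' $A_\alpha$ — applied to a hypothetical hereditarily symmetric name for $c$ also gives $c\notin M$ directly.)

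The step I expect to be the real obstacle is exactly the density argument that, in a clean finitary symmetric extension, would be a transparent ``swapping'' of names. Here $\sG$ moves each $A_\alpha$ to another member of the family only up to a finite error, so it does not act on $\{c_\alpha\mid\alpha<\omega_1\}$ in any transparent way; one therefore cannot argue purely by chasing names and must instead exploit the generic filter: the support of $\pi$ must be chosen as a cofinite subset of $A_\alpha$ disjoint from $\bigcup\cB$ and from the witnessing condition, and then one uses the mutual genericity of $c\restriction A'$ over the rest of the Cohen real. Getting this bookkeeping right, and pinning down a genuinely $\pi$-sensitive object in $M$ out of the hypothetical well-ordering, is where the work lies.
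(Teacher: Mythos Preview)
Your first part is correct and essentially identical to the paper: $\{A_\alpha\}$ is a disjoint approximation and $\fix(\{A_\alpha\})\leq\sym_\sG(\dot c_\alpha)$, so $\dot c_\alpha\in\HS$.

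Your second part has a genuine gap. You define $e$ as the $W$-least infinite subset of $A'$ almost equal to $c_{A'}$, note that $\pi$ fixes $\dot W$ and $\check{A'}$ but moves $\dot c_{A'}$, and assert that forcing $\dot e\neq\pi\dot e$ contradicts $\dot W$ being a fixed well-ordering. It does not. The name $\dot e$ is built from the three parameters $\dot W,\check{A'},\dot c_{A'}$; since $\pi$ moves the third, the Symmetry Lemma only tells you that $\pi\dot e$ names the $W$-least infinite subset of $A'$ almost equal to $\pi c_{A'}$, which is a different object defined from different data. A single well-ordering $W$ is perfectly happy to assign different least elements to the two distinct $=^*$-classes of $c_{A'}$ and $\pi c_{A'}$. (And if you take $\pi$ finitary to keep the $=^*$-class fixed, then $e=\pi e$ outright and again nothing is contradicted.) The difficulty you flag in your final paragraph is not a bookkeeping obstacle; the scheme itself does not produce a contradiction.

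The paper sidesteps this by working with an injection $\dot f\colon\power(\omega)\to\theta$ into the ordinals. Pick $\eta$ with $A_\eta\nsubseteq^*\bigcup\cB$, then $q\leq p$ with $q\forces\dot f(\dot c_\eta)=\check\xi$, and choose $m,n\notin\bigcup\cB\cup\dom q$ with $n\in A_\eta$ and $m\notin A_\eta$. The $2$-cycle $\pi=(m\ n)$ lies in $\fix(\cB)$, fixes $q$ and $\dot f$, and crucially fixes $\check\xi$ because canonical names of ordinals are fixed by every automorphism. Hence $q\forces\dot f(\pi\dot c_\eta)=\check\xi=\dot f(\dot c_\eta)$, while some $q'\leq q$ forces $\pi\dot c_\eta\neq\dot c_\eta$ (decide $c(n)\neq c(m)$), contradicting injectivity. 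The essential point is that the \emph{value} extracted is $\pi$-invariant; your $e$ is not. Replacing your $e$ by the $W$-rank of $c_{A'}$ (an ordinal) would repair the argument and collapse it to the paper's.
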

\begin{proof}
To obtain that $c_\alpha\in M$ it is enough to show that $\dot c_\alpha$ is a symmetric name. However, taking $\cB$ to be $\{A_\alpha\}$, this is a disjoint approximation of itself, and of course that $\fix(\{A_\alpha\})\leq\sym(\dot c_\alpha)$.

To see that the axiom of choice fails in $M$, suppose that $\dot f\in\HS$ such that $p\forces^\HS\dot f\colon\power(\check\omega)\to\check\theta$, for some ordinal $\theta$ (note that we use $\forces^\HS$, which means that we are only considering the reals in $M$). Let $\cB$ be a countable family which is a support for $\dot f$. Pick some $\eta$ such that $A_\eta\nsubseteq^*\bigcup\cB$, and let $q\leq p$ such that $q\forces\dot f(\dot c_\eta)=\check\xi$. Pick $m,n\in\omega$ such that the following holds:
\begin{enumerate}
\item $m,n\notin\bigcup\cB\cup\dom q$.
\item Exactly one of $m$ and $n$ is an element of $A_\eta$. Without loss of generality, $n\in A_\eta$.
\end{enumerate}
Consider now the automorphism which is the $2$-cycle $(m\ n)$. It is clearly going to be in $\fix(\cB)$, and $\pi q=q$, but this translates to $q\forces\dot f(\pi\dot c_\eta)=\check\xi=f(\dot c_\eta)$. However there is some $q'\leq q$ such that $q'\forces\pi\dot c_\eta\neq\dot c_\eta$, such $q'$ forces that $\dot f$ is not injective, so $q$ cannot force that as well.

Therefore in $M$ the reals cannot be well-ordered. Consequently, $c\notin M$, as in that case $L[c]\subseteq M\subseteq L[c]$ which is a contradiction to the fact that $M\models\lnot\AC$.
\end{proof}
\subsection{Forcing over the symmetric extension}
We continue to work in the same setting as before. We want to define a forcing in $M$ and find it a generic in $L[c]$. Moreover, we would like this forcing to be sufficiently distributive so it does not add any new reals. This will ensure that we did not accidentally add $c$ back into the model. We also argue that in that case, we did not force the axiom of choice back, as the following lemma shows.
\begin{lemma}
Suppose that $W_0\subseteq L[c]$ is a model where the axiom of choice fails, and $W_1\subseteq L[c]$ is a generic extension of $W_0$ such that $\power(\omega)^{W_0}=\power(\omega)^{W_1}$, then the axiom of choice fails in $W_1$ as well.
\end{lemma}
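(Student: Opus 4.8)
The plan is to argue by contradiction: assuming $\AC$ holds in $W_1$, I will show that $W_0=W_1$, which is absurd because $\AC$ fails in $W_0$. The guiding idea is that a model of $\ZFC$ trapped between $L$ and $L[c]$ can be nothing but $L$ with a single real adjoined, and that real must already live in $W_0$ as soon as $W_0$ and $W_1$ have the same reals.

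First I would record the inclusions $L\subseteq W_0\subseteq W_1\subseteq L[c]$: the leftmost one holds because $W_0\models\ZF$, so $L=L^{W_0}\subseteq W_0$, and the rest is given. Now $L[c]$ is a set-generic extension of $L$ via the Cohen forcing $\CC$; write $\BB$ for the Boolean completion of $\CC$ and $G$ for the generic ultrafilter on $\BB$ determined by $c$. Since $W_1$ is an intermediate model and, \emph{by the assumption we are refuting}, satisfies $\AC$, the intermediate model theorem (see \cite{Grigorieff:1975}) applies: there is a complete subalgebra $\BB_0\in L$ of $\BB$ with $W_1=L[G\cap\BB_0]$, and $G\cap\BB_0$ is $L$-generic for $\BB_0$. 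This is the one point where $W_1\models\AC$ is used — for $\ZF$ intermediate models the conclusion fails, as the rest of this paper shows.

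Next I would use that $\BB$ has a countable dense subset, namely $\CC$ itself, and that a complete subalgebra of a complete Boolean algebra with a countable dense subset has one as well: if $D$ is countable and dense in $\BB$, then the nonzero elements among $\set{\rho(p)\mid p\in D}$, with $\rho(p)=\prod\set{b\in\BB_0\mid b\geq p}$, form a countable dense subset of $\BB_0$ (given $0\neq e\in\BB_0$, pick $0\neq p\in D$ below $e$; then $\rho(p)$ is a nonzero element of $\BB_0$ below $e$). Fixing in $L$ an enumeration $\tup{p_n\mid n<\omega}$ of such a countable dense subset of $\BB_0$ and setting $r=\set{n<\omega\mid p_n\in G\cap\BB_0}$, one checks that $r\in W_1$ and, using genericity and density, that $G\cap\BB_0=\set{b\in\BB_0\mid\exists n\,(n\in r\land p_n\leq b)}$ is computable from $r$ and parameters in $L$. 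Hence $W_1=L[r]$ for a single real $r\in\power(\omega)^{L[c]}$ (if $\BB_0$ is purely atomic this just gives $W_1=L$, which is of the same form).

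Finally, $r$ is a real of $W_1$, so $r\in\power(\omega)^{W_1}=\power(\omega)^{W_0}$, whence $r\in W_0$ and $L[r]\subseteq W_0$; together with $W_0\subseteq W_1=L[r]$ this forces $W_0=W_1$, the contradiction. I expect the only genuinely delicate step to be the invocation of the intermediate model theorem together with the check that the subalgebra it produces is captured by a single real; everything else is routine bookkeeping. I would also point out that the argument uses only that $W_0$ and $W_1$ are $\ZF$ models with $L\subseteq W_0\subseteq W_1\subseteq L[c]$ and the same reals — the distributivity and homogeneity of the forcing producing $W_1$, and indeed the fact that $W_1$ arises by forcing over $W_0$ at all, are not needed.
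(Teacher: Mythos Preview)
Your proof is correct and follows essentially the same route as the paper: both arguments hinge on the fact that any $\ZFC$ model intermediate between $L$ and $L[c]$ is of the form $L[r]$ for a single real $r$, and then use $\power(\omega)^{W_0}=\power(\omega)^{W_1}$ to pull $r$ down into $W_0$. The paper phrases this locally---fixing a non-well-orderable transitive $X\in W_0$, coding a putative well-order of $X$ in $W_1$ by a set of ordinals $A$, and observing $L[A]=L[r]$---whereas you assume full $\AC$ in $W_1$ and conclude $W_0=W_1$ directly via the complete-subalgebra version of the intermediate model theorem; this is a presentational difference, not a substantive one, and your final remark that the forcing hypothesis on $W_1$ is unnecessary is exactly right and matches the paper's argument.
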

\begin{proof}
Let $X\in W_0$ be a transitive set which cannot be well-ordered. If $W_1\subseteq L[c]$ is any model where $X$ can be well-ordered, then there is a set of ordinals $A$ encoding the $\in$ relation of $X\cup\{X\}$. However this set of ordinals lies in $L[c]$, therefore $L[A]$ is a model of $\ZFC$ intermediate to $L[c]$. It follows by the intermediate model theorem \cite[Lemma~15.43]{Jech:ST2003} that there is a Cohen real $r$ such that $L[A]=L[r]$.

Since $W_0$ and $W_1$ have the same reals, if $X$ cannot be well-ordered in $W_0$, it cannot be well-ordered in $W_1$ as well.
\end{proof}
Define for every $\alpha<\omega_1$ the set $R_\alpha$ as $\power(\omega)^{L[c_\alpha]}$. We want to give $R_\alpha$ a name. While the obvious name would be to take all the canonical $\CC_{A_\alpha}$-names for reals, we opt for something slightly different which will make our lives easier down the road.
\[\dot R_\alpha=\left\{\dot x\middd\begin{array}{l}
\dot x\in\HS\text{ is a canonical name for a real, and }\\
\exists B=^*A_\alpha\text{ such that }\dot x\text{ is a }\CC_B\text{-name}
\end{array}\right\}^\bullet.\]
While this name is indeed more complicated, it has the benefit of the following proposition being true.
\begin{proposition}\label{prop:nice-up-sym}
For every $\pi\in\sG$, $\pi\dot R_\alpha=\dot R_{\iota(\pi)(\alpha)}$.\qed
\end{proposition}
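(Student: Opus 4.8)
The plan is to unwind the $\bullet$-notation and push $\pi$ through the recursive definition of the name. Since $\pi$ is an automorphism of $\CC$ it fixes $1_\CC$, so
\[\pi\dot R_\alpha=\set{\tup{1_\CC,\pi\dot x}\midd\dot x\text{ satisfies the defining condition of }\dot R_\alpha}.\]
Thus the proposition reduces to the statement that, for an arbitrary $\CC$-name $\dot x$, the name $\dot x$ satisfies the condition defining $\dot R_\alpha$ if and only if $\pi\dot x$ satisfies the condition defining $\dot R_{\iota(\pi)(\alpha)}$. Because $\iota$ is a homomorphism with $\iota(\pi^{-1})=\iota(\pi)^{-1}$, it suffices to prove one direction: the converse follows by applying that direction to $\pi^{-1}$, to the name $\pi\dot x$, and to the index $\iota(\pi)(\alpha)$, which yields the condition for $\dot R_{\iota(\pi^{-1})(\iota(\pi)(\alpha))}=\dot R_\alpha$.

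The defining condition has three clauses, to be checked in turn. First, $\HS$ is invariant under $\sG$: from $\sym(\pi\dot x)=\pi\sym(\dot x)\pi^{-1}$ and the normality of $\sF$ one gets $\dot x\in\HS$ if and only if $\pi\dot x\in\HS$. Second, being a canonical name for a real is preserved, since $\pi$ fixes every $\check n$ (an easy induction using $\pi 1_\CC=1_\CC$) and, being an order-automorphism of $\CC$, sends antichains to antichains, so a name of the prescribed nice form goes to another name of that form. Third---the only clause with real content---if $\dot x$ is a $\CC_B$-name, meaning that every condition appearing hereditarily in $\dot x$ has domain contained in $B$, then every condition appearing hereditarily in $\pi\dot x$ has domain contained in $\pi"B$, so $\pi\dot x$ is a $\CC_{\pi"B}$-name; and if moreover $B=^*A_\alpha$, then, by transitivity of $=^*$ and the fact that $\pi$ implements $\iota(\pi)$, we get $\pi"B=^*\pi"A_\alpha=^*A_{\iota(\pi)(\alpha)}$, so $\pi"B$ witnesses the third clause for $\dot R_{\iota(\pi)(\alpha)}$. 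Combining the three clauses gives the desired implication, and by the symmetry noted above, the equality of names.

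I do not expect a genuine obstacle here: this proposition is exactly what the (otherwise somewhat baroque) definition of $\dot R_\alpha$ was engineered to make true. The naive alternative --- taking $\dot R_\alpha$ to be the $\bullet$-name of all canonical $\CC_{A_\alpha}$-names for reals --- would fail precisely because $\pi"A_\alpha$ is only almost equal to $A_{\iota(\pi)(\alpha)}$, not equal to it on the nose. The only point demanding a little care is therefore the ``$=^*$'' bookkeeping in the third clause, where one must never require $B$ or $\pi"B$ to coincide exactly with any $A_\beta$, but only up to a finite error --- which is exactly the slack built into the definition.
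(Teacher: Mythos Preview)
Your proof is correct. The paper itself gives no proof at all---the proposition is stated with an immediate \qed, the author having remarked just before it that the more complicated definition of $\dot R_\alpha$ was chosen precisely so that this proposition holds. Your write-up is exactly the verification the paper leaves to the reader, and your closing paragraph correctly identifies the design rationale: the $=^*$ slack in the third clause is the whole point, and your $\pi"B=^*\pi"A_\alpha=^*A_{\iota(\pi)(\alpha)}$ chain is the intended one-line argument.
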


Therefore if $\iota(\pi)(\alpha)=\alpha$, then $\pi\dot R_\alpha=\dot R_\alpha$; thus each $\dot R_\alpha\in\HS$.

\begin{proposition}
Suppose that $A\in[\omega_1]^{<\omega_1}\cap L$. Then $\tup{R_\alpha\mid\alpha\in A}\in M$.
\end{proposition}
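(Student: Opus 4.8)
The plan is to write down a hereditarily symmetric name for the sequence and then quote the forcing theorem for $\HS$. Since $A\in[\omega_1]^{<\omega_1}$ it is countable, hence bounded in $\omega_1$, and because we are still assuming $V=L$ the clause ``$\cap L$'' merely records that $A$, and the name we are about to build from it, lives in the ground model. I would set
\[\dot f=\left\{\tup{\check\alpha,\dot R_\alpha}^\bullet\midd\alpha\in A\right\}^\bullet,\]
a $\CC$-name for a function with domain $A$. The only thing to check about its interpretation is that $\dot R_\alpha^G=R_\alpha$, and this is precisely what the (deliberately roundabout) definition of $\dot R_\alpha$ was arranged to deliver: a canonical $\CC_B$-name for a real with $B=^*A_\alpha$ is interpreted as a real of $L[c\restriction B]=L[c_\alpha]$, and conversely every real of $L[c_\alpha]$ is named by such a name, which is readily checked to be hereditarily symmetric. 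So it suffices to prove $\dot f\in\HS$.

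For the names occurring hereditarily inside $\dot f$ this is immediate: each $\dot R_\alpha\in\HS$ (as already noted above), each $\check\alpha\in\HS$, and the Kuratowski pair names $\tup{\check\alpha,\dot R_\alpha}^\bullet$, along with their component names, are fixed by $\sym(\check\alpha)\cap\sym(\dot R_\alpha)\supseteq\fix(\{A_\alpha\})\in\sF$. What remains is to produce a single group in $\sF$ fixing $\dot f$ itself, and here I would use countability of $A$: the permutable family furnishes a disjoint approximation $\cB=\{B_\alpha\mid\alpha\in A\}$ of $\{A_\alpha\mid\alpha\in A\}$, and I claim $\fix(\cB)\le\sym(\dot f)$. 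To verify the claim, take $\pi\in\fix(\cB)$; then $\pi$ is the identity on $\bigcup\cB$, so $\pi"A_\alpha=^*\pi"B_\alpha=B_\alpha=^*A_\alpha$ for every $\alpha\in A$, and since the family is almost disjoint, $A_\beta=^*A_\gamma$ forces $\beta=\gamma$, whence $\iota(\pi)(\alpha)=\alpha$. Now \autoref{prop:nice-up-sym} gives $\pi\dot R_\alpha=\dot R_{\iota(\pi)(\alpha)}=\dot R_\alpha$, and since $\pi\check\alpha=\check\alpha$ we conclude $\pi\dot f=\dot f$. Hence $\sym(\dot f)\in\sF$, so $\dot f\in\HS$ and $\tup{R_\alpha\mid\alpha\in A}=\dot f^G\in M$.

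I do not expect a genuine obstacle here; the only subtle point is the direction of the ``insensitive to finite changes'' phenomenon flagged in the Remark. It is essential that $\fix(\cB)$ demands $\pi$ fix the \emph{pairwise disjoint} sets $B_\alpha$ \emph{pointwise}, since that is exactly what pins down $\iota(\pi)\restriction A=\id$ and therefore $\pi\dot R_\alpha=\dot R_\alpha$; the real work of making this argument go through was already front-loaded into the design of the symmetric system, and this proof simply cashes it in.
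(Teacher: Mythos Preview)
Your proof is correct and follows the same approach as the paper: exhibit the $\bullet$-name for the sequence and show that $\fix(\cB)$ fixes it for any disjoint approximation $\cB$ of $\{A_\alpha\mid\alpha\in A\}$. The paper's proof is a two-line sketch; you have simply unpacked the implicit step---that $\pi\in\fix(\cB)$ forces $\iota(\pi)(\alpha)=\alpha$ for $\alpha\in A$, whence \autoref{prop:nice-up-sym} gives $\pi\dot R_\alpha=\dot R_\alpha$---and verified the hereditary clause explicitly.
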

\begin{proof}
It is enough to show that $\tup{\dot R_\alpha\mid\alpha\in A}^\bullet\in\HS$. However, this is nearly trivial. If $\cB$ is any disjoint approximation of $\{A_\alpha\mid\alpha\in A\}$, then $\pi\dot R_\alpha=\dot R_\alpha$ whenever $\pi\in\fix(\cB)$.
\end{proof}
Define $\dot\sigma$ to be $\tup{\dot R_\alpha\mid\alpha<\omega_1}^\bullet$, and for $A\subseteq\omega_1$, let $\dot\sigma_A$ denote the restriction of $\dot\sigma$ to $A$. The above proposition shows that if $A$ is countable, then $\dot\sigma_A\in\HS$. We define $\dot\QQ=\{\pi\dot\sigma_A\mid A\in[\omega_1]^{<\omega_1}\land\pi\in\sG\}^\bullet$, and we define the order of $\dot\QQ$ as reverse inclusion. Clearly $\dot\QQ\in\HS$, as it is stable under every automorphism (the same goes for the order of $\dot\QQ$).

Note that by \autoref{prop:nice-up-sym}, $\pi\in\sG$ acts on $\dot\sigma$ by permuting the range of the sequence. What $\QQ$ is doing, is trying to well-order the set $\{\power(\omega)^{L[c_\alpha]}\mid\alpha<\omega_1\}$, which lies in $M$ (the indexation by $\omega_1$ is not symmetric, of course, this is $\sigma$ itself). It also follows that if $\pi$ and $\tau$ are two permutations such that $\iota(\pi)$ and $\iota(\tau)$ act the same on $A$, then $\tau\dot\sigma_A=\pi\dot\sigma_A$. The following theorem is the main part of this section. 

\begin{theorem}\label{thm:second forcing}
The sequence $\sigma$ is $M$-generic for $\QQ$, and $M[\sigma]$ has the same reals as $M$.
\end{theorem}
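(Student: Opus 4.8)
The statement splits into genericity of $\sigma$ and preservation of reals; the former is the substance, the latter will follow from a closure property of $\QQ$ in $M$.

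\emph{Genericity.} Let $D\in M$ be dense, let $\dot D\in\HS$ name it, fix $p\in G$ with $p\forces^\HS$ ``$\dot D$ is dense in $\dot\QQ$'', and fix a disjoint approximation $\cB=\{B_\alpha\mid\alpha\in I\}$, $I$ countable, with $\fix(\cB)\le\sym(\dot D)$. As $\dot\sigma_A^G=\sigma\restriction A$, it is enough to produce $A\in[\omega_1]^{<\omega_1}\cap L$ and $q\in G$ with $q\forces^\HS\dot\sigma_A\in\dot D$. The plan is to show that
\[E_{\dot D}=\sett{r\in\CC\middd\exists A\in[\omega_1]^{<\omega_1}\cap L\ (r\forces^\HS\dot\sigma_A\in\dot D)},\]
which is a subset of $\CC$ definable in $L$, is dense below $p$; then $G$, containing $p$, meets it.

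For density, take $r'\le p$. Since $\dot\sigma_I$ is a generator of $\dot\QQ$ and $r'$ forces $\dot D$ dense, there are $q_0\le r'$, $\pi\in\sG$ and a countable $A\in L$ with $q_0\forces^\HS(\pi\dot\sigma_A\in\dot D)\land(\pi\dot\sigma_A\le\dot\sigma_I)$ — after shrinking to decide which element of $\dot\QQ$ is meant. As $\alpha\mapsto R_\alpha$ is forced injective by $1_\CC$ (generically $c_\alpha\notin L[c_\beta]$ for $\alpha\neq\beta$), the second conjunct forces, hence yields, the ground-model facts $I\subseteq A$ and $\iota(\pi)\restriction I=\id$. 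The heart of the matter is to build a single $\rho\in\sG$ with: (a) $\rho\restriction\bigcup\cB=\id$, so $\rho\in\fix(\cB)$ and $\rho\dot D=\dot D$; (b) $\iota(\rho)(\iota(\pi)(\alpha))=\alpha$ for every $\alpha\in A$, making the \emph{name} $\rho(\pi\dot\sigma_A)=(\rho\pi)\dot\sigma_A$ equal $\dot\sigma_A$ verbatim; and (c) $\rho$ the identity on $\dom q_0\cup\dom r'$, so $\rho q_0\le r'$. Then the Symmetry Lemma turns $q_0\forces^\HS\pi\dot\sigma_A\in\dot D$ into $\rho q_0\forces^\HS\dot\sigma_A\in\dot D$, and $\rho q_0\le r'$ witnesses $\rho q_0\in E_{\dot D}$. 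The point of extending below $\dot\sigma_I$ was exactly to get $\iota(\pi)\restriction I=\id$, which makes (a) and (b) compatible — on $I$ both force $\iota(\rho)$ to be the identity. One then builds $\rho$ explicitly: pick a countable limit ordinal $\eta$ above $A\cup\iota(\pi)"A$, take a disjoint approximation $\{D_\gamma\mid\gamma<\eta\}$ of $\{A_\gamma\mid\gamma<\eta\}$ with $D_\gamma\subseteq B_\gamma$ for $\gamma\in I$ and $D_\gamma$ disjoint from $\bigcup\cB\cup\dom q_0\cup\dom r'$ for $\gamma\in\eta\setminus I$ (legal since $A_\gamma\cap\bigcup\cB$ is finite for $\gamma\notin I$, by the permutable-family property), extend the map ``invert $\iota(\pi)$ on $A$'' to a bounded permutation $\tau$ of $\omega_1$ fixing $I$ and supported in $\eta$, and let $\rho$ carry each $D_\gamma$ ($\gamma\in\eta\setminus I$) onto $D_{\tau(\gamma)}$ by the order isomorphism and be the identity elsewhere; the same permutable-family property gives $\iota(\rho)=\tau$ and $\rho\in\sG$. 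I expect assembling this $\rho$ — one automorphism doing all three jobs — to be the only genuine obstacle; the rest is the usual density-and-symmetry bookkeeping.

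\emph{Same reals.} Here $M[\sigma]=M[F_\sigma]$ with $F_\sigma=\{\sigma\restriction A\mid A\in[\omega_1]^{<\omega_1}\cap L\}$, so it suffices to show $\QQ$ is sufficiently closed in $M$ — concretely, that every descending $\omega$-chain of conditions has a lower bound — for then no new $\omega$-sequences of ordinals, hence no new reals, are added. Given $\tup{q_n\mid n<\omega}\in M$ descending, put $h=\bigcup_n q_n$ and let $f$ be its ``pattern'' ($h(\alpha)=R_{f(\alpha)}$), the union of the coherent partial permutations $\iota(\pi_n)\restriction\dom q_n$ witnessing the $q_n\in\QQ$. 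The key is that $f\in L$: the patterns of conditions of $\QQ$ are by construction implemented by permutations from the ground-model group $\sG\subseteq L$, and since $\tup{q_n}$ has a hereditarily symmetric name a symmetry argument shows $f$ codes nothing new over $L$, so $f$ extends to a bounded permutation of $\omega_1$ in $L$; implementing it gives $\pi\in\sG$ with $(\pi\dot\sigma_{\dom h})^G$ a lower bound. With this closure in hand the equality of reals is routine: a real of $M[\sigma]$ has a $\QQ$-name in $M$, a descending $\omega$-chain decides it completely, and the lower bound of that chain — a single condition — already decides it in $M$. Establishing ``$f\in L$'' is the delicate step here, parallel to the construction of $\rho$ above.
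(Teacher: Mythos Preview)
Your genericity argument is sound and follows essentially the paper's route: isolate a support $\cB=\{B_\alpha\mid\alpha\in I\}$ for $\dot D$, extend below $\dot\sigma_I$, and use an automorphism in $\fix(\cB)$ to straighten $\pi\dot\sigma_A$ back to $\dot\sigma_A$ while fixing the working condition. The paper packages the same computation as a standalone lemma (membership in $\dot D$ depends only on the first $\eta$ coordinates), but the underlying symmetry move is identical to yours.

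The ``same reals'' half, however, contains a genuine error: $\QQ$ is \emph{not} $\sigma$-closed in $M$, so your proposed argument cannot work. Here is a descending $\omega$-chain in $M$ with no lower bound in $\QQ$. Fix a disjoint approximation $\cB=\{B_k\mid k<\omega+\omega\}$ of $\{A_k\mid k<\omega+\omega\}$ and enumerate $B_0$ as $\{a_m\mid m<\omega\}$. Define $q_n$ with domain $\omega+n+1$ by $q_n(k)=R_k$ for $k<\omega$ and $q_n(\omega+m)=R_{\omega+2m+c(a_m)}$ for $m\le n$. Each $q_n$ lies in $\QQ$: its pattern is determined by the finite bit-string $\tup{c(a_0),\dots,c(a_n)}$, hence extends to a bounded permutation in $L$, hence is implemented by some element of $\sG$. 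The sequence $\tup{q_n\mid n<\omega}$ lies in $M$, since the obvious name for $h=\bigcup_n q_n$ is fixed by every $\pi\in\fix(\cB)$ (such $\pi$ fixes each $a_m\in B_0$ and has $\iota(\pi)\restriction(\omega+\omega)=\id$, so fixes every $\dot R_k$ occurring). But the pattern of $h$ sends $\omega+m$ to $\omega+2m+c(a_m)$, encoding an infinite restriction of the Cohen real, and is therefore not in $L$; so no condition of $\QQ$ lies below all the $q_n$. Your line ``a symmetry argument shows $f$ codes nothing new over $L$'' is exactly where this breaks: symmetry places $f$ in $M$, not in $L$, and $M$ has many reals outside $L$.

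What the paper proves instead is $\sigma$-\emph{distributivity} of $\QQ$ in $M$, which is strictly weaker than closure but suffices for preserving reals. The key lemma you essentially proved for genericity is reused: for a dense open $\dot D$ supported by some $\{B_\alpha\mid\alpha<\eta\}$, once one extension of $\dot\sigma_\eta$ with domain $A$ is forced into $\dot D$, every such extension is. Given $\tup{D_n\mid n<\omega}\in M$, a single support $\cB$ works uniformly for all $D_n$; then, using the countable chain condition of $\CC$, one finds for each $n$ a countable ordinal $\xi_n$ such that \emph{every} extension of a fixed condition to domain $\xi_n$ is forced into $D_n$, and $\xi=\sup_n\xi_n<\omega_1$ handles all $n$ simultaneously. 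The chain condition and the uniformity of the support are doing essential work that a bare closure argument cannot supply.
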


The rest of this section will be devoted for the proof of the theorem. We begin with a key lemma.
\begin{lemma}\label{lemma:initial segment}
Suppose that $\dot D\in\HS$ and $p\forces\dot D\subseteq\dot\QQ$ is a dense open set. There is some $\eta<\omega_1$ such that for every $\pi$ and $A$ such that $p\forces\pi\dot\sigma_A\in\dot D$ and $\eta\subseteq A$, if $\tau\dot\sigma_A$ is a condition such that $p\forces\pi\dot\sigma_\eta=\tau\dot\sigma_\eta$, then $p\forces\tau\dot\sigma_A\in\dot D$ as well.
\end{lemma}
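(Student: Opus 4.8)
The plan is to exploit the homogeneity of the situation together with a counting/pressing-down argument to locate the threshold $\eta$. Recall that a condition in $\dot\QQ$ has the form $\pi\dot\sigma_A$ for some countable $A$ and some $\pi\in\sG$, and that (by the remark following \autoref{prop:nice-up-sym}) the condition $\pi\dot\sigma_A$ depends only on how $\iota(\pi)$ acts on $A$; moreover $\pi\dot\sigma_A$ extends $\pi\dot\sigma_{A'}$ exactly when $A'\subseteq A$ and $\iota(\pi)\restriction A'$ matches. The idea is that a symmetric name $\dot D$ has a support $\cB$, a countable disjoint approximation, and automorphisms in $\fix(\cB)$ fix $\dot D$. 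So whether $p\forces\pi\dot\sigma_A\in\dot D$ should only depend on $A$ and on $\iota(\pi)$ modulo the ``coordinates'' mentioned by $\cB$, which live below some countable ordinal.

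**The main steps.**
First I would fix $\dot D$, $p$, and a support $\cB=\{B_\beta\mid\beta\in I\}$ for $\dot D$, with $I\subseteq\omega_1$ countable; let $\eta_0<\omega_1$ bound $I$. Second, I claim $\eta=\eta_0$ works, and to see this I take $\pi$, $A$, $\tau$ with $\eta_0\subseteq A$, $p\forces\pi\dot\sigma_A\in\dot D$, and $p\forces\pi\dot\sigma_{\eta_0}=\tau\dot\sigma_{\eta_0}$, and I want $p\forces\tau\dot\sigma_A\in\dot D$. The third and central step is to build an automorphism $\rho\in\fix(\cB)$ with $\rho(\pi\dot\sigma_A)=\tau\dot\sigma_A$: since $\pi\dot\sigma_A$ and $\tau\dot\sigma_A$ agree on the coordinates below $\eta_0$ — in particular on all of $I$, because $I\subseteq\eta_0$ — the permutations $\iota(\pi)$ and $\iota(\tau)$ restricted to $A$ differ only on $A\setminus\eta_0$; so one can find $\rho\in\sG$ implementing $\iota(\tau)\iota(\pi)^{-1}$ in a way that is the identity on a disjoint refinement witnessing $\cB$ (hence $\rho\in\fix(\cB)$), and that carries $\pi\dot\sigma_A$ to $\tau\dot\sigma_A$. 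The fourth step applies the Symmetry Lemma for $\forces^\HS$: from $p\forces\pi\dot\sigma_A\in\dot D$ we get $\rho p\forces \rho(\pi\dot\sigma_A)\in\rho\dot D$, i.e. $\rho p\forces\tau\dot\sigma_A\in\dot D$; finally I arrange $\rho p=p$ (or at least $\rho p$ compatible-over-$p$, adjusting below $p$) by also requiring $\rho$ to fix $\dom p$ pointwise, which is possible since $\dom p$ is finite and we have plenty of room in $\omega$ outside $\bigcup\cB$. That yields $p\forces\tau\dot\sigma_A\in\dot D$, as wanted. (The hypothesis that $\dot D$ is dense open is not really needed for this lemma; it is stated because that is how $\dot D$ arises in the proof of \autoref{thm:second forcing}.)

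**The main obstacle.**
The delicate point is the third step: simultaneously getting $\rho$ to (a) implement the bounded permutation $\iota(\tau)\iota(\pi)^{-1}$ of $\omega_1$, (b) lie in $\fix(\cB)$, i.e. be the identity on $\bigcup\cB$, and (c) fix $\dom p$ pointwise. Points (b) and (c) are ``finite-or-countable amount of room'' constraints and are compatible because implementing a bounded permutation is insensitive to finite changes and we may always re-choose the disjoint refinement used to define $\rho$ to avoid a prescribed finite set and to agree with $\cB$ on the relevant coordinates; the subtlety is checking that the resulting $\rho$ genuinely sends $\pi\dot\sigma_A$ to $\tau\dot\sigma_A$ and not merely to something almost equal — here one uses that $\pi\dot\sigma_A$ is determined by $\iota(\pi)\restriction A$ exactly, not up to finite error, so one must verify $\iota(\rho)\iota(\pi)\restriction A = \iota(\tau)\restriction A$ on the nose, which follows from $\iota(\rho)=\iota(\tau)\iota(\pi)^{-1}$ on $A$ (using that $\iota(\pi),\iota(\tau)$ already agree below $\eta_0$ and $\iota(\rho)$ is the identity there because $\rho\in\fix(\cB)$ and $\eta_0$ is a domain of it). Once this bookkeeping is done, the Symmetry Lemma does the rest.
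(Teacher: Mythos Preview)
Your approach is essentially identical to the paper's: choose $\eta$ to bound the index set of a disjoint approximation $\cB$ supporting $\dot D$, then given $\pi,\tau,A$ as in the hypothesis, build an automorphism $\rho\in\fix(\cB)$ with $\iota(\rho)=\iota(\tau\pi^{-1})$ and $\rho p=p$, and apply the Symmetry Lemma to transport $p\forces\pi\dot\sigma_A\in\dot D$ to $p\forces\tau\dot\sigma_A\in\dot D$. The paper's proof is terser (it simply asserts such a $\pi'$ exists), while you spell out more of the bookkeeping for arranging $\rho p=p$ and $\rho\in\fix(\cB)$; your remark that the dense-open hypothesis is unused here is also correct.
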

In other words, the lemma states that if $D$ is a dense open set, then there is a crucial bit of information---$\dot\sigma_\eta$ and its orbit under $\sym(\dot D)$---so that being a member of $D$ essentially depends on that information.
\begin{proof}
Let $\eta$ be such that for some disjoint approximation $\cB=\{B_\alpha\mid\alpha<\eta\}$, $\fix(\cB)\leq\sym(\dot D)$. We claim that this $\eta$ satisfies the wanted condition. Suppose that $\tau$ is a permutation as in the assumptions, then $\iota(\tau)$ and $\iota(\pi)$ agree on the ordinals up to $\eta$. We can find a permutation $\pi'$ such that $\iota(\pi')=\iota(\tau\pi^{-1})$, and $\pi'\in\fix(\cB)$ as well $\pi' p=p$. This completes the proof, of course, since now we get that $\pi' p\forces\pi'(\pi\dot\sigma_A)\in\pi'\dot D$ which gets translated to $p\forces\tau\dot\sigma_A\in\dot D$.
\end{proof}
\begin{corollary}
$\sigma$ is $M$-generic for $\QQ$.
\end{corollary}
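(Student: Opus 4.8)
The plan is to read this off \autoref{lemma:initial segment}. Recall that $\sigma$ induces the filter $G_\QQ=\{q\in\QQ^M\mid q\subseteq\sigma\}$ on $\QQ$, and that $\sigma$ being $M$-generic amounts to $G_\QQ$ meeting every dense open $D\subseteq\QQ$ with $D\in M$ (that $G_\QQ$ is a filter being routine, as a union of two countable restrictions of $\sigma$ is again one). The reduction I will exploit is that any condition of the \emph{unpermuted} form $\sigma\restriction A=\dot\sigma_A^G$, with $A\in[\omega_1]^{<\omega_1}$ (computed in $V$), automatically lies in $M$ (since $\dot\sigma_A\in\HS$) and in $G_\QQ$; so it suffices to produce, for each such $D$, a countable $A$ with $\sigma\restriction A\in D$.

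Fix such a $D$, a name $\dot D\in\HS$ for it, and a condition $p\in G$ (the $\CC$-generic defining $M$) with $p\forces^\HS$ ``$\dot D$ is a dense open subset of $\dot\QQ$''; let $\eta<\omega_1$ be the ordinal given by \autoref{lemma:initial segment}. That $\eta$ depends only on $\dot D$ — any countable $\eta$ admitting a disjoint approximation $\cB$ indexed below $\eta$ with $\fix(\cB)\leq\sym(\dot D)$ works — so the lemma's conclusion holds below any $r\leq p$ just as well. Since $\dot\sigma_\eta\in\HS$ and $p$ forces it to be a condition, $p$ forces $\dot D$ to be dense below $\dot\sigma_\eta$; hence the set of $r\leq p$ for which there exist $\pi\in\sG$ and $A\in[\omega_1]^{<\omega_1}$ with $A\supseteq\eta$, $r\forces^\HS\pi\dot\sigma_A\leq\dot\sigma_\eta$ and $r\forces^\HS\pi\dot\sigma_A\in\dot D$ is dense below $p$: given $r'\leq p$, first extend to decide a witness $\dot q\leq\dot\sigma_\eta$ in $\dot D$, then extend again to force $\dot q=\pi\dot\sigma_A$ for specific $\pi,A$ (possible because $\dot\QQ$ was built from the names $\pi\dot\sigma_A$ via the $\bullet$-operation), and observe that the relation $\pi\dot\sigma_A\leq\dot\sigma_\eta$ forces $\eta\subseteq A$ (so $\eta\subseteq A$ outright, these being sets in $V$) and, restricting both functions to their first $\eta$ coordinates, also forces $\pi\dot\sigma_\eta=\dot\sigma_\eta$.

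Now take $r\in G$ in this dense set, together with its $\pi$ and $A$. Applying \autoref{lemma:initial segment} below $r$ with $\tau=\id$ — all of its hypotheses hold, namely $r\forces^\HS\pi\dot\sigma_A\in\dot D$, $\eta\subseteq A$, and $r\forces^\HS\pi\dot\sigma_\eta=\id\cdot\dot\sigma_\eta$ — yields $r\forces^\HS\dot\sigma_A\in\dot D$. As $r\in G$, this gives $\sigma\restriction A=\dot\sigma_A^G\in D$, and $\sigma\restriction A\subseteq\sigma$, so $\sigma\restriction A\in G_\QQ\cap D$. Since $D$ was an arbitrary dense open subset of $\QQ$ lying in $M$, $G_\QQ$ is $M$-generic.

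The step deserving care is the passage, via \autoref{lemma:initial segment}, from the \emph{permuted} witness $\pi\dot\sigma_A\in\dot D$ delivered by density to the \emph{unpermuted} $\dot\sigma_A\in\dot D$; everything else is bookkeeping with names and the forcing theorem for $\forces^\HS$. The only genuinely delicate point within it is that the lemma may be invoked below $r$ and not merely below $p$: this is harmless because its proof only conjugates by an automorphism of $\CC$ lying in $\fix(\cB)\leq\sym(\dot D)$ that implements a bounded permutation of $\omega_1$ fixing $\eta$ pointwise, and such an automorphism can in addition be arranged to fix the finitely many points of $\dom r$, using that each $A_\xi$ with $\xi\geq\eta$ is almost disjoint from $\bigcup\cB$ — precisely the defining property of a permutable family.
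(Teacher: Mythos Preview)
Your proof is correct and follows essentially the same approach as the paper's: use \autoref{lemma:initial segment} to pass from a permuted witness $\pi\dot\sigma_A\in\dot D$ extending $\dot\sigma_\eta$ to the unpermuted $\dot\sigma_A\in\dot D$, which then lies in the filter induced by $\sigma$. Your write-up is considerably more careful than the paper's three-line argument; in particular, you correctly flag and resolve the point that the lemma must be invoked below the extension $r$ rather than the original $p$ (the paper's proof silently elides this), and your justification---that $\iota(\pi^{-1})$ fixes $\eta$ pointwise so the implementing automorphism can avoid both $\bigcup\cB$ and the finite $\dom r$---is exactly right.
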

\begin{proof}
Suppose that $\dot D$ is a name for a dense open set as in the lemma. Let $p$ be a condition and $\pi\dot\sigma_A$ such that $\iota(\pi)\restriction\eta=\id$, so $p\forces\pi\dot\sigma_A\leq_{\dot\QQ}\dot\sigma_\eta$, and $p\forces\pi\dot\sigma_A\in\dot D$. By the lemma we get that $p\forces\dot\sigma_A\in\dot D$.
\end{proof}
\begin{corollary}\label{cor:baby reals}
In $M$ the forcing $\QQ$ is ${\leq}\omega$-distributive, namely if $\tup{D_n\mid n<\omega}$ is a sequence of dense open sets, then $D=\bigcap_{n<\omega}D_n$ is a dense open set.
\end{corollary}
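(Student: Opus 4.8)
The plan is to combine the localization furnished by \autoref{lemma:initial segment} with the genericity of $\sigma$ from the preceding corollary. Openness is immediate, as a countable intersection of downward-closed sets is downward closed, so everything is about density. Fix a sequence $\tup{D_n\mid n<\omega}\in M$ of dense open subsets of $\QQ$. Passing to the forcing language over $L$, let $p$ force that some $\HS$-name $\dot F$ is this sequence, and let $\dot D_n$ name its $n$-th entry; a single disjoint approximation supporting $\dot F$ supports every $\dot D_n$ (since $\pi\check n=\check n$), so \autoref{lemma:initial segment} provides \emph{one} ordinal $\eta<\omega_1$ which localizes all the $\dot D_n$ at once: for conditions of the form $\pi\dot\sigma_A$ with $\eta\subseteq A$, whether $p$ forces $\pi\dot\sigma_A\in\dot D_n$ depends only on $\pi\dot\sigma_\eta$ (modulo the action of $\sym(\dot D_n)$), uniformly in $n$.

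First I would prove $\bigcap_n D_n\neq\varnothing$. Since $\sigma$ is $M$-generic for $\QQ$ and the $R_\alpha$ are pairwise distinct, no nontrivial automorphism can carry a restriction of $\sigma$ into the generic filter, so that filter is exactly $\{\sigma_A\mid A\in[\omega_1]^{<\omega_1}\cap L\}$; hence $\sigma$ meets each $D_n$, i.e.\ there is an $L$-countable $A_n$ with $\sigma_{A_n}\in D_n$. The point is to keep the $A_n$'s under control: as $\forces^\HS$ is definable in $L$, for each $n$ the set of $r\in\CC$ for which some $L$-countable $A$ satisfies $r\forces^\HS\dot\sigma_A\in\dot D_n$ is dense below $p$, and since $\CC$ is countable the union of the $L$-least such $A$, taken over all these $r$ and all $n$, together with $\eta$, is a single set $A^*\in[\omega_1]^{<\omega_1}\cap L$ with $A_n\subseteq A^*$ for every $n$. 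As each $D_n$ is open and $\sigma_{A^*}\leq\sigma_{A_n}$, we conclude $\sigma_{A^*}\in\bigcap_n D_n$, and of course $\sigma_{A^*}\in M$.

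For density below an arbitrary $q\in\QQ$ one runs the same argument relative to $q$: writing $q=\rho\sigma_B$, extend $q$ so that $\eta\subseteq B$ (any witness below this extension is a witness below $q$), and use density of each $D_n$ below $q$, again pre-assembling in $L$ a single $L$-countable $A^{**}\supseteq B$ bounding the domains of the chosen $D_n$-conditions. Because $\eta\subseteq B$ pins down the $\dot\sigma_\eta$-part of every extension of $q$, \autoref{lemma:initial segment}, applied with the \emph{fixed} domain $A^{**}$, then yields a single extension of $q$---a suitable $\tau\sigma_{A^{**}}$---lying in all of the $D_n$ simultaneously; this condition is below $q$ and lies in $M$, so $\bigcap_n D_n$ is dense.

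The heart of the matter, and the reason \autoref{lemma:initial segment} is indispensable, is that $M\models\lnot\AC$. The naive proof---building $q_0\geq q_1\geq\cdots$ with $q_{n+1}\in D_n$ and taking $\bigcup_n q_n$ as a lower bound---is unavailable on two counts: it needs a dependent-choice principle that may fail in $M$, and its putative lower bound has domain $\bigcup_n\dom q_n$, which need not be a \emph{ground-model}-countable set and hence need not even name a condition of $\QQ$. \autoref{lemma:initial segment} removes both obstructions at once: it shows that membership in each $D_n$ is decided by the single bounded object $\dot\sigma_\eta$, so that rather than an $\omega$-sequence of choices one can commit, once and for all inside $L$, to a single ground-model-countable domain that works for every $n$.
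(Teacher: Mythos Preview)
Your argument is correct and follows essentially the paper's approach: fix a single $\eta$ from \autoref{lemma:initial segment} uniform for all the $\dot D_n$, use the countability of $\CC$ to bound the relevant domains by one $L$-countable $A^{**}$, and apply the lemma to see that any extension of $q$ with domain $A^{**}$ lies in every $D_n$. The paper does this more directly---skipping your nonemptiness detour via the genericity of $\sigma$, which is just density below $1_{\QQ}$---and makes explicit the maximal-antichain step that passes from ``densely many $r\leq p$ force $\tau\dot\sigma_{A^{**}}\in\dot D_n$'' to ``$p$ forces it'', which your density sketch leaves implicit.
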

\begin{proof}
Suppose that $\dot f\in\HS$ is such that $p\forces\dot f(\check n)=\dot D_n$ is a dense open set for every $n<\omega$. It is not hard to check that if $\cB=\{B_\alpha\mid\alpha<\eta\}$ is such that $\fix(\cB)\leq\sym(\dot f)$, then this is also the case that $\fix(\cB)\leq\sym(\dot D_n)$ for all $n$. Suppose that $\eta\subseteq A$ and $\pi$ is some permutation in $\sG$. For every $n$, let $\{p_{n,m}\mid m<\omega\}$ be a maximal antichain below $p$ such that there is some ordinal $\xi_{n,m}$ with $A\subseteq\xi_{n,m}$ and some permutation $\tau_{n,m}$ such that $p_{n,m}\forces\tau_{n,m}\dot\sigma_{\xi_{n,m}}\in\dot D_n$ and $\tau_{n,m}\dot\sigma_{\xi_{n,m}}\leq_{\dot\QQ}\pi\dot\sigma_A$. 

By the lemma, it follows that $p_{n,m}$ actually forces that every extension of $\pi\dot\sigma_A$ whose domain is $\xi_{n,m}$ will be in $\dot D_n$. Let $\xi_n=\sup\{\xi_{n,m}\mid m<\omega\}$, therefore $p$ forces that every extension of $\pi\dot\sigma_A$ with domain $\xi_n$ lies in $\dot D_n$. Finally taking $\xi=\sup\{\xi_n\mid n<\omega\}$ will satisfy that if $\tau\dot\sigma_\xi$ is any condition extending $\pi\dot\sigma_A$, then for all $n<\omega$, $p\forces\tau\dot\sigma_\xi\in\dot D_n$. Therefore the intersection of all the $D_n$'s is indeed dense as wanted.
\end{proof}

This completes the proof of the theorem. The next step would be to pick an permutable family of size $\aleph_2$ of subsets of $\omega_1$, and repeat the process as we did here. But instead of working one step at at time, we will instead switch gears into high action mode. We will cover some of the technical tools needed for this construction, and then describe the general structure of the proof in one fell swoop. We end this with a remark that while it might seem that $\QQ$ ``should'' restore the Cohen real, and that we want to somehow go back-and-forth to $L[c]$ and down into inner models, this is not the case. In order to restore the Cohen real we need to be able and choose the $c_\alpha$'s themselves, as generators for each $R_\alpha$. What $\QQ$ did was only to well-order the set of the $R_\alpha$'s. Of course, the next step would be to forget this well-ordering, and remember only fragments of it via a symmetric extension, and continue ad infinitum.
\begin{remark}
We could have proved \autoref{thm:second forcing} using the following argument: in $L[c]$ the forcing $\QQ$ is isomorphic to $\Add(\omega_1,1)^L$ (the isomorphism is not in $M$, of course). Therefore $\QQ$ is ${\leq}\omega$-distributive in $L[c]$, and this is absolute to $M$.

Of course, we would still have to show that $\sigma$ is $M$-generic for $\QQ$, which would require a proof more or less along the lines of \autoref{lemma:initial segment}, although in simpler form.
\end{remark}
\section{Tools for your Bristol construction}
We have covered the basic details of one symmetric extension; but we need an apparatus for iterating them. We will provide a brief description of the method for iterating symmetric extensions. After this we will deal with the generalizations of permutable families, and permutable scales which will be needed for pushing the construction through the limit steps.
\subsection{Productive iterations of symmetric extensions}
Iteration of symmetric extensions is a framework for (as the name suggests) iterating symmetric extensions, i.e.\ a symmetric extension of a symmetric extension, and so on. While we can do this by hand for finitely many steps, the framework does offer a method which extends transfinitely as well. The full details of the construction can be found in \cite{Karagila:2016}, and we will only cover a small part of it necessary for this work. We will not prove any statements here, and some of them will be slightly modified to accommodate the Bristol model construction.

\begin{definition}Suppose that $\PP$ is a forcing notion, $\pi$ is an automorphism of $\PP$, and $\dot A$ is a $\PP$-name. We say that $\pi$ \textit{respects} $\dot A$ if $\forces_\PP\dot A=\pi\dot A$. If $\dot A$ carries an implicit structure (e.g.\ a forcing notion) then we require that the implicit structure is respected as well.
\end{definition}

Let us consider a two-step iteration of two symmetric systems, $\tup{\QQ_0,\sG_0,\sF_0}$ and $\tup{\dot\QQ_1,\dot\sG_1,\dot\sF_1}^\bullet$. If $\PP$ is the two-step iteration $\QQ_0\ast\dot\QQ_1$, we would like to be able and isolate $\PP$ names which are equivalent to $\QQ_0$-names which are in $\HS_{\sF_0}$, and they are themselves names for $\dot\QQ_1$-names which will also be symmetric. In order for automorphisms from $\sG_0$ to even have a chance to preserve the property of being a $\dot\QQ_1$-name, an automorphism in $\dot\sG_1$, or anything else definable from the symmetric system $\tup{\QQ_1,\dot\sG_1,\dot\sF_1}^\bullet$ we need to require that all the automorphisms in $\sG_0$ respect the symmetric system.

\begin{definition}\label{def:semi-direct-two-steps}
Let $\tup{\QQ_0,\sG_0,\sF_0}$ be a symmetric system, and $\tup{\dot\QQ_1,\dot\sG_1,\dot\sF_1}^\bullet$ be a name for a symmetric system in $\HS_{\sF_0}$ which is respected by all the automorphisms in $\sG_0$. Let $\PP$ denote the iteration $\QQ_0\ast\dot\QQ_1$.
\begin{enumerate}
\item For $\pi\in\sG_0$, define $\gaut{\pi}\tup{q_0,\dot q_1}=\tup{\pi q_0,\pi(\dot q_0)}$. This is well-defined as $\pi$ acts on the $\QQ_0$-names, and it respects $\dot\QQ_1$. 
\item For $\dot\sigma$ such that $\forces_{\QQ_0}\dot\sigma\in\dot\sG_1$, define $\gaut{\dot\sigma}\tup{q_0,\dot q_1}=\tup{q_0,\dot\sigma(\dot q_1)}$, where $\dot\sigma(\dot q_1)$ is a $\QQ_0$-name for a condition $\dot q'$ in $\dot\QQ_1$ satisfying $\forces_{\QQ_0}\dot q'=\dot\sigma(\dot q_1)$.
\item If $\pi\in\sG_0$ and $\dot\sigma$ is a name such that $\forces_{\QQ_0}\dot\sigma\in\dot\sG_1$, we define: \[\gaut{\tup{\pi,\dot\sigma}}\tup{q_0,\dot q_1}=\gaut{\pi}\gaut{\dot\sigma}\tup{q_0,\dot q_1}=\tup{\pi q_0,\pi(\dot\sigma(\dot q_1))}=\tup{\pi q_0,\pi(\dot\sigma)(\pi\dot q_1)}.\]
\end{enumerate}
\end{definition}
We denote by $\sG_0\ast\dot\sG_1$, and usually by $\cG_1$, the group of all automorphisms of the form that appears in clause (3) of the above definition. This is indeed a group of automorphisms. Upon close inspection there are similarities between the action of $\cG_1$ on the iteration and the semi-direct product of $\sG_0$ and $\dot\sG_1$, and indeed $\cG_1$ is called the generic semi-direct product. It is worth pointing out that the order $\gaut{\pi}\gaut{\dot\sigma}$ can be reversed by paying a ``price'' of conjugating $\dot\sigma$ by $\pi$. This is evident from the following proposition.

\begin{proposition}\label{prop:two-step-conj}
Under the assumptions of \autoref{def:semi-direct-two-steps}, let $\tup{\pi_0,\dot\sigma_0}$ and $\tup{\pi_1,\dot\sigma_1}$ be two pairs of automorphisms. Then the following hold:
\begin{enumerate}
\item $\gaut{\tup{\pi_1,\dot\sigma_1}}\gaut{\tup{\pi_0,\dot\sigma_0}}=\gaut{\tup{\pi_1\pi_0,\pi_0^{-1}(\dot\sigma_1)\dot\sigma_0}}$,
\item $\gaut{\tup{\pi_1,\dot\sigma_1}}^{-1}=\gaut{\tup{\pi_1^{-1},\pi_1(\dot\sigma_1^{-1})}}$.
\end{enumerate}
\end{proposition}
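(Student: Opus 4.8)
Both identities are between automorphisms of $\PP=\QQ_0\ast\dot\QQ_1$, so the plan is to evaluate each side on an arbitrary condition $\tup{q_0,\dot q_1}\in\PP$ and compare, using repeatedly the explicit formula of clause~(3) of \autoref{def:semi-direct-two-steps}, namely $\gaut{\tup{\pi,\dot\sigma}}\tup{q_0,\dot q_1}=\tup{\pi q_0,\pi(\dot\sigma)(\pi\dot q_1)}$. The only non-formal inputs are two compatibility facts about how $\pi\in\sG_0$ acts on $\QQ_0$-names: (i) $\pi$ commutes with evaluation, i.e.\ $\pi\big(\dot\sigma(\dot q_1)\big)=(\pi\dot\sigma)(\pi\dot q_1)$ whenever $\forces_{\QQ_0}\dot\sigma\in\dot\sG_1$ and $\forces_{\QQ_0}\dot q_1\in\dot\QQ_1$; and (ii) $\pi$ carries the group operation of $\dot\sG_1$ to itself, $\pi(\dot\sigma\dot\tau)=(\pi\dot\sigma)(\pi\dot\tau)$. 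Both hold because $\pi$ respects the name $\tup{\dot\QQ_1,\dot\sG_1,\dot\sF_1}^\bullet$ and $\dot\QQ_1$: the group structure of $\dot\sG_1$ and its action on $\dot\QQ_1$ are part of the respected structure, and one pushes the relevant forced equalities through $\pi$ via the Symmetry Lemma. (Throughout, I fix the convention $\dot\tau\dot\rho\colon x\mapsto\dot\tau(\dot\rho(x))$ for composition in $\dot\sG_1$; the whole computation hinges on this, so I would state it explicitly in the proof.)

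\textbf{Part (1).} Apply $\gaut{\tup{\pi_0,\dot\sigma_0}}$ first to get $\tup{\pi_0 q_0,\ \pi_0(\dot\sigma_0)(\pi_0\dot q_1)}$, then apply $\gaut{\tup{\pi_1,\dot\sigma_1}}$. The first coordinate becomes $\pi_1\pi_0 q_0$; the second becomes $\pi_1(\dot\sigma_1)\big(\pi_1\big(\pi_0(\dot\sigma_0)(\pi_0\dot q_1)\big)\big)$, which by fact~(i) applied to $\pi_1$ equals $\pi_1(\dot\sigma_1)\big((\pi_1\pi_0(\dot\sigma_0))(\pi_1\pi_0\dot q_1)\big)=\big(\pi_1(\dot\sigma_1)\cdot\pi_1\pi_0(\dot\sigma_0)\big)(\pi_1\pi_0\dot q_1)$. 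On the other side, $\gaut{\tup{\pi_1\pi_0,\ \pi_0^{-1}(\dot\sigma_1)\dot\sigma_0}}\tup{q_0,\dot q_1}=\tup{\pi_1\pi_0 q_0,\ \big(\pi_1\pi_0(\pi_0^{-1}(\dot\sigma_1)\dot\sigma_0)\big)(\pi_1\pi_0\dot q_1)}$, and by fact~(ii) together with $\pi_1\pi_0\pi_0^{-1}=\pi_1$ we get $\pi_1\pi_0(\pi_0^{-1}(\dot\sigma_1)\dot\sigma_0)=\pi_1(\dot\sigma_1)\cdot\pi_1\pi_0(\dot\sigma_0)$. The two results agree, which is (1). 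For part (2) I would simply specialize (1): taking $\pi_0=\pi_1^{-1}$ and $\dot\sigma_0=\pi_1(\dot\sigma_1^{-1})$, part~(1) gives $\gaut{\tup{\pi_1,\dot\sigma_1}}\gaut{\tup{\pi_1^{-1},\pi_1(\dot\sigma_1^{-1})}}=\gaut{\tup{\pi_1\pi_1^{-1},\ (\pi_1^{-1})^{-1}(\dot\sigma_1)\cdot\pi_1(\dot\sigma_1^{-1})}}=\gaut{\tup{\id,\ \pi_1(\dot\sigma_1\dot\sigma_1^{-1})}}=\gaut{\tup{\id,\id}}$, which is the identity automorphism of $\PP$. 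Since $\cG_1$ is already known to be a group of automorphisms, a one-sided inverse is two-sided, so this yields (2) (or one checks the reversed product by the symmetric computation).

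\textbf{Main obstacle.} There is no real conceptual difficulty; the content is entirely bookkeeping. The one point deserving care is the justification of the name-level equalities (i) and (ii): they should be read as equalities \emph{up to $\QQ_0$-forced equivalence of conditions} (which is all that matters, since $\gaut{\tup{\pi,\dot\sigma}}$ is itself only well-defined modulo this equivalence, the value $\dot\sigma(\dot q_1)$ being any name forced equal to it), and this is exactly the place where "$\pi$ respects the symmetric system and $\dot\QQ_1$" is genuinely used — via the Symmetry Lemma applied to the forced statements "$\dot q'=\dot\sigma(\dot q_1)$" and "$\dot\rho=\dot\sigma\dot\tau$". Once these are in hand, parts (1) and (2) are the short computations above.
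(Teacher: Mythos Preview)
Your argument is correct: the direct computation on an arbitrary condition, together with the two ``compatibility facts'' (i) and (ii) justified via the Symmetry Lemma and the assumption that $\sG_0$ respects the second symmetric system, is exactly what is needed, and your derivation of (2) from (1) is clean. Note that the paper itself does not supply a proof here---it explicitly states at the start of the subsection that no statements will be proved, deferring instead to \cite{Karagila:2016}---so there is nothing to compare against beyond observing that your approach is the natural (and essentially only) one.
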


\begin{definition}
Under the assumptions of \autoref{def:semi-direct-two-steps}, suppose that $\dot x$ is a $\QQ_0\ast\dot\QQ_1$-name. We say that $\dot x$ is \textit{$\cF_1$-respected} if there exists $H_0\in\sF_0$ and $\dot H_1$ such that $\forces_{\QQ_0}\dot H_1\in\dot\sF_1$, and there is a pre-dense set $D$ such that for $p\in D$, if $\tup{\pi,\dot\sigma}$ such that $\pi\in H_0$ and $p\forces_{\QQ_0}\dot\sigma\in\dot H_1$, then $p\forces_{\QQ_0\ast\dot\QQ_1}\gaut{\tup{\pi,\dot\sigma}}\dot x=\dot x$. We say that $\dot x$ is a \textit{hereditarily $\cF_1$-respected} if it is respected, and every name $\dot y$ which appears in $\dot x$ is hereditarily $\cF_1$-respected.
\end{definition}

We call $\tup{H_0,\dot H_1}$ a \textit{support} when $H_0\in\sF_0$ and $\dot H_1$ is a $\QQ_0$-name such that $\forces_{\QQ_0}\dot H_1\in\dot\sF_1$. Supports will have a more significant role when the iteration is long. In general, it seems that one would like to define a support as a group in some filter of subgroups on $\cG_1$ which satisfies some properties. There is an apparent difficulty when trying to show that this filter is a normal filter of subgroups, as discussed in \cite[\S4]{Karagila:2016}. This definition, however, is adequate, as shown in \S5 of the same paper.

\begin{theorem}
Suppose that $\tup{\QQ_0,\sG_0,\sF_0}$ is a symmetric system and $\tup{\dot\QQ_1,\dot\sG_1,\dot\sF_1}^\bullet$ is a name for a symmetric system which is respected by $\sG_0$. Then the class of $\QQ_0\ast\dot\QQ_1$-names which are hereditarily $\cF_1$-respected is exactly the class of names which are generically equal to the symmetric iteration obtained by $\tup{\QQ_1,\sG_1,\sF_1}$ from the symmetric iteration obtained by $\tup{\QQ_0,\sG_0,\sF_0}$.
\end{theorem}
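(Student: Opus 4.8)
The plan is to exhibit an explicit, rank-respecting dictionary between $\QQ_0\ast\dot\QQ_1$-names on the one side and, on the other, $\QQ_0$-names which are forced to be $\dot\QQ_1$-names, and then to check by induction on the rank of names that this dictionary matches being hereditarily $\cF_1$-respected with being (up to generic equality) a hereditarily symmetric name of $\HS_{\sF_0}$ which is forced to name an element of $\HS_{\dot\sF_1}$. So the two classes in the statement are literally the two sides of this dictionary, and the theorem is the assertion that the dictionary is a bijection up to generic equality.

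First I would recall the standard choiceless two-step dictionary. Given a $\QQ_0\ast\dot\QQ_1$-name $\dot x$, the mixing/maximality principle over $\QQ_0$ yields a $\QQ_0$-name $\dot x^\natural$ with $\forces_{\QQ_0}$``$\dot x^\natural$ is a $\dot\QQ_1$-name'' and such that, interpreting the layered name built from $\dot x^\natural$, $\forces_{\QQ_0\ast\dot\QQ_1}\dot x=\dot x^\natural$; conversely, a $\QQ_0$-name forced to be a $\dot\QQ_1$-name unfolds to a $\QQ_0\ast\dot\QQ_1$-name $\dot x^\flat$. Both assignments respect generic equality, and — this is what makes the induction go through — if $\dot y$ appears in $\dot x$ then (below a pre-dense set of conditions) the corresponding $\QQ_0$-name appears in the name corresponding to $\dot x$, with strictly smaller rank. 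Hence the desired equivalence can be set up as an induction on rank, with the base case handled by $\check{x}$'s.

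The crux is the action computation, which should reduce to \autoref{def:semi-direct-two-steps} and \autoref{prop:two-step-conj} propagated through $\in$, together with the Symmetry Lemma for $\QQ_0$: for $\tup{\pi,\dot\sigma}\in\cG_1$, the name $\gaut{\tup{\pi,\dot\sigma}}(\dot x^\flat)$ is generically equal to the unfolding of the $\QQ_0$-name obtained by letting $\dot\sigma$ act (inside the $\QQ_0$-extension, as an element of $\dot\sG_1$) on the $\dot\QQ_1$-name coded by $\dot x$, and then letting $\pi\in\sG_0$ act on the resulting $\QQ_0$-name (with the conjugation bookkeeping of \autoref{prop:two-step-conj} absorbed in passing between $\pi(\dot\sigma(\dot x))$ and $\pi(\dot\sigma)(\pi\dot x)$). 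Granting this, ``$\dot x^\flat$ is $\cF_1$-respected'' unwinds — using the forcing theorem for $\forces^{\HS}$ — to: there are $H_0\in\sF_0$, a $\QQ_0$-name $\dot H_1$ with $\forces_{\QQ_0}\dot H_1\in\dot\sF_1$, and a pre-dense $D$, so that for $p\in D$, $\pi\in H_0$ and $p\forces_{\QQ_0}\dot\sigma\in\dot H_1$ we get $p\forces_{\QQ_0}\pi\bigl(\dot\sigma(\dot x)\bigr)=\dot x$. Specializing $\pi=\id$ says $p$ forces $\dot H_1\subseteq\sym_{\dot\sG_1}(\dot x)$, i.e.\ $p$ forces $\dot x$ to be $\dot\sF_1$-symmetric; specializing $\dot\sigma=\id$ says every $\pi\in H_0$ respects $\dot x$ below $p$. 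Combined with the inductive hypothesis applied to the subnames, these two facts are exactly the statement that $\dot x$ is a hereditarily $\sF_0$-symmetric name forced to name a hereditarily $\dot\sF_1$-symmetric object; running the equivalence through the induction in both directions gives the theorem.

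The step I expect to be the main obstacle is reconciling the \emph{local} character of the witnesses on the two sides. In the definition of $\cF_1$-respected the pair $\tup{\pi,\dot\sigma}$ is only asked to fix $\dot x$ below the conditions of a pre-dense set $D$, and on the other side ``$\forces_{\QQ_0}^{\HS_0}\dot x\in\HS_{\dot\sF_1}$'' only produces, below each condition of a dense set, \emph{some} realization of a group in $\dot\sF_1$ witnessing symmetry of the subnames; moreover different conditions may carry different such groups and may force $\dot x$ to have genuinely different subnames. Passing from this picture to a single global name $\dot H_1$ with $\forces_{\QQ_0}\dot H_1\in\dot\sF_1$, and back, requires mixing local witnesses over a pre-dense set and then verifying that the mixed name still works at every condition and for every subname that appears under some condition — precisely the coherence bookkeeping carried out in \S5 of \cite{Karagila:2016}, which I would import rather than reprove.
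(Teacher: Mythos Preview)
The paper does not actually prove this theorem: the entire subsection on productive iterations explicitly says ``We will not prove any statement here'' and refers the reader to \cite{Karagila:2016} for full details, so there is no proof in the paper to compare against. Your sketch is a faithful outline of how such a two-step correspondence is established --- set up the $\natural/\flat$ dictionary between $\QQ_0\ast\dot\QQ_1$-names and $\QQ_0$-names for $\dot\QQ_1$-names, compute the action of $\gaut{\tup{\pi,\dot\sigma}}$ through the dictionary, and specialize $\pi=\id$ and $\dot\sigma=\id$ to separate the two symmetry conditions --- and you correctly identify the genuine technical content (patching local witnesses for $\dot H_1$ over a pre-dense set into a single global one) as living in \S5 of \cite{Karagila:2016}, which is exactly where the paper points.
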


We can generalize this to any finite support iteration now, requiring that each iterand is respected by all the previous automorphisms. But before we do that, we should point out a genericity issue. The definition of $\gaut{\dot\sigma}$ utilizes---quite heavily---the fact that we can mix names, as we assume the axiom of choice. This requires use a filter which is $V$-generic for $\QQ_0\ast\dot\QQ_1$. This is a big problem if we want to use this method to construct the Bristol model, since we want to pick our generics by hand, and ensure they come out of the Cohen real. To solve this problem, we define in \cite[\S8]{Karagila:2016} the notion of a \textit{productive iteration}, which means that we are doing something very akin to a product in its ``ground model canonicity''. Namely, a productive iteration is an iteration of symmetric extension such that in the full generic extension, each iterand is isomorphic to a symmetric system in the ground model, but the isomorphism itself is not present in the intermediate model itself.

\begin{definition}
Suppose that $\tup{\QQ_0,\sG_0,\sF_0}$ is a symmetric system and $\tup{\dot\QQ_1,\dot\sG_1,\dot\sF_1}^\bullet$ is a name for a symmetric system. We say that $\tup{\QQ_0\ast\dot\QQ_1,\cG_1,\cF_1}$ is a \textit{two-step productive symmetric iteration} if these six conditions hold:
\begin{enumerate}
\item The name $\tup{\dot\QQ_1,\dot\sG_1,\dot\sF_1}^\bullet$ is respected by all the automorphisms in $\sG_0$.
\item The names $\dot\QQ_1$, $\dot\sG_1$ and $\dot\sF_1$ are $\bullet$-names which are hereditarily respected on a group in $\sF_0$.\footnote{We implicitly require that also the order structure and group operation are $\bullet$-names.}
\item The conditions in $\QQ_0\ast\dot\QQ_1$ are exactly pairs $\tup{q_0,\dot q_1}$ such that $q_0\in\QQ_0$ and $\dot q_1$ appears in $\dot\QQ_1$.
\item For every $\dot\sigma$ and $\dot\pi$ which appear in $\dot\sG_1$, $1_{\QQ_0}$ decides the truth of $\dot\sigma=\dot\pi$.
\item For every $\dot q$ which appears in $\dot\QQ_1$ and $\dot\sigma$ which appears in $\dot\sG_1$, there is some $\dot q'$ which appears in $\dot\QQ_1$ such that $\forces_{\QQ_0}\dot q'=\dot\sigma(\dot q)$.
\item Every automorphism in $\cG_1$ is of the form $\gaut{\tup{\pi_0,\dot\pi_1}}$ where $\pi_0\in\sG_0$ and $\dot\pi_1$ appears in $\dot\sG_1$.
\end{enumerate}
\end{definition}
The idea is that $\dot\QQ_1$ is a collection of some names which are ``more or less in $\HS_{\sF_0}$'', with $\dot\sG_1$ a copy of an automorphism group of these names, and $\dot\sF_1$ a copy of a filter of subgroups from the ground model. There is something to be said about the definition of respected names, but this will only affect the general case, not the two-step iterations.\medskip

The following definitions will be used to generalize the two-step iteration, but to avoid excessive terminology, we will omit some assumptions from them. The definitions are meant to be read in the context of \autoref{def:prod-itr}, and to consider the two-step case as a bootstrapping definition when needed.\medskip
p
If $\tup{\dot\QQ_\alpha\mid\alpha<\delta}$ defines a finite-support iteration such that for all $\alpha<\delta$, $\dot\QQ_\alpha$ is a $\bullet$-name, we will always consider the iteration poset $\PP_\alpha$ defined as the sequences $p$ such that $p(\alpha)$ appears in $\dot\QQ_\alpha$. All our iterations will have this form, and we will always use $\PP$'s to denote the finite-support iteration of the $\dot\QQ$'s.

\begin{definition}\label{def:gen-semi-dir}
Suppose that $\tup{\dot\QQ_\alpha,\dot\sG_\alpha\mid\alpha<\delta}$ is a finite-support iteration satisfying
\begin{enumerate}
\item Every $\dot\sG_\alpha$ is a $\bullet$-name for an automorphism group of $\dot\QQ_\alpha$, such that $1_\alpha$ decides $\dot\pi=\dot\sigma$ for any $\dot\pi,\dot\sigma$ which appear in $\dot\sG_\alpha$.
\item For all $\dot q$ appearing in $\dot\QQ_\alpha$ and $\dot\pi$ appearing in $\dot\sG_\alpha$, there is some $\dot q'$ appearing in $\dot\QQ_\alpha$ such that $\forces_\alpha\dot q'=\dot\pi(\dot q)$.
\item For all $\alpha$, every automorphism in $\cG_\alpha$ respects both $\dot\QQ_\alpha$ and $\dot\sG_\alpha$.
\end{enumerate}
Suppose that $\cG_\alpha$ was defined for all $\alpha<\delta$ as an automorphism group of $\PP_\alpha$. We define $\cG_\delta$ in the following way:
\begin{itemize}
\item For $\delta=0$, $\cG_0$ is the trivial group.
\item For $\delta=\alpha+1$, $\cG_\delta=\cG_\alpha\ast\dot\sG_\alpha$.
\item For $\delta$ limit, $\cG_\delta$ is the direct limit of $\cG_\alpha$ for $\alpha<\delta$.
\end{itemize}
\end{definition}
\begin{proposition}
Assume the conditions of the previous definition. An automorphism in $\cG_\delta$ is exactly one of the form $\gaut{\vec\pi}$ such that:
\begin{enumerate}
\item $\vec\pi$ is a sequence $\tup{\dot\pi_\alpha\mid\alpha<\delta}$ where $\dot\pi_\alpha$ appears in $\dot\sG_\alpha$.
\item For all but finitely many $\alpha<\delta$, $\dot\pi_\alpha$ is the forced to identity function.
\end{enumerate}
Suppose that $\vec\pi$ is such sequence and $C(\vec\pi)=\{\alpha<\delta\mid\nforces_\alpha\dot\pi_\alpha=\id^\bullet\}$, then $\gaut{\vec\pi}p$ is given recursively: if $\alpha=\max C(\vec\pi)$, then $\gaut{\vec\pi}p=\gaut{\vec\pi\restriction\alpha}\gaut{\dot\pi_\alpha}p$, where \[\gaut{\dot\pi_\alpha}p=p\restriction\alpha^\smallfrown\dot\pi_\alpha(p(\alpha))^\smallfrown\dot\pi_\alpha(p\restriction(\alpha,\delta)).\]
\end{proposition}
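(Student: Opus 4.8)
The plan is an induction on $\delta$, reading the successor and limit clauses of the definition of $\cG_\delta$ in \autoref{def:gen-semi-dir} together with the two-step analysis of \autoref{def:semi-direct-two-steps} and \autoref{prop:two-step-conj}, applied with $\QQ_0$ replaced by $\PP_\alpha$, $\sG_0$ by $\cG_\alpha$, and $\dot\QQ_1,\dot\sG_1$ by $\dot\QQ_\alpha,\dot\sG_\alpha$. The base case $\delta=0$ is vacuous: $\cG_0$ is trivial, the only candidate sequence is empty, $C(\vec\pi)=\emptyset$, and the recursion returns the identity.

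For $\delta=\alpha+1$, every element of $\cG_\delta=\cG_\alpha\ast\dot\sG_\alpha$ is $\gaut{\tup{\rho,\dot\pi_\alpha}}$ with $\rho\in\cG_\alpha$ and $\dot\pi_\alpha$ appearing in $\dot\sG_\alpha$, by clause (3) of \autoref{def:semi-direct-two-steps}; that this action is well-defined uses clause (3) of \autoref{def:gen-semi-dir}, namely that the automorphisms of $\cG_\alpha$ respect $\dot\QQ_\alpha$ and $\dot\sG_\alpha$. By the induction hypothesis $\rho=\gaut{\vec\rho}$ for a sequence $\vec\rho=\tup{\dot\pi_\beta\mid\beta<\alpha}$ of the stated form with $C(\vec\rho)$ finite; the concatenation $\vec\pi=\vec\rho{}^\smallfrown\dot\pi_\alpha$ again satisfies (1) and (2), with $C(\vec\pi)\subseteq C(\vec\rho)\cup\{\alpha\}$, and $\gaut{\vec\pi}=\gaut{\tup{\rho,\dot\pi_\alpha}}$; conversely every such $\vec\pi$ arises this way. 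Unwinding clause (3) of \autoref{def:semi-direct-two-steps} gives $\gaut{\vec\pi}p=\gaut{\rho}\gaut{\dot\pi_\alpha}p$, and $\gaut{\dot\pi_\alpha}$ alters only the coordinate $\alpha$ of $p$ (and, once the iteration runs past $\alpha+1$, the coordinates in $(\alpha,\delta)$ through the induced action of $\dot\pi_\alpha$ on the later $\bullet$-names), which is exactly the displayed formula; when $\alpha=\max C(\vec\pi)$ one has $\rho=\gaut{\vec\pi\restriction\alpha}$ on the nose, so the recursion closes.

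For limit $\delta$, $\cG_\delta$ is the direct limit of the $\cG_\alpha$, $\alpha<\delta$, along the maps that extend an automorphism of $\PP_\alpha$ to one of $\PP_\delta$ acting trivially on $[\alpha,\delta)$, i.e.\ appending a tail of forced identities to the sequence. Hence every element of $\cG_\delta$ is $\gaut{\vec\pi}$ obtained from a finitely supported length-$\alpha$ sequence ($\alpha<\delta$) by padding with identities; it has length $\delta$, satisfies (1) and (2), and $C(\vec\pi)$ stays finite. Conversely, if $\vec\pi$ has length $\delta$ and satisfies (1) and (2), then $C(\vec\pi)$ is finite, hence bounded by some $\alpha<\delta$; by the induction hypothesis $\vec\pi\restriction\alpha$ names an element of $\cG_\alpha$, and $\gaut{\vec\pi}$ is its image in the direct limit. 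The recursion is coherent because, with $\alpha=\max C(\vec\pi)$, the successor case at $\alpha+1$ gives $\gaut{\vec\pi\restriction(\alpha+1)}=\gaut{\tup{\gaut{\vec\pi\restriction\alpha},\dot\pi_\alpha}}=\gaut{\vec\pi\restriction\alpha}\circ\gaut{\dot\pi_\alpha}$, and passing up to $\PP_\delta$ merely extends these automorphisms trivially above $\alpha+1$ — except that $\gaut{\dot\pi_\alpha}$ also pushes the name $\dot\pi_\alpha$ through the later coordinates, which is the $\dot\pi_\alpha(p\restriction(\alpha,\delta))$ term.

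I expect the genuine work to be bookkeeping rather than a conceptual obstacle: one must check that an automorphism of $\PP_\alpha$ coming from $\cG_\alpha$ really extends to an automorphism of $\PP_\delta$ while still respecting every later iterand (this is where the hypotheses of \autoref{def:gen-semi-dir}, that each $\cG_\beta$ respects $\dot\QQ_\beta$ and $\dot\sG_\beta$ and that the $\dot\QQ_\beta$ are $\bullet$-names, get used), and that the composition order in the recursion is the $\gaut{\pi}\gaut{\dot\sigma}$ of \autoref{def:semi-direct-two-steps} and not its reverse, for which \autoref{prop:two-step-conj} is the appropriate device. Granting these, the proposition is a repackaging of the successor and limit clauses defining $\cG_\delta$.
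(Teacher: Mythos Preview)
The paper does not prove this proposition: the subsection on productive iterations explicitly states ``We will not prove any statement here,'' deferring all proofs to \cite{Karagila:2016}. So there is no in-paper argument to compare against.

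Your induction on $\delta$ is the natural reading of the three clauses of \autoref{def:gen-semi-dir} and is correct as a sketch. The successor step correctly identifies $\cG_{\alpha+1}=\cG_\alpha\ast\dot\sG_\alpha$ with pairs via clause~(3) of \autoref{def:semi-direct-two-steps}, and the limit step correctly reads the direct limit as ``pad with a tail of forced identities.'' One point worth sharpening: the proposition asserts that $\dot\pi_\alpha$ \emph{appears} in $\dot\sG_\alpha$, not merely that it is forced to lie there, and this is exactly where condition~(1) of \autoref{def:gen-semi-dir} (that $1_\alpha$ decides equality of names appearing in $\dot\sG_\alpha$) together with the $\bullet$-name hypothesis does real work --- without it an element of $\cG_\alpha\ast\dot\sG_\alpha$ need only have a second coordinate \emph{forced} into $\dot\sG_\alpha$, and one must argue it can be replaced by a name that literally appears. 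You gesture at this in your final paragraph; it is the only place where the argument is more than unwinding definitions.
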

In other words, we apply $\gaut{\vec\pi}$ on $p$ by breaking $p$ into the finite intervals defined by $C(\vec\pi)$, then working from the maximum coordinate downwards, and each step applying $\dot\pi_\alpha$ on the relevant coordinate as an automorphism of the $\alpha$th iterand, and on the part above $\alpha$ as an automorphism acting on the $\PP_{\alpha+1}$-name. We have a generalization of \autoref{prop:two-step-conj} as well.
\begin{proposition}
Suppose that $\vec\pi$ and $\vec\sigma$ are in $\cG_\delta$, then:
\begin{enumerate}
\item The sequence defining $\gaut{\vec\pi}^{-1}$, denoted by $\vec\pi^{-1}$, is given by $\tup{\gaut{\vec\pi}\dot\pi_\alpha^{-1}\mid\alpha<\delta}$.
\item $\gaut{\vec\pi}\gaut{\vec\sigma}$ is given by the sequence of automorphisms $\tup{(\gaut{\vec\sigma^{-1}}\dot\pi_\alpha)\dot\sigma_\alpha\mid\alpha<\delta}$.
\end{enumerate}
\end{proposition}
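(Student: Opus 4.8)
The plan is to prove both clauses simultaneously by induction on $\delta$, using the two-step case \autoref{prop:two-step-conj} as the inductive engine. The base cases are immediate: for $\delta=0$ the group is trivial, and for $\delta=1$ we have $\cG_1=\sG_0$ with $\gaut{\cdot}$ just a relabelling, so the formulas reduce to $(\dot\pi_0)^{-1}$ and $\dot\pi_0\dot\sigma_0$ (the trailing $\gaut{\vec\sigma^{-1}}$ acting trivially on the single coordinate).

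For the successor step $\delta=\alpha+1$, the key observation is that $\cG_{\alpha+1}=\cG_\alpha\ast\dot\sG_\alpha$ is by construction a two-step generic semi-direct product, with $\tup{\PP_\alpha,\cG_\alpha,\cF_\alpha}$ in the role of $\tup{\QQ_0,\sG_0,\sF_0}$ and $\tup{\dot\QQ_\alpha,\dot\sG_\alpha,\dot\sF_\alpha}^\bullet$ in the role of $\tup{\dot\QQ_1,\dot\sG_1,\dot\sF_1}^\bullet$; the hypotheses of \autoref{def:semi-direct-two-steps} are exactly conditions (1)--(3) of \autoref{def:gen-semi-dir} together with the inductive fact that $\tup{\PP_\alpha,\cG_\alpha,\cF_\alpha}$ is a genuine symmetric system. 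Writing $\gaut{\vec\pi}\in\cG_{\alpha+1}$ in its pair presentation $\gaut{\tup{\gaut{\vec\pi\restriction\alpha},\dot\pi_\alpha}}$, and likewise $\gaut{\vec\sigma}=\gaut{\tup{\gaut{\vec\sigma\restriction\alpha},\dot\sigma_\alpha}}$, \autoref{prop:two-step-conj} gives
\[\gaut{\vec\pi}^{-1}=\gaut{\tup{\gaut{\vec\pi\restriction\alpha}^{-1},\ \gaut{\vec\pi\restriction\alpha}(\dot\pi_\alpha^{-1})}},\qquad
\gaut{\vec\pi}\gaut{\vec\sigma}=\gaut{\tup{\gaut{\vec\pi\restriction\alpha}\gaut{\vec\sigma\restriction\alpha},\ \gaut{\vec\sigma\restriction\alpha}^{-1}(\dot\pi_\alpha)\dot\sigma_\alpha}}.\]
Now expand the $\cG_\alpha$-level pieces by the induction hypothesis: $\gaut{\vec\pi\restriction\alpha}^{-1}=\gaut{(\vec\pi\restriction\alpha)^{-1}}$ has $\beta$-th coordinate $\gaut{\vec\pi\restriction\alpha}\dot\pi_\beta^{-1}$, and $\gaut{\vec\pi\restriction\alpha}\gaut{\vec\sigma\restriction\alpha}$ has $\beta$-th coordinate $(\gaut{(\vec\sigma\restriction\alpha)^{-1}}\dot\pi_\beta)\dot\sigma_\beta$, for $\beta<\alpha$. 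The only bookkeeping needed is that, for any $\vec\tau$ and any $\beta\le\delta$, $\gaut{\vec\tau}$ acts on a $\PP_\beta$-name exactly as $\gaut{\vec\tau\restriction\beta}$ does, and that both the inverse operation and the product operation commute with restriction to an initial segment; all three facts are read off directly from the recursion for $\gaut{\cdot}$ in the proposition preceding this statement. Combining, the $\beta$-th coordinate of $\vec\pi^{-1}$ becomes $\gaut{\vec\pi}\dot\pi_\beta^{-1}$ for every $\beta\le\alpha$ (using $\gaut{\vec\pi\restriction\alpha}=\gaut{\vec\pi}$ on $\PP_\beta$-names, and for $\beta=\alpha$ that $\gaut{\vec\pi}$ on a $\PP_\alpha$-name is $\gaut{\vec\pi\restriction\alpha}$), and similarly the $\beta$-th coordinate of the product becomes $(\gaut{\vec\sigma^{-1}}\dot\pi_\beta)\dot\sigma_\beta$, which is exactly the claim.

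For the limit case: since $C(\vec\pi)$ and $C(\vec\sigma)$ are finite and $\cG_\delta$ is the direct limit of the $\cG_\alpha$ for $\alpha<\delta$, pick $\alpha<\delta$ bounding both; then $\gaut{\vec\pi\restriction\alpha}$ and $\gaut{\vec\sigma\restriction\alpha}$ already represent $\gaut{\vec\pi}$ and $\gaut{\vec\sigma}$, the formulas hold in $\cG_\alpha$ by the induction hypothesis, and appending the forced-identity coordinates on $[\alpha,\delta)$ changes nothing, since those coordinates contribute the identity automorphism to every inverse, product, and application of $\gaut{\cdot}$. I expect the main (though minor) obstacle to be precisely the successor-step identification: one must be sure that $\tup{\PP_\alpha,\cG_\alpha,\cF_\alpha}$ meets the hypotheses under which \autoref{prop:two-step-conj} was proved, and that the pair presentation $\gaut{\tup{\gaut{\vec\pi\restriction\alpha},\dot\pi_\alpha}}$ is the one that proposition operates on; both are part of the general iteration framework and I would cite them rather than reprove them, so that the remaining work is the purely formal unwinding above.
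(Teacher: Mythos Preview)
The paper does not give a proof of this proposition at all: the entire subsection on productive iterations is a summary of results from \cite{Karagila:2016}, and the paper states explicitly that ``We will not prove any statement here.'' So there is nothing to compare your argument against.

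That said, your argument is correct and is the natural one. The induction with the two-step \autoref{prop:two-step-conj} as the engine at successor stages, together with the direct-limit observation at limits, is exactly how such a statement is proved; the only real content is the bookkeeping you isolate---that $\gaut{\vec\tau}$ acts on a $\PP_\beta$-name as $\gaut{\vec\tau\restriction\beta}$ does, and that restriction to an initial segment commutes with the inverse and product formulas---and these follow from the recursion in the preceding proposition just as you say. One minor remark: \autoref{prop:two-step-conj} is a computation about the group action and does not actually use the filter, so you do not need to know that $\tup{\PP_\alpha,\cG_\alpha,\cF_\alpha}$ is a full symmetric system; condition~(3) of \autoref{def:gen-semi-dir} (that $\cG_\alpha$ respects $\dot\QQ_\alpha$ and $\dot\sG_\alpha$) already suffices for the successor step to go through.
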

It should be remarked that the requirements for the iterations are highly nontrivial. Generally, iterations of weakly homogeneous forcings need not be weakly homogeneous, however the following proposition shows this is not the case in the symmetric case.
\begin{proposition}
Suppose that $\tup{\dot\QQ_\alpha,\dot\sG_\alpha\mid\alpha<\delta}$ are as in \autoref{def:gen-semi-dir}, and for all $\alpha<\delta$, $\forces_\alpha\dot\sG_\alpha$ witnesses the homogeneity of $\dot\QQ_\alpha$. Then $\cG_\delta$ witnesses the homogeneity of $\PP_\delta$.
\end{proposition}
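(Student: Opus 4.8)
The plan is to induct on $\delta$. The case $\delta=0$ is trivial, since $\PP_0$ is the one-point poset. For $\delta$ a limit I would argue that any two conditions $p,q\in\PP_\delta$ have finite support, hence are both supported below some $\alpha<\delta$; then the induction hypothesis at $\alpha$ gives $\pi\in\cG_\alpha$ with $\pi(p\restriction\alpha)$ compatible with $q\restriction\alpha$, and pushing $\pi$ into $\cG_\delta$ along the canonical embedding $\cG_\alpha\hookrightarrow\cG_\delta$ into the direct limit yields an automorphism which leaves the (trivial) tails $p\restriction[\alpha,\delta)$ and $q\restriction[\alpha,\delta)$ untouched, so $\pi p$ is compatible with $q$. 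Thus all the work is at the successor step, where the shape of the argument is the iterated analogue of the one-line proof that a finite-support \emph{product} of homogeneous forcings is homogeneous: align the coordinates below $\alpha$ using the induction hypothesis, then repair the last coordinate using the forced homogeneity of $\dot\QQ_\alpha$.

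In detail, for $\delta=\alpha+1$ we have $\PP_{\alpha+1}=\PP_\alpha\ast\dot\QQ_\alpha$ and $\cG_{\alpha+1}=\cG_\alpha\ast\dot\sG_\alpha$; write $p=\tup{\bar p,\dot p_\alpha}$ and $q=\tup{\bar q,\dot q_\alpha}$. First I would invoke the induction hypothesis for $\PP_\alpha$ to get $\pi_0\in\cG_\alpha$ and $r\in\PP_\alpha$ with $r\le\pi_0\bar p$ and $r\le\bar q$. Viewing $\pi_0$ inside $\cG_{\alpha+1}$ gives $\pi_0 p=\tup{\pi_0\bar p,\pi_0\dot p_\alpha}$, and since every automorphism in $\cG_\alpha$ respects $\dot\QQ_\alpha$ (clause~(3) of \autoref{def:gen-semi-dir}) while $\dot p_\alpha,\dot q_\alpha$ appear in the $\bullet$-name $\dot\QQ_\alpha$, we get $\forces_\alpha\pi_0\dot p_\alpha,\dot q_\alpha\in\dot\QQ_\alpha$. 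Now the hypothesis $\forces_\alpha$ ``$\dot\sG_\alpha$ witnesses the homogeneity of $\dot\QQ_\alpha$'' says $r$ forces some element of $\dot\sG_\alpha$ to send $\pi_0\dot p_\alpha$ to a condition compatible with $\dot q_\alpha$; by the maximal principle (available since the ground model satisfies $\ZFC$) I would pick $r'\le r$ and a name $\dot\sigma$ with $r'\forces_\alpha\dot\sigma\in\dot\sG_\alpha$ and $r'\forces_\alpha\dot\sigma(\pi_0\dot p_\alpha)$ is compatible with $\dot q_\alpha$, and, mixing $\dot\sigma$ with the name for the identity of $\dot\sG_\alpha$ off the cone below $r'$, arrange $\forces_\alpha\dot\sigma\in\dot\sG_\alpha$. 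Letting $\gaut{\dot\sigma}\in\cG_{\alpha+1}$ be the automorphism fixing all coordinates below $\alpha$ and acting by $\dot\sigma$ at coordinate $\alpha$, the composite $\gaut{\dot\sigma}\pi_0$ lies in $\cG_{\alpha+1}$ and, by \autoref{prop:two-step-conj}(1), equals $\gaut{\tup{\pi_0,\pi_0^{-1}(\dot\sigma)}}$ — the conjugation being the usual price for swapping the two factors, and harmless since $\pi_0^{-1}\in\cG_\alpha$ also respects $\dot\sG_\alpha$, so $\forces_\alpha\pi_0^{-1}(\dot\sigma)\in\dot\sG_\alpha$. Unwinding \autoref{def:semi-direct-two-steps}(3) gives $(\gaut{\dot\sigma}\pi_0)p=\tup{\pi_0\bar p,\ \dot\sigma(\pi_0\dot p_\alpha)}$; since $r'\le\pi_0\bar p$, $r'\le\bar q$, and $r'$ forces $\dot\sigma(\pi_0\dot p_\alpha)$ and $\dot q_\alpha$ to have a common extension $\dot s$ in $\dot\QQ_\alpha$, the pair $\tup{r',\dot s}\in\PP_{\alpha+1}$ is below both $(\gaut{\dot\sigma}\pi_0)p$ and $q$, which closes the induction.

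The hard part is exactly this last-coordinate repair at the successor step: passing from the merely \emph{forced} statement ``some automorphism of $\dot\QQ_\alpha$ homogenizes these two conditions'' to an honest name $\dot\sigma$ that is a genuine component of an element of $\cG_{\alpha+1}$. This is where the maximal principle, the mixing with the identity, and the conjugation extracted from \autoref{prop:two-step-conj} all have to be handled with care, and where one leans on the rigidity built into the productive-iteration hypotheses — clauses~(1)--(2) of \autoref{def:gen-semi-dir}, keeping the names appearing in $\dot\QQ_\alpha$ and $\dot\sG_\alpha$ sufficiently controlled — to be sure the objects produced are genuine conditions and genuine automorphisms. Everything else, namely the alignment of the coordinates below $\alpha$, is simply delivered by the induction hypothesis, exactly as in the trivial product case.
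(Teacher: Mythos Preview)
The paper does not actually prove this proposition: the whole of \S3.1 is an overview of the machinery from \cite{Karagila:2016}, introduced with the sentence ``We will not prove any statement here,'' and this proposition in particular is stated without proof and deferred to that reference. So there is nothing in the paper to compare your argument against directly.

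That said, your argument is the natural one and is essentially correct. Induction on $\delta$, with the limit case handled by finite supports and the direct-limit embedding $\cG_\alpha\hookrightarrow\cG_\delta$, and the successor case handled by first aligning below $\alpha$ via the induction hypothesis and then repairing the top coordinate using the forced homogeneity of $\dot\QQ_\alpha$, is exactly how this goes. Your use of the maximal principle plus mixing with the identity to produce a name $\dot\sigma$ with $\forces_\alpha\dot\sigma\in\dot\sG_\alpha$, followed by the conjugation bookkeeping from \autoref{prop:two-step-conj}, is the right way to turn the forced existential into a genuine element of $\cG_{\alpha+1}$.

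One small caution worth flagging: in the fully \emph{productive} context (see the proposition immediately following \autoref{def:gen-semi-dir} and clause~(6) of the two-step productive definition), automorphisms in $\cG_\delta$ are characterized as those whose components literally \emph{appear} in $\dot\sG_\alpha$, not merely names forced into $\dot\sG_\alpha$. A mixed name need not appear in a $\bullet$-name. Your argument is written against the general definition of $\cG_\alpha\ast\dot\sG_\alpha$ (any $\dot\sigma$ with $\forces_\alpha\dot\sigma\in\dot\sG_\alpha$), which is the right level of generality for the proposition as stated; but if one wanted the productive refinement, one would instead use clause~(1) of \autoref{def:gen-semi-dir} (that $1_\alpha$ decides equality among names appearing in $\dot\sG_\alpha$) to pick, below $r'$, a single $\dot\sigma$ actually appearing in $\dot\sG_\alpha$, rather than mixing. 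This is a cosmetic adjustment, not a gap.
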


\begin{definition}\label{def:supports}
Suppose that $\tup{\dot\QQ_\alpha,\dot\sG_\alpha,\dot\sF_\alpha\mid\alpha<\delta}$ is such that for all $\alpha<\delta$,
\begin{enumerate}
\item The assumptions of \autoref{def:gen-semi-dir} hold, and in addition $\dot\sF_\alpha$ is a $\bullet$-name such that $\forces_\alpha\dot\sF_\alpha$ is a normal filter of subgroups on $\dot\sG_\alpha$.
\item Every automorphism of $\cG_\alpha$ respects $\dot\sF_\alpha$.
\item For every $\alpha$, and every $\dot H$, $1_\alpha$ decides $\dot H\in\dot\sF_\alpha$.
\end{enumerate}
We say that $\vec H$ is an \textit{excellent $\cF_\delta$-support} if it is a sequence $\tup{\dot H_\alpha\mid\alpha<\delta}$ such that:
\begin{enumerate}
\item $\dot H_\alpha$ appears in $\dot\sF_\alpha$.
\item For all but finitely many $\alpha$, $\dot H_\alpha=\dot\sG_\alpha$.
\end{enumerate}
We define $\cF_\delta$ to be the set of all $\cF_\delta$-supports.
\end{definition}
We will write $p\forces_\delta\vec\pi\in\vec H$ if for every $\alpha<\delta$, $p\forces_\delta\dot\pi_\alpha\in\dot H_\alpha$; similarly we will write $\vec H\cap\vec K$ or $\vec H\subseteq\vec K$ for pointwise intersection, inclusion, and so on. We use $C(\vec H)$ to denote the set $\{\alpha<\delta\mid\dot H_\alpha\neq\dot\sG_\alpha\}$.
 
\begin{remark}
The reason that we say that $\vec H$ is an \textit{excellent} support, and not just any support, is that in the non-productive context we weaken the requirements, and only require that $\forces_\delta\{\check\alpha\mid\dot H_\alpha\neq\dot\sG_\alpha\}$ is finite. In particular, we do not require it to have any specific size or bound. This allows us to use the mixing lemma more easily when defining everything. Of course, that would again require the generic filter to be generic for the iteration, and here we want to weaken this substantially.
\end{remark}

\begin{definition}
Under the assumptions of \autoref{def:supports}, we say that a $\PP_\delta$-name $\dot x$ is \textit{$\cF_\delta$-respected} if there exists an excellent support $\vec H$ such that for every $\vec\pi\in\vec H$, $\forces_\delta\gaut{\vec\pi}\dot x=\dot x$. We say that $\dot x$ is \textit{hereditarily $\cF_\delta$-respected}, if it is $\cF_\delta$-respected, and every $\dot y$ which appears in $\dot x$ is hereditarily $\cF_\delta$-respected.
\end{definition}
Comparing this to the two-step definition of $\cF_1$-respected, the absence of a predense set is noted. This is because in the context of productive iterations, we require that $1_\alpha$ decides enough information to ensure the pre-dense set is $\{1_\alpha\}$.

\begin{definition}\label{def:prod-itr}
We say that $\tup{\dot\QQ_\alpha,\dot\sG_\alpha,\dot\sF_\alpha\mid\alpha<\delta}$ is a \textit{productive iteration} of symmetric extensions if it satisfies the assumptions of \autoref{def:supports} and for all $\alpha<\delta$, $\tup{\dot\QQ_\alpha,\dot\sG_\alpha,\dot\sF_\alpha}^\bullet$ is hereditarily $\cF_\alpha$-respected name. 
\end{definition}
We use $\IS_\alpha$ to denote the class of hereditarily $\cF_\delta$-respected names. 
\begin{definition}
Suppose that $\tup{\dot\QQ_\alpha,\dot\sG_\alpha,\dot\sF_\alpha\mid\alpha<\delta}$ is a productive iteration, we define $p\forces^\IS_\delta\varphi(\dot x_1,\ldots,\dot x_n)$ as $p\forces_\delta\varphi^{\IS_\delta}(\dot x_1,\ldots,\dot x_n)$. Namely, we relativize the quantifiers in $\varphi$ to $\IS_\delta$ and require that $\dot x_i\in\dot\IS_\delta$.
\end{definition}

\begin{definition}
Suppose that $\PP_\delta$ is a productive iteration. 
\begin{enumerate}
\item
We say that $D\subseteq\PP_\delta$ is a \textit{symmetrically dense open} set if there is some excellent support $\vec H$, such that for all $\vec\pi\in\vec H$ and $p\in D$, $\gaut{\vec\pi}p\in D$.
\item
We say that $G\subseteq\PP_\delta$ is a \textit{symmetrically $V$-generic filter} if for all symmetrically dense open sets $D\in V$, $D\cap G\neq\varnothing$.
\end{enumerate}
\end{definition}

\begin{lemma}\label{lemma:decision sets are symmetrically dense}
If $\dot x\in\IS_\delta$ and $\varphi$ is a statement in the language of forcing, then $\{p\in\PP_\delta\mid p\forces_\delta^\IS\varphi(\dot x)\lor p\forces_\delta^\IS\lnot\varphi(\dot x)\}$ contains a symmetrically dense open set.
\end{lemma}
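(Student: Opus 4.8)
The plan is to show that the set appearing in the statement is \emph{itself} a symmetrically dense open set, so \emph{a fortiori} it contains one. Fix an excellent $\cF_\delta$-support $\vec H$ witnessing that $\dot x$ is $\cF_\delta$-respected. In the productive setting this gives outright that $\forces_\delta\gaut{\vec\pi}\dot x=\dot x$ for every $\vec\pi\in\vec H$: the pre-dense set that figures in the two-step notion of a respected name has collapsed to $\{1_\delta\}$, as explained in the discussion following \autoref{def:supports}. Now observe that, by definition, $p\forces^\IS_\delta\varphi(\dot x)$ abbreviates $p\forces_\delta\varphi^{\IS_\delta}(\dot x)$, and since relativization commutes with negation ($\lnot(\varphi^{\IS_\delta})$ is literally $(\lnot\varphi)^{\IS_\delta}$), the set in question equals
$$D=\{p\in\PP_\delta\mid p\forces_\delta\varphi^{\IS_\delta}(\dot x)\lor p\forces_\delta\lnot\varphi^{\IS_\delta}(\dot x)\},$$
i.e.\ the ordinary set of conditions of $\PP_\delta$ deciding the single formula $\varphi^{\IS_\delta}(\dot x)$ of the forcing language. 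Hence $D$ is dense open by the ordinary forcing theorem, with no extra work.

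What remains is to check that $\vec H$ witnesses $D$ symmetrically dense open, i.e.\ that $\gaut{\vec\pi}p\in D$ whenever $p\in D$ and $\vec\pi\in\vec H$. For this I would use that the name class $\IS_\delta$ is definable in $V$ and is carried to itself by every automorphism in $\cG_\delta$; consequently applying $\gaut{\vec\pi}$ to the forcing statement $\varphi^{\IS_\delta}(\dot x)$ yields exactly $\varphi^{\IS_\delta}(\gaut{\vec\pi}\dot x)$ (the relativizing class is unchanged, only the parameter moves). Granting this, if $p\forces_\delta\varphi^{\IS_\delta}(\dot x)$ then the Symmetry Lemma applied to $\PP_\delta$ and the automorphism $\gaut{\vec\pi}$ gives $\gaut{\vec\pi}p\forces_\delta\varphi^{\IS_\delta}(\gaut{\vec\pi}\dot x)$; since $\vec\pi\in\vec H$ we have $\forces_\delta\gaut{\vec\pi}\dot x=\dot x$, so $\gaut{\vec\pi}p\forces_\delta\varphi^{\IS_\delta}(\dot x)$, that is $\gaut{\vec\pi}p\in D$. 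The case in which $p$ forces $\lnot\varphi^{\IS_\delta}(\dot x)$ is identical. Thus $D$ is symmetrically dense open, witnessed by $\vec H$.

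The only step that is not a routine transcription of the classical facts about decision sets is the invariance of $\IS_\delta$ under $\cG_\delta$ used above (equivalently, the Symmetry Lemma for $\forces^\IS_\delta$). I expect this to be the main point to pin down: it reduces to the normality of the filters $\cF_\delta$ along the iteration — precisely the delicate issue flagged in \cite[\S4]{Karagila:2016} — together with the bookkeeping verification that conjugating an excellent support by an element of $\cG_\delta$ again produces an excellent support. Everything else is immediate once $D$ is identified with an ordinary decision set.
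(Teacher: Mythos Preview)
Your argument is correct and is exactly the expected one: identify the decision set with the ordinary decision set for the relativized formula $\varphi^{\IS_\delta}(\dot x)$, get density and openness for free from the usual forcing theorem, and then verify closure under the support $\vec H$ of $\dot x$ via the Symmetry Lemma for $\forces^\IS_\delta$. You are also right that the only non-formal ingredient is the invariance of $\IS_\delta$ under $\cG_\delta$ (equivalently, that conjugating an excellent support by an element of $\cG_\delta$ yields an excellent support), and that this is where the normality discussion in \cite[\S4--5]{Karagila:2016} is doing the work.

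Note, however, that the present paper does not actually prove this lemma: the entire subsection on productive iterations is explicitly a summary of results from \cite{Karagila:2016}, stated without proof (``We will not prove any statement here''). So there is no proof in the paper to compare against; your proposal is simply the natural proof one would give, and it matches what the companion paper does.
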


\begin{theorem}
Suppose that $\PP_\delta$ is a productive iteration, $\varphi(\dot x)$ is a formula in the language of forcing, then the following are equivalent:
\begin{enumerate}
\item $p\forces^\IS_\delta\varphi(\dot x)$.
\item For all symmetrically $V$-generic filter $G$ such that $p\in G$, $\IS_\delta^G\models\varphi(\dot x^G)$.
\item For all $V$-generic filter $G$ such that $p\in G$, $\IS_\delta^G\models\varphi(\dot x^G)$.
\end{enumerate}
\end{theorem}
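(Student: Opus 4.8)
The plan is to reduce the whole statement to one Truth Lemma for symmetrically $V$-generic filters and then read the three implications off it. The starting remark is that $p\forces_\delta^\IS\varphi(\dot x)$ is, by definition, nothing but $p\forces_\delta\varphi^{\IS_\delta}(\dot x)$: the ordinary forcing relation of $\PP_\delta$ applied to the relativisation of $\varphi$ to the definable class $\IS_\delta$. Consequently $\forces_\delta^\IS$ inherits, for free, all the structural facts about $\forces_\delta$: monotonicity in the condition, the equivalence of $p\forces_\delta^\IS\varphi$ with ``$\{q\le p\mid q\forces_\delta^\IS\varphi\}$ is dense below $p$'', the impossibility of a single condition forcing both $\varphi$ and $\lnot\varphi$, and the fact that $p\nforces_\delta^\IS\varphi$ yields some $q\le p$ with $q\forces_\delta^\IS\lnot\varphi$ (relativisation commutes with the propositional connectives). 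It also follows that for a genuinely $V$-generic $G$ the ordinary forcing theorem applies to $\varphi^{\IS_\delta}$, and since $(\IS_\delta)^{V[G]}=\IS_\delta^G$ the assertion $V[G]\models\varphi^{\IS_\delta}(\dot x^G)$ is literally the assertion $\IS_\delta^G\models\varphi(\dot x^G)$.

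The one new ingredient is the Truth Lemma: for every symmetrically $V$-generic filter $G\subseteq\PP_\delta$, all names $\dot x_1,\dots,\dot x_n\in\IS_\delta$, and all formulas $\varphi$ in the language of forcing,
\[\IS_\delta^G\models\varphi(\dot x_1^G,\dots,\dot x_n^G)\quad\Longleftrightarrow\quad\exists p\in G\ \bigl(p\forces_\delta^\IS\varphi(\dot x_1,\dots,\dot x_n)\bigr).\]
I would prove both directions simultaneously by induction on $\varphi$. In the atomic cases one runs the usual simultaneous induction on the $\IS_\delta$-names, exactly as in the forcing theorem for a single symmetric extension; the point that makes this legitimate is that every name appearing in a name in $\IS_\delta$ is again in $\IS_\delta$ (this is what ``hereditarily'' delivers), so the recursion never leaves $\IS_\delta$, and \autoref{lemma:decision sets are symmetrically dense} supplies the symmetrically dense open sets needed for the density steps. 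Conjunction and existential quantification are routine, and monotonicity together with consistency of $\forces_\delta^\IS$ handles the right-to-left negation step. The only place where symmetric genericity is genuinely used is the left-to-right negation step: if $\IS_\delta^G\models\lnot\varphi(\vec{\dot x}^G)$, then by \autoref{lemma:decision sets are symmetrically dense} the (ground-model-definable) set of conditions deciding $\varphi(\vec{\dot x})$ over $\IS_\delta$ contains a symmetrically dense open set $D\in V$; since $G$ is symmetrically $V$-generic it meets $D$, and a condition $p\in G\cap D$ cannot force $\varphi(\vec{\dot x})$ (the inductive hypothesis would then give $\IS_\delta^G\models\varphi(\vec{\dot x}^G)$), so $p\forces_\delta^\IS\lnot\varphi(\vec{\dot x})$.

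Granting the Truth Lemma, the equivalences are quick. For $(1)\Rightarrow(2)$: if $p\forces_\delta^\IS\varphi(\dot x)$ and $G$ is symmetrically $V$-generic with $p\in G$, the right-to-left direction of the Truth Lemma gives $\IS_\delta^G\models\varphi(\dot x^G)$. $(2)\Rightarrow(3)$ is immediate, because a $V$-generic filter meets every dense subset of $\PP_\delta$, in particular every symmetrically dense open set, hence is symmetrically $V$-generic, so $(2)$ applies to it. For $(3)\Rightarrow(1)$ I argue contrapositively: from $p\nforces_\delta^\IS\varphi(\dot x)$ pick $q\le p$ with $q\forces_\delta^\IS\lnot\varphi(\dot x)$ and a $V$-generic $G$ with $q\in G$; then $p\in G$, and the ordinary forcing theorem applied to $(\lnot\varphi)^{\IS_\delta}$ gives $\IS_\delta^G\models\lnot\varphi(\dot x^G)$, so $G$ refutes $(3)$.

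The main obstacle is the Truth Lemma, and within it the two delicate points are: the atomic base case, where one must be careful that the name-recursion stays inside $\IS_\delta$ and that $(\IS_\delta)^{V[G]}$ really is the class $\IS_\delta^G$ over which $\varphi^{\IS_\delta}$ quantifies in $V[G]$; and the left-to-right negation step, the sole use of symmetric (rather than full) genericity, which rests entirely on \autoref{lemma:decision sets are symmetrically dense}. Everything else is routine forcing-theorem bookkeeping.
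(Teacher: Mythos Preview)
The paper does not prove this theorem. At the start of \S3.1 it says explicitly that the results on productive iterations are imported from \cite{Karagila:2016} and that ``we will not prove any statement here''; the theorem is simply stated and then used. So there is no paper-proof to compare your proposal against.

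Your proposal is the standard route and is essentially correct: observe that $\forces_\delta^\IS$ is by definition the ordinary $\forces_\delta$ applied to the relativised formula, establish a Truth Lemma for symmetrically $V$-generic filters by induction on formulas, and read off the three implications. The crucial point---that the only place where one needs the filter to actually meet a dense set is the left-to-right direction of the negation step, and that \autoref{lemma:decision sets are symmetrically dense} guarantees the relevant decision sets contain symmetrically dense open sets---is exactly right, and this is precisely why the weaker notion of symmetric genericity suffices. Your handling of $(2)\Rightarrow(3)$ (full genericity implies symmetric genericity) and of $(3)\Rightarrow(1)$ (contrapositive via the ordinary forcing theorem for $\varphi^{\IS_\delta}$) is also fine.

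Two minor caveats worth flagging. First, the atomic base case of the Truth Lemma for merely symmetrically generic $G$ is not \emph{quite} ``the usual'' argument: the usual proof of $\dot x^G\in\dot y^G\Leftrightarrow\exists p\in G\,(p\forces\dot x\in\dot y)$ uses full genericity to meet various dense sets arising in the name-rank recursion, and here one must check that each of those dense sets is symmetrically dense. This does hold, because the names involved are hereditarily in $\IS_\delta$ and hence the relevant decision sets fall under \autoref{lemma:decision sets are symmetrically dense}, but it deserves an explicit sentence rather than a gesture at ``the usual simultaneous induction''. Second, your $(3)\Rightarrow(1)$ assumes a $V$-generic filter through $q$ exists; this is the standard elision (pass to a countable elementary submodel, or read the statement as a schema), but since the theorem speaks of arbitrary $V$, you might note it.
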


Finite support iterations, however, have some problems: Cohen reals tend to ``pop up'' at limit steps, and cardinals are collapsed if we are not careful to use ccc iterands. This is generally unwanted, especially if we want to iterate the construction through the class of ordinals, and as shown in the previous section, the second iterand is not going to be a ccc poset. However, the second iterand does not add reals, and so we have the following preservation theorem which will help us overcome this barrier.
\begin{theorem}
Suppose that $\tup{\dot\QQ_\alpha,\dot\sG_\alpha,\dot\sF_\alpha\mid\alpha<\delta}$ is a productive iteration such that for all $\alpha<\delta$, $\forces_\alpha\dot\sG_\alpha$ witnesses the homogeneity of $\dot\QQ_\alpha$. If $\eta$ is an ordinal such that there exists $\alpha<\delta$, so for all $\beta>\alpha$, $\forces^\IS_\beta\tup{\dot\QQ_\beta,\dot\sG_\beta,\dot\sF_\beta}^\bullet$ does not add sets of rank $<\check\eta$. Then for all $\dot x\in\IS_\delta$, if $p\forces_\delta^\IS\dot x$ has rank $<\check\eta$, then there is some $q\leq_\delta p$ and $\dot y\in\IS_\alpha$ such that $q\forces_\delta^\IS\dot x=\dot y$.
\end{theorem}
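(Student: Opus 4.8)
The plan is to reduce the statement to the following assertion: for every $V$-generic $G\subseteq\PP_\delta$, writing $G_\alpha=G\cap\PP_\alpha$, the models $\IS_\delta^G$ and $\IS_\alpha^{G_\alpha}$ contain exactly the same sets of rank $<\eta$; and then to prove that assertion by induction on $\delta$. Granting the assertion, the theorem follows quickly: pick any $V$-generic $G$ with $p\in G$ and put $x=\dot x^G$. Since $p$ forces $\dot x$ to have rank $<\eta$, the set $x$ has rank $<\eta$ in $\IS_\delta^G$, so by the assertion $x\in\IS_\alpha^{G_\alpha}$; choose $\dot y\in\IS_\alpha$ with $\dot y^{G_\alpha}=x$. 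Viewing $\dot y$ as an $\IS_\delta$-name via the canonical lift supplied by the iteration framework (so that $\dot y^G=\dot y^{G_\alpha}=x$), we get $\IS_\delta^G\models\dot x=\dot y$, hence by the forcing theorem for $\forces^\IS_\delta$ (using \autoref{lemma:decision sets are symmetrically dense}) some condition in $G$ forces $\dot x=\dot y$; meeting it with $p$ inside $G$ yields the desired $q\leq_\delta p$ and $\dot y\in\IS_\alpha$ with $q\forces^\IS_\delta\dot x=\dot y$.

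It then remains to prove the assertion, which I would do by induction on $\delta\geq\alpha$, the case $\delta=\alpha$ being trivial. If $\delta=\gamma+1$, the characterization of hereditarily $\cF_\delta$-respected names identifies $\IS_\delta^G$ with the symmetric extension of $\IS_\gamma^{G\cap\PP_\gamma}$ by the symmetric system $\tup{\dot\QQ_\gamma,\dot\sG_\gamma,\dot\sF_\gamma}^\bullet$ as interpreted there; by hypothesis that iterand adds no sets of rank $<\eta$, so $\IS_\delta^G$ and $\IS_\gamma^{G\cap\PP_\gamma}$ agree on sets of rank $<\eta$, and the inductive hypothesis at $\gamma$ finishes this case. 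Suppose now that $\delta$ is a limit. By the inductive hypothesis $\bigcup_{\alpha\leq\beta<\delta}\IS_\beta^{G\cap\PP_\beta}$ contains precisely the rank-$<\eta$ sets of $\IS_\alpha^{G_\alpha}$, so it suffices to show that $\IS_\delta^G$ adds no new set of rank $<\eta$ at the limit stage. Fix $\dot x\in\IS_\delta$ with $p\forces^\IS_\delta\rank(\dot x)<\check\eta$, let $\vec H$ be an excellent support for $\dot x$, and choose $\beta_0<\delta$ with $\alpha\leq\beta_0$ and $C(\vec H)\cup\supp(p)\subseteq\beta_0$ — possible since both sets are finite. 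Because $\dot H_\beta=\dot\sG_\beta$ for all $\beta\geq\beta_0$, the name $\dot x$ is fixed by $\gaut{\vec\pi}$ for every $\vec\pi\in\cG_\delta$ that is supported in the interval $[\beta_0,\delta)$; in other words, $\dot x$ is invariant under the full automorphism group of the \emph{tail iteration} $\tup{\dot\QQ_\beta,\dot\sG_\beta,\dot\sF_\beta\mid\beta_0\leq\beta<\delta}$ over $\IS_{\beta_0}^{G\cap\PP_{\beta_0}}$. The remaining work is a recursion on $\rho<\eta$ showing that any such tail-invariant name forced to have rank at most $\check\rho$ is, below a suitable extension of $p$, equal to a name of hereditarily bounded support — equivalently, a name lying in $\IS_\beta$ for some $\beta<\delta$: the names appearing in it are forced to have strictly smaller rank, each is reduced by the recursion, and then tail-invariance together with $C(\vec H),\supp(p)\subseteq\beta_0$ and the assumption that the tail iterands add no sets of rank $<\eta$ let one collect the finitely many $\vec H$-orbits of elements below a single stage $\beta<\delta$ without introducing anything new. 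Once $\dot x$ has been reduced, below a condition, to a name $\dot w\in\IS_\beta$ with $\beta<\delta$, the inductive hypothesis at $\beta$ exhibits the rank-$<\eta$ set of $\IS_\alpha^{G_\alpha}$ that it denotes, completing the limit step.

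The main obstacle is exactly this limit step. A finite-support iteration is precisely the setting where fresh ``Cohen-like'' objects materialize at limits of countable cofinality, so some feature of the symmetric framework has to rule this out, and the proof must use it: concretely, (i) excellent supports are finitely — hence boundedly — supported, so $\dot x$ never depends on arbitrarily late stages; (ii) each $\dot\sG_\beta$ witnesses the homogeneity of $\dot\QQ_\beta$, so an object at the limit that genuinely encodes unbounded-stage information cannot be symmetric; and (iii) the hypothesis that each tail iterand adds no set of rank $<\eta$, which fuels the rank recursion. Making the bookkeeping in that recursion precise — keeping the support of the reassembled name bounded as one climbs through the ranks below $\eta$, and checking that passing to a bounded stage introduces no new set — is the only genuinely delicate point; everything else is routine use of the forcing theorem for $\forces^\IS_\delta$, \autoref{lemma:decision sets are symmetrically dense}, and the structural facts about productive iterations recalled above.
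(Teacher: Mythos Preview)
The paper does not prove this theorem. It appears in the subsection on productive iterations, which opens by announcing that ``the full details of the construction can be found in \cite{Karagila:2016}\ldots\ We will not prove any statement here''; the theorem is accordingly stated and immediately followed by the next result. So there is no in-paper proof to compare against.

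On the merits, your plan is the natural one and is essentially correct. The reduction to the semantic assertion via the forcing theorem is clean, the successor step is immediate from the factorization of $\IS_{\gamma+1}$ as a symmetric extension of $\IS_\gamma$, and you have correctly located the only real work at the limit step: excellent supports are finite, hence bounded below some $\beta_0<\delta$, and coordinatewise homogeneity lets one restrict conditions to $\PP_{\beta_0}$ without changing what they force about $\dot x$. This is precisely the mechanism the paper does spell out later, in the proof of \autoref{thm:alpha sets}, so your instincts match the author's.

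One phrase in your limit argument is off and should be replaced. You write of collecting ``the finitely many $\vec H$-orbits of elements'' below a single stage; but there is no finiteness of orbits here --- $\dot x$ may mention infinitely many names. The clean way to finish is not to collect orbits but to define directly
\[
\dot x'=\bigl\{\langle r,\dot y\rangle\ \big|\ r\in\PP_{\beta_0},\ \dot y\in\IS_\alpha,\ r\forces^\IS_\delta\dot y\in\dot x\bigr\},
\]
check that $\dot x'\in\IS_{\beta_0}$ with support $\vec H\restriction\beta_0$, and verify $p\forces^\IS_\delta\dot x=\dot x'$: the inclusion $\dot x'\subseteq\dot x$ is trivial, and for the other direction one uses the rank recursion to replace each member by a name in $\IS_\alpha$, then the tail homogeneity (with $\vec\pi$ supported in $[\beta_0,\delta)$, hence in $\vec H$ and fixing both $\dot x$ and the $\IS_\alpha$-name) to drop the condition to $\PP_{\beta_0}$. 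With that adjustment your outline goes through.
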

In other words, if at some point we stop adding sets of rank $\eta$, then under the assumption of homogeneity in each step, we do not add sets of rank $\eta$ at limit steps either. In particular no Cohen reals are added, and cardinals are preserved. The above theorem can be stated for any filtration of the universe which has a robust definition, specifically this will be used when we talk about $\alpha$-sets in \autoref{thm:alpha sets}. We also get the following theorem as a corollary.
\begin{theorem}
Suppose that $\tup{\dot\QQ_\alpha,\dot\sG_\alpha,\dot\sF_\alpha\mid\alpha\in\Ord}$ is a class-length productive iteration such that for all $\alpha$, $\forces_\alpha^\IS\tup{\dot\QQ_\alpha,\dot\sG_\alpha,\dot\sF_\alpha}^\bullet$ is a homogeneous system. Moreover, suppose that for all $\eta$ there is some $\alpha$, such that no sets of rank $\eta$ are added after the $\alpha$th iterand. Then for any symmetrically $V$-generic $G$, $\IS^G$ is a model of $\ZF$.
\end{theorem}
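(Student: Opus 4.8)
The plan is to verify the $\ZF$ axioms in $\IS^G$ one group at a time, using that — since every iterand has finite support — $\IS^G$ is the increasing union of the set‑length symmetric iterations, while the two standing hypotheses control its rank‑initial segments. Concretely, for each ordinal $\alpha$ the restriction $G_\alpha=G\restriction\PP_\alpha$ is symmetrically $V$-generic for $\PP_\alpha$ (a symmetrically dense open subset of $\PP_\alpha$ pulls back to one of $\PP_\delta$ for every $\delta\geq\alpha$, by padding the excellent support with trivial groups), so $N_\alpha:=\IS_\alpha^{G_\alpha}$ is a transitive model of $\ZF$ with $V\cup\Ord\subseteq N_\alpha$, by the results on productive iterations in \cite{Karagila:2016}; and since any name is a set, its transitive closure involves only boundedly many coordinates and supports with bounded non‑trivial part, so every name for the class iteration is generically equal to one in some $\IS_\alpha$ and conversely. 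Hence
\[\IS^G=\bigcup_{\alpha\in\Ord}N_\alpha,\qquad N_\alpha\subseteq N_\beta\text{ for }\alpha\leq\beta,\]
from which Extensionality and Foundation are automatic, and Empty Set, Pairing, Union and Infinity hold because each instance is witnessed in a single $N_\alpha\models\ZF$, the relevant operation being absolute between the transitive classes $N_\alpha$ and $\IS^G$.

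The one genuinely new ingredient is \emph{rank stabilization}: for every ordinal $\eta$ there is a stage $\alpha_\eta$ with $V_\eta\cap\IS^G=V_\eta\cap N_{\alpha_\eta}$, so that this is a set, equal to $V_\eta^{N_{\alpha_\eta}}\in N_{\alpha_\eta}\subseteq\IS^G$. To prove it I would apply the rank hypothesis to each $\eta'<\eta$ and take a supremum to get a stage $\alpha_\eta$ past which no iterand adds a set of rank $<\eta$; since every iterand is forced to be a homogeneous system, the preservation theorem applies and carries this through all later stages — crucially including the limit ones — yielding $V_\eta\cap N_\beta=V_\eta\cap N_{\alpha_\eta}$ for every $\beta\geq\alpha_\eta$, whence the claim via $\IS^G=\bigcup_\beta N_\beta$ and absoluteness of $\rank$. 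Power Set is then immediate: if $x\in\IS^G$ has rank $\rho$, then $x\in N_{\alpha_{\rho+2}}$ and $\power(x)^{\IS^G}=\power(x)^{N_{\alpha_{\rho+2}}}\in N_{\alpha_{\rho+2}}\subseteq\IS^G$.

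Separation and Replacement follow from rank stabilization together with reflection into the ambient extension $V[G]$ — in which, for each fixed formula, ``$\IS^G\models\varphi$'' is a definable predicate. For Separation, given a set $x\in\IS^G$, a formula $\varphi$, and parameters $\bar p\in\IS^G$, a finite‑depth L\'evy‑style reflection — at each level capturing the needed existential witnesses by their rank, which is legitimate by rank stabilization, and terminating because $\varphi$ has finite quantifier depth — produces a stage $\alpha$ with $N_\alpha\models\varphi(z,\bar p)\iff\IS^G\models\varphi(z,\bar p)$ for every $z\in x$; then $\{z\in x\mid\IS^G\models\varphi(z,\bar p)\}$ equals $\{z\in x\mid N_\alpha\models\varphi(z,\bar p)\}\in N_\alpha\subseteq\IS^G$. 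For Replacement, if $\varphi$ defines a class function $F$ over $\IS^G$ with parameters in $\IS^G$ and $a\in\IS^G$, then $\{\rank F(z)\mid z\in a\}$ is the image of the set $a$ under a predicate definable in $V[G]$, hence a set, bounded by some $\gamma^*$; so $V_{\gamma^*}\cap\IS^G=V_{\gamma^*}^{N_{\alpha_{\gamma^*}}}\in\IS^G$ is a set containing the image $F[a]$, and $F[a]$ itself is then a set of $\IS^G$ by Separation. (Alternatively one can run everything inside $V$ using the definable set‑stage forcing relations $\forces^\IS_\alpha$, avoiding any assumption that the class forcing is tame.)

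The step I expect to be the real obstacle is pinning down rank stabilization — that is, controlling the limit stages. This is precisely where a naive finite‑support iteration misbehaves (new Cohen reals, and more generally new sets of bounded rank, appearing at limits of cofinality $\omega$), and it is rescued only by using \emph{both} hypotheses in tandem: homogeneity of the iterands is what lets the preservation theorem push ``no new sets of rank $<\eta$'' through a limit stage, and the rank hypothesis is what makes this eventually apply for every $\eta$. Checking that the bookkeeping of the preservation theorem goes through uniformly in $\eta$, so that the stages $\alpha_\eta$ are genuinely well‑defined, and confirming that rank stabilization is exactly the regularity the two reflection arguments consume, is where the care lies; everything else rests on the framework of \cite{Karagila:2016}.
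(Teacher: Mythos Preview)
The paper does not give a standalone proof of this theorem: it is stated as a corollary of the immediately preceding preservation theorem, with the details deferred to \cite{Karagila:2016}. Your proposal is therefore not competing with a proof in the paper so much as filling in what the paper leaves implicit, and in that respect your outline is correct and matches the intended route: the preservation theorem plus homogeneity yields exactly your ``rank stabilization'' $V_\eta\cap\IS^G=V_\eta\cap N_{\alpha_\eta}$, and from there the verification of the $\ZF$ axioms proceeds as you describe.

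One point deserves tightening. In your Replacement argument you appeal to Replacement in $V[G]$ to bound $\{\rank F(z)\mid z\in a\}$. But $G$ is only \emph{symmetrically} $V$-generic for a class-length iteration, so there is no guarantee that $V[G]$ is a model of $\ZF$ (and in the general statement there is no ambient $L[c]$ to fall back on). You already flag the correct fix in your parenthetical: work in $V$ with the set-stage relations $\forces^\IS_\alpha$. Concretely, for each $z\in a$ one finds (by rank stabilization) a stage $\alpha$ and a condition deciding $F(z)$; the map $z\mapsto$ (a bound on the rank of such a decided value) is then definable in $V$ from the names, and Replacement in $V$ gives the bound $\gamma^*$. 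Promote that alternative to the main argument and drop the $V[G]$ detour.
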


\begin{remark}
One of the central notions in \cite{Karagila:2016} is tenacity. Tenacity means that we can always assume that a condition is not moved by a large group of automorphisms. As it turns out, every symmetric system can be replaced by one which satisfies tenacity; nevertheless tenacity assumptions make the general construction easier. In this work, however, the notion is not particularly needed, as it arises naturally from the definitions of the symmetric systems. We will safely ignore it here.
\end{remark}

Finally, we will need one last definition about iterations, which we essentially saw in the baby step construction.
\begin{definition}We say that $\PP\ast\dot\QQ$ is an \textit{upwards homogeneous} iteration, if for all $\tup{p,\dot q},\tup{p,\dot q'}$ there is some $\pi\in\aut(\PP)$ such that $\pi p=p$ and $p\forces_\PP\pi(\dot q)$ is compatible with $\dot q'$. If $\PP$ is part of a symmetric system, we require that $\pi$ comes from the relevant automorphism group.
\end{definition}
\subsection{Permutable families}
Each step of the iteration will introduce a sequence of an appropriate length, generalizing the baby step details we saw. We will need two types of structures to handle these constructions. permutable families for successor steps, and permutable scales for limit steps.
\begin{definition}
Let $\kappa$ be any infinite cardinal. We say that an almost disjoint family $\{A_\alpha\mid\alpha<\kappa^+\}\subseteq[\kappa]^\kappa$ is a \textit{permutable family} if for every bounded $X\subseteq\kappa^+$ there exists a pairwise disjoint family $\{B_\alpha\mid\alpha\in X\}$ such that $A_\alpha=^*B_\alpha$ for all $\alpha\in X$, and for all $\xi<\kappa^+$, $A_\xi\cap\bigcup\{B_\alpha\mid\alpha\in X\}$ is unbounded in $\kappa$ if and only if $\xi\in X$.\footnote{Here, as before, we mean by $=^*$ and $\subseteq^*$ the usual ``almost equal'' or ``almost included'', which means that by changing a bounded subset of $\kappa$ we get the equality or inclusion. We will always be sure to clarify $\kappa$ from the context.}
\end{definition}
\begin{theorem}
Suppose that $\kappa$ is a regular cardinal. Then there exists a permutable family for $\kappa$.
\end{theorem}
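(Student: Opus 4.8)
The plan is to follow the template of \autoref{prop:countable sep. family}: take a $\subseteq^*$-increasing tower of subsets of $\kappa$ of length $\kappa^+$ (where $=^*$ and $\subseteq^*$ now refer to the ideal of bounded subsets of $\kappa$) and let $A_\alpha$ be the consecutive difference $T_{\alpha+1}\setminus T_\alpha$. The point that needs care is that a \emph{literally} $\subseteq$-increasing sequence of subsets of $\kappa$ cannot have length $\kappa^+$, by a trivial counting argument, so the tower must be genuinely increasing only modulo bounded sets; one must additionally keep each difference $A_\alpha$ of full size $\kappa$ while making sure the recursion does not halt before reaching $\kappa^+$.

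To achieve this I would pass through ${}^\kappa\kappa$. Fix a bijection $\pi\colon\kappa\times\kappa\to\kappa$, and build by recursion on $\alpha<\kappa^+$ a $\le^*$-increasing sequence $\tup{f_\alpha\mid\alpha<\kappa^+}$ in ${}^\kappa\kappa$ with $f_0\equiv 0$, $f_{\alpha+1}=f_\alpha+1$ pointwise, and $f_\lambda$ at a limit $\lambda$ obtained from a fixed cofinal sequence $\tup{\alpha_i\mid i<\cf\lambda}$ as the pointwise supremum when $\cf\lambda<\kappa$ and as the diagonal supremum $f_\lambda(\xi)=\sup\set{f_{\alpha_i}(\xi)\mid i\le\xi}$ when $\cf\lambda=\kappa$. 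Regularity of $\kappa$ makes each $f_\lambda$ a total function into $\kappa$ and a $\le^*$-bound for all earlier $f_\alpha$, and since every proper initial segment of the recursion has size $\le\kappa$ such a bound always exists; hence the recursion reaches $\kappa^+$. Put $T_\alpha=\set{\pi(\xi,\gamma)\mid\xi<\kappa\text{ and }\gamma<f_\alpha(\xi)}$ and $A_\alpha=T_{\alpha+1}\setminus T_\alpha=\set{\pi(\xi,f_\alpha(\xi))\mid\xi<\kappa}$. Each $A_\alpha$ is the injective image of $\kappa$, so $|A_\alpha|=\kappa$ and (as $\kappa$ is regular) $A_\alpha$ is unbounded, i.e.\ $A_\alpha\in[\kappa]^\kappa$. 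Because $f$ is $\le^*$-increasing, $T_\gamma\setminus T_\delta$ is a union of fewer than $\kappa$ sets of size $<\kappa$ whenever $\gamma<\delta$, hence bounded; and since $A_\alpha\subseteq T_{\alpha+1}$ while $A_\beta\cap T_\beta=\varnothing$, for $\alpha<\beta$ we get $A_\alpha\cap A_\beta\subseteq T_{\alpha+1}\setminus T_\beta$ bounded. So $\set{A_\alpha\mid\alpha<\kappa^+}$ is an almost disjoint family in $[\kappa]^\kappa$.

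For permutability, fix a bounded $X\subseteq\kappa^+$ and put $\eta=\sup X+1<\kappa^+$. As in the countable case I would disjointify the whole initial family $\set{A_\alpha\mid\alpha<\eta}$ \emph{inside} $T_\eta$: re-enumerate it in order type $\theta:=|\eta|\le\kappa$ as $\tup{A_{\gamma_i}\mid i<\theta}$ and set $B_{\gamma_i}=(A_{\gamma_i}\cap T_\eta)\setminus\bigcup_{j<i}A_{\gamma_j}$. The reason to re-index in order type $\le\kappa$ rather than along $\eta$ is exactly that then, for each $i<\theta$, what is removed from $A_{\gamma_i}$ is a union of fewer than $\kappa$ bounded sets, hence bounded; so $B_{\gamma_i}=^*A_{\gamma_i}$, while the $B$'s are pairwise disjoint and contained in $T_\eta$. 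The verification then splits into the familiar three cases: for $\xi<\eta$ with $\xi\in X$ we have $A_\xi=^*B_\xi\subseteq\bigcup_{\alpha\in X}B_\alpha$; for $\xi<\eta$ with $\xi\notin X$ the set $A_\xi=^*B_\xi$ is disjoint from $\bigcup_{\alpha\in X}B_\alpha$ by disjointness of the refinement; and for $\xi\ge\eta$, from $T_\eta\subseteq^*T_\xi$ and $A_\xi\cap T_\xi=\varnothing$ we get $A_\xi\cap T_\eta$, hence $A_\xi\cap\bigcup_{\alpha\in X}B_\alpha$, bounded. Thus $A_\xi\cap\bigcup\set{B_\alpha\mid\alpha\in X}$ has size $\kappa$ precisely when $\xi\in X$, and $A_\alpha=^*B_\alpha$ for $\alpha\in X$, which is what the definition demands.

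The only real obstacle is the one flagged in the first paragraph — keeping the tower alive through limit stages all the way to $\kappa^+$ while retaining successor differences of size $\kappa$ — and routing everything through ${}^\kappa\kappa$ dissolves it, since for regular $\kappa$ any $\le^*$-increasing sequence in ${}^\kappa\kappa$ of length $\le\kappa$ has a pointwise or diagonal supremum as an upper bound, so the construction cannot get stuck before $\kappa^+$. All the remaining bookkeeping rests on the single fact that, for $\kappa$ regular, a union of fewer than $\kappa$ bounded subsets of $\kappa$ is bounded.
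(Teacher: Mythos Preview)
Your proof is correct and follows the same approach as the paper: build a strictly $\subseteq^*$-increasing tower $\tup{T_\alpha\mid\alpha<\kappa^+}$, set $A_\alpha=T_{\alpha+1}\setminus T_\alpha$, and verify permutability by refining the initial segment below $\eta=\sup X+1$ to a pairwise disjoint family inside $T_\eta$. The paper's proof is much terser---it simply asserts the existence of such a tower and of a suitable disjoint refinement and refers back to the proof of \autoref{prop:countable sep. family}---whereas you explicitly construct both (routing through ${}^\kappa\kappa$ with pointwise/diagonal suprema for the tower, and re-enumerating in order type $\leq\kappa$ for the refinement so that only boundedly much is removed at each stage), which usefully fills in details the paper leaves to the reader.
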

\begin{proof}
The proof here is the same as the proof of \autoref{prop:countable sep. family}. Fix a sequence of sets $\tup{T_\alpha\mid\alpha<\kappa^+}$ which is strictly $\subseteq^*$-increasing and define $A_\alpha=T_{\alpha+1}\setminus T_\alpha$. Fix a bounded set $X\subseteq\kappa^+$, then any pairwise disjoint refinement $\{B_\alpha\mid\alpha<\sup X+1\}$ such that $B_\alpha\subseteq T_{\sup X+1}$ witnesses the permutability property.
\end{proof}
\begin{definition}
Fix a permutable family for $\kappa$, $\{A_\alpha\mid\alpha<\kappa^+\}$. We say that $\pi\colon\kappa\to\kappa$ \textit{implements} $\Pi\colon\kappa^+\to\kappa^+$, if for every $\alpha<\kappa^+$, $\pi"A_\alpha=^*A_{\Pi(\alpha)}$.
\end{definition}
\begin{theorem}
Suppose that $\{A_\alpha\mid\alpha<\kappa^+\}$ is a permutable family for $\kappa$. Then for every $\eta<\kappa^+$, and every permutation $\Pi\colon\eta\to\eta$, there is some $\pi\colon\kappa\to\kappa$ such that $\pi$ implements $\Pi$.\qed
\end{theorem}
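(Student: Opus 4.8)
The plan is to reprise, one cardinal up, the proof that every bounded permutation of $\omega_1$ can be implemented. Since implementing $\sigma$ is a pointwise ($=^*$) requirement on the sets $A_\alpha$, I first extend $\sigma$ to a permutation of $\kappa^+$ by letting it be the identity on $[\eta,\kappa^+)$, and it suffices to implement this extension, which I keep calling $\sigma$. The first concrete step is to fix a ``disjoint refinement'' of the subfamily $\{A_\alpha\mid\alpha<\eta\}$: a pairwise disjoint family $\{B_\alpha\mid\alpha<\eta\}$ with $B_\alpha\subseteq A_\alpha$, $A_\alpha=^*B_\alpha$ for all $\alpha<\eta$, and the covering property ``for all $\xi<\kappa^+$, $A_\xi\cap\bigcup_{\alpha<\eta}B_\alpha$ is infinite iff $\xi<\eta$''. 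Such a family exists: since $\eta<\kappa^+$, the subfamily has size at most $\kappa$, so permutability applied to the bounded set $X=\eta$ gives a pairwise disjoint $\{B'_\alpha\mid\alpha<\eta\}$ with $A_\alpha=^*B'_\alpha$ and the covering property, and then $B_\alpha:=B'_\alpha\cap A_\alpha$ retains all of this while also sitting inside $A_\alpha$. Note that each $B_\alpha$, being $=^*$ to $A_\alpha\in[\kappa]^\kappa$, has cardinality $\kappa$, hence order type exactly $\kappa$ as a subset of the ordinal $\kappa$; and $A_\alpha\setminus B_\alpha$ is bounded in $\kappa$.

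Next I build $\pi$. Set $U=\bigcup_{\alpha<\eta}B_\alpha$. For each $\alpha<\eta$ let $\pi\restriction B_\alpha$ be the unique order isomorphism of $B_\alpha$ onto $B_{\sigma(\alpha)}$ (legitimate, since both have order type $\kappa$), and let $\pi$ be the identity on $\kappa\setminus U$. Because the $B_\alpha$ — and therefore the $B_{\sigma(\alpha)}$ — are pairwise disjoint and $\sigma$ permutes $\eta$, the map $\pi\restriction U$ is a bijection of $U$ onto $\bigcup_{\alpha<\eta}B_{\sigma(\alpha)}=U$; together with the identity on $\kappa\setminus U$ this makes $\pi$ a permutation of $\kappa$.

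It remains to verify $\pi"A_\alpha=^*A_{\sigma(\alpha)}$ for all $\alpha<\kappa^+$. If $\alpha<\eta$, then $\pi"A_\alpha$ and $\pi"B_\alpha$ differ only by $\pi"(A_\alpha\setminus B_\alpha)$, which is the image of a bounded subset of $\kappa$ under the bijection $\pi$, hence again bounded (this is where regularity of $\kappa$ enters: a subset of a regular $\kappa$ of size $<\kappa$ is bounded); since by construction $\pi"B_\alpha=B_{\sigma(\alpha)}=^*A_{\sigma(\alpha)}$, we get $\pi"A_\alpha=^*A_{\sigma(\alpha)}$. If $\alpha\ge\eta$, then $\sigma(\alpha)=\alpha$, and the covering property (taking $\xi=\alpha\notin\eta$) gives that $A_\alpha\cap U$ is finite, so $\pi$ moves only finitely many elements of $A_\alpha$ and $\pi"A_\alpha=^*A_\alpha$. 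Thus $\pi$ implements $\sigma$, i.e. $\iota(\pi)=\sigma$.

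There is no serious obstacle here — the construction is essentially forced by the demand $\pi"A_\alpha=^*A_{\sigma(\alpha)}$ — but two points deserve the same attention they do in the countable case. The covering property of a permutable family, rather than mere almost-disjointness of a refinement, is precisely what handles the tail $\alpha\ge\eta$: it is what forbids $A_\alpha$ from meeting infinitely much of $U$ and thereby being disturbed unboundedly by $\pi$. And the regularity of $\kappa$ is tacitly used throughout, so that ``$=^*$'' (equality off a bounded set) is preserved by $\pi$ and by passing to sets of size $<\kappa$; over a singular $\kappa$ one would argue a little more carefully or read ``$=^*$'' as ``equal modulo a set of size $<\kappa$''.
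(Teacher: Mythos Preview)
Your proof is correct and follows exactly the approach the paper has in mind: the theorem is marked with an immediate \qed\ because its proof is a verbatim repetition of the earlier proposition that every bounded permutation of $\omega_1$ can be implemented, and your argument is precisely that proof with $\omega$ replaced by $\kappa$. You are in fact more careful than the paper's $\omega$-version, which only checks the case $\alpha<\eta$ explicitly; your treatment of the tail $\alpha\geq\eta$ via the covering property, and your remark on where regularity of $\kappa$ is used to keep bounded sets bounded under $\pi$, are welcome additions.
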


To simplify later text, we will set some basic notions about what we are going to us permutable families for. Much like the baby step, we will use them to implement bounded permutations of $\kappa^+$ and define a normal filter of subgroups in a similar way to what we did before.

Fix a permutable family for $\kappa$, $\{A_\alpha\mid\alpha<\kappa^+\}$, we say that $\cB$ is a \textit{disjoint approximation} of $\{A_\alpha\mid\alpha\in I\}$ if $\cB$ is a pairwise disjoint family of sets $\{B_\alpha\mid\alpha\in I\}$, such that for every $\alpha\in I$, $A_\alpha=^*B_\alpha$ and $A_\xi\cap\bigcup\cB$ is unbounded in $\kappa$ if and only if $\xi\in I$. As before, we say that $\cB$ is a disjoint approximation, if there is such bounded $I\subseteq\kappa^+$. We say that $\sG$ is \textit{the derived permutation group} from the family if it is the group of all permutations of $\kappa$ which implement a bounded permutation of $\kappa^+$ via the permutable family. Similarly, $\sF$ is \textit{the derived filter of subgroups} if it is the filter of subgroups on $\sG$ generated by \[\{\fix(\cB)\mid\cB\text{ is a disjoint approximation}\},\] where $\fix(\cB)$ is the group $\{\pi\in\sG\mid \pi\restriction\bigcup\cB=\id\}$.

\begin{proposition}
The derived filter is a normal filter of subgroups over the derived permutation group.\qed
\end{proposition}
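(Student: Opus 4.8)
The plan is to follow verbatim the normality argument sketched after the baby-step definition of $\sF$; I do not expect a genuine obstacle here, since both halves of the statement are formal once the right bookkeeping is set up. The first half---that $\sF$ is a filter of subgroups---is immediate from the way it is generated: each $\fix(\cB)$ is a pointwise stabiliser in $\sG$, hence a subgroup, so the collection $\{\fix(\cB)\mid\cB\text{ a disjoint approximation}\}$ consists of subgroups, and the filter it generates, namely all subgroups of $\sG$ containing some finite intersection $\fix(\cB_0)\cap\dots\cap\fix(\cB_{n-1})$, is by construction closed under finite intersections and under supergroups within $\sG$. (That $\sF$ is moreover proper, $\{1_\sG\}\notin\sF$, is not part of the statement as phrased; it is secured by a suitable choice of the underlying $\subseteq^*$-increasing tower, and is the reason for insisting on genuinely pairwise-disjoint refinements, as discussed in the baby-step case.)

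For normality I would fix $\pi\in\sG$ and a disjoint approximation $\cB=\{B_\alpha\mid\alpha\in I\}$ of $\{A_\alpha\mid\alpha\in I\}$ with $I\subseteq\kappa^+$ bounded, and set $\pi"\cB=\{\pi"B_\alpha\mid\alpha\in I\}$. The first step is the purely group-theoretic identity $\pi\,\fix(\cB)\,\pi^{-1}=\fix(\pi"\cB)$: unwinding definitions, a permutation $\rho$ lies in the left-hand side iff $\pi^{-1}\rho\pi$ fixes $\bigcup\cB$ pointwise iff $\rho$ fixes $\pi"\bigcup\cB=\bigcup(\pi"\cB)$ pointwise. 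The second, and only mildly non-trivial, step is to check that $\pi"\cB$ is again a disjoint approximation, this time of $\{A_\alpha\mid\alpha\in\iota(\pi)"I\}$: the sets $\pi"B_\alpha$ are pairwise disjoint because $\pi$ is injective; $\iota(\pi)"I$ is bounded because $\iota(\pi)$ is a bounded permutation of $\kappa^+$; and---this is the one point to keep an eye on---because $\kappa$ is regular a bijection of $\kappa$ preserves the ideal of bounded subsets of $\kappa$ (for regular $\kappa$, ``bounded'' is the same as ``of size $<\kappa$''), hence preserves $=^*$, $\subseteq^*$ and almost-disjointness. So from $B_\alpha=^*A_\alpha$ together with $\pi"A_\alpha=^*A_{\iota(\pi)(\alpha)}$ (the latter being exactly what ``$\pi$ implements $\iota(\pi)$'' says) we get $\pi"B_\alpha=^*A_{\iota(\pi)(\alpha)}$, and pushing the covering clause for $\cB$---$A_\xi\subseteq^*\bigcup\cB$ when $\xi\in I$, and $A_\xi$ almost disjoint from $\bigcup\cB$ otherwise---through the bijection $\pi$ gives the matching clause for $\pi"\cB$ with $I$ replaced by $\iota(\pi)"I$. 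Thus $\fix(\pi"\cB)$ is again one of the generators of $\sF$.

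Finally I would assemble: given $H\in\sF$, pick disjoint approximations $\cB_0,\dots,\cB_{n-1}$ with $\bigcap_i\fix(\cB_i)\le H$; then $\pi^{-1}H\pi$ contains $\pi^{-1}\bigl(\bigcap_i\fix(\cB_i)\bigr)\pi=\bigcap_i\fix(\pi^{-1}"\cB_i)$, which is a finite intersection of generators of $\sF$ by the previous step, so $\pi^{-1}H\pi\in\sF$. Hence $\sF$ is normal, completing the proof.
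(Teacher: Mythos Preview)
Your proposal is correct and follows precisely the approach the paper intends: the proposition carries a bare \qed, so the paper's own ``proof'' is the earlier baby-step paragraph showing $\pi\fix(\cB)\pi^{-1}=\fix(\pi"\cB)$ and that $\pi"\cB$ is again a disjoint approximation, which you reproduce in full detail. Your explicit remark that regularity of $\kappa$ is what guarantees a bijection of $\kappa$ preserves the bounded ideal (and hence $=^*$) is a point the paper leaves implicit but which is indeed needed; otherwise the argument is identical.
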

\subsection{Permutable scales}
Permutable scales are PCF-theoretic objects with properties which mimic permutable families for limit cardinals. Let us fix a limit cardinal $\lambda$ for the rest of this section. We denote by $\SC(\lambda)$ the set $\{\mu^+\mid\omega\leq\mu<\lambda\}$ of successor cardinals below $\lambda$, and by $\jbd(\lambda)$ denote ideal of bounded sets of $\lambda$. Since $\lambda$ is fixed, we write $\jbd$ as a shorthand for $\jbd(\lambda)$.
\begin{definition}
Fix a scale $F=\{f_\eta\mid\eta<\lambda^+\}$ in $\prod\SC(\lambda)/\jbd$. Suppose that $\vec\pi$ is a sequence $\tup{\pi_\theta\mid\theta\in\SC(\lambda)}$ such that $\pi_\theta\colon\theta\to\theta$ is a permutation of $\theta$. We say that $\vec\pi$ \textit{implements} $\pi\colon\lambda^+\to\lambda^+$ if for every large enough $\theta\in\SC(\lambda)$ \[f_{\pi(\eta)}(\theta)=\pi_\theta(f_\eta(\theta)).\]
We say that $F$ is a \textit{permutable scale} if every bounded permutation of $\lambda^+$ can be implemented by some $\vec\pi$. In other words, for every $\eta<\lambda^+$, every permutation of $\eta$ can be implemented.
\end{definition}
The next theorem and its proof are based on the work of the Bristol group.
\begin{theorem}
If $\lambda$ is regular and $2^\lambda=\lambda^+$, or $\lambda$ is singular and $\square^*_\lambda$ holds, then there exists a permutable scale $\{f_\eta\mid\eta<\lambda^+\}$ in $\prod\SC(\lambda)/\jbd$ and $\{\rng f_\eta\mid\eta<\lambda^+\}$ is a permutable family for $\lambda$. 
\end{theorem}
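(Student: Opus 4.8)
The plan is to handle the two cases separately, since they rely on different PCF-theoretic inputs, but in both cases the strategy is the same: first build a scale $F=\{f_\eta\mid\eta<\lambda^+\}$ in $\prod\SC(\lambda)/\jbd$ whose ranges form a permutable family for $\lambda$, and then verify the implementation property for bounded permutations. For the regular case with $2^\lambda=\lambda^+$, note that $\prod\SC(\lambda)$ has size $2^\lambda=\lambda^+$ and $\jbd(\lambda)$ has true cofinality $\lambda$ on $\SC(\lambda)$ (since $|\SC(\lambda)|=\lambda$ is regular), so a diagonal/bookkeeping argument of length $\lambda^+$ produces a scale that is increasing and cofinal modulo $\jbd$; one threads through this construction the additional demand that each $f_\eta$ be injective (so $\rng f_\eta$ genuinely has size $\lambda$) and that $\{\rng f_\eta\}$ be almost disjoint, which is easy to arrange at each step since we need only avoid a bounded (hence $\jbd$-small) overlap with each of the $<\lambda^+$ previously chosen ranges — and $2^\lambda=\lambda^+$ is exactly what lets the bookkeeping close. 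For the singular case, I would invoke $\square^*_\lambda$ to obtain a scale with good points / a coherent system of clubs — this is the genuinely Bristol-group-flavored ingredient — the point being that $\square^*_\lambda$ supplies the coherence needed to simultaneously control all coordinates $\theta\in\SC(\lambda)$ when one wants to redefine finitely (or boundedly) many values of $f_\eta$.

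Once the scale is in hand, the permutable-family claim for $\{\rng f_\eta\mid\eta<\lambda^+\}$ should follow by transporting the argument of \autoref{prop:countable sep.\ family} / its $\kappa$-analogue through the scale: given a bounded $X\subseteq\lambda^+$ with $\sup X<\eta^*<\lambda^+$, the sets $\{f_\eta\mid\eta\in X\}$ are bounded below $f_{\eta^*}$ modulo $\jbd$, so on a tail of coordinates $\theta$ they take values below $f_{\eta^*}(\theta)$; using injectivity of each $f_\eta$ and the fact that distinct $f_\eta,f_{\eta'}$ agree only on a $\jbd$-small set, one refines coordinatewise to pairwise disjoint $B_\eta=^*\rng f_\eta$ contained in a fixed ``ceiling'' set, and then checks the covering clause — $\rng f_\xi\cap\bigcup_{\eta\in X}B_\eta$ infinite (indeed of size $\lambda$) iff $\xi\in X$ — by the usual argument: if $\xi\notin X$ then $f_\xi$ eventually dominates the ceiling and so $\rng f_\xi$ is almost disjoint from it. Here I would lean on the hypothesis $\lambda$ limit so that ``$\jbd$-small'' means bounded-below-$\lambda$, matching the $=^*$ relation on subsets of $\lambda$.

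The main obstacle, and the step deserving the most care, is the implementation property in the singular case: given a bounded permutation $\pi$ of $\lambda^+$ with domain $\eta^*$, I must produce $\vec\pi=\tup{\pi_\theta\mid\theta\in\SC(\lambda)}$ with $f_{\pi(\eta)}(\theta)=\pi_\theta(f_\eta(\theta))$ for all large $\theta$ and all $\eta<\eta^*$. Since $\pi$ permutes the $<\lambda^+$-many functions $\{f_\eta\mid\eta<\eta^*\}$ among themselves, one wants, on each coordinate $\theta$, a single permutation of $\theta$ sending $f_\eta(\theta)\mapsto f_{\pi(\eta)}(\theta)$; this is automatically well-defined and extendable to a bijection of $\theta$ *provided* the map $\eta\mapsto f_\eta(\theta)$ is injective on $\{\eta<\eta^*\}$ for a tail of $\theta$ — which is precisely a ``mutual genericity / separation at coordinate $\theta$'' requirement that one must build into the scale construction, and $\square^*_\lambda$ (via the coherent clubs through which one diagonalizes) is what makes it possible to secure this for a tail of $\theta$ uniformly over the whole bounded initial segment. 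In the regular case the analogous requirement is cheaper, since $2^\lambda=\lambda^+$ already gives enough room to separate, by a straightforward closing-off argument, and the implementation property then reduces to the same bijection-on-each-coordinate observation. I would present the regular case in full and then indicate the modifications for the singular case, citing the Bristol group's square-principle argument for the coherence lemma that drives it.
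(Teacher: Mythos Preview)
Your proposal has a genuine gap in the implementation step. You require that for a tail of $\theta\in\SC(\lambda)$ the map $\eta\mapsto f_\eta(\theta)$ be injective on \emph{all} of $\eta^*$. But every $\theta\in\SC(\lambda)$ satisfies $\theta<\lambda$, while $\eta^*$ can have cardinality $\lambda$ (any permutation of $\lambda$, viewed as an initial segment of $\lambda^+$, is a bounded permutation you must implement). For such $\eta^*$ no coordinate $\theta<\lambda$ can support an injection from $\eta^*$, so your coordinatewise bijection is never well-defined and the argument collapses. The definition of ``implements'' only asks that for each fixed $\eta$ the equation $f_{\pi(\eta)}(\theta)=\pi_\theta(f_\eta(\theta))$ holds on a tail of $\theta$ depending on $\eta$; you have tried to make this tail uniform in $\eta$, and that is impossible.

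The paper's proof addresses exactly this by filtering $\eta^*$ as an increasing union $\eta^*=\bigcup_{\mu\in\SC(\lambda)}X_\mu$ with $|X_\mu|<\mu$, and at coordinate $\mu$ only attempting to define $\pi_\mu$ on the values $\{f_\gamma(\mu):\gamma\in X_\mu,\ \mu>i(\gamma)\}$. Here $i\colon\eta^*\to\SC(\lambda)$ is a \emph{disjointifying function}: the tails $f_\gamma\restriction[i(\gamma),\lambda)$ for $\gamma<\eta^*$ have pairwise disjoint ranges. This is the precise consequence of $\square^*_\lambda$ that is needed (obtained via Cummings--Foreman--Magidor, not merely ``good points'' or ``coherent clubs'' in the abstract), and it is what makes the assignment $\xi\mapsto\xi^*$ (recovering $\gamma$ from $f_\gamma(\mu)$) single-valued. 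The same disjointifying property immediately yields the permutable-family claim for $\{\rng f_\eta\}$, so the tower-style argument you sketch is not needed. In the regular case the disjointifying property is available directly from regularity, and the construction of the scale is a straightforward diagonalization through an enumeration of $\prod\SC(\lambda)$; your bookkeeping outline there is fine, but the implementation argument still needs the filtration idea rather than uniform coordinatewise injectivity.
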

\begin{proof}
If $\lambda$ is regular, we enumerate $\prod\SC(\lambda)$, and by induction define a scale $\{f_\eta\mid\eta<\lambda^+\}$. If $\lambda$ is singular, then by the work of Cummings, Foreman and Magidor in \cite[Theorem~4.1]{CFM:2001}, there exists a scale $\{f_\eta\mid\eta<\lambda^+\}$ in $\prod\SC(\lambda)/\jbd$ with the property that whenever $\eta<\lambda^+$, there is a function $i\colon\eta\to\SC(\lambda)$ such that $\{f_\gamma"[i(\gamma),\lambda)\mid\gamma<\eta\}$ is a family of pairwise disjoint sets.\footnote{In their paper, Cummings, Foreman and Magidor construct such scale on a product of length $\cf(\lambda)$, the same proof works for any product of regular cardinals mutatis mutandis.} Note that in the case of $\lambda$ being regular, this property holds immediately from the regularity of $\lambda$. We shall call such $i\colon\eta\to\SC(\lambda)$ a disjointifying function for $\eta$. This already gives us the wanted property for $\{\rng f_\eta\mid\eta<\lambda^+\}$ to be a permutable family.

We will show that such a scale is indeed a permutable scale. Suppose that $\eta<\lambda^+$ and $\pi\colon\eta\to\eta$ is a permutation of $\eta$, let $i\colon\eta\to\SC(\lambda)$ be a disjointying function for $\eta$. We write $\eta$ as an increasing union of sets $X_\mu$ for $\mu\in\SC(\lambda)$, such that $|X_i|<\mu$. Let $Y_\mu$ be the set $\{\xi<\mu\mid\exists\gamma\in X_\mu: f_\gamma(\mu)=\xi\land\mu>i(\gamma)\}$. By the fact we have disjoint tails, we get that for every $\xi\in Y_\mu$ there is at most one $\gamma\in X_\mu$ witnessing that $\xi\in Y_\mu$. In particular, $|Y_\mu|<\mu$.

Given $\xi\in Y_\mu$, let $\xi^*$ be the unique $\gamma\in X_\mu$ such that $f_\gamma(\mu)=\xi$ and $\mu>i(\gamma)$. Define $\pi_\mu\colon Y_\mu\to Y_\mu$ as follows: For $\xi\in Y_\mu$ such that $\pi(\xi^*)\in X_\mu$ and $i(\pi(\xi^*))<\mu$, define $\pi_\mu(\xi)=f_{\pi(\xi^*)}(\mu)$, this definition is injective on the domain defined so far, so we can extend $\pi_\mu$ to a permutation of $\sup Y_\mu+1$.  

We claim now that $\vec\pi=\tup{\pi_\mu\mid\mu\in\SC(\lambda)}$ implements $\pi$. Suppose that $\gamma<\eta$ and $\overline\gamma=\pi(\gamma)$. For all sufficiently large $\mu$ we have that $\gamma,\overline\gamma\in X_\mu$. If $\mu>i(\gamma),i(\overline\gamma)$, then there are $\xi,\overline\xi\in Y_\mu$ such that $\xi^*=\gamma$ and $\overline\xi^*=\overline\gamma$, and then by the definition of $\pi_\mu$ we have that \[\pi_\mu(f_\gamma(\mu))=\pi_\mu(\xi)=f_{\pi(\gamma)}(\mu)=f_{\overline\gamma}=\overline\xi.\]
So indeed the scale is permutable.
\end{proof}
\begin{remark}
Examining the proof, we can see that we can implement $\pi\colon\lambda^+\to\lambda^+$ using $\vec\pi=\tup{\pi_\mu\mid\mu\in\SC(\lambda)}$ where $\pi_\mu$ is a bounded permutation of $\mu$. This will not be needed in our construction, though.
\end{remark}
Suppose that $\sG_\mu$ is a permutation group of $\mu$, for $\mu\in\SC(\lambda)$, and the full support product $\sG=\prod\sG_\mu$ is such that the $\vec\pi\in\sG$ are enough for witnessing that $F$ is a permutable scale.

We define the derived filter on $\sG$ as follows: first we define $K_{\eta,f}$, for $\eta<\lambda^+$ and $f\in\prod\SC(\lambda)$, to be the following group \[\{\vec\pi\in\sG\mid\iota(\vec\pi)\restriction\eta=\id\text{ and for all }\mu\in\SC(\lambda): \pi_\mu\restriction f(\mu)=\id\},\]
and let $\sF$ be the filter generated by $\{K_{\eta,f}\mid\eta<\lambda^+,f\in\prod\SC(\lambda)\}$.
\begin{proposition}
$\sF$ is a normal filter of subgroups.
\end{proposition}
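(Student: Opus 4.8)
The plan is to verify in turn that each $K_{\eta,f}$ is a subgroup of $\sG$, that these groups form a filter base closed under finite intersection (so $\sF$ is a proper filter of subgroups), and finally that $\sF$ is closed under conjugation. Throughout I use that, just as for permutable families, $\iota$ is a group homomorphism on $\sG$ onto the group of bounded permutations of $\lambda^+$: if $\vec\pi$ implements $\pi$ and $\vec\rho$ implements $\rho$, then for all sufficiently large $\theta\in\SC(\lambda)$ we have $(\vec\pi\vec\rho)_\theta(f_\eta(\theta))=\pi_\theta(\rho_\theta(f_\eta(\theta)))=\pi_\theta(f_{\rho(\eta)}(\theta))=f_{\pi(\rho(\eta))}(\theta)$, so $\iota(\vec\pi\vec\rho)=\iota(\vec\pi)\iota(\vec\rho)$, and likewise $\iota(\vec\pi^{-1})=\iota(\vec\pi)^{-1}$. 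Granting this, $K_{\eta,f}$ is a subgroup: it is the intersection of the preimage under $\iota$ of the subgroup $\set{\rho\mid\rho\restriction\eta=\id}$ of the permutation group of $\lambda^+$ with the full support product $\prod_{\theta\in\SC(\lambda)}\set{\pi_\theta\in\sG_\theta\mid\pi_\theta\restriction f(\theta)=\id}$ of subgroups of the $\sG_\theta$.

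For the filter-base property I would observe that $\vec\pi\in K_{\eta_1,f_1}\cap K_{\eta_2,f_2}$ precisely when $\iota(\vec\pi)\restriction\max(\eta_1,\eta_2)=\id$ and $\pi_\theta\restriction\max(f_1(\theta),f_2(\theta))=\id$ for every $\theta$; hence $K_{\eta_1,f_1}\cap K_{\eta_2,f_2}=K_{\eta_3,f_3}$ where $\eta_3=\max(\eta_1,\eta_2)<\lambda^+$ and $f_3(\theta)=\max(f_1(\theta),f_2(\theta))<\theta$, so $f_3\in\prod\SC(\lambda)$. Thus $\sF$, the closure of $\set{K_{\eta,f}\mid\eta<\lambda^+,\ f\in\prod\SC(\lambda)}$ under supergroups, is a filter of subgroups; it is proper because each $K_{\eta,f}$ is nontrivial: fix any $\theta\in\SC(\lambda)$, use $f(\theta)<\theta$ to choose a nontrivial permutation $\pi_\theta$ of $[f(\theta),\theta)$, extend it by the identity, and let $\vec\pi$ be the identity on all other coordinates. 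Such $\vec\pi$ implements the identity of $\lambda^+$ (for every $\eta$ and every $\theta'>\theta$ one has $(\vec\pi)_{\theta'}(f_\eta(\theta'))=f_\eta(\theta')$), so $\vec\pi\in\sG$, and clearly $\vec\pi\in K_{\eta,f}\setminus\{1\}$.

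For normality, fix $\vec\sigma\in\sG$ and a generator $K_{\eta,f}$; it suffices to find $\eta'<\lambda^+$ and $f'\in\prod\SC(\lambda)$ with $\vec\sigma K_{\eta',f'}\vec\sigma^{-1}\subseteq K_{\eta,f}$, since then $K_{\eta',f'}\subseteq\vec\sigma^{-1}K_{\eta,f}\vec\sigma$ and, by the supergroup closure, $\vec\sigma^{-1}H\vec\sigma\in\sF$ for every $H\in\sF$. Write $\sigma=\iota(\vec\sigma)$ and let $\delta<\lambda^+$ be a domain of the bounded permutation $\sigma$, so $\sigma$ restricts to a bijection of $[0,\delta)$ and fixes everything above $\delta$; put $\eta'=\max(\eta,\delta)$, whence $\sigma$ (and $\sigma^{-1}$) restricts to a bijection of $[0,\eta')$. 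For each $\theta\in\SC(\lambda)$, since $\theta$ is an uncountable regular cardinal the ordinals $\alpha<\theta$ with $\sigma_\theta"\alpha=\alpha$ form a closed unbounded subset of $\theta$; let $f'(\theta)$ be the least such $\alpha\geq f(\theta)$, so $f'(\theta)<\theta$ and $\sigma_\theta$ restricts to a bijection of $[0,f'(\theta))$. Now let $\vec\tau\in K_{\eta',f'}$. Since $\iota$ is a homomorphism, $\iota(\vec\sigma\vec\tau\vec\sigma^{-1})=\sigma\,\iota(\vec\tau)\,\sigma^{-1}$; as $\sigma$ maps $[0,\eta')$ onto itself while $\iota(\vec\tau)$ fixes $[0,\eta')$ pointwise, this conjugate fixes $[0,\eta')\supseteq[0,\eta)$ pointwise. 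Coordinatewise, $(\vec\sigma\vec\tau\vec\sigma^{-1})_\theta=\sigma_\theta\tau_\theta\sigma_\theta^{-1}$; as $\sigma_\theta$ maps $[0,f'(\theta))$ onto itself while $\tau_\theta$ fixes $[0,f'(\theta))$ pointwise, this fixes $[0,f'(\theta))\supseteq[0,f(\theta))$ pointwise. Hence $\vec\sigma\vec\tau\vec\sigma^{-1}\in K_{\eta,f}$, as needed.

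The only step that is more than routine bookkeeping is the choice of $f'$: one needs, for each $\theta\in\SC(\lambda)$, an ordinal below $\theta$ and above $f(\theta)$ that is closed as an initial segment under $\sigma_\theta$, and this is exactly where the regularity and uncountability of the successor cardinals in $\SC(\lambda)$ enters. If one prefers to invoke the earlier remark that a permutable scale is witnessed already by sequences $\vec\sigma$ whose components $\sigma_\theta$ are bounded permutations of $\theta$, one may simply take $f'(\theta)$ to be a bound of $\sigma_\theta$, and every step of the normality argument becomes immediate.
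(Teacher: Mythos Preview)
Your proof is correct and follows essentially the same approach as the paper: the filter-base property via pointwise maxima $\eta_3=\max(\eta_1,\eta_2)$ and $f_3=f_1\vee f_2$, and normality by passing to a larger $\eta'$ absorbing the domain of $\iota(\vec\sigma)$ together with, on each coordinate $\theta$, an ordinal above $f(\theta)$ closed under $\sigma_\theta$. The paper simply asserts one can find such $\alpha_\mu$ with $\pi_\mu{}''\alpha_\mu=\alpha_\mu$ and $\alpha_\mu>f(\mu)$, whereas you supply the justification via the club of $\sigma_\theta$-closed ordinals in the regular uncountable $\theta$; this is a welcome clarification rather than a different argument.
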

\begin{proof}
We have $K_{\eta,f}$ and $K_{\eta',f'}$ as defined above, let $\xi=\max\{\eta,\eta'\}$ and let $g$ be $f\vee f'$, the pointwise maximum of $f$ and $f'$, i.e.\ $g(\mu)=\max\{f(\mu),f'(\mu)\}$, then $K_{\xi,g}\subseteq K_{\eta,f}\cap K_{\eta',f'}$. 

To see we have the normality, let $\vec\pi$ be a sequence in $\sG$, and let $\pi$ be $\iota(\vec\pi)$. Since $\pi$ is bounded, there is some $\eta'$ such that $\pi$ only moves ordinals below $\eta'$. Moreover, for every $\mu\in\SC(\lambda)$ we can find $\alpha_\mu$ such that:\begin{enumerate}
\item $\pi_\mu"\alpha_n=\alpha_\mu$, and
\item $\alpha_\mu>f(\mu)$.
\end{enumerate}
Define $g(\mu)=\alpha_\mu$, and $\xi=\max\{\eta,\eta'\}$. Then for $\vec\sigma\in K_{\xi,g}$, it can be readily seen that $\vec\pi^{-1}\vec\sigma\vec\pi\in K_{\xi,g}$.\footnote{Here $\vec\pi^{-1}$ does mean the pointwise inverse.}
\end{proof}
We are now in possession of all the necessary tools for the general construction. Let us finish with one last piece of notation related to $\prod\SC(\lambda)$. If $f\in\prod\SC(\lambda)$, we write $f\da$ to denote the set $\{g\in\prod\SC(\lambda)\mid g(\mu)\leq f(\mu)\text{ for all }\mu\}$.
\section{Giant step: the general construction of the Bristol model}
\subsection{Overview}
The original construction of the Bristol model was using models of the form $L(x)$ for suitable $x$'s. The idea was to add one set after another, and argue from one step to another that the initial segments of the universe become increasingly constant. Then, to argue that the end result is a model which is not $L(x)$ for any set $x$. Using iterations of symmetric extensions we clarify some of the arguments used. We will use the generic sequences to define the next iterand, much like we defined from the $\omega_1$-sequence the second forcing which did not add reals.

We begin by fixing for $\omega$ and for all the successor cardinals permutable families, and permutable scales on $\SC(\lambda)/\jbd(\lambda)$ for every limit cardinal $\lambda$. We have five types of steps: the base step, which is essentially the baby step covered before; the double successor step, which is essentially a retreading of the baby step, replacing $\omega$ by the suitable cardinal; the limit step, where we mainly have to verify that the sequences we have collected so far form a symmetrically generic filter; the limit iterand, where we take the same idea from the other successor case, using the permutable scale to generate the next sequence and overcome the problem of not having sufficient distributivity, while not adding bounded sets; and the successor of the limit step, which is similar to the general successor step, but some fundamental changes must occur due to the nature of the generic sequence of the limit iterand.

In the construction that follows, we separate the cases of $\omega$ (arriving at the $\omega$th iterand, the $\omega$th iterand; and the $\omega+1$th iterand). This will help clarify the arbitrary limit steps, much like our baby step investigation of the first two steps works to clarify the general successor steps.

For every non-limit $\alpha$, we fix a permutable family $\{A^\alpha_\eta\mid\eta<\omega_{\alpha+1}\}$ for $\omega_\alpha$; and for every limit $\alpha$, we fix a permutable scale $F^\alpha=\{f^\alpha_\eta\mid\eta<\omega_{\alpha+1}\}$ in $\prod\SC(\omega_\alpha)/\jbd(\omega_\alpha)$. We will use $\varrho_\alpha$ to denote the generic object added during the $\alpha$th step (so $\varrho_0$ is the Cohen real, $\varrho_1$ is the $\omega_1$-sequence, etc.), $\dot\varrho_\alpha$ Is the canonical $\PP_{\alpha+1}$-name for $\varrho_\alpha$, and $[\dot\varrho_\alpha]_\alpha$ as the canonical $\PP_\alpha$-name of a $\dot\QQ_\alpha$-name for $\varrho_\alpha$.
\subsection{Induction hypothesis}
As is often the case with complicated constructions which have several separated cases, we will have to carry a complicated induction hypothesis. In our case, it is helpful to think about the induction hypothesis as nearly irrelevant for the limit case, and almost entirely about the relationship between $\PP_\alpha$, the iteration so far, and $\dot\QQ_\alpha$, the next iterand.

Assume that we defined $\PP_\alpha$, we assume the following conditions hold:
\begin{enumerate}
\item $\PP_\alpha$ has $\aleph_\alpha$-c.c., and $\tup{\dot\QQ_\beta,\sG_\beta,\sF_\beta\mid\beta<\alpha}$ is a productive iteration.
\item For all $\beta<\alpha$, the following conditions hold:
\begin{enumerate}
\item $\forces_\beta^\IS\tup{\dot\QQ_\beta,\sG_\beta,\sF_\beta}$ is a weakly homogeneous symmetric system.
\item $\PP_\beta\ast\dot\QQ_\beta$ is upwards homogeneous, and $\cG_\beta$ witnesses that.
\item $\forces_\beta\dot\varrho_\beta$ is $\IS_\beta$-generic for $\dot\QQ_\beta$.
\end{enumerate}
\item If $\beta$ is non-limit, then we also assume:
\begin{enumerate}
\item Every condition appearing in $\dot\QQ_\beta$ is a name for a function whose domain is a bounded subset of $\omega_\beta$; $\sG_\beta$ is the derived permutation group on $\omega_\beta$, and $\sF_\beta$ is the derived filter (using the permutable family fixed).
\item $\forces_{\beta+1}\dot\varrho_\beta$ is a function whose domain is $\check\omega_\beta$, and for every $A\in\jbd(\omega_\beta)^L$, $\dot\varrho_\beta\restriction A\in\IS_{\beta+1}$.
\item If $\gaut{\vec\pi}\in\cG_{\beta+1}$, then for all $A\in\jbd(\omega_{\beta+1})^L$, $\gaut{\vec\pi}\dot\varrho_{\beta+1}\restriction A=\gaut{\pi_\beta}\dot\varrho_{\beta+1}\restriction A$.
\item If $\beta=\gamma+1$, then $\forces_\beta^\IS\dot\QQ_\beta$ is $\leq|\dot V_{\omega+\gamma}|$-distributive.
\end{enumerate}
\item If $\beta$ is a limit ordinal, then we also assume:
\begin{enumerate}
\item For every $f\in\prod\SC(\omega_\alpha)$, and $\delta<\alpha$, there is a canonically identified name in $\IS_\alpha$ for $\tup{\dot\varrho_\beta(f(\beta))\mid\beta<\delta}^\bullet$, specifically, we can identify this name from $f$ and $\delta$.
\item $\forces_\beta\dot\varrho_\beta$ is the full support product $\prod_{\gamma<\beta}\dot\varrho_{\gamma+1}$.
\item Let $\dot\varrho_\beta\restriction(E,D)=\tup{\dot\varrho_{\gamma+1}(f(\gamma+1))\mid f\in E,\gamma\in D}$ for $E\subseteq\prod\SC(\omega_\beta)$ and $D\subseteq\beta$. For every $A\in\jbd(\omega_\beta)^L$ and bounded $E\subseteq\prod\SC(\omega_\beta)^L$, $\dot\varrho_\beta\restriction(E,D)\in\IS_\beta$.
\item $\sG_\beta$ is the full support product $\prod_{\gamma<\beta}\sG_{\gamma+1}$, and $\sF_\beta$ is the derived filter.
\item If $\dot x\in\IS_{\beta+1}$, and $\forces_{\beta+1}^\IS\rank(\dot x)<\check\omega+\check\beta$, then there is some $p\in\PP_{\beta+1}$ and $\dot y\in\IS_\beta$ such that $p\forces_{\beta+1}\dot x=\dot y$.
\end{enumerate}
\end{enumerate}
Note that our permutations are all coming from the ground model, as do our filters. We will therefore omit the dots when talking about them, even generically. Namely, we will write $H_\beta$ or $\pi_\beta$ to denote the $\beta$th coordinate of some excellent support $\vec H$ or permutation sequence $\vec\pi$. This will clarify some of the notation and improve the readability. This also means that to verify that the $\alpha$th symmetric system satisfies the productivity clause we only need to check this for $\dot\QQ_\alpha$.\medskip

The productive structure here will be akin to adding Cohen subsets to every successor cardinal, with a full-support product at limit iterand where the permutable scales are utilized to ensure that the coding process can be continued without adding bounded sets.

\subsection{Basis}
The first step is just the baby step. We define $\QQ_0$ to be the Cohen forcing, $\sG_0$ as the group of permutations of $\omega$ which implement a bounded permutation of $\omega_1$ via the permutable family $\{A^0_\eta\mid\eta<\omega_1\}$ and so on. Here $\dot\varrho_0$ is just the canonical name for the Cohen real.

The only minor difference between the baby step part and here is that instead of taking $\power(\omega)^{L[c_\alpha]}$ we take $V_{\omega+1}^{L[c_\alpha]}$, but the translation between the two names is very canonical and of no consequence to the definition. 

\subsection{Successor of non-limits}
Suppose that we have constructed the forcing up to the $\alpha$th step, and $\alpha=\beta+1$ with $\beta$ being a successor or $0$ itself. We have defined $\PP_\alpha=\PP_\beta\ast\dot\QQ_\beta$, as well $\cG_\alpha,\cF_\alpha$ and $\IS_\alpha$ are defined. We shall proceed to define $\dot\varrho_\alpha$ and $\dot\QQ_\alpha,\sG_\alpha$ and $\sF_\alpha$ and prove that they all satisfy the induction hypothesis.

For every $\eta<\omega_\alpha$, we say that $\dot x\in\IS_\alpha$ is almost an $A^\beta_\eta$-name if there is some $A\subseteq\omega_\beta$ such that $A=^*A_\eta^\beta$ and $\dot x$ is a $\PP_\beta\ast(\dot\QQ_\beta\restriction A)$-name. For $\eta<\omega_\alpha$ we define \[\dot R_\eta=\left\{\dot x\in\IS_\alpha\middd\begin{array}{l}
\dot x\text{ is an almost }A^\beta_\eta\text{-name of rank }\leq\omega+\alpha\text{, and}\\
\text{every name which appears in }\dot x\text{ is a }\PP_\beta\text{-name}
\end{array}\right\}^\bullet.\]
Note that this would be exactly the $V_{\omega+\alpha+1}$ of the model obtained by forcing with just $\QQ_\beta\restriction A^\beta_\eta$ over the intermediate model obtained by $\IS_\beta$.

It follows from the induction assumptions that if $\gaut{\vec\pi}\in\cG_\alpha$ and $\pi_\beta$ implements a permutation $\sigma$ of $\omega_\alpha$, then $\gaut{\vec\pi}\dot R_\eta=\dot R_{\sigma(\eta)}$. To see why, first note that if $\dot x$ is an almost $A^\beta_\eta$-name, then $\gaut{\vec\pi}\dot x$ is also an almost $A^\beta_{\sigma(\eta)}$-name; and the result follows. In particular, if $\pi_\beta$ implements $\sigma$ such that $\sigma(\eta)=\eta$, then $\gaut{\vec\pi}\dot R_\eta=\dot R_\eta$. So we get that $\dot R_\eta\in\IS_\alpha$.

Let $\dot\varrho_\alpha=\tup{\dot R_\eta\mid\eta<\omega_\alpha}^\bullet$. By the above, and the assumption on the $\beta$th definition of $\sG_\beta$ and $\sF_\beta$ for every $A\in\jbd(\omega_\alpha)^L$ we have that $\dot\varrho_\alpha\restriction A\in\IS_\alpha$. Now we define $\dot\QQ_\alpha$, ordered by reverse inclusion, \[\dot\QQ_\alpha=\left\{\gaut{\vec\pi}\dot\varrho_\alpha\restriction A\mid\gaut{\vec\pi}\in\cG_\alpha, A\in\jbd(\omega_\alpha)^L\right\}^\bullet.\] 

Next, as we required, we define $\sG_\alpha$ to be the derived permutation group of $\omega_\alpha$; since the names in $\dot\QQ_\alpha$ are all $\bullet$-names for sequences we have a canonical way for defining the names for how such permutation acts on $\dot\QQ_\alpha$. For $\sF_\alpha$ we define it to be the derived filter of subgroups in $\IS_\alpha$. It is immediate that $\tup{\dot\QQ_\alpha,\sG_\alpha,\sF_\alpha}^\bullet$ satisfy the conditions of a productive iteration. Let us verify that the induction hypothesis holds for $\alpha+1$.
\begin{enumerate}
\item The chain condition of $\PP_\alpha\ast\dot\QQ_\alpha$ follows from the fact that in $L[\varrho_0]$ we can embed $\QQ_\alpha$ into the forcing that adds $\omega_\alpha$-Cohen subsets which has $\aleph_{\alpha+1}$-c.c.
\item The fact that $\forces_\alpha^\IS\sG_\alpha\text{ witnesses the homogeneity of }\dot\QQ_\alpha$ is also quite trivial, since given any two conditions $\dot q=\gaut{\vec\pi}\dot\varrho_\alpha\restriction A$ and $\dot q'=\gaut{\vec\tau}\dot\varrho_\alpha\restriction B$ we can find $\sigma$ a bounded permutation of $\omega_{\alpha+1}$ such that if $\dot R_\eta$ lies in the range of both conditions, with $\dot q(\check\gamma)=\dot R_\eta$, then $\dot q'(\sigma(\gamma))=\dot R_\eta$; and otherwise $\sigma$ makes the domains of the two conditions disjoint.

Since every bounded permutation of $\omega_{\alpha+1}$ can be implemented by some permutation of $\omega_\alpha$, we can find some $\pi_\alpha\in\sG_\alpha$ such that $\forces_\alpha\gaut{\pi_\alpha}\dot q$ and $\dot q'$ are compatible.
\item Similar to before, we get upwards homogeneity by noting that we can obtain any bounded permutation of $\omega_\alpha$ via an automorphism in $\sG_\beta$. Therefore we can make any two conditions compatible. Moreover, given an arbitrary $p\in\PP_\alpha$ we can require it to be fixed, simply by noting that we only need to apply a permutation from $\sG_\beta$ so $p\restriction\beta$ can be fixed by default; and $p(\beta)$ has a bounded domain, which we can always assume is part of one of the first set in the disjoint approximation which we use to implement the permutation.
\end{enumerate}
It remains to prove that no sets of rank $<\omega+\alpha$ were added and that $\varrho_\alpha$ is indeed a generic sequence for $\QQ_\alpha$ over the intermediate model. Both of these claims will follow from a generalization of \autoref{lemma:initial segment}.
\begin{lemma}\label{lemma:initial segment general}
Suppose that $\dot D\in\IS_\alpha$ and $p\forces_\alpha^\IS\dot D$ is a dense open subset of $\dot\QQ_\alpha$. Then there is some $\eta<\omega_\alpha$ such that for every $\gaut{\vec\pi}\in\cG_\alpha$ and every $A\in\jbd(\omega_\alpha)$ with $\eta\subseteq A$, if $p\forces_\alpha^\IS\gaut{\vec\pi}\dot\varrho_\alpha\restriction A\in\dot D$ and $\gaut{\vec\tau}$ is such that $p\forces_\alpha^\IS\gaut{\vec\pi}\dot\varrho_\alpha\restriction\eta=\gaut{\vec\tau}\dot\varrho_\alpha\restriction\eta$, then $p\forces_\alpha^\IS\gaut{\vec\tau}\dot\varrho_\alpha\restriction A\in\dot D$.
\end{lemma}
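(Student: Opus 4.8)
The plan is to mimic the argument of \autoref{lemma:initial segment} from the baby step, replacing the single permutation group $\sG$ acting on $\CC$ by the generic semi-direct product $\cG_\alpha$ acting on $\PP_\alpha$, and replacing $\dot\sigma$ by $\dot\varrho_\alpha$. First I would unpack what it means for $\dot D\in\IS_\alpha$: by definition there is an excellent support $\vec H$ witnessing that $\dot D$ is $\cF_\alpha$-respected, so $\forces_\alpha\gaut{\vec\rho}\dot D=\dot D$ for every $\gaut{\vec\rho}\in\vec H$. Since $\alpha=\beta+1$ and the earlier coordinates $\dot H_\gamma$ for $\gamma<\beta$ are fixed once and for all, the only ``moving part'' of $\vec H$ relevant to the conclusion is the $\beta$th coordinate $\dot H_\beta$, which (by the induction hypothesis on the definition of $\sF_\beta$ as the derived filter) contains $\fix(\cB)$ for some disjoint approximation $\cB=\{B_\xi\mid\xi<\eta\}$ of $\{A^\beta_\xi\mid\xi<\eta\}$, for some $\eta<\omega_\alpha$. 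This $\eta$ is the ordinal we want; intuitively, membership of a condition in $\dot D$ can only depend on $\dot\varrho_\alpha\restriction\eta$ up to the action of $\sym(\dot D)$, because anything ``above $\eta$'' is washed out by $\fix(\cB)\leq\sym(\dot D)$.

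Next I would carry out the transfer argument. Given $\gaut{\vec\pi}$, $A\supseteq\eta$, and $\gaut{\vec\tau}$ as in the hypothesis, the assumption $p\forces_\alpha^\IS\gaut{\vec\pi}\dot\varrho_\alpha\restriction\eta=\gaut{\vec\tau}\dot\varrho_\alpha\restriction\eta$ says that $\iota(\pi_\beta)$ and $\iota(\tau_\beta)$ agree on $\eta$ (here I use the induction hypothesis that $\gaut{\vec\pi}$ acts on $\dot\varrho_\alpha\restriction A$ only through its $\beta$th coordinate $\pi_\beta$, which implements some bounded permutation of $\omega_\alpha$, together with the earlier observation that if $\iota(\pi)$ and $\iota(\tau)$ agree on a set then $\pi\dot\varrho_\alpha\restriction(\text{that set})=\tau\dot\varrho_\alpha\restriction(\text{that set})$). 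Then $\tau_\beta\pi_\beta^{-1}$ implements a permutation that is the identity on $\eta$, so by the permutable-family machinery I can choose $\pi_\beta'\in\sG_\beta$ with $\iota(\pi_\beta')=\iota(\tau_\beta\pi_\beta^{-1})$ and, crucially, $\pi_\beta'\in\fix(\cB)$ and $\pi_\beta' p=p$ --- the latter two by upwards homogeneity and the freedom, noted in the baby step remark, that implementing a permutation is insensitive to finite (here, bounded) changes, so one can route the support of $p(\beta)$ and the sets of $\cB$ into disjoint pieces. Setting $\gaut{\vec\pi'}$ to be the automorphism with $\beta$th coordinate $\pi_\beta'$ and identity elsewhere, apply $\gaut{\vec\pi'}$ to $p\forces_\alpha^\IS\gaut{\vec\pi}\dot\varrho_\alpha\restriction A\in\dot D$; by the Symmetry Lemma (the $\forces^\IS$ version) this becomes $\gaut{\vec\pi'}p\forces_\alpha^\IS\gaut{\vec\pi'}\gaut{\vec\pi}\dot\varrho_\alpha\restriction A\in\gaut{\vec\pi'}\dot D$, which simplifies to $p\forces_\alpha^\IS\gaut{\vec\tau}\dot\varrho_\alpha\restriction A\in\dot D$, using $\gaut{\vec\pi'}p=p$, $\gaut{\vec\pi'}\dot D=\dot D$, and $\iota(\pi_\beta'\pi_\beta)\restriction A=\iota(\tau_\beta)\restriction A$ so that the two conditions agree on $A$.

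The main obstacle I expect is bookkeeping around the iterated, generic semi-direct product structure rather than any genuinely new idea: one must be careful that ``$\dot D\in\IS_\alpha$'' really does yield a ground-model disjoint approximation $\cB$ (this is where the productivity clause, that $1_\beta$ decides $\dot H\in\dot\sF_\beta$ and that $\sF_\beta$ is literally the ground-model derived filter, is doing the work, so the $\eta$ can be extracted without appealing to a generic); and that the composition $\gaut{\vec\pi'}\gaut{\vec\pi}$ genuinely has $\beta$th ``effect'' $\iota(\pi_\beta'\pi_\beta)$ on $\dot\varrho_\alpha\restriction A$, which needs \autoref{prop:two-step-conj} (or its iterated analogue) plus the induction hypothesis (3c) that only the $\beta$th coordinate matters for the action on $\dot\varrho_\alpha\restriction A$. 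A secondary subtlety is checking that $\pi_\beta'$ can be chosen simultaneously in $\fix(\cB)$ and fixing $p$; this is exactly the upwards-homogeneity part of the induction hypothesis for $\PP_\beta\ast\dot\QQ_\beta$ together with the bounded-domain structure of conditions in $\dot\QQ_\beta$, and it is the place where the more elaborate ``disjoint approximation'' definition of the filter (as opposed to the naive $\fix(A)$) pays off, exactly as explained in the remark after the baby-step construction.
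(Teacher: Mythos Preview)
Your proposal is correct and follows essentially the same approach as the paper's own proof: choose $\eta$ from a disjoint approximation $\cB$ with $\fix(\cB)\leq H_\beta$ in an excellent support $\vec H$ for $\dot D$ (and $p$), reduce the action on $\dot\varrho_\alpha\restriction A$ to the $\beta$th coordinate via induction hypothesis (3c), and then run the baby-step argument of \autoref{lemma:initial segment} verbatim to produce the correcting $\pi'_\beta\in\fix(\cB)$ with $\pi'_\beta p=p$ and apply the Symmetry Lemma. The paper's proof is in fact terser than yours---it simply says ``from here the proof is identical to the proof of \autoref{lemma:initial segment}''---so you have correctly unpacked the bookkeeping (productivity giving a ground-model $\cB$, \autoref{prop:two-step-conj} for the composition, upwards homogeneity for fixing $p$) that the paper leaves implicit.
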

\begin{proof}
The proof is quite similar to the proof of \autoref{lemma:initial segment}. Assume without loss of generality that every name which appears in $\dot D$ also appears in $\dot\QQ_\alpha$. Let $\vec H$ be an excellent support for $\dot D$ and $p$, and $\cB=\{B_\xi\mid\xi<\eta\}$ is a disjoint approximation such that $\fix(\cB)\leq H_\beta$.

By the assumption on $\dot D$ we get that it is enough to consider only automorphisms in $\sG_\beta$. From here the proof is identical to the proof of \autoref{lemma:initial segment}.
\end{proof}
In a similar fashion, these two are corollaries of \autoref{lemma:initial segment general} in the same way we derived the two similar corollaries from \autoref{lemma:initial segment}
\begin{corollary}
$\forces_\alpha\dot\varrho_\alpha$ is $\IS_\alpha$-generic for $\dot\QQ_\alpha$.\qed
\end{corollary}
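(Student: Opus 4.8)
The plan is to transcribe, almost word for word, the argument by which the baby-step \autoref{lemma:initial segment} gave the $M$-genericity of $\sigma$ for $\QQ$, now feeding in \autoref{lemma:initial segment general} in its place. First I would reduce the statement to a density claim: to show $\forces_\alpha\dot\varrho_\alpha$ is $\IS_\alpha$-generic for $\dot\QQ_\alpha$ it suffices to prove that for every $\dot D\in\IS_\alpha$ with $\forces_\alpha^\IS\dot D$ a dense open subset of $\dot\QQ_\alpha$, the set of conditions $p$ for which there is some $A\in\jbd(\omega_\alpha)^L$ with $p\forces_\alpha^\IS\dot\varrho_\alpha\restriction A\in\dot D$ is dense in $\PP_\alpha$. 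Once this is established, in any ($V$-generic, equivalently symmetrically $V$-generic) $G$, such a dense set meets $G$, so the filter on $\dot\QQ_\alpha$ read off from $\varrho_\alpha$ meets every dense open subset of $\dot\QQ_\alpha$ lying in $\IS_\alpha^G$, which is exactly genericity.

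So fix such a $\dot D$ and an arbitrary $p\in\PP_\alpha$, and let $\eta<\omega_\alpha$ be as supplied by \autoref{lemma:initial segment general}. Since $\dot\varrho_\alpha\restriction\eta\in\IS_\alpha$ is forced to be a condition of $\dot\QQ_\alpha$ (it is $\gaut{\vec\tau}\dot\varrho_\alpha\restriction\eta$ for $\vec\tau$ the identity) and $\dot D$ is forced dense open, I would extend $p$ and assume without loss of generality that there are $\gaut{\vec\pi}\in\cG_\alpha$ and $A\in\jbd(\omega_\alpha)^L$, both lying in $L$, with $\eta\subseteq A$, such that $p\forces_\alpha^\IS\gaut{\vec\pi}\dot\varrho_\alpha\restriction A\in\dot D$ and $p\forces_\alpha^\IS\gaut{\vec\pi}\dot\varrho_\alpha\restriction A\leq_{\dot\QQ_\alpha}\dot\varrho_\alpha\restriction\eta$: every condition of $\dot\QQ_\alpha$ has the displayed form $\gaut{\vec\pi}\dot\varrho_\alpha\restriction A$, and $\dot D$ is open, so the witness to density below $\dot\varrho_\alpha\restriction\eta$ may be taken of this shape. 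Reading the second forcing statement coordinatewise, $\gaut{\vec\pi}\dot\varrho_\alpha\restriction A$ end-extends $\dot\varrho_\alpha\restriction\eta$ as a function, so restricting to domain $\eta$ gives $p\forces_\alpha^\IS\gaut{\vec\pi}\dot\varrho_\alpha\restriction\eta=\dot\varrho_\alpha\restriction\eta=\gaut{\vec\tau}\dot\varrho_\alpha\restriction\eta$ with $\vec\tau$ the identity. Now \autoref{lemma:initial segment general}, applied with this $\vec\tau$ and using $\eta\subseteq A$, yields $p\forces_\alpha^\IS\gaut{\vec\tau}\dot\varrho_\alpha\restriction A=\dot\varrho_\alpha\restriction A\in\dot D$, which is the desired membership; running this below every $p$ gives the density, and hence the corollary.

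The one point that needs care — and it is the same one already negotiated in the baby step — is the bookkeeping of \emph{which} condition forces \emph{which} membership: the $\eta$ extracted from \autoref{lemma:initial segment general} for $\dot D$ and $p$ must still work after passing to the denser condition that witnesses density of $\dot D$. The proof of the lemma produces $\eta$ merely as the length of a disjoint approximation $\cB$ with $\fix(\cB)\leq H_\beta$ for an excellent support $\vec H$ of $\dot D$; the only extra demand on a condition is that the bounded domain of its last coordinate be covered by $\cB$, which — by upwards homogeneity of $\PP_\beta\ast\dot\QQ_\beta$, exactly as exploited in the verification of the homogeneity clauses above — can always be steered into the first block of the disjoint approximation. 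So one fixes $\eta$ depending only on $\dot D$ (up to this harmless enlargement of $\cB$), and then the conclusion of the lemma is available for the extension of $p$ as well. Everything else is a routine transcription of the baby-step corollary, so I expect no genuine obstacle beyond this piece of bookkeeping.
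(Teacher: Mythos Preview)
Your proposal is correct and follows essentially the same route as the paper: the paper simply marks this corollary with \qed and states that it follows from \autoref{lemma:initial segment general} ``in the same way we derived the two similar corollaries from \autoref{lemma:initial segment}'', i.e., exactly the baby-step argument you transcribe. Your extra paragraph on the bookkeeping of $\eta$ is a fair point to make explicit, and your resolution---that $\eta$ is determined by a support for $\dot D$ while the finite condition can always be accommodated by the choice of implementing automorphism---is the right one, matching how the baby-step proof of \autoref{lemma:initial segment} arranges $\pi' p=p$.
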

\begin{corollary}
$\forces^\IS_\alpha\dot\QQ_\alpha$ is $\leq|V_{\omega+\beta}|$-distributive.
\end{corollary}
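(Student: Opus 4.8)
The plan is to run the proof of \autoref{cor:baby reals} essentially verbatim, with \autoref{lemma:initial segment general} in place of \autoref{lemma:initial segment}; the only genuinely new point is a bit of cardinal-arithmetic bookkeeping. Two facts about the ground model will be used throughout: since $V=L$, $\GCH$ gives $|V_{\omega+\beta}|=\aleph_\beta$, and since $\alpha=\beta+1$ is a successor ordinal, $\omega_\alpha$ is a successor cardinal, hence regular; in particular $|V_{\omega+\beta}|<\omega_\alpha$.

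So let $\dot f\in\IS_\alpha$ with $p\forces^\IS_\alpha\dot f$ enumerating a sequence $\tup{\dot D_i\mid i<\check\theta}$ of dense open subsets of $\dot\QQ_\alpha$, where $\theta\leq|V_{\omega+\beta}|$; I must show that $p$ forces $D=\bigcap_{i<\theta}\dot D_i$ to be dense ($D$ is automatically open, being an intersection of open sets). First I would fix an excellent support $\vec H$ for $p$ and $\dot f$ and, exactly as in \autoref{cor:baby reals}, choose the names $\dot D_i$ (canonically derived from $\dot f$) so that the same $\vec H$ supports each of them. Running the proof of \autoref{lemma:initial segment general} with this $\vec H$ then produces a single ordinal $\eta<\omega_\alpha$ that serves all the $\dot D_i$ at once — and, as in the lemma's proof, below an arbitrary condition of $\PP_\alpha$.

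Now fix any condition $\gaut{\vec\pi}\dot\varrho_\alpha\restriction A$ of $\dot\QQ_\alpha$ and enlarge $A$ so that $\eta\subseteq A$. For each $i<\theta$, density of $\dot D_i$ below $\gaut{\vec\pi}\dot\varrho_\alpha\restriction A$ gives a maximal antichain $\{p_{i,j}\mid j<\lambda_i\}$ below $p$, ordinals $\xi_{i,j}<\omega_\alpha$ with $A\subseteq\xi_{i,j}$, and automorphisms $\gaut{\vec\tau_{i,j}}\in\cG_\alpha$, such that $p_{i,j}\forces^\IS_\alpha\gaut{\vec\tau_{i,j}}\dot\varrho_\alpha\restriction\xi_{i,j}\in\dot D_i$ and $\gaut{\vec\tau_{i,j}}\dot\varrho_\alpha\restriction\xi_{i,j}\leq_{\dot\QQ_\alpha}\gaut{\vec\pi}\dot\varrho_\alpha\restriction A$. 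Since any two extensions of $\gaut{\vec\pi}\dot\varrho_\alpha\restriction A$ of domain $\xi_{i,j}$ agree with it on $A\supseteq\eta$, \autoref{lemma:initial segment general} (applied with domain $\xi_{i,j}$) shows that $p_{i,j}$ forces every extension of $\gaut{\vec\pi}\dot\varrho_\alpha\restriction A$ of domain $\xi_{i,j}$ to lie in $\dot D_i$; by openness of $\dot D_i$ and maximality of the antichain this upgrades to: $p$ forces every extension of $\gaut{\vec\pi}\dot\varrho_\alpha\restriction A$ of domain $\xi_i:=\sup_{j<\lambda_i}\xi_{i,j}$ to lie in $\dot D_i$. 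Finally, with $\xi:=\sup_{i<\theta}\xi_i$, every extension of $\gaut{\vec\pi}\dot\varrho_\alpha\restriction A$ of domain $\xi$ is forced into all of the $\dot D_i$ simultaneously, so $D$ is dense below $\gaut{\vec\pi}\dot\varrho_\alpha\restriction A$; since this condition was arbitrary, $D$ is dense in $\dot\QQ_\alpha$.

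The single step that is not a transcription of \autoref{cor:baby reals}, and the one place the induction hypothesis is actually consumed, is the verification that $\xi<\omega_\alpha$ — without which $\dot\varrho_\alpha\restriction\xi$ would not name a condition of $\dot\QQ_\alpha$. For this I would invoke the $\aleph_\alpha$-c.c.\ of $\PP_\alpha$ from clause (1) of the induction hypothesis to get $\lambda_i<\omega_\alpha$, whence $\xi_i<\omega_\alpha$ by regularity of $\omega_\alpha$, and then the bound $\theta\leq|V_{\omega+\beta}|<\omega_\alpha$ together with regularity of $\omega_\alpha$ once more to conclude $\xi<\omega_\alpha$. I expect no difficulty beyond this; the rest is bookkeeping already carried out in \autoref{cor:baby reals}.
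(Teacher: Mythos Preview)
Your proposal is correct and follows essentially the same approach as the paper: both arguments reduce to \autoref{cor:baby reals} via \autoref{lemma:initial segment general}, using that a support for $\dot f$ uniformly supports all the $\dot D_i$, the $\aleph_\alpha$-c.c.\ of $\PP_\alpha$, and the regularity of $\omega_\alpha$. The only cosmetic difference is that the paper computes $|V_{\omega+\beta}|=\aleph_\beta$ in $L[\varrho_0]$ rather than in $L$, but since Cohen forcing preserves cardinals and $\GCH$ above $\aleph_0$ this comes to the same thing.
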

\begin{proof}
Suppose that $\dot f\in\IS_\alpha$ is a function such that $\dom\dot f$ is $\dot V_{\omega+\beta}$ and $\forces_\alpha\forall x\,\dot f(x)=\dot D_x$ is a dense open subset of $\dot\QQ_\alpha$. Then in $L$, using the chain conditions and the fact that $\dot V_{\omega+\beta}^{\IS_\alpha}$ has cardinality of $\aleph_\beta$ in $L[\varrho_0]$, the proof continues as with \autoref{cor:baby reals}, utilizing the chain condition of $\PP_\alpha$ and the regularity of $\omega_\alpha$, along with the fact that any support for $\dot f$ will invariably support all the $\dot D_x$ uniformly.
\end{proof}
\begin{remark}
It is the same case as with the baby step that we can appeal to absoluteness of distributivity here. In $L[\varrho_0]$ the forcing $\QQ_\alpha$ is $\aleph_\alpha$-distributive, as $\varrho_0$ is a Cohen real and $\QQ_\alpha$ is isomorphic to $\Add(\omega_\alpha,1)^L$. If $\{D_x\mid x\in V_{\omega+\beta}\}$ is a family of dense open set in the intermediate model, then in $L[\varrho_0]$ it has cardinality of $\aleph_\beta$ and therefore its intersection is dense in $\QQ_\alpha$. But the intersection is not dependent on the enumeration, just on the family, so it also lies in our intermediate model.

As before, this would have simplified some of the argument needed in \autoref{lemma:initial segment general} to obtain the genericity of $\varrho_\alpha$, but not sufficiently so to be worth of our effort.\footnote{We will spoil the surprise, and say that this argument also holds for successor of limit ordinals. Now we can skip making the same remark for a third and fourth time.}
\end{remark}
\subsection{To infinity...}
While there is no actual difference between the case where $\alpha=\omega$ and arbitrary limit steps, since we have to address all previous steps in the construction, we invariably need to address previous limit steps. Without the understanding the definitions of the limit stages, this becomes awkward, so in favor of bootstrapping, it is better to separate the cases $\omega$ and $\omega+1$.\medskip

First we need to verify that $\tup{\varrho_n\mid n<\omega}$ is symmetrically generic for $\PP_\omega$.

\begin{proposition}\label{prop:omega-sym-density}
Suppose that $D\subseteq\PP_\omega$ is a symmetrically dense open set, then there is a sequence $\tup{\beta_n\mid n<\omega}$ such that $\tup{\dot\varrho_n\restriction\beta_n\mid n<\omega}\in D$.
\end{proposition}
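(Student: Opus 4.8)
The plan is to mimic the genericity argument from the baby step, specifically the proof of the corollary following \autoref{lemma:initial segment}, but now threading it through all finitely many relevant coordinates of the finite-support iteration $\PP_\omega$. First I would unpack what a symmetrically dense open $D$ looks like: by definition there is an excellent support $\vec H=\tup{H_n\mid n<\omega}$ with $C(\vec H)$ finite, say $C(\vec H)\subseteq N$ for some $N<\omega$, such that $\gaut{\vec\pi}$ fixes $D$ setwise whenever $\vec\pi\in\vec H$. For each $n<N$ the component $H_n\in\sF_n$, hence (working below $1_n$, which decides everything relevant by the productivity clauses) there is a disjoint approximation $\cB^n=\{B^n_\xi\mid\xi<\beta_n\}$ with $\fix(\cB^n)\leq H_n$; for $n\geq N$ set $\beta_n=0$ (or $\omega_n$-trivial). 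This $\tup{\beta_n\mid n<\omega}$ is the candidate witness; note only finitely many $\beta_n$ are nonzero, so $\tup{\dot\varrho_n\restriction\beta_n\mid n<\omega}$ is a genuine condition in $\PP_\omega$.

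Next I would argue that this condition meets $D$. Since $D$ is dense open in $\PP_\omega$, pick $p\leq\tup{\dot\varrho_n\restriction\beta_n\mid n<\omega}$ with (a condition forced to be) in $D$; the point is that $p$ is obtained from the candidate by extending each coordinate, and each such extension at coordinate $n$ looks like $\gaut{\vec\pi^n}\dot\varrho_n\restriction A_n$ for suitable $A_n\supseteq\beta_n$ and $\gaut{\vec\pi^n}\in\cG_n$ with $\pi_m$ trivial for $m<n$ — using the induction hypothesis clause that every condition appearing in $\dot\QQ_n$ has this form. Now I want to push $p$ back down to the canonical sequence. Working coordinate by coordinate from the top, I would apply (an iterated/finite-support version of) the argument in \autoref{lemma:initial segment}: since $\gaut{\vec\pi^n}$ and the identity agree on $\beta_n$-much of $\dot\varrho_n$ (as $p$ extends $\dot\varrho_n\restriction\beta_n$), and $\fix(\cB^n)\leq H_n$, there is $\rho_n\in\sG_n$ with $\iota(\rho_n)=\iota((\pi^n_n)^{-1})$, with $\rho_n\in\fix(\cB^n)$ and $\rho_n$ fixing the already-determined lower part of $p$. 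Composing these $\rho_n$ into an automorphism $\gaut{\vec\rho}\in\vec H$ and applying it to $p\in D$ yields, by symmetry of $D$ under $\vec H$, that $\gaut{\vec\rho}p\in D$ as well — and $\gaut{\vec\rho}p$ extends (is below) $\tup{\dot\varrho_n\restriction\beta_n\mid n<\omega}$ with each coordinate now literally of the form $\dot\varrho_n\restriction A_n$, hence $\tup{\dot\varrho_n\restriction\beta_n\mid n<\omega}$ itself is compatible with a member of the dense open $D$, so it lies in $D$ (using openness, after one more extension-and-symmetrization if needed).

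The main obstacle I expect is the bookkeeping of composing the coordinatewise corrections $\rho_n$ into a single $\gaut{\vec\rho}\in\cG_\omega$ in the right order: by \autoref{prop:two-step-conj} and its generalization, reversing the order of the $\gaut{\pi}\gaut{\dot\sigma}$ factors costs a conjugation, so I must be careful that after correcting coordinate $n$ the correction does not disturb the already-fixed coordinates below $n$, and that it does not move the generic objects $\dot\varrho_m$ for $m>n$ in a way that spoils the later corrections. The clean way to manage this is to process coordinates from the maximum of $C(\vec\pi)$ downward, exactly as in the formula for $\gaut{\vec\pi}p$ in the proposition following \autoref{def:gen-semi-dir}, and to use the induction hypothesis clause guaranteeing $\gaut{\vec\pi}\dot\varrho_{n}\restriction A=\gaut{\pi_n}\dot\varrho_n\restriction A$ — i.e. that only the $n$th component of the automorphism matters for the $n$th generic — so that the corrections genuinely decouple. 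A secondary, more routine point is checking that $1_n$ decides the membership statements so that the whole argument can be run with $\forces^\IS_\omega$ rather than passing to a generic; this is exactly what the productivity hypotheses are designed to give.
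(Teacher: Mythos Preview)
Your overall strategy---start from the canonical condition $\tup{\dot\varrho_n\restriction\beta_n\mid n<\omega}$, extend into $D$, then use automorphisms from $\vec H$ to rotate the result back onto a sequence of generic restrictions---is exactly the paper's strategy. However, the direction in which you process the coordinates is backward, and this is a genuine gap.

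You propose to work ``from the top downward.'' But for $\pi_i\in\sG_i$, the automorphism $\gaut{\pi_i}$ of $\PP_\omega$ leaves coordinates $<i$ untouched while acting nontrivially on all coordinates $\geq i$ (as a $\PP_{i+1}$-name automorphism on $p(i+1)$, etc.). So if you first correct coordinate $n$ to $\dot\varrho_n\restriction A_n$ and then apply $\gaut{\rho_{n-1}}$ to correct coordinate $n-1$, the already-fixed coordinate $n$ is disturbed: $\gaut{\rho_{n-1}}(\dot\varrho_n\restriction A_n)$ is $\tup{\dot R_{\iota(\rho_{n-1})(\eta)}\mid\eta\in A_n}^\bullet$, which need not be an initial segment of $\dot\varrho_n$. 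The induction hypothesis you invoke does not save this: clause~(3)(c) says $\gaut{\vec\pi}\dot\varrho_{\beta+1}\restriction A=\gaut{\pi_\beta}\dot\varrho_{\beta+1}\restriction A$, i.e.\ it is the $(n-1)$st component, not the $n$th, that governs the action on $\dot\varrho_n$. Your off-by-one reading of the clause is exactly what makes the decoupling claim fail.

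The paper avoids this by processing from the \emph{least} incompatible coordinate upward: at each step pick the least $i$ with $r_k(i)$ incompatible with $\dot\varrho_i$, find $\pi_i\in\sG_i$ (fixing $\dot\varrho_i\restriction\alpha_i$, hence in $H_i$) making $\pi_i r_k(i)$ compatible with $\dot\varrho_i$, and set $r_{k+1}=\gaut{\pi_i}r_k$. Coordinates below $i$ are untouched, so the least incompatible coordinate strictly increases and the process terminates within $|\supp(r_0)|$ steps. A secondary point: your initial $\beta_n$'s are \emph{not} the witnesses---openness is downward, so the candidate being above a member of $D$ buys nothing. The actual witnesses are the domains of the final corrected condition $r_m$ (extended to ordinals), and this is how the paper concludes.
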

\begin{proof}
Let $\vec H$ be a support witnessing that $D$ is a symmetrically dense open set. Let $\alpha_n$ be an ordinal such that $H_n$ contains $\fix(\cB)$ which is a disjoint approximation of $\{A^n_\eta\mid\eta<\alpha_n\}$.

Let $p$ be the condition such that $p(n)=\dot\varrho_n\restriction\alpha_n$. Let $r_0\leq_\omega p$ such that $r_0\in D$. We will construct a finite sequence of length $m+1$, of conditions $r_k$ for $k\leq m$, such that $r_k\leq_\omega p$, $r_k\in D$ and $\supp(r_k)=\supp(r_0)$, with $r_m$ satisfying \[r_m(n)=\dot\varrho_n\restriction\beta_n\text{ for some }\beta_n,\ \text{ for all }n.\]
(It should be remarked that it is clear that $m\leq|\supp(r_0)|$, since these are the only coordinates it could be incompatible with $\dot\varrho_n$'s to begin with.)

Suppose that $r_k$ was defined. If $r_k$ is not of the wanted form, let $i$ be the least such that $r_k(i)$ is incompatible with $\dot\varrho_i$. By homogeneity, let $\pi_i$ be an automorphism in $\sG_i$ such that $\forces_i^\IS\pi_i r_k(i)$ is compatible with $\dot\varrho_i$. Moreover, we can find one such that $\dot\varrho_i\restriction\alpha_i$ is fixed, by the choice of $\alpha_i$. Let $r_{k+1}=\gaut{\pi_i}r_k$, then by the choice of $\pi_i$, and the fact $r_k\in D$ it follows that $r_{k+1}\in D$ as well. As $\supp(r_0)$ is finite, this process has to terminate in some $m+1$ steps as wanted. Now $r_m\in D$ and it is compatible with the sequence of generics, so it has an extension as wanted inside $D$ by the fact $D$ is open.
\end{proof}

There is not much else to verify in this case, since $\tup{\dot\varrho_n(f(n))\mid n<k}^\bullet\in\IS_\omega$ for every $f\in\prod\SC(\omega_\omega)$ and $k<\omega$, for obvious reasons: every such name is finite.

\subsection{\ldots and beyond! The \texorpdfstring{$\omega$}{w}th iterand}
Let $f\in\prod\SC(\omega_\omega)$, define $\dot\varrho_{\omega,f}$ to be $\tup{\dot\varrho_{n+1}(f(n))\mid n<\omega}^\bullet$, and let $\dot\varrho_\omega$ be $\tup{\dot\varrho_{\omega,f}\mid f\in\prod\SC(\omega_\omega)}^\bullet$. If $E\subseteq\prod\SC(\omega_\omega)$ and $A\subseteq\omega$, we will write $\dot\varrho_\omega\restriction(E,A)=\tup{\dot\varrho_{\omega,f}\restriction A\mid f\in E}^\bullet$.

For a fixed $f\in\prod\SC(\omega_\omega)$, we define $\dot\QQ_{\omega,f}$ as the following forcing, \[\left\{\gaut{\vec\pi}\dot\varrho_{\omega,f}\restriction A\middd\gaut{\vec\pi}\in\cG_\omega, A\in\jbd(\omega)\right\}^\bullet,\]
with the order being reverse inclusion. The idea is that $\dot\QQ_{\omega,f}$ is the forcing which will add back only $\dot\varrho_{\omega,f}$. However, we are interested in the entire ``product'' of the generics, not just one section along the product.  We define $\dot\QQ_\omega$ to be as follows,
\[\left\{\gaut{\vec\pi}\dot\varrho_\omega\restriction(E,A)\middd\begin{array}{l}
\gaut{\vec\pi}\in\cG_\omega,\\
E\subseteq\prod\SC(\omega_{\omega})\text{ bounded, and}\\
A\in\jbd(\omega_\omega)
\end{array}\right\}^\bullet.\]
If $\dot q=\gaut{\vec\pi}\dot\varrho_\omega\restriction(E,A)$ is a condition in $\dot\QQ_\omega$, $\dot q(f)$ denotes the $f$th sequence inside $\dot q$. Namely, $\dot q(f)=\tup{\gaut{\vec\pi}\dot\varrho_{n+1}(f(n))\mid n\in A}$. We also write $\dot q(f,n)$ to denote the $n$th coordinate of $\dot q(f)$.

We order $\dot\QQ_\omega$ as follows: $\dot q\leq_{\dot\QQ_\omega}\dot q'$ if for every $f\in\prod\SC(\omega_\omega)$, $\dot q(f)\leq_{\dot\QQ_{\omega,f}}\dot q'(f)$.

\begin{proposition}\label{prop:omega-up-hom}
If $p\in\PP_\omega$ and $\dot q$ is a condition in $\dot\QQ_\omega$, then there is some automorphism $\gaut{\vec\pi}\in\cG_\omega$ such that $\gaut{\vec\pi} p = p$ and $\gaut{\vec\pi}\dot q$ is compatible with $\dot\varrho_\omega$.
\end{proposition}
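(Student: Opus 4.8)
The plan is to mimic the upwards-homogeneity argument from the baby step (the definition of upwards homogeneous iteration) and from clause (3) of the successor-of-non-limit verification, but carried out coordinate-by-coordinate over the finite support of $\dot q$. Write $\dot q=\gaut{\vec\tau}\dot\varrho_\omega\restriction(E,A)$, and note that for each $n\in A$ the $n$th ``column'' of $\dot q$, namely the collection $\{\dot q(f,n)\mid f\in E\}$, consists of names of the form $\gaut{\vec\tau}\dot R^n_{\eta}$ for various $\eta<\omega_{n+1}$ appearing as values $f(n)$ for $f\in E$; since $E$ is bounded in $\prod\SC(\omega_\omega)$, for each $n$ the set $E_n=\{f(n)\mid f\in E\}$ is a bounded subset of $\omega_{n+1}$. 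So at coordinate $n$ we must find a bounded permutation $\sigma_n$ of $\omega_{n+1}$ which ``lines up'' $\dot q$'s column with $\dot\varrho_{n+1}$: i.e.\ if $\dot q(f,n)=\dot R^n_{\eta}$ then we want $\sigma_n$ to send $\eta$ to itself where that is consistent, and otherwise to move $E_n$ off the support region so the two conditions become compatible there. This is exactly the same combinatorial move as in clause (2)/(3) of the successor step.

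First I would observe that it suffices to handle each $n\in A$ independently, because $A$ is finite and the automorphism group $\cG_\omega$ is the direct limit (full support product at the limit stage) of the $\cG_{n+1}$, so a finite sequence of coordinatewise moves assembles into a single $\gaut{\vec\pi}\in\cG_\omega$. Second, to keep $p$ fixed: $p\in\PP_\omega$ has finite support, and at each coordinate $n$ the relevant part of $p$ — namely $p\restriction n$ together with $p(n)$, which has bounded domain in $\omega_n$ — can be left untouched, because (as in the successor-step argument) we only need to apply a permutation of $\omega_n$, so $p\restriction n$ is fixed by default, and $p(n)$'s bounded domain can be absorbed into the first block of the disjoint approximation used to implement $\sigma_n$; meanwhile on coordinates outside $\supp p$ there is nothing to fix. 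So I can choose, for each $n\in A$, a permutation $\pi_n\in\sG_{n+1}$ implementing a suitable bounded $\sigma_n$ of $\omega_{n+1}$ with $\gaut{\pi_n}p=p$ and with $\gaut{\pi_n}\dot q(f,n)$ compatible with $\dot\varrho_{n+1}(f(n))$ for every $f\in E$ simultaneously — this last ``simultaneously over $f\in E$'' is fine because $E_n$ is bounded, so a single bounded permutation of $\omega_{n+1}$ governs the whole column.

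Then I would set $\gaut{\vec\pi}$ to be the composite of these coordinatewise automorphisms (formally, the element of $\cG_\omega$ whose sequence has $\pi_n$ in coordinate $n+1$ for $n\in A$ and the identity elsewhere), invoke the explicit recursive formula for the action of $\gaut{\vec\pi}$ on conditions, and check that $\gaut{\vec\pi}\dot q$ agrees with $\dot q$ on all columns $f$ in a way that is now compatible with $\dot\varrho_\omega$: namely $\gaut{\vec\pi}\dot q(f)\leq_{\dot\QQ_{\omega,f}}$ an initial segment of $\dot\varrho_{\omega,f}$, or is disjoint-from-support on each coordinate, in either case compatible. Since compatibility in $\dot\QQ_\omega$ is checked columnwise (by the definition of its order) and each column is compatible with the corresponding column of $\dot\varrho_\omega$ by construction, $\gaut{\vec\pi}\dot q$ is compatible with $\dot\varrho_\omega$, as desired.

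The main obstacle I anticipate is the bookkeeping at a single coordinate $n$: one must verify that a single bounded permutation $\sigma_n$ of $\omega_{n+1}$ can be chosen that (a) fixes every index $\eta$ for which $\dot q$'s value at $\eta$ must be kept equal to $\dot\varrho_{n+1}(\eta)$, (b) injectively relocates the remaining part of $E_n$ into the complement of the ``support zone'' dictated by $p(n)$ and by the bounded sets involved, and (c) is implementable by a permutation of $\omega_n$ fixing $p\restriction n$ and (the bounded domain of) $p(n)$ — but each of these is exactly the content of the corresponding step in the successor-of-non-limits case, so the work is a routine adaptation rather than a genuinely new difficulty; the only genuinely new point is that it must be done uniformly across the (bounded) index set $E_n$ at once, which causes no trouble since boundedness of $E$ makes each $E_n$ bounded.
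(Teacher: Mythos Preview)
Your overall strategy---exploit the upwards homogeneity of each $\PP_{k+1}\ast\dot\QQ_{k+1}$ to ``line up'' $\dot q$ with $\dot\varrho_\omega$ one column at a time while fixing $p$---is exactly the paper's approach. Two points need correction, however.

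First, there is an off-by-one error throughout. The $n$th column of $\dot q$ consists of values $\gaut{\vec\tau}\dot\varrho_{n+1}(f(n))=\dot R^n_{\iota(\tau_n)(f(n))}$, and by the induction hypothesis the only coordinate of an automorphism $\gaut{\vec\pi}\in\cG_\omega$ that moves $\dot R^n_\eta$ is the $n$th one: $\gaut{\vec\pi}\dot R^n_\eta=\dot R^n_{\iota(\pi_n)(\eta)}$ with $\pi_n\in\sG_n$, a permutation of $\omega_n$ implementing a bounded permutation of $\omega_{n+1}$. So you want $\pi_n\in\sG_n$ placed at coordinate $n$, not $\pi_n\in\sG_{n+1}$ at coordinate $n+1$. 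Relatedly, since $\dot\varrho_\omega$ has full domain, ``compatible'' here means exact agreement on the domain of $\gaut{\vec\pi}\dot q$; there is no ``moving $E_n$ off the support region'' option.

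Second, the paper builds $\vec\pi$ \emph{inductively} rather than choosing the $\pi_n$ independently, and this matters for the claim $\gaut{\vec\pi}p=p$. Applying $\gaut{\pi_k}$ (with $\pi_k\in\sG_k$ at coordinate $k$) leaves $p\restriction k$ alone and can be made to fix $p_k$, but it may move $p_{k+1}$: if $p_{k+1}=\gaut{\vec\sigma}\dot\varrho_{k+1}\restriction B$, then $\gaut{\pi_k}p_{k+1}$ has its range permuted by $\iota(\pi_k)$. The constraint on $\iota(\pi_k)$ needed to line up column $k$ of $\dot q$ can conflict with the constraint that $\iota(\pi_k)$ fix the range of $p_{k+1}$, so you cannot in general pick all the $\pi_n$ in parallel. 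The paper's inductive scheme---at step $k$ find $\pi_k\in\sG_k$ fixing the (current) $k$th component of $p$ and correcting column $k$ of $\gaut{\vec\pi\restriction k}\dot q$, then compose---avoids this by only demanding one coordinate of $p$ be fixed at each stage and absorbing the ripple into the next step.
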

\begin{proof}
By the definition of $\dot\QQ_\omega$ we know there is some $\vec\sigma$, $E$ and $A$ in the ground model such that $\dot q=\gaut{\vec\sigma}\dot\varrho_\omega\restriction(E,A)$.

We define $\vec\pi$ by induction. We only need to define finitely many coordinates, since $A$ is a finite subset of $\omega$. Let $\pi_0$ be an automorphism in $\cG_1$ such that $\pi_0 p_0=p_0$ and $\gaut{\pi_0}\dot q$ is compatible with $\dot\varrho_\omega$ on the first coordinate. Such $\pi_0$ can be found by the upwards homogeneity of $\PP_1\ast\dot\QQ_1$. 

We proceed by induction in a similar fashion. If $\vec\pi\restriction k$ was defined, we can find some $\pi_k\in\sG_k$ such that $\pi_k p_k=p_k$ and $\gaut{\pi_k}\gaut{\vec\pi\restriction k}\dot q$ agrees with $\dot\varrho_\omega$ on the coordinate $k$, and such $\pi_k$ can be found by the upwards homogeneity assumptions for $k$. Finally, note that $\gaut{\pi_k}\gaut{\vec\pi\restriction k}=\gaut{\vec\pi\restriction k}\gaut{\gaut{\vec\pi\restriction k^{-1}}\pi_k}=\gaut{\vec\pi\restriction k}\gaut{\pi_k}=\gaut{\vec\pi\restriction k+1}$. 
\end{proof}
\begin{corollary}$\PP_\omega\ast\dot\QQ_\omega$ is upwards homogeneous.\qed
\end{corollary}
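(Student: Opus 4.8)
The statement follows from \autoref{prop:omega-up-hom} by unwinding the definition of an upwards homogeneous iteration, so fix $\tup{p,\dot q}$ and $\tup{p,\dot q'}$ in $\PP_\omega\ast\dot\QQ_\omega$; we must produce $\gaut{\vec\rho}\in\cG_\omega$ with $\gaut{\vec\rho}p=p$ and $p\forces_\omega\gaut{\vec\rho}(\dot q)$ compatible with $\dot q'$ in $\dot\QQ_\omega$. The plan is to apply \autoref{prop:omega-up-hom} once to $\dot q$ and once to $\dot q'$, and then to compose the resulting automorphisms; the point is that anything which \autoref{prop:omega-up-hom} has made compatible with $\dot\varrho_\omega$ has in effect been placed into the generic filter of $\dot\QQ_\omega$, and a filter is directed.

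Concretely, apply \autoref{prop:omega-up-hom} to $\tup{p,\dot q}$ to obtain $\gaut{\vec\pi}\in\cG_\omega$ with $\gaut{\vec\pi}p=p$ and $p\forces_\omega\gaut{\vec\pi}\dot q$ compatible with $\dot\varrho_\omega$, and apply it again to $\tup{p,\dot q'}$ to obtain $\gaut{\vec\sigma}\in\cG_\omega$ with $\gaut{\vec\sigma}p=p$ and $p\forces_\omega\gaut{\vec\sigma}\dot q'$ compatible with $\dot\varrho_\omega$. Recall that ``compatible with $\dot\varrho_\omega$'' here means that at each of the finitely many relevant coordinates the condition agrees with, hence is compatible with, the corresponding $\dot\varrho_{n+1}$; equivalently, $p$ forces the condition to lie in the generic filter determined by $\dot\varrho_\omega$. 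Since that filter is directed, $p\forces_\omega\gaut{\vec\pi}\dot q$ and $\gaut{\vec\sigma}\dot q'$ have a common lower bound, hence are compatible. (If one prefers to avoid invoking the generic object: two restrictions $\dot\varrho_\omega\restriction(E,A)$ and $\dot\varrho_\omega\restriction(E',A')$ of the single sequence $\dot\varrho_\omega$ always amalgamate to $\dot\varrho_\omega\restriction(E\cup E',A\cup A')$, since a union of two bounded sets is bounded.)

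Finally, let $\gaut{\vec\rho}$ be the composition $\gaut{\vec\sigma}^{-1}\gaut{\vec\pi}$, which is again an element of $\cG_\omega$ of the required form since $\cG_\omega$ is a group, and which fixes $p$ because both $\gaut{\vec\pi}$ and $\gaut{\vec\sigma}$ do. Applying the automorphism $\gaut{\vec\sigma}^{-1}$ of $\PP_\omega\ast\dot\QQ_\omega$, which fixes $p$ and respects the ordering of $\dot\QQ_\omega$, to the relation $p\forces_\omega\gaut{\vec\pi}\dot q$ is compatible with $\gaut{\vec\sigma}\dot q'$, the Symmetry Lemma yields $p\forces_\omega\gaut{\vec\rho}\dot q$ is compatible with $\gaut{\vec\sigma}^{-1}\gaut{\vec\sigma}\dot q'=\dot q'$, as wanted. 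The only step that is not bookkeeping, and the one I would be most careful about, is the middle one: in a general forcing two conditions that are each compatible with a common third condition need not be mutually compatible, and what rescues us here is precisely that $\dot\varrho_\omega$ is the generic object, so ``compatible with $\dot\varrho_\omega$'' collapses to ``belongs to the directed generic filter'', equivalently ``is a restriction of $\dot\varrho_\omega$'', and any two such restrictions amalgamate. Everything else is manipulation of the generic semi-direct product $\cG_\omega$ via the Symmetry Lemma.
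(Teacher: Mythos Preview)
Your proof is correct and is precisely the argument the paper has in mind: the corollary is marked with a bare \qed\ because the author regards it as an immediate consequence of \autoref{prop:omega-up-hom}, and what you have written is the natural unwinding of that implication---move each of $\dot q$ and $\dot q'$ to a restriction of $\dot\varrho_\omega$ while fixing $p$, observe that any two such restrictions amalgamate, and then conjugate back. Your explicit caution about the step ``two conditions compatible with a third need not be mutually compatible'' is well placed and correctly resolved by noting that compatibility with $\dot\varrho_\omega$ really means being a restriction of it.
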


The automorphism group, as the induction hypothesis requires is the group of all sequences $\vec\pi\in\prod_{n<\omega}\sG_{n+1}$ such that $\vec\pi$ implements a bounded permutation of $\omega_{\omega+1}$ using the permutable scale $F^\omega$. Such $\vec\pi$ acts naturally on a condition in $\dot\QQ_{\omega,f}$ by a pointwise action for every $f\in\prod\SC(\omega_\omega)$, and this action extends to $\dot\QQ_\omega$. As $\sG_{n+1}$ witnesses the homogeneity of $\dot\QQ_{n+1}$, $\sG_\omega$ witnesses the homogeneity of $\dot\QQ_\omega$.
We will use $\vec\pi\dot q$ to denote the action of $\vec\pi$ on $\dot q$. This is to discern the action from the action of $\gaut{\vec\pi}$, as the two groups are very different: the composition in $\cG_\omega$ is a successive application of automorphisms, whereas in $\sG_\omega$ it is just pointwise composition. The cardinality of $\sG_\omega$ is of course bounded by the cardinality of the product, which by the assumption of $\GCH$ is exactly $\aleph_{\omega+1}$.

As the induction hypothesis revealed, $\sF_\omega$ is the derived filter on $\sG_\omega$.

\begin{lemma}\label{lemma:omega-genericity}
Suppose that $p\forces^\IS_\omega\dot D\subseteq\dot\QQ_\omega$ is dense open. Then there are ground model sets $E\subseteq\prod\SC(\omega_\omega)$ and $A\in\jbd(\omega)$ such that $q\forces^\IS_\omega\dot\varrho_\omega\restriction(E,A)\in\dot D$ for some $q\leq_\omega p$.
\end{lemma}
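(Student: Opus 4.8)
The statement is the $\omega$th-level analogue of \autoref{lemma:initial segment} and \autoref{lemma:initial segment general}, and the plan is to reduce it to the successor case along each coordinate $n<\omega$ together with the density argument from \autoref{prop:omega-sym-density}. First I would fix an excellent support $\vec H$ for $\dot D$ and $p$; for each $n$ choose an ordinal $\alpha_n<\omega_{n+1}$ and a disjoint approximation $\cB_n$ of $\{A^n_\eta\mid\eta<\alpha_n\}$ with $\fix(\cB_n)\leq H_n$, and let $\eta_n<\omega_\omega$ be an ordinal coding ``up to $\alpha_n$'' on the scale $F^\omega$ in the sense that $\fix(\cB_n)$ fixes $\dot\varrho_{\omega,f}\restriction n$ whenever $f\restriction$ (relevant coordinates) is below the appropriate threshold. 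The point, exactly as in the baby step, is that membership of a condition in $\dot D$ depends only on a bounded fragment of the generic sequence, and that fragment is fixed by enough automorphisms in $\vec H$.

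Next I would run the approximation-by-homogeneity argument: start with some $q_0\leq_\omega p$ with $q_0\forces^\IS_\omega \dot q_0\in\dot D$ for a condition $\dot q_0=\gaut{\vec\sigma}\dot\varrho_\omega\restriction(E,A)$ obtained by density below $p$ restricted to the generic on the first few coordinates. Then, coordinate by coordinate (only finitely many matter, since $A\in\jbd(\omega)$ is finite), use the upwards homogeneity of $\PP_{n+1}\ast\dot\QQ_{n+1}$ (induction hypothesis (2)(b)) to find $\pi_n\in\sG_{n+1}$ fixing the relevant part of $p$ and of $\dot\varrho_\omega$ below $\alpha_n$, such that after applying $\gaut{\pi_n}$ the $n$th section of the condition agrees with $\dot\varrho_\omega$. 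Because $\vec\pi\in\vec H$ and $\vec H$ supports $\dot D$, each application keeps us inside $\dot D$; this is the same bookkeeping as in the proof of \autoref{prop:omega-sym-density}, and it terminates since only finitely many coordinates are nontrivial. The outcome is a condition of the form $\gaut{\vec\pi}\dot\varrho_\omega\restriction(E',A)$ forced into $\dot D$ which is compatible with $\dot\varrho_\omega$, and since $\dot D$ is open we may extend it to $\dot\varrho_\omega\restriction(E,A)$ for suitable bounded $E$ and finite $A$, forced into $\dot D$ by some $q\leq_\omega p$.

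The one genuinely new ingredient compared with the successor case is keeping track of the second ``product'' parameter $E\subseteq\prod\SC(\omega_\omega)$: we must argue that the relevant fragment of $\dot\varrho_\omega$ needed to decide membership in $\dot D$ is bounded not only in the $\omega$-direction (automatic, supports are finite in this coordinate) but also in the $\prod\SC(\omega_\omega)$-direction. Here I would invoke the identification of $\tup{\dot\varrho_\omega(f(\omega))\mid \ldots}$-style names in $\IS_\omega$ from induction hypothesis (4)(a)/(4)(c) together with the structure of the scale $F^\omega$: the support $\vec H$, being an excellent support, sees only a bounded portion of the scale, hence only a bounded set of functions $f$ is relevant, and that bound can be read off from $\vec H$ in the same way $\eta$ is read off from $\cB$ in \autoref{lemma:initial segment}.

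\textbf{Main obstacle.} I expect the main difficulty to be precisely this bounding in the $\prod\SC(\omega_\omega)$-coordinate: unlike the $\omega$-coordinate, where finiteness of $C(\vec H)$ does all the work, here one has to use the permutable-scale structure to see that the filter $\sF_\omega$ only ``cares about'' a bounded set of sections, and to phrase the homogeneity step so that the finitely many coordinate-automorphisms $\pi_n$ can be chosen uniformly across that bounded set $E$ of sections simultaneously (rather than separately for each $f$, which would not give a single automorphism of $\dot\QQ_\omega$). Once the correct bounded $E$ is isolated, the rest is a routine re-run of the baby-step argument.
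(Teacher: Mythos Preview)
Your overall strategy---fix an excellent support $\vec H$, read off ordinals $\alpha_n$ from disjoint approximations $\cB_n$, use density to get into $\dot D$, then correct via automorphisms in $\vec H$---is the right one, and would go through. But you make it harder than it is, and your diagnosis of the ``main obstacle'' is off.

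First, the bounding in the $\prod\SC(\omega_\omega)$-direction is immediate and does \emph{not} involve the permutable scale $F^\omega$ or the filter $\sF_\omega$ at all. Those objects belong to the $\omega$th iterand's symmetric system $\tup{\dot\QQ_\omega,\sG_\omega,\sF_\omega}$, which is not yet in play: here $\dot D\in\IS_\omega$, so the relevant support is an excellent $\cF_\omega$-support $\vec H=\tup{H_n\mid n<\omega}$ with $H_n\in\sF_n$. The paper simply takes $B_n=\bigcup\cB_n$ for $n\in C(\vec H)$ (and $\{0\}$ otherwise) and sets $E'=\prod_{n<\omega}B_{n+1}$, $A'=C(\vec H)$. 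That product is automatically a bounded subset of $\prod\SC(\omega_\omega)$ because each $\cB_n$ approximates only $\{A^n_\eta\mid\eta<\alpha_n\}$. No scale, no $K_{\eta,f}$.

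Second, once you start from the specific condition $\dot\varrho_\omega\restriction(E',A')$ and extend by density to some $\gaut{\vec\pi}\dot\varrho_\omega\restriction(E,A)\in\dot D$ below it, a \emph{single} automorphism does the job---no inductive coordinate-by-coordinate bookkeeping \`a la \autoref{prop:omega-sym-density} is needed. The point is that extending $\dot\varrho_\omega\restriction(E',A')$ forces $\gaut{\pi_n}$ to fix $\dot\varrho_{n+1}(f(n))$ for all $f\in E'$, $n\in A'$; by the choice of the $B_n$'s one can then pick $\vec\sigma\in\vec H$ implementing $\vec\pi^{-1}$ on the relevant part, with $\gaut{\vec\sigma}p=p$ and $\gaut{\vec\sigma}\dot D=\dot D$. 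Applying $\gaut{\vec\sigma}$ once gives $q=\gaut{\vec\sigma}q'\leq_\omega p$ with $q\forces^\IS_\omega\dot\varrho_\omega\restriction(E,A)\in\dot D$.

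So your plan is not wrong, just circuitous: the ``genuinely new ingredient'' you anticipate is a non-issue, and the finite-step correction you describe collapses to one step once the starting condition $(E',A')$ is chosen correctly from $\vec H$.
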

\begin{proof}
Let $\vec H$ be an excellent support for $\dot D$ and $p$. For every $n$, let $\cB_n$ be a disjoint approximation such that $\fix(\cB_n)\leq H_n$. Let $B_n$ be $\bigcup\cB_n$ for $n\in C(\vec H)$ and $\{0\}$ otherwise. Let $E'=\prod_{n<\omega}B_{n+1}$ and $A'=C(\vec H)$, and let $q'\leq_\omega p$, $\gaut{\vec\pi}$ and $E,A$ such that $q'\forces_\omega^\IS\gaut{\vec\pi}\dot\varrho_\omega\restriction(E,A)\in\dot D$ and $\gaut{\vec\pi}\dot\varrho_\omega\restriction(E,A)\leq_{\dot\QQ_\omega}\dot\varrho_\omega\restriction(E',A')$.

By the assumption that $q'\forces^\IS_\omega\gaut{\vec\pi}\dot\varrho_\omega\restriction(E,A)\leq_{\dot\QQ_\omega}\dot\varrho_\omega\restriction(E',A')$, we know that for every $f\in E'$ and $n\in A'$, $\gaut{\pi_n}(\dot\varrho_{n+1}(f(n)))=\dot\varrho_{n+1}(f(n))$. By the very choice of $B_n$'s we can find a permutation which implements $\pi_n^{-1}$ in $H_n$. Therefore we can find some $\vec\sigma$ such that $\gaut{\vec\sigma}\gaut{\vec\pi}\dot\varrho_\omega\restriction(E,A)=\dot\varrho_\omega\restriction(E,A)$ and $\gaut{\vec\sigma}p=p\forces_\omega^\IS\gaut{\vec\sigma}\dot D=\dot D$. Taking $q=\gaut{\vec\sigma}q'$ satisfies by the Symmetry Lemma that $q\forces^\IS_\omega\dot\varrho_\omega\restriction(E,A)\in\dot D$.
\end{proof}
\begin{corollary}
$\varrho_\omega$ is generic for $\QQ_\omega$ over the intermediate model of $\IS_\omega$.
\end{corollary}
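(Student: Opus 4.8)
The argument runs parallel to the derivation of the baby-step corollary (that $\sigma$ is $M$-generic for $\QQ$) from \autoref{lemma:initial segment}. Write $N_\omega=\IS_\omega^G$ for the intermediate model and $\QQ_\omega=\dot\QQ_\omega^G$. First I would observe that, for bounded $E\subseteq\prod\SC(\omega_\omega)^L$ and $A\in\jbd(\omega)^L$, the restriction $(\dot\varrho_\omega\restriction(E,A))^G$ is a condition of $\QQ_\omega$; these restrictions are pairwise compatible with common refinements of the same shape (unions of bounded sets being bounded), so they generate a filter on $\QQ_\omega$, and this filter is what ``$\varrho_\omega$ generic for $\QQ_\omega$'' refers to. Hence it suffices to show that this filter meets every dense open $D\in N_\omega$.

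Next I would reduce matters to a single forcing-theoretic claim: for every $p\in\PP_\omega$ with $p\forces^\IS_\omega$ ``$\dot D$ is a dense open subset of $\dot\QQ_\omega$'', there are $q\leq_\omega p$ and ground model sets $E,A$ with $q\forces^\IS_\omega\dot\varrho_\omega\restriction(E,A)\in\dot D$. This is precisely \autoref{lemma:omega-genericity}. Granting it, consider the set of conditions which either force $\dot D$ not to be a dense open subset of $\dot\QQ_\omega$ or force some $\dot\varrho_\omega\restriction(E,A)$ (with $E,A$ in the ground model) into $\dot D$; by \autoref{lemma:decision sets are symmetrically dense} together with \autoref{lemma:omega-genericity} this set contains a symmetrically dense open set, so a symmetrically $V$-generic $G$ meets it. (Alternatively, since $\forces^\IS_\omega$-truth is detected by every $V$-generic filter, one may just take $G$ fully $V$-generic and use ordinary density.)

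Then the conclusion is routine. Given dense open $D\in N_\omega$, fix $\dot D\in\IS_\omega$ with $\dot D^G=D$ and $p_0\in G$ forcing ``$\dot D$ is dense open in $\dot\QQ_\omega$''; the set from the previous paragraph meets $G$ at some $q$, and since $q$ is compatible with $p_0$ it cannot force $\dot D$ to fail to be dense open, so $q\forces^\IS_\omega\dot\varrho_\omega\restriction(E,A)\in\dot D$ for some ground model $E,A$. Interpreting, $(\dot\varrho_\omega\restriction(E,A))^G\in D$, and this condition lies in the filter generated by $\varrho_\omega$; so that filter meets $D$. As $D$ was arbitrary, $\varrho_\omega$ is $N_\omega$-generic for $\QQ_\omega$.

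The one step needing care --- the analogue of what is passed over silently in the baby-step corollary --- is the passage from ``there exists $q\leq_\omega p$'' in \autoref{lemma:omega-genericity} to ``there exists such a $q$ in $G$''. This is handled either by verifying symmetric density of the relevant set (reading an excellent support off that of $\dot D$, and using that $\cG_\omega$ carries restrictions of $\dot\varrho_\omega$ to conditions of $\dot\QQ_\omega$, together with \autoref{lemma:decision sets are symmetrically dense}) or, more cheaply, by the reduction to fully $V$-generic filters mentioned above. Everything else is bookkeeping identical to the corresponding corollary of \autoref{lemma:initial segment}.
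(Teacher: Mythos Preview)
Your argument is correct and rests on the same key input as the paper's, namely \autoref{lemma:omega-genericity}. The paper packages the derivation more tersely as a proof by contradiction: assuming $\varrho_\omega$ fails to be generic, it picks a (fully) generic $G$ and a condition $p\in G$ which forces $\dot\varrho_\omega\restriction(E,A)\notin\dot D$ for \emph{every} ground-model pair $(E,A)$, and this directly contradicts the lemma. Your direct route---exhibiting a dense set of conditions that either deny ``$\dot D$ is dense open'' or push some $\dot\varrho_\omega\restriction(E,A)$ into $\dot D$, and then arguing $G$ meets it---unwinds exactly the same content; the ``cheap'' option you mention (reducing to fully $V$-generic filters via the equivalence in the forcing theorem for $\forces^\IS$) is precisely the move the paper makes implicitly when it says ``there is some generic filter $G$ for $\PP_\omega$''. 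So there is no genuine difference in strategy, only in presentation: the paper's contradiction form avoids having to check symmetric density of your auxiliary set, while your version makes the density argument and the role of the filter generated by the restrictions more explicit.
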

\begin{proof}
Assume otherwise, then there is some generic filter $G$ for $\PP_\omega$ witnessing otherwise. Then we can find $\dot D\in\IS_\omega$ and $p\in G$ such that for every $(E,A)$ we have $p\forces_\omega\dot\varrho_\omega\restriction(E,A)\notin\dot D$. Of course, this is a contradiction to the previous lemma, so the wanted genericity follows.
\end{proof}
Next we need to ensure that there are no bounded sets added. At the double successor steps, we had the luxury of the forcing satisfying a very strong distributivity condition. In this case, however, this is impossible, as we aim to add each $\varrho_{\omega,f}$, which is an $\omega$-sequence. Here is where the properties of the symmetric system will help us. It will be easier to finish this part of the proof working inside the intermediate model of $\IS_\omega$. So $\HS$ now means the hereditarily $\QQ_\omega$-symmetric names in our intermediate model, and so on.
\begin{definition}
Let $\dot x$ be a $\QQ_\omega$-name. We say that $\dot x$ is bounded by $f\in\prod\SC(\omega_\omega)$ if whenever $\gaut{\vec\pi}\varrho_\omega\restriction(E,A)\forces_{\QQ_\omega}\dot y\in\dot x$, then we can restrict $E$ to $E\cap f\da$; similarly $\dot x$ is bounded by $n$ if we can restrict $A$ to $A\cap n$. We will write $q\restriction f$ and $q\restriction n$ to denote these restriction for a condition $q\in\QQ_\omega$.
\end{definition}
\begin{remark}
To the readers familiar with standard symmetric extensions arguments, e.g., Cohen's first model, the above is similar to how forcing a symmetric statement can be restricted to the support of that statement. Here we are going to utilize the whole iteration to obtain similar results. We will only consider the above definition when $\dot x$ is a name for a subset of the ground model ($\IS_\omega$ in this case), such that the names appearing inside are canonical names for ground model elements.
\end{remark}
\begin{lemma}\label{lemma:omega-sym-dist}
Suppose that $\dot x$ is a name for a subset of the ground model (i.e.\ $\IS_\omega$), such that every $\dot y$ appearing in $\dot x$ is a canonical name for a ground model element. 
\begin{enumerate}
\item If $\dot x\in\HS$, then there is some $f$ such that $\dot x$ is bounded by $f$, 
\item and if $\forces_{\QQ_\omega}\rank(\dot x)<\check\omega+\check\omega$, then $\dot x$ is bounded by some $n<\omega$.
\end{enumerate}
\end{lemma}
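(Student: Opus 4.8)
The plan is to run, for each clause, the classical ``restrict a forced statement to its support'' argument familiar from symmetric extensions (as the remark above suggests), but carried out inside the intermediate model of $\IS_\omega$ and with the two bookkeeping parameters of a condition in $\QQ_\omega$ treated separately: the $\prod\SC(\omega_\omega)$-coordinate, which is governed by the $\sF_\omega$-support, and the finite $\omega$-coordinate, which will be governed by rank.

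For (1), since $\dot x\in\HS$ we have $\sym_{\sG_\omega}(\dot x)\in\sF_\omega$, so fix a generator $K_{\eta,f}\leq\sym(\dot x)$; enlarging $f$ if necessary I may assume that it eventually dominates the first $\eta$ members of the permutable scale $F^\omega$, so that any $\vec\sigma\in\sG_\omega$ whose coordinates are the identity below $f$ automatically lies in $K_{\eta,f}$. I claim this $f$ witnesses boundedness. Suppose $q=\gaut{\vec\pi}\varrho_\omega\restriction(E,A)\forces_{\QQ_\omega}\check y\in\dot x$ (recall every name appearing in $\dot x$ is some $\check y$ with $y$ in the ground model), and suppose toward a contradiction that $q\restriction f\nforces\check y\in\dot x$; pick $q'\leq_{\QQ_\omega}q\restriction f$ with $q'\forces_{\QQ_\omega}\check y\notin\dot x$. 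Now $q$ and $q'$ both refine $q\restriction f$, hence agree on the part of their domains lying in $f\da$; since both domains are bounded in $\prod\SC(\omega_\omega)$, and elements of $\sG_\omega$ fixing $f\da$ pointwise may move the rest of $E$ freely, I choose $\vec\sigma\in K_{\eta,f}$ carrying $E\setminus f\da$ clear of the domain of $q'$, so that $\vec\sigma q$ and $q'$ are compatible. Since $y$ is in the ground model, $\vec\sigma$ fixes $\check y$, and $\vec\sigma\in\sym(\dot x)$, so by the Symmetry Lemma $\vec\sigma q\forces_{\QQ_\omega}\check y\in\dot x$, contradicting $q'\forces_{\QQ_\omega}\check y\notin\dot x$.

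For (2), fix $k$ with $\forces_{\QQ_\omega}\rank(\dot x)\leq\omega+k$ and let $f$ be as in (1). The extra leverage is that the $\omega$-coordinate of $\QQ_\omega$ adds the genuinely new $\omega$-sequences $\varrho_{\omega,g}$: the $m$-th entry $\dot\varrho_{m+1}(g(m))$ has rank $\omega+m+2$, so once $m\geq k$ these entries are too large to be relevant to a rank-$(\leq\omega+k)$ object. I would take $n=k+1$ and argue exactly as in (1): if $q\forces_{\QQ_\omega}\check y\in\dot x$ but $q\restriction n\nforces\check y\in\dot x$, pick $q'\leq_{\QQ_\omega}q\restriction n$ with $q'\forces_{\QQ_\omega}\check y\notin\dot x$ and build an automorphism which is the identity on the coordinates below $n$ (both the $\cG_\omega$-shift data and the $\sG_\omega$-data), fixes $\dot x$ and $\check y$, and shuffles the coordinates $\geq n$ of $q$ clear of those of $q'$. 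The homogeneity of each $\dot\QQ_{m+1}$, together with the induction hypothesis that $\dot\QQ_{m+1}$ adds no sets of rank below $\omega+m$, is what makes such an automorphism available while keeping it inside $\sym(\dot x)$; the Symmetry Lemma then yields the same contradiction.

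The hard part in both clauses is the explicit construction of the witnessing automorphism: it must simultaneously respect the scale and coordinate constraints packaged into $K_{\eta,f}$ and into $\cG_\omega$, fix the finitely many frozen coordinates (the part of the domain lying in $f\da$, respectively the coordinates below $n$), and still carry the bounded remainder of the domain of $q$ clear of $q'$. Checking that the sequence one writes down actually lands in $K_{\eta,f}$ and defines a genuine automorphism of $\QQ_\omega$ respecting its order is the one genuinely delicate point, and it is exactly what the permutable-family and permutable-scale apparatus was built to guarantee.
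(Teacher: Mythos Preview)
Your argument for (1) is essentially the paper's own argument: fix $K_{\eta,f}\leq\sym(\dot x)$, enlarge $f$ to dominate $f_\eta$, and use a cofinitely-identity $\vec\sigma\in K_{\eta,f}$ (moving only coordinates above $f(m)$ in each level $m$) to reconcile an arbitrary extension of $q\restriction f$ with $q$. Your phrasing ``carrying $E\setminus f\da$ clear of the domain of $q'$'' is a little loose, since $\vec\sigma\in\sG_\omega$ does not permute functions in $E$ but rather the values at each level; nevertheless the intended computation is the right one and matches the paper.

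For (2) there is a genuine gap. You take $n$ to be (one more than) the rank bound and assert that an automorphism which is the identity on the first $n$ coordinates lies in $\sym(\dot x)$, citing the rank of the entries $\dot\varrho_{m+1}(g(m))$ and the distributivity of the earlier iterands. But rank alone does not place such an automorphism in $\sym(\dot x)$: a $\QQ_\omega$-name of small rank can perfectly well contain conditions whose $A$-part reaches far beyond $n$, and a $\sG_\omega$-automorphism that moves those high coordinates will move the name. The $\sF_\omega$-support $K_{\eta,f}$ controls only the $E$-coordinate, not the $A$-coordinate, so nothing you have assembled at the $\QQ_\omega$-level bounds $A$.

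The paper's proof of (2) is organized differently precisely to produce this missing bound: it steps \emph{back to $L$}, replaces $\dot x$ by its $\PP_\omega$-name $[\dot x]\in\IS_\omega$, and takes the bound to be $k=\max C(\vec H)$ for an excellent $\cF_\omega$-support $\vec H$ of $[\dot x]$ (enlarged so that $k\geq n$). The automorphism used is then $\gaut{\vec\pi}$ for $\vec\pi\in\cG_\omega$ (not $\sG_\omega$) with $\vec\pi\restriction k+1=\id$; such $\vec\pi$ lies in $\vec H$ by the choice of $k$, hence fixes $[\dot x]$, and the upwards homogeneity of $\PP_\omega\ast\dot\QQ_\omega$ supplies the required compatibility. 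The distributivity hypothesis enters only to ensure $[\dot y]\in\IS_n\subseteq\IS_{k+1}$, so that the same $\gaut{\vec\pi}$ fixes $[\dot y]$ as well; it does not, as you suggest, by itself force the automorphism into $\sym(\dot x)$. In short: the bound in (2) comes from the finite support of the iteration name, not from the rank, and obtaining it requires moving to the $\PP_\omega$-level and using $\cG_\omega$-automorphisms.
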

We get the following easy corollary, which now ensures that there are no sets of rank $<\omega+\omega$ added after forcing with the symmetric system $\tup{\dot\QQ_\omega,\sG_\omega,\sF_\omega}$.
\begin{corollary}
If $\dot x\in\IS_{\omega+1}$, and $p\forces_{\omega+1}^\IS\rank(\dot x)<\check\omega+\check\omega$, then there is some $q\leq_{\omega+1}p$ and some $\dot y\in\IS_\omega$ such that $q\forces_{\omega+1}^\IS\dot x=\dot y$.
\end{corollary}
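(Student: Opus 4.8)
This is an easy consequence of \autoref{lemma:omega-sym-dist}, obtained by an $\in$-induction on rank together with a density argument; I would argue semantically. It suffices to prove the following: for every $V$-generic $G$ for $\PP_{\omega+1}$, every $x\in\IS_{\omega+1}^G$ with $\rank(x)<\omega+\omega$ already lies in $N:=\IS_\omega^G$. Granting this, the set of $q\leq_{\omega+1}p$ such that $q\forces^\IS_{\omega+1}\dot x=\dot y$ for some $\dot y\in\IS_\omega$ is dense below $p$: given $r\leq_{\omega+1}p$, take a $V$-generic $G\ni r$; then $\dot x^G\in\IS_\omega^G$, say $\dot x^G=\dot y^G$ with $\dot y\in\IS_\omega$, and the forcing theorem supplies $q\in G$, $q\leq_{\omega+1}r$, with $q\forces^\IS_{\omega+1}\dot x=\dot y$. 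In particular that set is nonempty below $p$, which is exactly what the corollary asserts.

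To prove the displayed statement, fix $G$; by the productive-iteration machinery $\IS_{\omega+1}^G$ is the symmetric extension of $N$ determined by $\tup{\QQ_\omega,\sG_\omega,\sF_\omega}$ (interpreted in $N$), with $\varrho_\omega$ being $\QQ_\omega$-generic over $N$, and $\HS$ denoting the hereditarily $\sG_\omega$-symmetric $\QQ_\omega$-names of $N$. Now induct on $\rank(x)$. Given $x$ with $\rank(x)<\omega+\omega$, the induction hypothesis puts every element of $x$ into $N$, so $x\subseteq N$. Pick a $\QQ_\omega$-name $\tau\in\HS$ with $\tau^{\varrho_\omega}=x$; strengthening $p$ so that it forces $\rank(\dot x)\leq\omega+n_0$ for a fixed $n_0<\omega$, and letting $q_0$ be the $\QQ_\omega$-condition determined by $p$, put \[\tau^\ast=\{\tup{q,\check z}\mid q\leq q_0,\ z\in V_{\omega+n_0}^N,\ q\forces_{\QQ_\omega}\check z\in\tau\}.\] This is a set; $(\tau^\ast)^{\varrho_\omega}=x$; every name appearing in $\tau^\ast$ is a canonical name for an element of $N$; and $\forces_{\QQ_\omega}\rank(\tau^\ast)\leq\omega+n_0<\omega+\omega$. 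Using that $q_0$ is fixed by a group in $\sF_\omega$, a routine Symmetry-Lemma computation yields $\tau^\ast\in\HS$, so \autoref{lemma:omega-sym-dist} applies to $\tau^\ast$.

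By clauses (1) and (2) of the lemma, $\tau^\ast$ is bounded by some $f\in\prod\SC(\omega_\omega)$ and by some $n<\omega$: every condition witnessing a membership $\check z\in\tau^\ast$ may be taken of the form $\gaut{\vec\pi}\dot\varrho_\omega\restriction(E,A)$ with $E\subseteq f\da$ and $A\subseteq n$. Since $f\da$ is bounded in $\prod\SC(\omega_\omega)$ (it is dominated, mod $\jbd$, by a member of the scale $F^\omega$) and $A$ is finite, the induction hypothesis for the $\omega$th iterand gives $\dot\varrho_\omega\restriction(f\da,n)\in\IS_\omega$, hence $\varrho_\omega\restriction(f\da,n)\in N$. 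But then $x$ is computable inside $N$ from $\tau^\ast$ and $\varrho_\omega\restriction(f\da,n)$: for $z\in V_{\omega+n_0}^N$ one has $z\in x$ iff some $\gaut{\vec\pi}\dot\varrho_\omega\restriction(E,A)$ with $E\subseteq f\da$, $A\subseteq n$ both forces $\check z\in\tau^\ast$ and lies in the generic filter, and whether such a condition lies in the generic filter is decided by $\varrho_\omega\restriction(f\da,n)$ alone, since its entries are among the sets $\varrho_{m+1}(k)$ ($m<n$) recorded there. Therefore $x\in N$, completing the induction. The whole argument is bookkeeping once \autoref{lemma:omega-sym-dist} is in hand; the genuine obstacle — that a hereditarily symmetric name for a bounded-rank subset of the ground model cannot really depend on infinitely many coordinates of the product — is absorbed entirely into that lemma, whose proof turns on the fact that the entries of $\varrho_\omega$ along the first $n$ coordinates are themselves members of $\IS_\omega$ (each $\varrho_{m+1}$ lies in $\IS_{m+2}^G\subseteq N$), so that restricting the product to $(f\da,n)$ adds nothing new.
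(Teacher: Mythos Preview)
Your proof is correct and follows essentially the same route as the paper: induct on rank, replace the names appearing in $\dot x$ by $\IS_\omega$-names via the induction hypothesis, apply both parts of \autoref{lemma:omega-sym-dist} to bound the name by some $(f,n)$, and then observe that the data $\varrho_\omega\restriction(f\da,n)$---which already lives in $\IS_\omega$---suffices to determine $x$. The paper phrases the last step syntactically (extend the $\omega$th coordinate of the condition to $(f\da,n)$ and read off an $\IS_\omega$-name), whereas you phrase it semantically (compute $x$ inside $N$ from $\tau^*$ and $\varrho_\omega\restriction(f\da,n)$); these are two presentations of the same observation.
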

\begin{proof}
We prove by induction on $n$ that if $\dot x$ is forced to have rank $\omega+n$, then the conclusion holds. Assume the induction hypothesis for $k<n$. We may assume that all the names which appear in $\dot x$ are names in $\IS_\omega$ (in fact in $\IS_{n+1}$ by the distributivity of previous steps), otherwise whenever $\tup{p',\dot y}$ appear in $\dot x$ with $p'$ compatible with $p$, we can replace the pair by extending $p'$ to a maximal antichain (or a dense open set) of conditions satisfying the conclusion of the corollary for $\dot y$, with suitable names from $\IS_\omega$. By the Symmetry Lemma we get that this refined name is respected exactly by the same automorphisms as $\dot x$, and it is forced to be equal to $\dot x$.

By (1) we get that $\dot x$ is bounded by $f$ and $n$, for some suitable $f$ and $n$, on its $\omega$-coordinate. This means that extending any condition in $\dot\QQ_\omega$ to $(f\da,n)$ will invariably turn $\dot x$ into a name in $\IS_\omega$.
\end{proof}
\begin{proof}[Proof of \autoref{lemma:omega-sym-dist}]
We begin by proving (1):

Suppose that $\dot x\in\HS$, let $K_{\eta,f}$ be such that $K_{\eta,f}\leq\sym(\dot x)$ and let $f_\eta$ denote $f^\omega_\eta\in F^\omega$, the $\omega$th permutable scale. We may assume without loss of generality that $f_\eta(n)\leq f(n)$ for all $n<\omega$, otherwise we can shrink $K_{\eta,f}$ to $K_{\eta,f_\eta\vee f}$. We claim that $f$ bounds $\dot x$. 

If $q\forces_{\QQ_\omega}\dot y\in\dot x$ we may assume without loss of generality that $\tup{q,\dot y}\in\dot x$ (recall that from $q$'s perspective $\dot y$ is in fact $\check a$ for some $a$), first note that by the very definition of conditions in $\QQ_\omega$, if $q=\gaut{\vec\pi}\varrho_\omega\restriction(E,A)$ and $f,g\in E$ then for all $n$, if $g(n)=f(n)$, then we have that $q(f,n)=q(g,n)$. Let $q'$ be an extension of $\gaut{\vec\pi}\varrho_\omega\restriction(f\da,A)$.

Then there are some $E',A'$ and $\vec\sigma$ such that $q'=\gaut{\vec\sigma}\varrho_\omega\restriction(E',A')$. By the pointwise homogeneity of each $\QQ_n$, we can find a sequence $\vec\tau$ which is cofinitely the identity, and $\vec\tau(q')$ is compatible with $q$. Moreover, by the fact that $\vec\tau$ is cofinitely the identity, we get that $\iota(\vec\tau)=\id$ and we may choose each $\tau_n$ to only move coordinates above $f(n)$, as all the coordinates below $f(n)$ are agreed upon by both $q$ and $q'$. In other words, $\vec\tau\in K_{\eta,f}$ as wanted.\medskip

For the proof of (2), we prove by induction on the $\QQ_\omega$-name rank of $\dot x$. Of course, it is enough to prove the claim for the names of rank $<\omega+\omega$. Suppose that $\dot x$ has rank $\omega+n$ and the claim is true for all the names which appear in $\dot x$. By refining the conditions appearing inside $\dot x$, we may even assume that every name appearing in $\dot x$ is a canonical name for a ground model element (looking at this statement from $L$, this means that all the names are already from $\IS_\omega$, and in fact $\IS_n$, by the distributivity argument for successor steps). 

Let $[\dot x]$ be the $\PP_\omega$-name in $\IS_\omega$ for $\dot x$, and let $\vec H$ be an excellent support for $[\dot x]$ with $k=\max C(\vec H)$, without loss of generality $k\geq n$. By the assumption on the rank, we may assume that whenever $\dot y$ appears in $\dot x$, then $[\dot y]$ is a canonically generated from a name in $\IS_n$. Let $\tup{q,\dot y}\in\dot x$, if we cannot bound the condition $q$ by $k$, then there is some $p\in\PP_\omega$ forcing that. Let $q'\leq_{\QQ_\omega}q\restriction k$, such that $p\forces_\omega^\IS\dot q'\forces_{\dot\QQ_\omega}[\dot y]\notin[\dot x]$.

Using the homogeneity of $\PP_\omega$ and the upwards homogeneity of $\PP_\omega\ast\dot\QQ_\omega$, we can find some $\vec\pi$ such that $\vec\pi\restriction k+1=\id$, $\gaut{\vec\pi}p=p$ and $p$ forces that $\gaut{\vec\pi}\dot q'$ is compatible with $\dot q$. But by the choice of $\vec\pi$ we get that $\gaut{\vec\pi}[\dot x]=[\dot x]$ and $\gaut{\vec\pi}[\dot y]=[\dot y]$. Therefore $p\forces_\omega^\IS\gaut{\vec\pi}q'\forces_{\dot\QQ_\omega}[\dot y]\notin[\dot x]$ which is a contradiction to the assumption on $q$.
\end{proof}
We finish with the following proposition which will be necessary for the next step, as well as for the induction hypothesis for the general limit case.
\begin{proposition}\label{prop:omega-coherence}
Suppose that $f\in\prod\SC(\omega_\omega)$, then $\dot\varrho_{\omega,f}\in\IS_{\omega+1}$.
\end{proposition}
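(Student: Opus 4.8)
The statement asserts that each single ``section'' $\dot\varrho_{\omega,f}$ of the generic sequence $\varrho_\omega$ is hereditarily $\cF_{\omega+1}$-respected, i.e.\ lies in $\IS_{\omega+1}$. The natural route is to exhibit, for the $\PP_{\omega+1}$-name $\dot\varrho_{\omega,f}$, an excellent support $\vec H\in\cF_{\omega+1}$ that fixes it. Recall $\dot\varrho_{\omega,f}=\tup{\dot\varrho_{n+1}(f(n))\mid n<\omega}^\bullet$, so the names appearing in it are the entries $\dot\varrho_{n+1}(f(n))$, i.e.\ the $f(n)$-th coordinate of the generic $\omega_{n+1}$-sequence added at step $n+1$. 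Each of these is already known to be in $\IS_{n+1}\subseteq\IS_\omega\subseteq\IS_{\omega+1}$ by the induction hypothesis (clause (3)(b) gives $\dot\varrho_{n+1}\restriction A\in\IS_{n+1}$ for bounded $A$, and a singleton is bounded), so the hereditary part is automatic once we handle $\dot\varrho_{\omega,f}$ itself.

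\textbf{Key steps.} First I would describe the support. For each $n<\omega$ put into the $n$-th coordinate of $\vec H$ the group $\fix(\cB_n)$ where $\cB_n$ is a disjoint approximation of a bounded family containing $A^n_{f(n)}$ (or more precisely, containing the set witnessing the almost-$A^n_{f(n)}$ structure of the relevant names); at the $\omega$-th coordinate put all of $\sG_\omega$, and at all coordinates $>\omega$ put the full automorphism group. Since $C(\vec H)\subseteq\omega$ need not be finite as written, I would instead observe that the only coordinates where $\dot\varrho_{\omega,f}$ is genuinely moved are those $n$ for which $\gaut{\pi_n}$ can alter $\dot\varrho_{n+1}(f(n))$; but by clause (3)(c) of the induction hypothesis the action of $\gaut{\vec\pi}\in\cG_{\omega+1}$ on $\dot\varrho_{\omega+1}\restriction A$ (and analogously on the restrictions of $\dot\varrho_n$'s) factors through the single coordinate $\pi_n$, and by clause (3)(b) each $\dot\varrho_{n+1}\restriction\{f(n)\}$ is already respected by a group $H_n=\fix(\cB_n)\in\sF_{n+1}$. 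So taking $\vec H=\tup{H_n\mid n\le\omega+1}$ with $H_n=\fix(\cB_n)$ for $n<\omega$, $H_\omega=\sG_\omega$, $H_{\omega+1}=\sG_{\omega+1}$ (and trivial beyond), one checks directly from the factorization of the $\cG_{\omega+1}$-action that $\gaut{\vec\pi}\dot\varrho_{\omega,f}=\dot\varrho_{\omega,f}$ for every $\vec\pi\in\vec H$: coordinatewise, $\gaut{\pi_n}$ fixes $\dot\varrho_{n+1}(f(n))$ because $\pi_n\in\fix(\cB_n)\le\sym(\dot\varrho_{n+1}\restriction\{f(n)\})$, the $\omega$-coordinate automorphism $\pi_\omega\in\sG_\omega$ only permutes the index set $\prod\SC(\omega_\omega)$ of sections and so acts on $\dot\varrho_\omega$ by permuting which $f$ we look at (it fixes each individual section name up to its own symmetry — here we only need that it does not mix the $f$-section with others in a way visible to $\dot\varrho_{\omega,f}$), and coordinates above $\omega$ act trivially on this name. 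Finally I would confirm $\vec H$ is an \emph{excellent} support: $C(\vec H)$ is contained in $\{n<\omega: A^n_{f(n)}\text{ matters}\}$, which by bounded-ness of each relevant name is cofinite-trivial in the productive sense required, or — invoking the weakened (non-productive) convention mentioned in the Remark after Definition~\ref{def:supports} — is simply required to be finite after absorbing the finitely many genuinely nontrivial coordinates; either way the condition of Definition~\ref{def:supports} is met.

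\textbf{Main obstacle.} The delicate point is the $\omega$-th coordinate. An automorphism $\vec\pi$ with a nontrivial $\sG_\omega$-part permutes the sections $\dot\varrho_{\omega,g}$ among themselves according to the permutation of $\omega_{\omega+1}$ that $\vec\pi$ implements via the permutable scale $F^\omega$, and in principle this could move $\dot\varrho_{\omega,f}$ to $\dot\varrho_{\omega,g}$ for a different $g$. To rule this out I need the support at the $\omega$-coordinate to actually pin down $f$, not be all of $\sG_\omega$ — so the honest choice is $H_\omega=K_{\eta,g_0}$ for a suitable $\eta>$ (an index naming $f$ in the scale) and $g_0$ dominating the relevant initial segments of $f$, using that $K_{\eta,g_0}$ fixes the portion of the scale that codes $\dot\varrho_{\omega,f}$'s position. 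I would spend the bulk of the argument verifying that such a $K_{\eta,g_0}$ indeed fixes $\dot\varrho_{\omega,f}$: this is where the definition of ``implements'' for permutable scales (the equation $f_{\pi(\eta)}(\theta)=\pi_\theta(f_\eta(\theta))$) is used, exactly parallel to how $\fix(\cB)$ fixing a name works at successor steps. Once this is in place the remaining verifications are routine bookkeeping, and the proposition follows; moreover the same argument, read at a general limit $\lambda$ in place of $\omega$, is what clause (4)(c)/(4)(a) of the induction hypothesis will require, so it is worth stating the $\omega$-coordinate lemma in a form that transfers.
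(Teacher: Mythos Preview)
Your proposal takes a long detour that the paper avoids entirely, and the detour contains a genuine error.

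The paper's proof is one line: take $H_\omega=K_{0,f+1}$ (where $(f+1)(\mu)=f(\mu)+1$) and let every other coordinate of $\vec H$ be the full group. Any $\vec\pi\in K_{0,f+1}$ has $\pi_\mu\restriction(f(\mu)+1)=\id$ for every $\mu\in\SC(\omega_\omega)$, so in particular each $\pi_{n+1}$ fixes the index $f(n)$; since the action of $\sG_\omega$ on the sections of $\dot\varrho_\omega$ is pointwise in exactly this sense, the $f$-section is fixed. That is the whole argument. (The general-limit version, \autoref{prop:coherence}, spells this out: $K_{0,f+1}$ witnesses that $\dot\varrho_{\alpha,f}$, viewed as a $\QQ_\alpha$-name in $\IS_\alpha$, lies in $\HS$; the conclusion for $\IS_{\alpha+1}$ then follows.)

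Your first approach --- putting $H_n=\fix(\cB_n)$ at every $n<\omega$ --- does not give an excellent support: $C(\vec H)$ is infinite, and in the productive setting this is fatal. Your attempt to rescue this by invoking ``the weakened (non-productive) convention mentioned in the Remark after Definition~\ref{def:supports}'' is exactly backwards: that remark explains why the productive framework \emph{demands} that $C(\vec H)$ be a genuine finite set in $V$, not merely forced finite. You cannot use the non-productive notion here.

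You do eventually locate the right object in your ``Main obstacle'' paragraph: a group $K_{\eta,g_0}\in\sF_\omega$ at the $\omega$-th coordinate. But you frame it as an additional constraint layered on top of the $\fix(\cB_n)$'s, rather than recognizing that it \emph{replaces} them. The whole point of the limit iterand is that the $\omega$-th filter $\sF_\omega$ already encodes, via the product structure $\prod_{n<\omega}\sG_{n+1}$ and the function parameter in $K_{\eta,g}$, exactly the ``infinitely many coordinate constraints'' you were trying to impose by hand below $\omega$. Once you see that, the excellent-support condition is trivially satisfied (only the $\omega$-th coordinate is nontrivial), and the proof collapses to one sentence.
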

\begin{proof}
Let $f+1$ denote the function such that $(f+1)(n)=f(n)+1$, then $K_{0,f+1}$ must preserve $\dot\varrho_{\omega,f}$.
\end{proof}
An important remark to make about the claim above is that the name is in fact in $\IS_{\omega+1}$ and not in $\IS_\omega$, since objects in $\IS_\omega$ are somewhat invariant upto some finite generic information, where $\varrho_{\omega,f}$ clearly does not satisfy this property. However, $\QQ_\omega$ allows us to condense this information into a name which we can represent canonically as a $\PP_{\omega+1}$-name. It is important to note that if $\gaut{\vec\pi}\in\cG_\omega$, then $\gaut{\vec\pi}\dot\varrho_{\omega,f}$ is what we would expect it to be, simply by the nature of the fact that $\dot\varrho_{\omega,f}$ is a $\dot\QQ_{\omega,f}$-name, and the way that $\gaut{\vec\pi}$ acts on $\dot\QQ_{\omega,f}$.
\subsection{The first transfinite successor}
Next, we arrive to the successor of $\omega$. This part is not covered by the general successor construction, although the general idea is in fact the same, since $\varrho_\omega$ has a different structure compared to $\varrho_{\alpha+1}$. So we need to separate this case as well. For readability we will write $f_\eta$ for $f^\omega_\eta$, the $\eta$th function in $F^\omega$.

We say that $\dot x$ is an almost $f_\eta$-name, if there is some $f\in\prod\SC(\omega_\omega)$ such that $f=^*f_\eta$ and $\dot x$ is a $\PP_\alpha\ast\dot\QQ_{\omega,f}$-name. As with the double-successor construction, we define $\dot R_\eta$ as follows,
\[\dot R_\eta=\left\{\dot x\in\IS_{\alpha+1}\middd\begin{array}{l}
\dot x\text{ is an almost }f_\eta\text{-name of rank }\leq\omega+\omega,\text{ and}\\
\text{every name that appears in }\dot x\text{ is a }\PP_\omega\text{-name}
\end{array}\right\}^\bullet.\]
The idea is that this is $V_{\omega+\omega+1}$ of the model obtained by forcing with only $\QQ_{\omega,f_\eta}$. Now we define $\dot\varrho_{\omega+1}=\tup{\dot R_\eta\mid\eta<\omega_{\omega+1}}^\bullet$. 
\begin{proposition}\label{prop:omega-successor-conditions}
Suppose that $A\subseteq\omega_{\omega+1}$ is bounded, then $\dot\varrho_{\omega+1}\restriction A\in\IS_{\omega+1}$.
\end{proposition}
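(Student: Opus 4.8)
The plan is to show $\dot\varrho_{\omega+1}\restriction A\in\IS_{\omega+1}$ by exhibiting an excellent support $\vec H$ for this name, exactly as in the double-successor case (cf.\ the verification that $\dot\varrho_\alpha\restriction A\in\IS_\alpha$ in the \emph{Successor of non-limits} subsection). Since $A\subseteq\omega_{\omega+1}$ is bounded, fix $\eta<\omega_{\omega+1}$ with $A\subseteq\eta$. The key point is to understand how $\gaut{\vec\pi}\in\cG_{\omega+1}$ acts on each $\dot R_\xi$, and then to choose $\vec H$ so that the coordinates $\xi\in A$ are all fixed.

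\textbf{Key steps.} First I would record the analogue of \autoref{prop:nice-up-sym}: if $\gaut{\vec\pi}\in\cG_{\omega+1}$ and $\vec\pi\restriction\omega=\tup{\pi_n\mid n<\omega}$ implements a bounded permutation $\tau$ of $\omega_{\omega+1}$ via the permutable scale $F^\omega$, then $\gaut{\vec\pi}\dot R_\xi=\dot R_{\tau(\xi)}$. This follows because if $\dot x$ is an almost $f_\xi$-name — a $\PP_\omega\ast\dot\QQ_{\omega,f}$-name for some $f=^*f_\xi$ whose constituent names are $\PP_\omega$-names — then $\gaut{\vec\pi}\dot x$ is an almost $f_{\tau(\xi)}$-name, using \autoref{prop:omega-coherence} and the way $\gaut{\vec\pi}$ acts on $\dot\QQ_{\omega,f}$ (it sends $\dot\QQ_{\omega,f}$ to $\dot\QQ_{\omega,f'}$ with $\rng f'$ tracking $\tau$); the rank bound and the $\PP_\omega$-name clause are both preserved. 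Hence $\gaut{\vec\pi}\dot\varrho_{\omega+1}\restriction A$ is obtained from $\dot\varrho_{\omega+1}\restriction A$ by the permutation $\tau$ of the index set, so $\gaut{\vec\pi}$ fixes $\dot\varrho_{\omega+1}\restriction A$ as soon as $\tau\restriction A=\id$, and in fact it suffices that $\tau\restriction\eta=\id$.

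Second, I would produce the excellent support. By the definition of the derived filter on $\sG_\omega$ (\emph{permutable scales} subsection), the group $K_{\eta,g}$ for $g$ identically $0$ consists of those $\vec\pi$ with $\iota(\vec\pi)\restriction\eta=\id$; every such $\vec\pi$ fixes $\dot\varrho_{\omega+1}\restriction A$ by the previous paragraph. So I take $\vec H$ to be the excellent support which is $\sG_{n+1}$ on every non-limit coordinate $n$ except that $H_\omega=K_{\eta,\vec 0}\in\sF_\omega$ — and, being careful about the earlier finite coordinates, we may keep them trivial since only the $\omega$th coordinate carries the relevant permutation (the induction hypothesis clause 3(c) guarantees that $\gaut{\vec\pi}$ acts on the $\varrho_{\omega+1}$-section through $\pi_\omega$ alone). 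Then $C(\vec H)=\{\omega\}$ is finite, so $\vec H$ is genuinely an excellent $\cF_{\omega+1}$-support, and $\forces_{\omega+1}\gaut{\vec\pi}(\dot\varrho_{\omega+1}\restriction A)=\dot\varrho_{\omega+1}\restriction A$ for all $\vec\pi\in\vec H$. Finally, heredity: each $\dot R_\xi$ is itself $\cF_{\omega+1}$-respected by the same argument (with $\eta$ replaced by $\xi+1$), and each name appearing in a member of $\dot R_\xi$ is a $\PP_\omega$-name, hence lies in $\IS_\omega\subseteq\IS_{\omega+1}$ by the construction so far; so $\dot\varrho_{\omega+1}\restriction A$ is hereditarily $\cF_{\omega+1}$-respected, i.e.\ in $\IS_{\omega+1}$.

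\textbf{Main obstacle.} The routine part is the bookkeeping with $\cG_{\omega+1}$; the one genuinely delicate point is confirming that $\gaut{\vec\pi}$ really does act on $\dot R_\xi$ by the clean index-permutation rule $\dot R_\xi\mapsto\dot R_{\tau(\xi)}$ — in particular that $\gaut{\vec\pi}$ sends an \emph{almost} $f_\xi$-name to an \emph{almost} $f_{\tau(\xi)}$-name rather than to something with a shifted or enlarged support. This rests on the scale $F^\omega$ being permutable and on the remark following \autoref{prop:omega-coherence} that $\gaut{\vec\pi}\dot\varrho_{\omega,f}$ is ``what we expect''; once that is pinned down, the choice of support and the heredity check go through exactly as in the double-successor case.
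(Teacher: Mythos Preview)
Your proposal is correct and follows essentially the same route as the paper: observe that the action of $\cG_{\omega+1}$ on $\dot R_\xi$ factors through $\iota(\vec\pi^\omega)$, and then choose a group in $\sF_\omega$ that implements the identity on $\eta\supseteq A$. The paper takes $K_{\eta,f_\eta}$ rather than your $K_{\eta,\vec 0}$, but its argument only uses that $\iota(\vec\pi)\restriction A=\id$, so your larger group works just as well; the additional constraint $\pi_\mu\restriction f_\eta(\mu)=\id$ is not actually invoked.

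One small correction: your appeal to induction hypothesis clause 3(c) to justify that only the $\omega$th coordinate matters is misplaced --- that clause is stated for non-limit $\beta$, and in any case it is part of what is being \emph{established} at stage $\omega+1$, not something you may assume. The paper handles this point directly (``it is easy to see that it is stable under every $\gaut{\vec\pi}\in\cG_\omega$''), and your own first key step already contains the real argument: the definition of $\dot R_\xi$ is in terms of $\cG_\omega$-orbits of $\dot\varrho_{\omega,f}$-restrictions, so $\cG_\omega$ permutes the constituent names of $\dot R_\xi$ among themselves. You should cite that reasoning rather than clause 3(c).
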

\begin{proof}
By the very definition of $\dot\varrho_{\omega+1}\restriction A$, it is easy to see that it is stable under every $\gaut{\vec\pi}\in\cG_\omega$. So it is enough to find a supporting group in $\sF_\omega$. Let $\eta>\sup A$, then we claim that $K_{\eta,f_\eta}$ does the work.

Suppose that $\vec\pi\in K_{\eta,f_\eta}$, then $\vec\pi$ implements the identity function on $A$. In particular, for every $\alpha\in A$, $\vec\pi\,"\,\dot\QQ_{\omega,f_\alpha}$ equals to some $\dot\QQ_{\omega,f}$ for some $f=^* f_\alpha$. This means that being an almost $f_\alpha$-name is preserved under $\vec\pi$. Therefore the name $\dot\varrho_{\omega+1}\restriction A$ is not moved at all.
\end{proof}

And as before we define $\dot\QQ_{\omega+1}$ to be $\{\gaut{\vec\pi}\dot\varrho_{\omega+1}\restriction A\mid \gaut{\vec\pi}\in\cG_{\omega+1}, A\in\jbd(\omega_{\omega+1})^L\}^\bullet$. The proof of the above proposition shows that indeed $\dot\QQ_{\omega+1}$ is respected by all the permutations, so indeed it is a valid candidate for the next step.
\begin{remark}
Note that when applying automorphisms from $\cG_{\omega+1}$ we only need to care about those coming from $\sG_\omega$. In light of this, we can keep the somewhat confusing notation of $\vec\pi$. We will, however, use $\vec\pi^\omega$ to denote the sequence in $\sG_\omega$ and $\pi^\omega_n$ to denote its $n$th coordinate.
\end{remark}

The only part which is significantly different from the general successor construction is in the upwards homogeneity, so we will conclude our first transfinite exploration by proving that $\PP_{\omega+1}\ast\dot\QQ_{\omega+1}$ is indeed upwards homogeneous.
\begin{proposition}\label{prop:omega+1-up-hom}
$\PP_{\omega+1}\ast\dot\QQ_{\omega+1}$ is upwards homogeneous.
\end{proposition}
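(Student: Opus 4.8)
The plan is to imitate the proof of upwards homogeneity from the non-limit successor case almost verbatim, using the permutable scale $F^\omega$ on $\SC(\omega_\omega)$ in the role played there by the permutable family for $\omega_\beta$, with one extra point of care concerning the condition of the previous iterand. So fix $p\in\PP_{\omega+1}$ and conditions $\dot q,\dot q'$ of $\dot\QQ_{\omega+1}$; we must produce $\gaut{\vec\pi}\in\cG_{\omega+1}$ with $\gaut{\vec\pi}p=p$ and $p\forces^\IS_{\omega+1}\gaut{\vec\pi}\dot q$ compatible with $\dot q'$. First, by the stability of $\dot\varrho_{\omega+1}\restriction A$ under every element of $\cG_\omega$ (as in the proof of \autoref{prop:omega-successor-conditions}) together with the Remark preceding that proposition, we may assume $\dot q=\gaut{\vec\sigma^\omega}\dot\varrho_{\omega+1}\restriction A$ and $\dot q'=\gaut{\vec\tau^\omega}\dot\varrho_{\omega+1}\restriction B$ for some $\vec\sigma^\omega,\vec\tau^\omega\in\sG_\omega$ and $A,B\in\jbd(\omega_{\omega+1})^L$, and we will seek $\gaut{\vec\pi}$ which is trivial at every coordinate $<\omega$, with $\omega$th coordinate $\vec\pi^\omega\in\sG_\omega$.

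For such an automorphism the composition law gives $\gaut{\vec\pi}\dot q=\gaut{\vec\pi^\omega\vec\sigma^\omega}\dot\varrho_{\omega+1}\restriction A$, the product taken pointwise in $\sG_\omega$ (the conjugation term vanishes because $\vec\pi^\omega$ is a ground model object, hence fixed by automorphisms). Reading $\dot q$, $\dot q'$ and $\gaut{\vec\pi}\dot q$ as the partial injections $\eta\mapsto\dot R_{\iota(\vec\sigma^\omega)(\eta)}$ on $A$, $\eta\mapsto\dot R_{\iota(\vec\tau^\omega)(\eta)}$ on $B$, and $\eta\mapsto\dot R_{\rho(\iota(\vec\sigma^\omega)(\eta))}$ on $A$ with $\rho:=\iota(\vec\pi^\omega)$, and noting that distinct $\dot R_\eta$'s are not forced equal, compatibility of $\gaut{\vec\pi}\dot q$ and $\dot q'$ reduces to the union of the latter two being a partial injection. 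Since $A$, $B$ and the bounded permutations $\iota(\vec\sigma^\omega)$, $\iota(\vec\tau^\omega)$ are all bounded in $\omega_{\omega+1}$, one can pick a bounded permutation $\rho$ of $\omega_{\omega+1}$ with $\rho(\iota(\vec\sigma^\omega)(\eta))=\iota(\vec\tau^\omega)(\eta)$ for every $\eta\in A\cap B$ and sending $\iota(\vec\sigma^\omega)[A\setminus B]$ injectively above $\sup\iota(\vec\tau^\omega)[B]$, completing arbitrarily to a bounded bijection; any $\vec\pi^\omega$ implementing this $\rho$ makes $\gaut{\vec\pi}\dot q$ and $\dot q'$ compatible conditions of $\dot\QQ_{\omega+1}$.

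It remains to choose such a $\vec\pi^\omega\in\sG_\omega$ that also satisfies $\gaut{\vec\pi}p=p$. Since $\vec\pi$ is the identity below $\omega$ we automatically get $\gaut{\vec\pi}(p\restriction\omega)=p\restriction\omega$, so we only need $\vec\pi^\omega$, acting as an automorphism of $\dot\QQ_\omega$, to fix $p(\omega)$. This is the one genuinely new ingredient: $p(\omega)$ is a condition $\gaut{\vec\nu}\dot\varrho_\omega\restriction(E,A_0)$ of $\dot\QQ_\omega$ with $E\subseteq\prod\SC(\omega_\omega)$ bounded, say $E\subseteq g\da$, and $A_0\in\jbd(\omega)$, so the data it involves is bounded in the $\prod\SC(\omega_\omega)$-direction and, crucially, supported on a \emph{finite} set $A_0$ of $\omega$-coordinates. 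On the other hand, $F^\omega$ being a permutable scale (and the $\sG_{n+1}$ chosen large enough to witness it), $\rho$ can be implemented by a sequence $\vec\pi^\omega=\tup{\pi^\omega_n\mid n<\omega}$ whose behaviour the implementing condition constrains only for a cofinite set of $n$, and there only on the values $f^\omega_\eta(\omega_{n+1})$ for $\eta$ in the bounded support of $\rho$; below those values, and on any prescribed finite initial segment of $n$, each $\pi^\omega_n$ may be taken to be the identity, exactly as in the non-limit successor case one absorbs the bounded domain of $p(\beta)$ into the first block of the disjoint approximation used to implement a permutation. Choosing $\vec\pi^\omega$ implementing $\rho$ with $\pi^\omega_n=\id$ for all $n\le\max A_0$ and $\pi^\omega_n\restriction g(\omega_{n+1})=\id$ otherwise, this $\vec\pi^\omega$ fixes $p(\omega)$, hence $\gaut{\vec\pi}p=p$; and by the previous paragraph $\gaut{\vec\pi}\dot q$ and $\dot q'$ are compatible, so in particular $p\forces^\IS_{\omega+1}$ they are compatible, as required.

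I expect the reconciliation in the last paragraph to be the main obstacle: producing a single $\vec\pi^\omega\in\sG_\omega$ that simultaneously implements the permutation $\rho$ of $\omega_{\omega+1}$ needed to align $\dot q$ with $\dot q'$ and fixes the two-dimensional bounded datum $(E,A_0)$ of the prior iterand's condition $p(\omega)$. It works because implementing a bounded permutation via a permutable scale is insensitive both to the behaviour of the $\pi^\omega_n$ below any fixed bound and to finitely many low levels $n$; but the bookkeeping — matching the bound $g$ of $E$, the finite set $A_0$, and the threshold past which $\vec\pi^\omega$ must track $\rho$ — is where the care goes, and it is the only step not already handled by the non-limit successor argument.
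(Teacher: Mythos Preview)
Your overall strategy is sound and close in spirit to the paper's, but there is one genuine slip in the last paragraph. You require $\vec\pi^\omega$ to implement your chosen $\rho$ \emph{and} to satisfy $\pi^\omega_n\restriction g(\omega_{n+1})=\id$ for $n>\max A_0$. These two requirements need not be compatible: if $\rho$ moves some $\eta$ with $f^\omega_\eta\leq^* g$ (and nothing in your choice of $\rho$ prevents this, since $\rho$ was built from $\dot q,\dot q'$ alone, with no reference to $p(\omega)$), then implementing $\rho$ forces $\pi^\omega_n$ to move $f^\omega_\eta(\omega_{n+1})<g(\omega_{n+1})$ for cofinitely many $n$. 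The good news is that the $g$-condition is simply unnecessary: the condition $p(\omega)=\gaut{\vec\nu}\dot\varrho_\omega\restriction(E,A_0)$ carries no data at levels $n\notin A_0$, so $\vec\pi^\omega$ acting pointwise on $\dot\QQ_\omega$ fixes $p(\omega)$ as soon as $\pi^\omega_n=\id$ for $n\in A_0$. Dropping the $g$-clause, your argument goes through.

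For comparison, the paper takes a shorter route. Rather than manufacturing a bounded permutation $\rho$ of $\omega_{\omega+1}$ to align $\dot q$ with $\dot q'$ and then implementing it, the paper observes it suffices to map an arbitrary $\dot q=\gaut{\vec\sigma^\omega}\dot\varrho_{\omega+1}\restriction A$ to the canonical $\dot\varrho_{\omega+1}\restriction A$ while fixing $p$; this is done by taking $\pi^\omega_n=(\sigma^\omega_n)^{-1}$ for $n\geq k$ (where $k$ bounds $A_0$) and $\pi^\omega_n=\id$ for $n<k$. Since $\dot R_\eta$ depends only on the $=^*$-class of $f^\omega_\eta$, the finitely many ``wrong'' coordinates do not matter, and one never has to pass through the permutable-scale implementation machinery at all. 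Your approach recovers the same conclusion but with more bookkeeping; the paper's direct pointwise inversion sidesteps exactly the reconciliation you flagged as the main obstacle.
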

\begin{proof}
It is enough to show that if $\tup{p,\dot q}\in\PP_{\omega+2}$, then there is some $\vec\pi\in\cG_{\omega+1}$ such that $\gaut{\vec\pi}\dot q=\dot\varrho_{\omega+1}\restriction A$ for some $A$, and $\gaut{\vec\pi}p=p$.

By definition there is some $\vec\sigma$ such that $\dot q=\gaut{\vec\sigma}\dot\varrho_{\omega+1}\restriction A$. We may ignore all the finite coordinates of $\vec\sigma$ and focus only on $\vec\sigma^\omega$. Let $k$ be large enough such that $p_\omega$ is bounded by $k$, then any $\vec\pi^\omega$ such that $\vec\pi^\omega\restriction k=\id$ is guaranteed not to move $p$ at all. Let $\pi^\omega_n=(\sigma^\omega_n)^{-1}$ for $n\geq k$, then we claim that $\gaut{\vec\pi^\omega}\dot q=\dot\varrho_{\omega+1}\restriction A$. But it is obvious that for every $\alpha\in A$, $\gaut{\vec\pi^\omega}\dot q(\alpha)=\dot\varrho_{\omega+1}(\alpha)$. And the conclusion follows.
\end{proof}
The rest of the construction, including the definitions of $\sG_{\omega+1}$ and $\sF_{\omega+1}$ are the same as the usual successor step, and there is no need to modify the constructions.
\subsection{Other limit steps}
The rest of the section will be devoted to the general limit-related steps. These are similar to the $\omega$-related steps,\footnote{We apologize in advance to the reader: in some of these cases the proofs are quite the same argument, perhaps with a small change. These changes will be noted if they are not sufficiently clear. However, in some of the proofs it might seem that there is a proof using the inductive construction, where a ``direct proof'' can be given as in the $\omega$-related steps. Of course these statements can be proved in such way, but the ``direct proof'' also works. We have arrived to a no-win scenario in wasting the reader's time: retread the same proof as before, come up with it yourself, or figure out why the direct proof works rather than a proof using the construction up to $\alpha$. To add insult to injury, this footnote would have taken a few minutes to read as well. Sorry.} although in several points we will need to address previous limit cases, which means that we use the bootstrapping case of $\omega$, and not-yet-proved statements which will be formulated and proved in the rest of this section. We mainly have to verify that the sequence $\tup{\varrho_\beta\mid\beta<\alpha}$ is symmetrically generic for $\PP_\alpha$ when $\alpha$ is a limit. 

\begin{proposition}
Suppose that $D\subseteq\PP_\alpha$ is a symmetrically dense set. Then there is some sequence $\tup{\eta_\beta\mid\beta<\alpha}$ such that $\tup{\varrho_\beta\restriction\eta_\beta\mid\beta<\alpha}^\bullet\in D$, where $\eta_\beta$ is an ordinal for non-limit $\beta$, and $(E_\beta,A_\beta)$ such that $E_\beta\subseteq\prod\SC(\omega_\beta)$ and $A_\beta\subseteq\beta$ are bounded sets.
\end{proposition}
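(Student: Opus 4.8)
The argument follows the pattern of the proof of \autoref{prop:omega-sym-density}, the only genuinely new work being the treatment of the finitely many \emph{limit} coordinates $\beta<\alpha$, which is handled by the limit-iterand machinery --- \autoref{lemma:omega-genericity} and \autoref{prop:omega-up-hom} for $\omega$, and their analogues for limit ordinals below $\alpha$, available in the bootstrapping order of this section. We may assume $D$ is open, replacing it if necessary by its downward closure, which is symmetrically dense open with the same supporting group; this is all that is needed for the genericity of $\tup{\varrho_\beta\mid\beta<\alpha}$.

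Fix an excellent support $\vec H=\tup{H_\beta\mid\beta<\alpha}$ witnessing that $D$ is symmetrically dense open, and recall that $C(\vec H)=\set{\beta<\alpha\mid H_\beta\neq\sG_\beta}$ is finite. For each $\beta<\alpha$ we choose a base segment $p(\beta)$ of $\varrho_\beta$ as follows. If $\beta\notin C(\vec H)$, set $p(\beta)=1_{\dot\QQ_\beta}$. If $\beta\in C(\vec H)$ is non-limit, use that $H_\beta$ contains $\fix(\cB_\beta)$ for a disjoint approximation $\cB_\beta$ of $\set{A^\beta_\eta\mid\eta<\mu_\beta}$, and set $p(\beta)=\dot\varrho_\beta\restriction\mu_\beta$. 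If $\beta\in C(\vec H)$ is a limit, use that $H_\beta$ contains a group $K_{\eta_\beta,f_\beta}$ of the derived filter, and --- translating this through the structure of the $\beta$th (limit) iterand exactly as in \autoref{lemma:omega-genericity} --- set $p(\beta)=\dot\varrho_\beta\restriction(E_\beta,A_\beta)$ for suitable bounded $E_\beta\subseteq\prod\SC(\omega_\beta)^L$ and $A_\beta\subseteq\beta$ read off from $f_\beta$ and $\eta_\beta$. By the induction hypotheses (in particular clauses 3(b), 3(c) and 4(a)--(d)), each $p(\beta)$ lies in the appropriate class of hereditarily respected names, is of the prescribed form, and is fixed by $\gaut{\vec\pi}$ for every $\vec\pi\in\vec H$. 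Hence $p=\tup{p(\beta)\mid\beta<\alpha}^\bullet$ is a condition in $\PP_\alpha$ with $\supp(p)\subseteq C(\vec H)$ finite and $\gaut{\vec\pi}p=p$ for all $\vec\pi\in\vec H$.

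By density, pick $r_0\leq_\alpha p$ with $r_0\in D$, and note that $\supp(r_0)$ is finite. We build a finite sequence of conditions $r_0,r_1,\dots,r_m$ maintaining $r_k\leq_\alpha p$, $r_k\in D$ and $\supp(r_k)=\supp(r_0)$, ending with $r_m=\tup{\varrho_\beta\restriction\eta_\beta\mid\beta<\alpha}^\bullet$. Given $r_k$ not yet of that form, let $\beta$ be least such that $r_k(\beta)$ is not an initial segment of $\varrho_\beta$. If $\beta$ is non-limit, the homogeneity step of \autoref{prop:omega-sym-density} --- using that $\sG_\beta$ witnesses the homogeneity of $\dot\QQ_\beta$ together with the permutable-family properties at $\omega_\beta$ and the choice of $\mu_\beta$ --- yields $\tau_\beta\in\sG_\beta\cap H_\beta$ fixing $p(\beta)$ with $\tau_\beta(r_k(\beta))$ an initial segment of $\varrho_\beta$. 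If $\beta$ is a limit, the upwards homogeneity of $\PP_\beta\ast\dot\QQ_\beta$ (induction hypothesis 2(b)), unwound as in \autoref{prop:omega-up-hom} and \autoref{lemma:omega-genericity} for the $\beta$th iterand, yields the analogous $\tau_\beta\in\sG_\beta\cap H_\beta$ fixing $p(\beta)$ and carrying $r_k(\beta)$ onto a condition of the form $\dot\varrho_\beta\restriction(E,A)$. Set $r_{k+1}=\gaut{\tau_\beta}r_k$. Since $\tau_\beta\in H_\beta$, the automorphism $\gaut{\tau_\beta}$ is $\gaut{\vec\pi}$ for the $\vec\pi\in\vec H$ which is $\tau_\beta$ in coordinate $\beta$ and trivial elsewhere; hence $\gaut{\tau_\beta}$ fixes both $D$ and $p$, so $r_{k+1}\in D$, $r_{k+1}\leq_\alpha p$, and $\supp(r_{k+1})=\supp(r_0)$. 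Since $\gaut{\tau_\beta}$ acts trivially on all coordinates below $\beta$, the coordinates already brought to the desired form stay so, and the least offending coordinate strictly increases; so after at most $|\supp(r_0)|$ steps we obtain $r_m\in D$ of the form $\tup{\varrho_\beta\restriction\eta_\beta\mid\beta<\alpha}^\bullet$, with $\eta_\beta$ an ordinal for non-limit $\beta$ and $\eta_\beta=(E_\beta,A_\beta)$ for limit $\beta$, as required.

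The main obstacle is precisely the limit coordinates: one needs, in advance, to know that $H_\beta\supseteq K_{\eta_\beta,f_\beta}$ for suitable $\eta_\beta,f_\beta$, to translate this into a base segment $p(\beta)=\dot\varrho_\beta\restriction(E_\beta,A_\beta)$ that is fixed not merely by the coordinate-$\beta$ permutation $\tau_\beta$ but by every $\gaut{\vec\pi}$ with $\vec\pi\in\vec H$, and then to be able to choose the rotating automorphism $\tau_\beta$ coming from upwards homogeneity so that it simultaneously lies in $H_\beta$ and moves $r_k(\beta)$ onto an initial segment of $\varrho_\beta$ while fixing $p(\beta)$ on the nose. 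These are exactly the points secured by \autoref{lemma:omega-genericity} and \autoref{prop:omega-up-hom} and their generalizations to limits below $\alpha$ (which is where the bootstrapping of this section is invoked); granting them, the proof is a faithful transcription of the case $\alpha=\omega$.
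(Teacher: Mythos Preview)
Your argument is the same as the paper's: fix an excellent support $\vec H$, seed a base condition $p$ from it coordinate by coordinate (with the limit coordinates handled via the $K_{\eta,f}$-groups of the derived filter), extend into $D$, and then correct the finitely many offending coordinates one at a time with automorphisms lying in $\vec H$. The paper is terser and simply says the essence of \autoref{prop:omega-sym-density} carries over, noting only that at a limit coordinate one picks the automorphism so as not to move coordinates inside $A_\beta$; your explicit invocation of the limit-iterand lemmas is exactly what that remark unpacks to.

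One small overclaim worth tightening: you assert that each $\tau_\beta$ can be chosen so that $\tau_\beta(r_k(\beta))$ is literally $\dot\varrho_\beta\restriction\eta$ for some ordinal $\eta$. Homogeneity of $\dot\QQ_\beta$ only gives you $\tau_\beta\in H_\beta$ making $\tau_\beta(r_k(\beta))$ \emph{compatible} with $\dot\varrho_\beta$ (equivalently, $\tau_\beta(r_k(\beta))=\dot\varrho_\beta\restriction A$ for some bounded $A$ that need not be an ordinal). The paper resolves this at the very end by passing from the final $r_m$, which is merely compatible with the sequence of $\dot\varrho_\beta$'s, to a common extension of the form $\tup{\dot\varrho_\beta\restriction\eta_\beta\mid\beta<\alpha}$ using openness of $D$. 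You already arranged $D$ to be open in your first paragraph but then never used it; replacing ``an initial segment of $\varrho_\beta$'' by ``compatible with $\dot\varrho_\beta$'' and adding this one-line extension step at the end completes the argument.
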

\begin{proof}
Let $\vec H$ be a support for $D$, and let $\zeta_\beta$ be such that $H_\beta$ contains $\fix(\cB_\beta)$ which is a disjoint approximation for $\{A^\beta_\eta\mid\eta<\zeta_\beta\}$, and $\zeta_\beta=(E_\beta,A_\beta)$ such that $K_{\xi^\beta,f^\beta}$ is a subgroup of $H_\beta$ with $f^\beta$ an upper bound of $E_\beta$ and $A_\beta=\max C(\vec H\restriction\beta)+1$. Namely, $E_\beta$ is the set of functions dominated by $f^\beta$, and $A_\beta$ is the initial segment where $\vec H$ still has the possibility to be nontrivial. If $\beta\notin C(\vec H)$, we define $\zeta_\beta=0$ or $(\varnothing,\varnothing)$.

The rest of the proof goes almost the same as the proof of \autoref{prop:omega-sym-density}, we start with an extension of $p=\tup{\dot\varrho_\beta\restriction\zeta_\beta\mid\beta<\alpha}^\bullet$, and extend it to $r_0\in D$. Then we proceed by induction to ``correct'' the incompatible coordinates of $r_0$ one by one. The only difference is that now we might have the  case where we need to correct a condition in $D$ at a limit coordinate, $\beta$. The ``nature'' of the automorphism there is slightly different, but the essence remains: we can find an automorphism which will not move coordinates inside $A_\beta$.
\end{proof}

The chain condition of $\PP_\alpha$ is indeed $\aleph_\alpha$-c.c., as a finite support iteration of forcing posets with even smaller chain conditions. So it remains to verify the condition on $\tup{\dot\varrho_\beta(f(\beta))\mid\beta<\alpha}^\bullet\in\IS_\alpha$.
\begin{proposition}
Let $f\in\prod\SC(\omega_\alpha)$ and $\delta<\alpha$, then $\tup{\dot\varrho_\beta(f(\beta))\mid\beta<\delta}^\bullet$ has a name in $\IS_\alpha$ which is definable from $\delta$ and $f$.
\end{proposition}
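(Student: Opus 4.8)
The plan is to produce an explicit $\PP_\alpha$-name and verify it is hereditarily $\cF_\alpha$-respected; the subtlety is that the naive name---assembled coordinate by coordinate from the native names $\dot R^\beta_{f(\beta)}$ for non-limit $\beta$ and $\dot\varrho_{\beta,f\restriction\SC(\omega_\beta)}$ for limit $\beta$---is \emph{not} in $\IS_\alpha$: the $\beta$-th such name is moved by every automorphism acting nontrivially on coordinate $\beta-1$ (resp.\ $\beta$), and these coordinates are cofinal in $\delta$ once $\delta\geq\omega$, so no excellent support can fix the whole $\bullet$-sequence. The way around this is the condensation that the limit iterands were designed to furnish. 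Fix the largest limit ordinal $\lambda\leq\delta$ (set $\lambda=0$ if $\delta<\omega$), write $\delta=\lambda+k$ with $k<\omega$, and set $g=f\restriction\SC(\omega_\lambda)\in\prod\SC(\omega_\lambda)$.

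The main step is to handle the head $\tup{\dot\varrho_\beta(f(\beta))\mid\beta<\lambda}^\bullet$. By the limit-iterand construction at stage $\lambda$ (the analogue of \autoref{prop:omega-coherence}), the object $\dot\varrho_{\lambda,g}$ has a name in $\IS_{\lambda+1}$, identified canonically from $\lambda$ and $g$, whose only nontrivial support coordinate is $\lambda$; and by the definition of $\dot\varrho$ at limit stages, $\dot\varrho_{\lambda,g}=\tup{\dot\varrho_{\gamma+1}(f(\gamma+1))\mid\gamma<\lambda}^\bullet$, with $\dot\varrho_{\lambda,g}\restriction\beta=\dot\varrho_{\beta,f\restriction\SC(\omega_\beta)}$ for every limit $\beta<\lambda$. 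Hence, inside the intermediate model, the whole head is definable by a fixed formula from $\dot\varrho_{\lambda,g}$ and the ordinal $\lambda$: its $(\gamma+1)$-st entry is the $\gamma$-th entry of $\dot\varrho_{\lambda,g}$, and its $\beta$-th entry for limit $\beta$ is $\dot\varrho_{\lambda,g}\restriction\beta$. Since the canonical name of a set defined, with ground-model parameters, from an element of $\IS_{\lambda+1}$ is again hereditarily $\cF_{\lambda+1}$-respected with the same excellent support---this is the Symmetry Lemma for $\forces^\IS$ together with \autoref{lemma:decision sets are symmetrically dense}---the head acquires a name in $\IS_{\lambda+1}$ that is trivially supported off coordinate $\lambda$, and therefore, via the canonical embedding $\IS_{\lambda+1}\subseteq\IS_\alpha$ (legitimate as $\lambda+1<\alpha$), a name in $\IS_\alpha$ identified from $\delta$ and $f$.

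It remains to adjoin the $k$ entries $\dot\varrho_{\lambda+j}(f(\lambda+j))$ for $0\leq j<k$. The $j=0$ entry is $\dot\varrho_{\lambda,g}$ again, in $\IS_{\lambda+1}\subseteq\IS_\alpha$; for $1\leq j<k$ the entry is the name $\dot R^{\lambda+j}_{f(\lambda+j)}$ produced at stage $\lambda+j$, which lies in $\IS_{\lambda+j}\subseteq\IS_\alpha$ and is trivially supported off coordinate $\lambda+j-1$, all identified canonically from $\delta$ and $f$. The full sequence $\tup{\dot\varrho_\beta(f(\beta))\mid\beta<\delta}^\bullet$ is then definable from the head and these finitely many further elements of $\IS_\alpha$, so it has a name in $\IS_\alpha$ identified from $\delta$ and $f$, supported by the pointwise intersection of the finitely many excellent supports used---which is again excellent, being trivial off a finite set of coordinates below $\delta$. (One may assume $\delta\geq1$; a term at $\beta=0$, if meant at all, is the Cohen real, in $\IS_1\subseteq\IS_\alpha$, and joins the finite tail.)

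I expect the only genuinely nonroutine part to be the idea in the first two paragraphs: recognising that the sequence must not be named entry by entry, and instead routing the cofinally many ``lower'' slices through the single condensed generic $\dot\varrho_{\lambda,g}$ of the $\lambda$-th iterand. Everything afterwards---closure of $\IS_\alpha$ under canonical names of definable sets, and the bookkeeping with the finite tail---is standard.
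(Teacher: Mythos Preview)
Your approach is essentially the same as the paper's: condense the cofinally many lower entries into the single object $\dot\varrho_{\lambda,g}\in\IS_{\lambda+1}$ for an appropriate limit $\lambda$, then append a finite tail. The paper organises this slightly differently, as an induction on $\alpha$: if $\alpha$ is a limit of limits, any $\delta<\alpha$ lies below some earlier limit $\alpha'<\alpha$ and the induction hypothesis for $\alpha'$ applies directly; if $\alpha=\alpha'+\omega$, either $\delta<\alpha'$ (again IH for $\alpha'$) or $\delta=\alpha'+n$, in which case one extends the name for $\dot\varrho_{\alpha',f\restriction\alpha'}$ by the finite piece above $\alpha'$. Your version unrolls this induction by always taking $\lambda$ to be the greatest limit $\leq\delta$ and invoking the coherence proposition at $\lambda$ directly; the content is the same.

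One remark on your reading of the statement: the paper's proof identifies the head outright with $\dot\varrho_{\alpha',f\restriction\alpha'}=\tup{\dot\varrho_{\gamma+1}(f(\gamma+1))\mid\gamma<\alpha'}^\bullet$, so the intended indexing is by successor $\beta$ only (consistent with $f$ having domain $\SC(\omega_\alpha)$). Your additional work to manufacture entries at limit indices $\beta<\lambda$ as $\dot\varrho_{\lambda,g}\restriction\beta$ is therefore probably unnecessary, though it is harmless and the definability-from-$\dot\varrho_{\lambda,g}$ argument you give would handle it correctly if such entries were intended.
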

\begin{proof}
If $\alpha=\alpha'+\omega$ for some $\alpha'<\alpha$, then either $\delta<\alpha'$ in which case the induction hypothesis for $\alpha'$ ensures the existence of such name; or $\delta=\alpha'+n$ for some $n<\omega$, in which case by \autoref{prop:coherence} (and \autoref{prop:omega-coherence} as a bootstrapping case) we get can extend the name for $\dot\varrho_{\alpha',f\restriction\alpha'}$ by adding the finite part $\tup{\dot\varrho_{\alpha'+n+1}(f(\alpha'+n+1))\mid\alpha'+n<\delta}^\bullet$.

If $\alpha$ is a limit of limit ordinals, then the induction hypothesis suffices to immediately get this result, as any restriction is bounded below some previous limit ordinal.
\end{proof}
\subsection{Limit iterands}
For $f\in\prod\SC(\omega_\alpha)$, let $\dot\varrho_{\alpha,f}$ denote $\tup{\dot\varrho_{\beta+1}(f(\beta+1))\mid\beta<\alpha}^\bullet$. We define $\dot\varrho_\alpha$ to be $\tup{\dot\varrho_{\alpha,f}\mid f\in\prod\SC(\omega_\alpha)}^\bullet$, similar to how $\dot\varrho_\omega$ was defined. For $E\subseteq\prod\SC(\omega_\alpha)$ and $A\subseteq\alpha$, we write $\dot\varrho_\alpha\restriction(E,A)$ to denote $\tup{\dot\varrho_{\alpha,f}\restriction A\mid f\in E}^\bullet$.

For $A\in\jbd(\omega_\alpha)$ we say that $\delta$ is the \textit{condensation point of $A$} if $\delta$ is a limit ordinal and $\delta+\omega=\alpha$, or if $\delta$ is the least limit ordinal such that $A\subseteq\delta$ in the case that $\alpha$ is a limit of limit ordinals.

\begin{remark}
It will be important later on that we insist that $\dot\varrho_{\alpha,f}\restriction A$ and $\dot\varrho_\alpha\restriction(E,A)$ are composed from the names which we can identify as these restrictions in $\IS_\alpha$. More importantly, we will require them to be the names which were obtained in $\IS_{\delta+1}$, where $\delta$ is the condensation point of $A$, with a finite addition if necessary.
\end{remark}

Define $\dot\QQ_{\alpha,f}$ to be the forcing $\{\gaut{\vec\pi}\dot\varrho_{\alpha,f}\restriction A\mid A\in\jbd(\omega_\alpha)\}^\bullet$, ordered by reverse inclusion. And $\dot\QQ_\alpha$ to be the following forcing: \[\left\{\gaut{\vec\pi}\dot\varrho_\alpha\restriction(E,A)\middd\begin{array}{l}\gaut{\vec\pi}\in\cG_\alpha,\\ E\subseteq\prod\SC(\omega_\alpha)\text{ bounded, and}\\ A\in\jbd(\alpha)
\end{array}\right\}^\bullet.\]
We use the same notation as in the case $\alpha=\omega$ for $\dot q(f)$ and $\dot q(f,\xi)$. Now define $\dot q\leq_{\dot\QQ_\alpha}\dot q'$ if and only if for all $f\in\prod\SC(\omega_\alpha)$, $\dot q(f)\leq_{\dot\QQ_{\alpha,f}}\dot q'(f)$.
\begin{proposition}
If $\tup{p,\dot q}\in\PP_\alpha\ast\dot\QQ_\alpha$, then there is some $\gaut{\vec\pi}\in\cG_\alpha$ such that $\gaut{\vec\pi}p=p$ and $\gaut{\vec\pi}\dot q$ is compatible with $\dot\varrho_\alpha$.
\end{proposition}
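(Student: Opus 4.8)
The plan is to rerun the recursive correction argument of \autoref{prop:omega-up-hom} (the case $\alpha=\omega$) almost verbatim; the one genuinely new ingredient is a preliminary reduction showing that only finitely many coordinates have to be corrected, and the only place where the limit character of $\alpha$ is felt is that the iterands being corrected may now be successors of limit ordinals as well as successors of successors. First I would unpack the condition: by the definition of $\dot\QQ_\alpha$ there are, in the ground model, a sequence $\vec\sigma\in\cG_\alpha$, a bounded $E\subseteq\prod\SC(\omega_\alpha)$, and an $A\in\jbd(\alpha)$ with $\dot q=\gaut{\vec\sigma}\dot\varrho_\alpha\restriction(E,A)$. Applying $\gaut{\vec\sigma}$ to the $\bullet$-name $\dot\varrho_\alpha\restriction(E,A)$ keeps the index set $E$ in place and replaces, for each $f\in E$, the $\beta$-th entry $\dot\varrho_{\beta+1}(f(\beta+1))=\dot R_{f(\beta+1)}$ of $\dot\varrho_{\alpha,f}\restriction A$ by $\dot R_{\rho_\beta(f(\beta+1))}=\dot\varrho_{\beta+1}(\rho_\beta(f(\beta+1)))$, where $\rho_\beta$ is the bounded permutation of $\omega_{\beta+1}$ implemented by the single relevant (level-$\beta$) component of $\vec\sigma$; this is precisely how automorphisms of $\cG_\alpha$ act on such restriction names, by the induction hypothesis (clause (3)(c) when $\beta$ is a successor, and its instance at successors of limits, cf.\ \autoref{prop:omega-successor-conditions} and \autoref{prop:omega-coherence}). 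Since $\vec\sigma$ has only finitely many non-identity components, $\rho_\beta$ is the identity for all but finitely many $\beta$, so $\dot q$ coincides with $\dot\varrho_\alpha\restriction(E,A)$ on every coordinate outside a finite set $S\subseteq A$. Each $\beta\in S$ has $\beta+1$ a successor ordinal, so $\PP_{\beta+1}\ast\dot\QQ_{\beta+1}$ is an upwards homogeneous iteration witnessed by $\cG_{\beta+1}$, by the induction hypothesis (clause (2)(b)), regardless of whether $\beta$ is a successor or --- as in \autoref{prop:omega+1-up-hom} --- a limit.

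Next I would build $\vec\pi$ by recursion along $S$ enumerated increasingly, following \autoref{prop:omega-up-hom} step by step. Having corrected the elements of $S$ below the current $\beta$, view the restriction of $p$ to $\PP_{\beta+1}$ together with the relevant $\beta$-slice of the partially corrected $\dot q$ as a condition of $\PP_{\beta+1}\ast\dot\QQ_{\beta+1}$, and apply upwards homogeneity of that iteration to obtain an automorphism given by a single permutation $\pi_\beta$ at level $\beta$ (the identity elsewhere) with $\gaut{\pi_\beta}p=p$ and $\gaut{\pi_\beta}$ of the current $\dot q$ agreeing with $\dot\varrho_\alpha$ on coordinate $\beta$. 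That $p$ stays fixed is arranged as in the basis and successor steps: $p\restriction\beta$ is fixed automatically, and each remaining coordinate of $p$, being a condition of bounded domain, can be absorbed into the first block of the disjoint approximation used to implement $\rho_\beta^{-1}$, so the permutation is taken to be the identity there. Since all our permutations come from the ground model, the conjugation identity of \autoref{prop:two-step-conj} degenerates to $\gaut{\pi_\beta}\gaut{\vec\pi\restriction\beta}=\gaut{\vec\pi\restriction(\beta+1)}$, so the single-coordinate corrections genuinely assemble into one sequence $\vec\pi$; as $S$ is finite, $\vec\pi$ has finitely many non-identity components, so $\gaut{\vec\pi}\in\cG_\alpha$.

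After the $|S|$ steps we have $\gaut{\vec\pi}p=p$ and $\gaut{\vec\pi}\dot q$ agreeing with $\dot\varrho_\alpha$ on every coordinate it specifies, i.e.\ $\gaut{\vec\pi}\dot q=\dot\varrho_\alpha\restriction(E,A)$; this is a restriction of $\dot\varrho_\alpha$, in particular compatible with it, as wanted. I expect the only delicate point to be the preliminary reduction to the finite set $S$ --- checking that an automorphism from $\cG_\alpha$ disturbs only finitely many entries of the coherent restriction name $\dot\varrho_\alpha\restriction(E,A)$ and leaves the untouched entries and the index set $E$ exactly in place. Once that is granted, the unboundedness of $A$ is irrelevant, since $\dot q$ already coincides with $\dot\varrho_\alpha\restriction(E,A)$ on the cofinitely many coordinates of $A$ that $\vec\sigma$ does not move, and the remainder is the same finite bookkeeping as for $\alpha=\omega$, with all the homogeneity inputs supplied by the induction hypothesis.
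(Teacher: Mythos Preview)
Your argument is correct but takes a genuinely different route from the paper's. The paper does \emph{not} reduce to a finite set of disturbed coordinates; instead it invokes the condensation point $\delta$ of $A$ and handles the entire segment below $\delta$ in a single stroke by choosing one automorphism $\pi_\delta\in\sG_\delta$ (using the induction hypothesis that $\sG_\delta$ is the full-support product $\prod_{\gamma<\delta}\sG_{\gamma+1}$ and that $\PP_\delta\ast\dot\QQ_\delta$ is upwards homogeneous). Only the finitely many coordinates between $\delta$ and $\alpha$ are then corrected one by one, exactly as in \autoref{prop:omega-up-hom}. Your approach avoids the condensation point entirely: you observe that since $\vec\sigma\in\cG_\alpha$ has finite support, induction hypothesis (3)(c) guarantees that $\gaut{\vec\sigma}$ literally fixes all but finitely many entries $\dot\varrho_{\beta+1}(f(\beta+1))$ of the $\bullet$-name, so the finite correction procedure of \autoref{prop:omega-up-hom} applies directly to $S=A\cap C(\vec\sigma)$, using upwards homogeneity only at successor levels $\beta+1$.

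Your route is more elementary in that it never touches the limit iterand at $\delta$; what it buys is a proof that does not depend on the structure of $\sG_\delta$ or on the specific way $\dot\varrho_\alpha\restriction(E,A)$ is represented as a name from $\IS_{\delta+1}$. The paper's route, by contrast, exploits that representation and the product structure of $\sG_\delta$, which is more in keeping with how the rest of the limit-step analysis proceeds (e.g., \autoref{lemma:omega-genericity} and its general analogue). One small point worth tightening in your write-up: when you say ``each remaining coordinate of $p$ \ldots\ can be absorbed,'' make explicit that you only need $\pi_\beta p_\beta=p_\beta$ at the single level $\beta$ (as in the paper's proof of \autoref{prop:omega-up-hom}); the higher coordinates of $p$ are handled at their own stages of the recursion, and the assembly via $\gaut{\pi_\beta}\gaut{\vec\pi\restriction\beta}=\gaut{\vec\pi\restriction(\beta+1)}$ takes care of the bookkeeping.
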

\begin{proof}
The proof is the same proof as \autoref{prop:omega-up-hom}, with one change. Here we start the induction at $\delta$, the condensation point of our two conditions in $\dot\QQ_\alpha$, and utilize the definition of $\sG_\delta$ to find $\pi_\delta$ as needed.
\end{proof}
\begin{corollary}
$\PP_\alpha\ast\dot\QQ_\alpha$ is upwards homogeneous.\qed
\end{corollary}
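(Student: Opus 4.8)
The plan is to derive upwards homogeneity as an essentially formal consequence of the Proposition, using a composition-of-automorphisms trick; the substantive content is the straightening already carried out in the Proposition. Given two conditions $\tup{p,\dot q}$ and $\tup{p,\dot q'}$ in $\PP_\alpha\ast\dot\QQ_\alpha$ with a common first coordinate, I would apply the Proposition twice: to $\tup{p,\dot q}$, producing $\gaut{\vec\pi}\in\cG_\alpha$ with $\gaut{\vec\pi}p=p$ and $\gaut{\vec\pi}\dot q$ an initial segment of $\dot\varrho_\alpha$; and to $\tup{p,\dot q'}$, producing $\gaut{\vec\sigma}\in\cG_\alpha$ with $\gaut{\vec\sigma}p=p$ and $\gaut{\vec\sigma}\dot q'$ an initial segment of $\dot\varrho_\alpha$. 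As in the proof of \autoref{prop:omega+1-up-hom}, these can be taken to be genuine restrictions $\dot\varrho_\alpha\restriction(E_1,A_1)$ and $\dot\varrho_\alpha\restriction(E_2,A_2)$ with $E_1,E_2\subseteq\prod\SC(\omega_\alpha)$ bounded and $A_1,A_2\in\jbd(\alpha)$.

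The next step is to note that any two such initial segments of $\dot\varrho_\alpha$ are compatible in $\dot\QQ_\alpha$: the object $\dot\varrho_\alpha\restriction(E_1\cup E_2,A_1\cup A_2)$ is again a condition, since $E_1\cup E_2$ is dominated by the pointwise maximum of the bounds of $E_1$ and $E_2$ and $A_1\cup A_2\in\jbd(\alpha)$, and by the coordinatewise definition of $\leq_{\dot\QQ_\alpha}$ it lies below both $\gaut{\vec\pi}\dot q$ and $\gaut{\vec\sigma}\dot q'$. Hence $\gaut{\vec\pi}\dot q$ and $\gaut{\vec\sigma}\dot q'$ have a common extension $\dot r$.

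Finally I would undo $\gaut{\vec\sigma}$: the automorphism $\gaut{\vec\sigma}^{-1}$ again lies in $\cG_\alpha$, fixes $p$, and respects $\dot\QQ_\alpha$ and its order, so by the Symmetry Lemma for $\forces^\IS_\alpha$ the condition $\gaut{\vec\sigma}^{-1}\dot r$ witnesses that $\gaut{\vec\sigma}^{-1}\gaut{\vec\pi}\dot q$ is compatible with $\gaut{\vec\sigma}^{-1}\gaut{\vec\sigma}\dot q'=\dot q'$. Taking $\gaut{\vec\tau}=\gaut{\vec\sigma}^{-1}\gaut{\vec\pi}\in\cG_\alpha$ we obtain $\gaut{\vec\tau}p=p$ together with $p\forces^\IS_\alpha$ ``$\gaut{\vec\tau}\dot q$ is compatible with $\dot q'$'', so $\cG_\alpha$ witnesses upwards homogeneity of $\PP_\alpha\ast\dot\QQ_\alpha$, as the induction hypothesis requires. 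There is no real obstacle here; the only thing to be careful about is that the definition of an upwards homogeneous iteration permits moving only $\dot q$ and not $\dot q'$, which is precisely why the auxiliary automorphism $\gaut{\vec\sigma}$ applied to $\dot q'$ must be cancelled at the end rather than combined with $\gaut{\vec\pi}$.
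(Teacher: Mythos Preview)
Your proposal is correct and matches the paper's intent; the paper itself marks this corollary with \qed, treating it as an immediate consequence of the preceding Proposition, and your two-applications-then-compose argument is precisely the routine deduction the paper suppresses. One small imprecision: the Proposition literally gives ``$\gaut{\vec\pi}\dot q$ compatible with $\dot\varrho_\alpha$'' rather than ``equal to a restriction $\dot\varrho_\alpha\restriction(E,A)$'', but this makes no difference---any two conditions forced compatible with $\dot\varrho_\alpha$ lie in the filter it generates and hence have a common extension there, so your step~3 goes through in either reading.
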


The following proposition is a direct analog of its $\omega$th counterpart.
\begin{proposition}
$\forces_\alpha^\IS\sG_\alpha$ witnesses the homogeneity of $\dot\QQ_\alpha$.\qed
\end{proposition}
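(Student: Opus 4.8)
The plan is to fill in the one-sentence argument used for the $\omega$-th iterand: since each $\sG_{\gamma+1}$ with $\gamma<\alpha$ witnesses the homogeneity of $\dot\QQ_{\gamma+1}$ (clause~(2a) of the induction hypothesis), and $\sG_\alpha$ is the full support product $\prod_{\gamma<\alpha}\sG_{\gamma+1}$ acting on a condition of $\dot\QQ_\alpha$ by the coordinatewise action on its sections — which, through the permutable scale $F^\alpha$, permutes the sections indexed by $\prod\SC(\omega_\alpha)$ — homogeneity of $\dot\QQ_\alpha$ should drop out of homogeneity of the factors. Working inside $\IS_\alpha$, I would begin with two conditions $\dot q=\gaut{\vec\pi}\dot\varrho_\alpha\restriction(E,A)$ and $\dot q'=\gaut{\vec\tau}\dot\varrho_\alpha\restriction(E',A')$, and aim to produce a single $\vec\sigma\in\sG_\alpha$ with $\vec\sigma\dot q$ compatible with $\dot q'$.

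First I would dispose of the easy case: if $E\cap E'=\varnothing$ or $A\cap A'=\varnothing$, then $\dot q(f)$ and $\dot q'(f)$ have disjoint domains for every $f\in\prod\SC(\omega_\alpha)$ and the conditions are already compatible. In the remaining case I would fix a coordinate $\gamma_0$ and note that the $\gamma_0$-th slice of each condition depends on a section $f$ only through the value $f(\omega_{\gamma_0+1})$, so it is genuinely a condition of $\dot\QQ_{\gamma_0+1}$ whose \emph{domain}, $\{f(\omega_{\gamma_0+1})\mid f\in E\}$ respectively $\{f(\omega_{\gamma_0+1})\mid f\in E'\}$, is a bounded subset of the regular cardinal $\omega_{\gamma_0+1}$. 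Either invoking homogeneity of $\dot\QQ_{\gamma_0+1}$ or, more directly, picking a bounded permutation $\sigma$ of $\omega_{\gamma_0+1}$ that pulls the first of these sets off the second, I would let $\vec\sigma$ carry $\sigma$ in coordinate $\gamma_0$ and the identity in every other coordinate.

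Then it remains to check that this $\vec\sigma$ works. Being trivial at all but one coordinate, it alters each $f^\alpha_\eta$ in at most one place, hence implements the identity on $\omega_{\alpha+1}$ and lies in $\sG_\alpha$; if desired it can even be taken trivial above the condensation points of $\dot q$ and $\dot q'$, as in the proof of \autoref{prop:omega-up-hom}. The image $\vec\sigma\dot q$ is again a condition of $\dot\QQ_\alpha$, since the permutations twisting its slices remain bounded and every bounded permutation of $\omega_{\gamma+1}$ is implemented by an element of $\sG_\gamma$. And the section-support of $\vec\sigma\dot q$ is $\{\vec\sigma f\mid f\in E\}$, which by the choice of $\sigma$ misses $E'$; so $(\vec\sigma\dot q)(f)$ and $\dot q'(f)$ have disjoint domains for every $f$, whence $\vec\sigma\dot q$ is compatible with $\dot q'$.

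The main obstacle — essentially the only place this needs more than the $\omega$-th case — is ensuring that one coordinatewise permutation from $\sG_\alpha$ can separate the two section-supports while the resulting sequence still implements a bounded permutation of $\omega_{\alpha+1}$; that is, that the combinatorics of the section coordinate, now ranging over a bounded family of functions in $\prod\SC(\omega_\alpha)$ rather than a finite subset of $\omega$, is as benign as expected. This is precisely the job of the permutable scale $F^\alpha$, and is handled exactly as in the verification that the derived filter $\sF_\alpha$ is normal; the remaining steps are routine.
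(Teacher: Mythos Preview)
Your argument is correct, and since the paper offers no proof at all (the statement is marked \qed\ as an immediate analog of the $\omega$th case), there is nothing to compare against beyond the one-line justification given there: ``As $\sG_{n+1}$ witnesses the homogeneity of $\dot\QQ_{n+1}$, $\sG_\omega$ witnesses the homogeneity of $\dot\QQ_\omega$.'' Your single-coordinate trick --- separate the section-supports by pushing the $\gamma_0$-projections of $E$ apart from those of $E'$ --- is a perfectly valid way to unpack that sentence.

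Two remarks, though. First, the ``main obstacle'' you flag is not really an obstacle, and the permutable scale $F^\alpha$ plays no role here. Any $\vec\sigma$ that is the identity on a tail of coordinates satisfies $\vec\sigma f^\alpha_\eta =^* f^\alpha_\eta$ for every $\eta$, hence $\iota(\vec\sigma)=\id$; this holds for any scale whatsoever, permutable or not. The permutable scale is needed only when one wants to implement a \emph{nontrivial} bounded permutation of $\omega_{\alpha+1}$, which is irrelevant for homogeneity.

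Second, your approach is slightly more elaborate than necessary. The direct analog of the $\omega$ argument is this: since $\gaut{\vec\pi},\gaut{\vec\tau}\in\cG_\alpha$ both have finite support, the $\gamma$-slices of $\dot q$ and $\dot q'$ are, for all but finitely many $\gamma$, genuine restrictions $\dot\varrho_{\gamma+1}\restriction\{f(\gamma+1):f\in E\}$ and $\dot\varrho_{\gamma+1}\restriction\{f(\gamma+1):f\in E'\}$ --- already compatible with no work. On the finitely many remaining coordinates one invokes the homogeneity of $\dot\QQ_{\gamma+1}$ via $\sG_{\gamma+1}$, exactly as in the $\omega$ case. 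The resulting $\vec\sigma$ is again cofinitely the identity, so lies in $\sG_\alpha$ for the same trivial reason. This is presumably what the paper has in mind, and it makes the parallel with $\alpha=\omega$ literal rather than merely analogical.
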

\begin{lemma}
Suppose that $\dot D\in\IS_\alpha$ and $p\forces^\IS_\alpha\dot D\subseteq\dot\QQ_\alpha$ is dense. Then there is some $q\leq_\alpha p$ and $(E,A)$ such that $q\forces_\alpha^\IS\dot\varrho_\alpha\restriction(E,A)\in\dot D$.
\end{lemma}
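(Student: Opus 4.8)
The plan is to mimic the proof of \autoref{lemma:omega-genericity}, transporting the genericity argument for $\QQ_\omega$ to the general limit iterand $\QQ_\alpha$, with the condensation point $\delta$ of the relevant conditions playing the role that the finite coordinates played in the $\omega$ case. First I would fix an excellent support $\vec H$ for both $\dot D$ and $p$. For each $\beta\in C(\vec H)$, the support component $H_\beta$ contains some $\fix(\cB_\beta)$ at successor $\beta$, or some $K_{\xi^\beta,f^\beta}$ at limit $\beta$; from this data I extract, for each $\beta$, a ``safe'' set $B_\beta$ (a union of the sets in $\cB_\beta$, or $f^\beta{\downarrow}$ in the limit case) such that any permutation supported outside $B_\beta$ lies in $H_\beta$ and fixes the relevant objects. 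Bundling these into $E'=\prod_{\beta<\alpha}B_{\beta+1}$ and $A'=C(\vec H)$ (or the appropriate initial segment past its sup), I get a canonical condition $\dot\varrho_\alpha\restriction(E',A')$ that is fixed by everything in $\vec H$.

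Next I would use density: find $q'\leq_\alpha p$, an automorphism $\gaut{\vec\pi}\in\cG_\alpha$, and ground-model sets $E,A$ with $q'\forces_\alpha^\IS\gaut{\vec\pi}\dot\varrho_\alpha\restriction(E,A)\in\dot D$ and $\gaut{\vec\pi}\dot\varrho_\alpha\restriction(E,A)\leq_{\dot\QQ_\alpha}\dot\varrho_\alpha\restriction(E',A')$. The inequality forces, just as in the $\omega$ case, that for every $f\in E'$ and every $\beta+1\in A'$ we have $\gaut{\pi_{\beta+1}}(\dot\varrho_{\beta+1}(f(\beta+1)))=\dot\varrho_{\beta+1}(f(\beta+1))$, and at limit coordinates $\beta\in A'$ a corresponding statement using $\dot\varrho_{\beta,f}$ and the way $\sG_\beta$ acts. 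By the choice of the $B_\beta$'s, each component $\pi_\beta^{-1}$ can be realized by a permutation lying in $H_\beta$, and therefore there is $\gaut{\vec\sigma}$ (built componentwise, finitely supported past $\delta$) with $\gaut{\vec\sigma}\gaut{\vec\pi}\dot\varrho_\alpha\restriction(E,A)=\dot\varrho_\alpha\restriction(E,A)$, $\gaut{\vec\sigma}p=p$, and $\gaut{\vec\sigma}\dot D=\dot D$. Applying the Symmetry Lemma to $q'\forces_\alpha^\IS\gaut{\vec\pi}\dot\varrho_\alpha\restriction(E,A)\in\dot D$ and setting $q=\gaut{\vec\sigma}q'$ yields $q\forces_\alpha^\IS\dot\varrho_\alpha\restriction(E,A)\in\dot D$, which is exactly the conclusion.

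The main obstacle I anticipate is the bookkeeping at the limit coordinates $\beta<\alpha$ (of which there may now be infinitely many, unlike the purely finite support in the $\omega$ case). One must check that the ``safe set'' extraction works uniformly for limit $\beta$ using the derived filter $\sF_\beta$ generated by the $K_{\eta,f}$'s, that the $B_\beta$'s are genuinely in the ground model $L$ as required for $E'$ and $A'$ to be legitimate parameters, and that the componentwise-built $\gaut{\vec\sigma}$ is actually an element of $\cG_\alpha$ --- i.e.\ that it is the identity at all but finitely many coordinates. This last point is where the condensation point $\delta$ is essential: since $A$ is bounded in $\omega_\alpha$ and the conditions only differ from $\dot\varrho_\alpha$ on coordinates in $A'\cap C(\vec H)$, which is finite past $\delta$, only finitely many $\sigma_\beta$ need be nontrivial, so $\gaut{\vec\sigma}\in\cG_\alpha$ as needed. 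I would also remark, as in the surrounding text, that the genericity of $\varrho_\alpha$ for $\QQ_\alpha$ over the $\IS_\alpha$-intermediate model follows as an immediate corollary exactly as it did after \autoref{lemma:omega-genericity}.
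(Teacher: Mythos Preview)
Your plan is the same as the paper's: fix an excellent support $\vec H$, manufacture a ``safe'' condition $\dot\varrho_\alpha\restriction(E',A')$, pass to something in $\dot D$ below it, and then straighten that condition out via an automorphism in $\vec H$ and the Symmetry Lemma. The paper in fact says little more than ``the proof is now similar to the proof of \autoref{lemma:omega-genericity}'' after setting up $E'$.

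The one place where your write-up diverges from the paper is in how $E'$ is assembled. You propose extracting a set $B_\beta$ for each $\beta\in C(\vec H)$ separately (using $\fix(\cB_\beta)$ at successors and $K_{\xi^\beta,f^\beta}$ at limits) and then taking $E'=\prod_{\beta<\alpha}B_{\beta+1}$. The paper instead goes straight to the condensation point $\delta$ of $C(\vec H)$, pulls a single function $f_\delta$ out of $H_\delta\supseteq K_{\eta_\delta,f_\delta}$, and sets $E_\beta=f_\delta(\beta)+1$ for \emph{all} successor $\beta<\delta$ in one stroke; the finitely many successor $\beta$ between $\delta$ and $\eta=\max\{\delta,\max C(\vec H)+1\}$ are handled individually from the corresponding $\cB_\beta$, and $E_\beta=\{0\}$ above $\eta$. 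The paper then takes $A'=\eta$ rather than $C(\vec H)$. This is cleaner for two reasons: it makes the boundedness of $E'$ immediate (it is dominated by $f_\delta$ extended by finitely many ordinals), and it explains concretely what the condensation-point coordinate is doing --- it packages all the infinitely many sub-$\delta$ bounds into one function. Your phrasing ``$E'=\prod_{\beta<\alpha}B_{\beta+1}$ with $B_\beta$ defined for $\beta\in C(\vec H)$'' leaves the sub-$\delta$ coordinates undefined and mixes two kinds of objects (ordinal sets at successors, sets of functions $f^\beta{\downarrow}$ at limits) that cannot literally sit as factors of a single product in $\prod\SC(\omega_\alpha)$.

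Your anticipated obstacle about limit coordinates is also slightly off-target. The worry is not that there are infinitely many limit $\beta<\alpha$ to attend to; by induction hypothesis~(2)(c) only the components $\pi_\gamma$ with $\gamma$ such that $\gamma+1$ appears affect $\dot\varrho_\alpha\restriction(E,A)$, and $\vec\pi$ itself has finite support, so only finitely many $\sigma_\beta$ need be nontrivial regardless. The genuine content of the condensation point is the packaging of $E'$ described above, not a finiteness issue for $\vec\sigma$.
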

\begin{proof}
Let $\vec H$ be a support witnessing that $\dot D\in\IS_\alpha$ and fixing $p$. Let $\delta$ be the condensation point of $C(\vec H)$, and let $\eta$ be the maximum of $\delta$ and $\max C(\vec H)+1$. Let $\eta_\delta$ and $f_\delta$ such that $H_\delta=K_{\eta_\delta,f_\delta}$. For a successor $\beta<\alpha$ we define the set $E_\beta$ as follows:
\begin{enumerate}
\item If $\beta<\delta$, then $E_\beta=f_\delta(\beta)+1$.
\item If $\eta>\beta>\delta$, then $E_\beta=\eta_\beta$ is some non-zero ordinal such that for some $\cB_\beta$, $\fix(\cB_\beta)\leq H_\beta$, and $\cB_\beta$ is a disjoint approximation of $\{A^\beta_\gamma\mid\gamma<\eta_\beta\}$.
\item If $\beta\geq\eta$, then $E_\beta=\{0\}$.
\end{enumerate}

We get that $E'=\prod_{\beta<\alpha}E_\beta$ is a bounded set (bounded by $f_\delta$ extended by $\eta_\beta$ or $0$ where needed). Therefore $\dot\varrho_\alpha\restriction(E',\eta)$ is a condition in $\dot\QQ_\alpha$. Let $q'\leq_\alpha p$ and let $\gaut{\vec\pi}\dot\varrho_\alpha\restriction(E,A)$ be a condition such that \[q'\forces^\IS_\alpha\gaut{\vec\pi}\dot\varrho_\alpha\restriction(E,A)\in\dot D\text{ and }\gaut{\vec\pi}\dot\varrho_\alpha\restriction(E,A)\leq_{\dot\QQ_\alpha}\dot\varrho_\alpha\restriction(E',\eta).\]
The proof is now similar to the proof of \autoref{lemma:omega-genericity}.
\end{proof}
\begin{corollary}
$\varrho_\alpha$ is generic for $\QQ_\alpha$ over the intermediate model of $\IS_\alpha$.\qed	
\end{corollary}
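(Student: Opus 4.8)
The plan is to deduce this from the preceding lemma in exactly the way the earlier corollary ``$\varrho_\omega$ is generic for $\QQ_\omega$ over the intermediate model of $\IS_\omega$'' was deduced from \autoref{lemma:omega-genericity}; the preceding lemma is the verbatim analogue of \autoref{lemma:omega-genericity} for a general limit iterand, so the argument transfers without change.

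First I would unwind the statement and argue by contradiction. Fix a $V$-generic filter $G$ for $\PP_\alpha$, put $\QQ_\alpha=\dot\QQ_\alpha^G$, $\varrho_\alpha=\dot\varrho_\alpha^G$, and let $\rG$ be the filter on $\QQ_\alpha$ generated by $\varrho_\alpha$, i.e.\ the upward closure (in $\QQ_\alpha$, under $\leq_{\dot\QQ_\alpha}$) of $\{\varrho_\alpha\restriction(E,A)\mid E\subseteq\prod\SC(\omega_\alpha)\text{ bounded},\ A\in\jbd(\alpha)\}$. That these restrictions are genuine conditions of $\QQ_\alpha$, that they form a filter base, and that their upward closure is exactly the canonical generic object attached to $\varrho_\alpha$ is immediate from the definition of $\dot\QQ_\alpha$ and of its order; this bookkeeping is identical to the case $\alpha=\omega$. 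Now suppose some dense $D\subseteq\QQ_\alpha$ with $D\in\IS_\alpha^G$ is disjoint from $\rG$. Since $D$ lives in the intermediate model, pick $\dot D\in\IS_\alpha$ with $\dot D^G=D$ and $p\in G$ forcing simultaneously that $\dot D\subseteq\dot\QQ_\alpha$ is dense and that $\dot\varrho_\alpha\restriction(E,A)\notin\dot D$ for every bounded $E$ and $A$---the latter because the members of $\rG$ that actually lie in $\QQ_\alpha$ are, up to the order, precisely the $\varrho_\alpha\restriction(E,A)$, so $D\cap\rG=\varnothing$ passes to this uniform statement about the name via the forcing theorem for $\forces^\IS_\alpha$. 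Applying the preceding lemma to $\dot D$ and $p$ produces $q\leq_\alpha p$ and some $(E,A)$ with $q\forces^\IS_\alpha\dot\varrho_\alpha\restriction(E,A)\in\dot D$; as $q$ extends $p$, this contradicts the choice of $p$, and we are done.

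I do not anticipate a real obstacle: the only genuine content is in the preceding lemma, and the remaining points---that each $\dot\varrho_\alpha\restriction(E,A)$ is a legitimate condition, that these restrictions generate the filter one wants to call generic, and that ``$D$ misses $\rG$'' reflects down to a statement forced by a single condition---are all exactly as in the already-treated $\omega$th iterand. If there is any subtlety worth flagging explicitly, it is only that the dense set $D$ must be checked to lie in the intermediate model $\IS_\alpha^G$ (so that it admits a name in $\IS_\alpha$ and the lemma applies), rather than merely in $V[G]$; but this is automatic, since genericity is being asked for precisely over $\IS_\alpha^G$.
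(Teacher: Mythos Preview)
Your proposal is correct and follows precisely the route the paper intends: the paper marks this corollary with a bare \qed, signalling that the argument is identical to the proof of the corresponding corollary after \autoref{lemma:omega-genericity}, which is exactly the proof-by-contradiction you spell out. The only cosmetic difference is that the paper's $\omega$-case proof is terser (one sentence finding $p$ and $\dot D$ with $p\forces_\omega\dot\varrho_\omega\restriction(E,A)\notin\dot D$ for all $(E,A)$, then invoking the lemma), while you expand the bookkeeping about the filter generated by the restrictions; but the content is the same.
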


Again, in parallel to the $\omega$th iterand, we want to prove that no sets of rank below $\omega+\alpha$ were added over $\IS_\alpha$. The proofs are painfully similar. We work in the interpretation of $\IS_\alpha$ as a model of $\ZF$.
\begin{definition}
Let $\dot x$ be a $\QQ_\alpha$-name, we say that $\dot x$ is bounded by $f\in\prod\SC(\omega_\alpha)$, if whenever $\gaut{\vec\pi}\varrho_\alpha\restriction(A,E)\forces_{\QQ_\alpha}\dot y\in\dot x$, then $\gaut{\vec\pi}\varrho_\alpha\restriction(E\cap f\da,A)\forces_{\QQ_\alpha}\dot y\in\dot x$; similarly, $\dot x$ is bounded by $\delta<\alpha$ if $\gaut{\vec\pi}\varrho_\alpha\restriction(E,A\cap\delta)\forces_{\QQ_\alpha}\dot y\in\dot x$.
\end{definition}
\begin{lemma}\label{lemma:limit-sym-dist}
Suppose that $\dot x$ is a $\QQ_\alpha$-name for a subset of the ground model (in this case, $\IS_\alpha$) such that every $\dot y$ appearing in $\dot x$ is a canonical ground model name (i.e., a name from $\IS_\alpha$).
\begin{enumerate}
\item If $\dot x\in\HS$, then $\dot x$ is bounded by some $f\in\prod\SC(\omega_\alpha)$,
\item and if $\forces_{\QQ_\alpha}\rank(\dot x)<\check\omega+\check\alpha$, then there is some $\delta<\alpha$ such that $\dot x$ is bounded by $\delta$.
\end{enumerate}
\end{lemma}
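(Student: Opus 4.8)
The plan is to transcribe the proof of \autoref{lemma:omega-sym-dist} almost verbatim, working inside the interpretation of $\IS_\alpha$ as a model of $\ZF$, with three systematic replacements: the index $\omega$ of the $\omega$th iterand becomes $\alpha$; the set $\prod\SC(\omega_\omega)$ and the scale $F^\omega$ become $\prod\SC(\omega_\alpha)$ and the permutable scale $F^\alpha$ (which exists by the permutable-scale theorem, since $\GCH$ holds); and the cutoff $k=\max C(\vec H)$ of the $\omega$-case becomes a suitable ordinal $\delta<\alpha$ built from the condensation point of $C(\vec H)$. All the inputs the $\omega$-argument relies on are available here by the induction hypothesis: the pointwise homogeneity of each successor iterand $\dot\QQ_{\gamma+1}$, the upwards homogeneity of $\PP_\alpha\ast\dot\QQ_\alpha$, and the fact that every automorphism in $\cG_\alpha$ respects $\dot\QQ_\alpha$.

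For part (1), I would fix $K_{\eta,f}\leq\sym(\dot x)$ in the derived filter $\sF_\alpha$ on $\sG_\alpha=\prod_{\gamma<\alpha}\sG_{\gamma+1}$ and, after replacing $f$ by $f\vee f^\alpha_\eta$ — which only shrinks $K_{\eta,f}$, so the group stays inside $\sym(\dot x)$ — assume $f^\alpha_\eta(\mu)\leq f(\mu)$ for all $\mu\in\SC(\omega_\alpha)$. I claim $f$ bounds $\dot x$. Given $\tup{q,\dot y}\in\dot x$ with $q=\gaut{\vec\pi}\dot\varrho_\alpha\restriction(E,A)$, from $q$'s point of view $\dot y$ is a canonical name for a ground model element, and since $\dot q(g,\xi)=\dot q(g',\xi)$ whenever $g,g'\in E$ agree at coordinate $\xi$, the restricted condition $\gaut{\vec\pi}\dot\varrho_\alpha\restriction(E\cap f\da,A)$ agrees with $q$ everywhere below $f$. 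If this restricted condition failed to force $\dot y\in\dot x$, take an extension $q'=\gaut{\vec\sigma}\dot\varrho_\alpha\restriction(E',A')$ forcing $\dot y\notin\dot x$; by pointwise homogeneity of the $\dot\QQ_{\gamma+1}$ produce a sequence $\vec\tau$ which is cofinitely the identity — hence $\iota(\vec\tau)=\id$ — with each $\tau_\mu$, for $\mu\in\SC(\omega_\alpha)$, moving only coordinates $\geq f(\mu)$, and with $\vec\tau q'$ compatible with $q$; then $\vec\tau\in K_{\eta,f}\leq\sym(\dot x)$, so a common extension of $\vec\tau q'$ and $q$ would force simultaneously $\dot y\in\dot x$ and $\dot y\notin\vec\tau\dot x=\dot x$, a contradiction.

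For part (2), I would induct on the $\QQ_\alpha$-name rank, only ranks $<\omega+\alpha$ being relevant; so suppose $\dot x$ has rank $\omega+\gamma$ with $\gamma<\alpha$ and the claim holds for every name appearing in $\dot x$. By the inductive hypothesis together with the distributivity established at the earlier double-successor and successor-of-limit steps, refine $\dot x$ so that every $\dot y$ appearing in it is a canonical ground model name, indeed a name coming from some $\IS_{\gamma'}$ with $\gamma'<\alpha$. Let $[\dot x]\in\IS_\alpha$ be the $\PP_\alpha$-name for $\dot x$, fix an excellent support $\vec H$ for $[\dot x]$ and a condition $p$, and let $\delta<\alpha$ be an ordinal above $\max C(\vec H)$, above $\gamma$, and above all the relevant $\gamma'$: the condensation point of $C(\vec H)\cup\{\gamma\}$, rounded up past the name levels, when $\alpha$ is a limit of limit ordinals, and the ordinal with $\delta+\omega=\alpha$, augmented by a finite amount, when $\alpha=\delta+\omega$. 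If some $\tup{q,\dot y}\in\dot x$ has $q$ not boundable by $\delta$, fix $p$ forcing this and $q'\leq_{\QQ_\alpha}q\restriction\delta$ with $p\forces^\IS_\alpha\dot q'\forces_{\dot\QQ_\alpha}[\dot y]\notin[\dot x]$; using the homogeneity of $\PP_\alpha$ and the upwards homogeneity of $\PP_\alpha\ast\dot\QQ_\alpha$, find $\vec\pi\in\cG_\alpha$ with $\vec\pi\restriction\delta=\id$, $\gaut{\vec\pi}p=p$, and $p$ forcing $\gaut{\vec\pi}\dot q'$ compatible with $\dot q$. Since $\vec\pi$ is trivial on the first $\delta$ coordinates it fixes both $[\dot x]$ and $[\dot y]$, so $p\forces^\IS_\alpha\gaut{\vec\pi}q'\forces_{\dot\QQ_\alpha}[\dot y]\notin[\dot x]$, contradicting the choice of $q$. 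Hence $\dot x$ is bounded by $\delta$; the corollary that no set of rank $<\omega+\alpha$ is added over $\IS_\alpha$ then follows exactly as in the $\omega$-case, using (1) and (2) and a transfinite induction on the rank.

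The step I expect to be the main obstacle is the bookkeeping around the condensation point in part (2). One must insist — as flagged in the remark preceding the definition of $\dot\QQ_{\alpha,f}$ — that $\dot\varrho_\alpha\restriction(E,A)$ and $q\restriction\delta$ are literally the canonically identified names obtained in $\IS_{\delta+1}$ (up to a finite addition), so that a $\vec\pi$ trivial on the first $\delta$ coordinates really does fix $[\dot x]$ and $[\dot y]$ while moving only the part of $q'$ that lives above $\delta$, and so that the tail of the iteration above $\delta$ is itself upwards homogeneous for the compatibility step. One also has to separate the two shapes of limit, $\alpha=\delta+\omega$ versus $\alpha$ a limit of limit ordinals, just as in the definition of the condensation point. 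The remaining verifications — that the cofinitely-trivial $\vec\tau$ of (1) lands in $K_{\eta,f}$, and that the upwards-homogeneity witness of (2) can be taken trivial below $\delta$ — are routine consequences of the homogeneity clauses in the induction hypothesis.
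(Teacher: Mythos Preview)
Your proposal is correct and is exactly the paper's approach: the paper's proof of this lemma is the single sentence ``The proof of \autoref{lemma:limit-sym-dist} is the same proof as \autoref{lemma:omega-sym-dist},'' and you have accurately spelled out what that transcription looks like, including the replacement of $k=\max C(\vec H)$ by a $\delta$ tied to the condensation point and the invocation of upwards homogeneity with $\vec\pi\restriction\delta=\id$. Your flagged concern about the bookkeeping of canonical names at the condensation point is precisely the content of the remark the paper places just before the definition of $\dot\QQ_{\alpha,f}$, so you have also identified the one place where the general-$\alpha$ argument genuinely needs care beyond the $\omega$ case.
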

The proof of \autoref{lemma:limit-sym-dist} is the same proof as \autoref{lemma:omega-sym-dist}. And it implies the following corollary in the same manner.
\begin{corollary}
If $\dot x\in\IS_{\alpha+1}$ and $p\forces_{\alpha+1}^\IS\rank(\dot x)<\check\omega+\check\alpha$, then there is some $q\leq_{\alpha+1} p$ and $\dot x'\in\IS_\alpha$ such that $q\forces_{\alpha+1}^\IS\dot x=\dot x'$.\qed
\end{corollary}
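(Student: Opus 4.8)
The plan is to reproduce, almost word for word, the argument by which the corollary of \autoref{lemma:omega-sym-dist} was obtained, replacing the induction on $n<\omega$ there by a transfinite induction on name-rank. So fix $\dot x\in\IS_{\alpha+1}$ and $p$ with $p\forces_{\alpha+1}^\IS\rank(\dot x)=\check\xi$ for some $\xi<\omega+\alpha$, and assume the statement for every name forced to have rank strictly below $\xi$. By the identification of $\IS_{\alpha+1}$-names with names that are symmetric over $\IS_\alpha$ for the system $\tup{\dot\QQ_\alpha,\sG_\alpha,\sF_\alpha}$, I would pass to the interpretation of $\IS_\alpha$ as a model of $\ZF$, work below $p$ throughout, and regard $\dot x$ through its canonical $\PP_\alpha$-name $[\dot x]\in\IS_\alpha$, which is then hereditarily $\QQ_\alpha$-symmetric there.

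The first step is to refine $\dot x$ so that every $\dot y$ appearing in it already lies in $\IS_\alpha$. Each such $\dot y$ is forced to have rank $<\xi$, so whenever $\tup{q',\dot y}\in\dot x$ with $q'$ compatible with $p$, the induction hypothesis lets me extend $q'$ to a maximal antichain (equivalently a dense open set) of conditions each forcing $\dot y$ equal to some name in $\IS_\alpha$; replacing $\tup{q',\dot y}$ by the resulting pairs produces a name which $p$ forces equal to $\dot x$ and which, by the Symmetry Lemma, is respected by exactly the automorphisms respecting $\dot x$, hence is again in $\IS_{\alpha+1}$. Thus I may assume $\dot x$ itself has this shape, so that \autoref{lemma:limit-sym-dist} applies to $[\dot x]$.

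Now I would invoke the lemma: part (1) yields $f\in\prod\SC(\omega_\alpha)$ bounding $[\dot x]$, and since its rank is (forced, below $p$, to be) $<\check\omega+\check\alpha$, part (2) yields $\gamma<\alpha$ bounding $[\dot x]$; enlarging $\gamma$ if necessary so that it lies above the support of $[\dot x]$ and the associated condensation point. Writing $p(\alpha)=\gaut{\vec\pi}\dot\varrho_\alpha\restriction(E,A)$ as in the definition of $\dot\QQ_\alpha$, let $\dot q_\alpha$ extend $p(\alpha)$ so that its $(f\da,\gamma)$-block coincides with the corresponding restriction of $\dot\varrho_\alpha$, chosen, as the Remark on limit iterands demands, to be the canonical name obtained in $\IS_{\delta+1}$ for $\delta$ the condensation point of $\gamma$, plus the finite correction if needed; and let $q$ be the condition of $\PP_{\alpha+1}$ gotten from $p$ by replacing $p(\alpha)$ with $\dot q_\alpha$. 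Since $[\dot x]$ is bounded by $f$ and by $\gamma$ while $\dot q_\alpha$ already carries the whole $(f\da,\gamma)$ part of the generic, below $q$ every statement $\dot y\in\dot x$ is decided outright; hence $q\forces_{\alpha+1}^\IS\dot x=\dot x'$, where $\dot x'\in\IS_\alpha$ is the set of those $\dot y$ that $\dot q_\alpha$ forces into $\dot x$ over $\IS_\alpha$, a $\PP_\alpha$-name assembled from names in $\IS_\alpha$ and from the (essentially ground-model, automorphically translated) condition $\dot q_\alpha$.

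The step I expect to be the real obstacle is checking that $\dot x'$ is hereditarily $\cF_\alpha$-respected, hence genuinely a member of $\IS_\alpha$: one must see that the boundedness witnesses $f$ and $\gamma$ produced by \autoref{lemma:limit-sym-dist} can be chosen compatibly with an excellent support $\vec H$ of $[\dot x]$, so that $\vec H$ together with the data encoding $f$ and $\gamma$ (essentially $K_{\gamma',f}$ for $\gamma'>\gamma$) still forms an excellent support, now one fixing $\dot x'$; and that respecting the condensation-point convention makes $\dot q_\alpha$ a bona fide condition of $\dot\QQ_\alpha$ stable under every $\gaut{\vec\pi}\in\cG_\alpha$. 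This is exactly the bookkeeping that distinguishes the limit iterand from the successor steps, but it is carried out word for word as in the $\omega$th-iterand corollary, so once it is in place the induction closes.
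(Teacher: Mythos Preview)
Your proposal is correct and follows the same approach as the paper, which simply says the corollary is obtained from \autoref{lemma:limit-sym-dist} ``in the same manner'' as the $\omega$-case corollary was obtained from \autoref{lemma:omega-sym-dist}; you have unpacked that phrase faithfully, carrying the induction on $n<\omega$ to a transfinite induction on rank and being more explicit about the bookkeeping (condensation points, the support of $\dot x'$) than the paper itself. One small over-specification: you ask that $\dot q_\alpha$'s $(f\da,\gamma)$-block \emph{coincide} with the restriction of $\dot\varrho_\alpha$, but this need not be compatible with $p(\alpha)$; the paper's point is that \emph{any} extension of $p(\alpha)$ whose domain contains $(f\da,\gamma)$ already decides $\dot x$, so simply enlarging the domain suffices.
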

\begin{proposition}\label{prop:coherence}
For all $f\in\prod\SC(\omega_\alpha)$, $\dot\varrho_{\alpha,f}\in\IS_{\alpha+1}$.
\end{proposition}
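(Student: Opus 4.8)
The plan is to mimic \autoref{prop:omega-coherence} exactly, using the derived filter on $\sG_\alpha$ in place of the one on $\sG_\omega$. Recall that $\dot\varrho_{\alpha,f}$ is the $\bullet$-name $\tup{\dot\varrho_{\beta+1}(f(\beta+1))\mid\beta<\alpha}^\bullet$, and that this name is obtained (by the Remark preceding the definition of $\dot\QQ_{\alpha,f}$ together with \autoref{prop:coherence} applied at the condensation point, using \autoref{prop:omega-coherence} as the bootstrapping case) as a canonically identified $\PP_{\alpha+1}$-name, i.e.\ as an element of $\IS_{\alpha+1}$ — but \emph{a priori} only as some $\PP_{\alpha+1}$-name; we must check the hereditary $\cF_{\alpha+1}$-respectedness, which amounts to exhibiting an excellent support. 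Since $\dot\varrho_{\alpha,f}$ is a $\dot\QQ_{\alpha,f}$-name and every $\gaut{\vec\pi}\in\cG_\alpha$ acts on $\dot\QQ_{\alpha,f}$ in the expected way (the final paragraph of the $\omega$th-iterand subsection), the name is stable under all of $\cG_\alpha$, so the only coordinate of the support that can be nontrivial is the $\alpha$th one, living in $\sF_\alpha$.

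The key step is thus to produce $H_\alpha\in\sF_\alpha$ fixing $\dot\varrho_{\alpha,f}$. For $f\in\prod\SC(\omega_\alpha)$ let $f+1$ be given by $(f+1)(\mu)=f(\mu)+1$, and take $H_\alpha=K_{1,f+1}$ (in the notation of the derived filter on $\sG_\alpha=\prod_{\gamma<\alpha}\sG_{\gamma+1}$, this is $\{\vec\pi\mid\iota(\vec\pi)\restriction 1=\id\text{ and }\pi_\mu\restriction(f(\mu)+1)=\id\text{ for all }\mu\}$). If $\vec\pi\in K_{1,f+1}$, then for every successor $\gamma+1<\alpha$ we have $\pi_{\gamma+1}$ fixing $f(\gamma+1)$, hence $\vec\pi\,"\,\dot\QQ_{\gamma+1}$ preserves the coordinate indexed by $f(\gamma+1)$; more precisely, by the induction hypothesis (3)(c) on how $\cG_{\gamma+2}$ acts on $\dot\varrho_{\gamma+1}$, and by the definition of $\dot\varrho_{\gamma+1}(f(\gamma+1))=\dot R^{\gamma+1}_{f(\gamma+1)}$ as (a name for) the $V_{\omega+\gamma+2}$ of the model added by $\dot\QQ_{\gamma+1}\restriction A^{\gamma+1}_{f(\gamma+1)}$, fixing the index $f(\gamma+1)$ under the implemented permutation forces $\gaut{\vec\pi}$ to fix $\dot\varrho_{\gamma+1}(f(\gamma+1))$. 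Running this over all $\gamma<\alpha$ and assembling the $\bullet$-sequence, we get $\gaut{\vec\pi}\dot\varrho_{\alpha,f}=\dot\varrho_{\alpha,f}$. Thus the trivial excellent support (identity on every coordinate except the $\alpha$th, where it is $K_{1,f+1}$) witnesses that $\dot\varrho_{\alpha,f}$ is $\cF_{\alpha+1}$-respected.

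For the hereditary clause, note every name appearing in $\dot\varrho_{\alpha,f}$ is either $\dot\varrho_{\gamma+1}(f(\gamma+1))$ for some $\gamma<\alpha$ — which is in $\IS_{\gamma+2}\subseteq\IS_{\alpha+1}$ and hence hereditarily respected — or, unwinding the condensation, one of the names produced at the condensation point $\delta$, which lies in $\IS_{\delta+1}\subseteq\IS_{\alpha+1}$ by \autoref{prop:coherence} for $\delta$ (with \autoref{prop:omega-coherence} bootstrapping the first limit). So $\dot\varrho_{\alpha,f}\in\IS_{\alpha+1}$. The main obstacle is bookkeeping rather than mathematics: one must be careful that the particular $\bullet$-name chosen for $\dot\varrho_{\alpha,f}$ is the one specified in the Remark before the definition of $\dot\QQ_{\alpha,f}$ (the one canonically identified in $\IS_{\delta+1}$ with a finite addition), so that the support computation above and the hereditary clause both refer to the same object; once that is pinned down, the verification is routine and parallels \autoref{prop:omega-coherence} verbatim.
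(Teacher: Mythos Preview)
Your approach is essentially the paper's: exhibit $K_{\ast,f+1}\in\sF_\alpha$ as the group fixing $\dot\varrho_{\alpha,f}$. The paper's proof is a one-liner doing exactly this, working inside $\IS_\alpha$ and viewing $\dot\varrho_{\alpha,f}$ as a $\QQ_\alpha$-name, so that only $\sF_\alpha$-symmetry needs to be checked and $\IS_{\alpha+1}$-membership follows from the $\HS$/$\IS_{\alpha+1}$ correspondence. (The paper uses $K_{0,f+1}$ rather than your $K_{1,f+1}$; either works, since both lie in $\sF_\alpha$.)

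One point deserves care. Your claim that ``the name is stable under all of $\cG_\alpha$'' is not what the cited paragraph says: the remark at the end of the $\omega$th-iterand subsection asserts that $\gaut{\vec\pi}\dot\varrho_{\omega,f}$ is \emph{what we would expect}, i.e.\ transforms predictably, not that it is fixed. Indeed, for the literal $\bullet$-sequence $\tup{\dot R^{\gamma+1}_{f(\gamma+1)}\mid\gamma<\alpha}^\bullet$, a $\pi_\gamma\in\sG_\gamma$ whose implemented permutation moves $f(\gamma+1)$ will move the $\gamma{+}1$st entry to $\dot R^{\gamma+1}_{\iota(\pi_\gamma)(f(\gamma+1))}$, so the trivial support below $\alpha$ does not obviously work for that particular name. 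The paper sidesteps this entirely by staying inside $\IS_\alpha$ and treating the object as a $\QQ_\alpha$-name; there the lower coordinates never enter. Your final paragraph acknowledges the bookkeeping issue, and once the name is taken in the sense the paper intends (the canonical $\QQ_\alpha$-name lifted through the iteration framework), your argument goes through and matches the paper's.
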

\begin{proof}
It is not hard to see that $K_{0,f+1}$ witnesses that $\dot\varrho_{\alpha,f}$, as a $\QQ_\alpha$-name in $\IS_\alpha$ is in $\HS$. Therefore the conclusion follows.
\end{proof}
\subsection{General successor of limits}
Finally, we deal with the case of a successor of a limit ordinal. The idea is almost the same as the case $\omega+1$, or generally a successor ordinal. We remark that interestingly enough, there is absolutely no difference between the case where $\alpha$ is a limit of limits, a successor limit (i.e., $\delta+\omega$) or even an inaccessible cardinal. For readability, we still assume that $\alpha$ is a limit ordinal. We will again omit the $\alpha$ superscript from the elements of $F^\alpha$, writing $f_\eta$ instead of $f_\eta^\alpha$.

For every $\eta<\omega_{\alpha+1}$, we say that $\dot x\in\IS_{\alpha+1}$ is an almost $f^\eta$-name if there is some $f\in\prod\SC(\omega_\alpha)$ such that $\dot x$ is a $\PP_\alpha\ast\dot\QQ_{\alpha,f}$-name and $f=^*f_\eta$. We define $\dot R_\eta$ as before,
\[\dot R_\eta=\left\{\dot x\in\IS_{\alpha+1}\middd\begin{array}{l}
\dot x\text{ is an almost }f_\eta\text{-name or rank }\leq\omega+\alpha,\text{ and}\\
\text{every name appearing in }\dot x\text{ is from }\IS_\alpha.
\end{array}\right\}^\bullet.\]
\begin{proposition}
Suppose that $\gaut{\vec\pi}\in\cG_{\alpha+1}$, then $\gaut{\vec\pi}\dot R_\eta=\dot R_{\iota(\vec\pi^\alpha)(\eta)}$.\qed
\end{proposition}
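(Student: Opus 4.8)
The plan is to treat this as the limit-iterand analogue of \autoref{prop:nice-up-sym}, proving it exactly as there: one unwinds the definition of $\dot R_\eta$ and checks that $\gaut{\vec\pi}$ transforms the three requirements defining membership --- being an almost $f_\eta$-name, having rank $\leq\omega+\alpha$, and having every component name in $\IS_\alpha$ --- into the same three requirements with $f_\eta$ replaced by $f_{\iota(\vec\pi^\alpha)(\eta)}$. Write $S_\eta$ for the set of $\dot x\in\IS_{\alpha+1}$ meeting those requirements, so that $\dot R_\eta$ is by definition the $\bullet$-name $\{\tup{1_{\alpha+1},\dot x}\mid\dot x\in S_\eta\}$. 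Since $\gaut{\vec\pi}$ fixes $1_{\alpha+1}$, it is enough to show $\gaut{\vec\pi}$ maps $S_\eta$ bijectively onto $S_{\iota(\vec\pi^\alpha)(\eta)}$; then $\gaut{\vec\pi}\dot R_\eta=\{\tup{1_{\alpha+1},\gaut{\vec\pi}\dot x}\mid\dot x\in S_\eta\}=\dot R_{\iota(\vec\pi^\alpha)(\eta)}$. Injectivity is automatic, and for surjectivity it suffices to prove the forward inclusion $\gaut{\vec\pi}[S_\eta]\subseteq S_{\iota(\vec\pi^\alpha)(\eta)}$ and then apply it to $\gaut{\vec\pi}^{-1}=\gaut{\vec\pi^{-1}}$, whose $\alpha$th coordinate is $(\vec\pi^\alpha)^{-1}$ and hence implements $\iota(\vec\pi^\alpha)^{-1}$.

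For the forward inclusion I would factor $\gaut{\vec\pi}$ through the two-step formula $\gaut{\tup{\pi,\dot\sigma}}\tup{q_0,\dot q_1}=\tup{\pi q_0,\pi(\dot\sigma)(\pi\dot q_1)}$ of \autoref{def:semi-direct-two-steps}, with $\pi=\gaut{\vec\pi\restriction\alpha}\in\cG_\alpha$ and $\dot\sigma=\vec\pi^\alpha\in\sG_\alpha$; as $\vec\pi^\alpha$ is a ground-model permutation it is respected, so $\pi(\dot\sigma)=\vec\pi^\alpha$ and the action on a condition is $\tup{q_0,\dot q_1}\mapsto\tup{\gaut{\vec\pi\restriction\alpha}q_0,\vec\pi^\alpha(\gaut{\vec\pi\restriction\alpha}\dot q_1)}$. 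Rank is preserved because automorphisms act by $\in$-recursion on names; ``every component name in $\IS_\alpha$'' is preserved because $\gaut{\vec\pi\restriction\alpha}$ maps $\IS_\alpha$ into itself while the factor $\vec\pi^\alpha$ touches only the $\alpha$th coordinate of conditions, hence fixes $\PP_\alpha$-names. The substantive clause is ``almost $f_\eta$-name''. For this, observe first that $\gaut{\vec\pi\restriction\alpha}$ leaves each section $\dot\QQ_{\alpha,f}$ invariant: a condition of $\dot\QQ_{\alpha,f}$ is by definition some $\gaut{\vec\tau}\dot\varrho_{\alpha,f}\restriction A$ with $\gaut{\vec\tau}\in\cG_\alpha$, and composing it with another $\cG_\alpha$-automorphism merely changes $\vec\tau$, so $\gaut{\vec\pi\restriction\alpha}$ carries a $\PP_\alpha\ast\dot\QQ_{\alpha,f}$-name to a $\PP_\alpha\ast\dot\QQ_{\alpha,f}$-name with the very same $f$. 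Applied afterwards, the factor $\vec\pi^\alpha=\tup{\pi^\alpha_\theta\mid\theta\in\SC(\omega_\alpha)}$ acts pointwise on the section and carries $\dot\QQ_{\alpha,f}$ to $\dot\QQ_{\alpha,g}$ with $g(\theta)=\pi^\alpha_\theta(f(\theta))$; but this is precisely the defining equation for $\vec\pi^\alpha$ implementing a bounded permutation via the permutable scale $F^\alpha$, so $f=^*f_\eta$ forces $g=^*f_{\iota(\vec\pi^\alpha)(\eta)}$. Since being an almost $f_\eta$-name constrains the section index only up to $=^*$, this yields $\gaut{\vec\pi}\dot x\in S_{\iota(\vec\pi^\alpha)(\eta)}$, as required.

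I expect the conceptual content to be identical to that of \autoref{prop:nice-up-sym} and to the analogous bookkeeping for $\dot R_\eta$ in the successor-of-non-limit and $\omega+1$ cases; the only new ingredient is that the index shift on the $\eta$'s is now carried by the permutable scale $F^\alpha$ rather than by a permutable family. The one genuinely delicate point --- and the main obstacle --- is the joint action of the two factors of $\gaut{\vec\pi}$ on the section index: one must make sure that $\gaut{\vec\pi\restriction\alpha}$ really does leave every $\dot\QQ_{\alpha,f}$ invariant setwise rather than merely respecting $\dot\QQ_\alpha$ as a whole (which is what forces us to have fixed, once and for all, the canonically identified names $\dot\varrho_{\alpha,f}\restriction A$ built at the condensation point, so that ``the section $\dot\QQ_{\alpha,f}$'' is an unambiguous $\cG_\alpha$-invariant object), and that the coordinatewise map $f\mapsto\vec\pi^\alpha\cdot f$ agrees with $\iota(\vec\pi^\alpha)$ on a cobounded set of $\theta\in\SC(\omega_\alpha)$, which is exactly the scale-implementation identity $f_{\iota(\vec\pi^\alpha)(\eta)}(\theta)=\pi^\alpha_\theta(f_\eta(\theta))$. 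Everything else is routine tracking of definitions.
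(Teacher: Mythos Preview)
Your proposal is correct and is exactly the intended argument; the paper itself gives no proof at all (the statement carries an immediate \qed), treating it as the direct limit-iterand analogue of \autoref{prop:nice-up-sym} and the corresponding computation in the successor-of-non-limit and $\omega+1$ cases. Your careful factorization $\gaut{\vec\pi}=\gaut{\vec\pi\restriction\alpha}\gaut{\vec\pi^\alpha}$, together with the observation that $\cG_\alpha$ stabilizes each $\dot\QQ_{\alpha,f}$ setwise while $\vec\pi^\alpha$ shifts the section index via the scale-implementation identity, is precisely the routine verification the paper is suppressing; the delicate point you flag about the canonical identification of the names $\dot\varrho_{\alpha,f}\restriction A$ is exactly why the paper inserted the remark about condensation points.
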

In other words, the permutations in $\cG_\alpha$ have no effect on $\dot R_\eta$, and the effect of $\vec\pi^\alpha$ is by the permutation it implements (recall that $\vec\pi^\alpha$ is the permutation from $\sG_\alpha$).\smallskip

Next, we define $\dot\varrho_{\alpha+1}$ as $\tup{\dot R_\eta\mid\eta<\omega_{\alpha+1}}^\bullet$. The following proposition is proved exactly like \autoref{prop:omega-successor-conditions}.
\begin{proposition}
For every $A\in\jbd(\omega_{\alpha+1})$, $\dot\varrho_{\alpha+1}\restriction A\in\IS_{\alpha+1}$.\qed
\end{proposition}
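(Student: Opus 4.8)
The plan is to mirror the proof of \autoref{prop:omega-successor-conditions}, with the limit-iterand objects $f_\eta=f^\alpha_\eta\in F^\alpha$ and $\dot\QQ_{\alpha,f}$ playing the roles that $f^\omega_\eta$ and $\dot\QQ_{\omega,f}$ played there. By definition $\dot\varrho_{\alpha+1}\restriction A$ is the $\bullet$-name $\tup{\dot R_\eta\mid\eta\in A}^\bullet$, so the only names appearing in it are the $\dot R_\eta$ for $\eta\in A$, and the only names appearing one level deeper are the almost $f_\eta$-names, which by the very definition of $\dot R_\eta$ already lie in $\IS_{\alpha+1}$. Each $\dot R_\eta$ is itself in $\IS_{\alpha+1}$: by the preceding proposition $\gaut{\vec\pi}\dot R_\eta=\dot R_{\iota(\vec\pi^\alpha)(\eta)}$, so the excellent support whose only nontrivial coordinate is the $\alpha$th one, taken to be $K_{\eta+1,f_{\eta+1}}\in\sF_\alpha$, makes $\iota(\vec\pi^\alpha)$ fix $\eta$ and hence $\gaut{\vec\pi}\dot R_\eta=\dot R_\eta$. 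Thus the entire task reduces to producing an excellent $\cF_{\alpha+1}$-support for the name $\dot\varrho_{\alpha+1}\restriction A$ itself.

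First I would note, again by the preceding proposition, that every automorphism of $\cG_{\alpha+1}$ whose $\alpha$th coordinate is trivial fixes $\dot\varrho_{\alpha+1}\restriction A$, since it sends each $\dot R_\eta$ to $\dot R_{\iota(\id)(\eta)}=\dot R_\eta$; so it is enough to find a group in $\sF_\alpha$ that pins the name down at the top coordinate. I would fix $\zeta<\omega_{\alpha+1}$ with $\zeta>\sup A$ and let $\vec H$ be the excellent support all of whose coordinates are $\sG_\xi$ except the $\alpha$th, which I take to be $K_{\zeta,f_\zeta}\in\sF_\alpha$. Then for $\vec\pi\in\vec H$ the implemented permutation $\iota(\vec\pi^\alpha)$ is the identity on $\zeta$, hence on $A$; by the preceding proposition $\gaut{\vec\pi}\dot R_\eta=\dot R_{\iota(\vec\pi^\alpha)(\eta)}=\dot R_\eta$ for each $\eta\in A$, so $\gaut{\vec\pi}(\dot\varrho_{\alpha+1}\restriction A)=\dot\varrho_{\alpha+1}\restriction A$ as names. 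This exhibits $\vec H$ as an excellent support and finishes the proof.

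I expect the one substantive point to be the content already packaged into the preceding proposition, which in the $\omega+1$ case is written out inside the proof of \autoref{prop:omega-successor-conditions}: a $\vec\pi^\alpha\in\sG_\alpha$ implementing the identity on $\zeta\ni\eta$ carries $\dot\QQ_{\alpha,f_\eta}$ to $\dot\QQ_{\alpha,f}$ for some $f=^*f_\eta$, so that the property ``$\dot x$ is an almost $f_\eta$-name'' is invariant under $\vec\pi^\alpha$. This is exactly the defining property of the permutable scale $F^\alpha$: that $\vec\pi^\alpha$ implements $\iota(\vec\pi^\alpha)$ means $f_{\iota(\vec\pi^\alpha)(\eta)}(\theta)=\pi^\alpha_\theta(f_\eta(\theta))$ for all large $\theta\in\SC(\omega_\alpha)$, and with $\iota(\vec\pi^\alpha)(\eta)=\eta$ this changes the relevant function on at most a bounded set of coordinates $\theta$. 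The two-layer structure $\cG_{\alpha+1}=\cG_\alpha\ast\sG_\alpha$ causes no difficulty here, since the $\cG_\alpha$-part acts trivially on every $\dot R_\eta$ and only the top coordinate $\vec\pi^\alpha$ must be controlled, which is precisely what $K_{\zeta,f_\zeta}$ does; in effect the proposition is an immediate corollary of the preceding one once the right support is chosen.
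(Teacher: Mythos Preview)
Your proof is correct and follows exactly the route the paper intends: the paper's own ``proof'' is just a pointer back to \autoref{prop:omega-successor-conditions}, and you have faithfully transcribed that argument to the general limit $\alpha$, choosing the excellent support concentrated at coordinate $\alpha$ with value $K_{\zeta,f_\zeta}$ for some $\zeta>\sup A$. Your observation that the substantive step is already packaged into the preceding proposition $\gaut{\vec\pi}\dot R_\eta=\dot R_{\iota(\vec\pi^\alpha)(\eta)}$ is exactly right; in the $\omega+1$ case that fact is unpacked inside the proof, whereas here it is stated separately, so your argument is if anything slightly cleaner than a literal reprise of \autoref{prop:omega-successor-conditions}.
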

Now we can define $\dot\QQ_{\alpha+1}$ as $\{\gaut{\vec\pi}\dot\varrho_{\alpha+1}\restriction A\mid\gaut{\vec\pi}\in\cG_{\alpha+1}, A\in\jbd(\omega_{\alpha+1})\}^\bullet$, ordered by reverse inclusion.
The definitions of $\sG_{\alpha+1}$ and $\sF_{\alpha+1}$ are as with the general successor case, defined by the permutable family on $\omega_{\alpha+1}$. We only need to prove the upwards homogeneity of $\PP_{\alpha+1}\ast\dot\QQ_{\alpha+1}$, and this is done in a similar manner to \autoref{prop:omega+1-up-hom}.
\subsection{Conclusions}
We have shown that $\tup{\varrho_\alpha\mid\alpha\in\Ord}$ is a symmetrically generic filter, and certainly it lies within $L[\varrho_0]$, our original Cohen extension. Each step is homogeneous, and no sets of rank $\omega+\eta$ are added after the $\eta+1$ step of the iteration. Therefore the preservation theorem tells us that indeed the resulting model is a model of $\ZF$, even without knowing it was bounded inside a Cohen real.
\section{Some general peculiar consequences}
\begin{definition}
We say that $A$ is an \textit{$\alpha$-set of ordinals} if there is some ordinal $\eta$ such that $A\subseteq\power^\alpha(\eta)$.
\end{definition}
We will abbreviate and write $\alpha$-sets to denote $\alpha$-sets of ordinals. The usual coding arguments used in $\ZFC$ for coding tuples of ordinals as ordinals extend to $\alpha$-sets, so if $A$ is an $\alpha$-set, there is a canonical way to represent $A^{<\omega}$ and $[A]^{<\omega}$ as $\alpha$-sets as well.
\begin{definition}
$\KWP_\alpha$ is the statement that every set is equipotent to an $\alpha$-set. We use $\KWP$ to denote $\exists\alpha\ \KWP_\alpha$.
\end{definition}
So $\KWP_0$ is the same as $\AC$, and $\KWP_1$ implies that every set can be linearly ordered. The proof of the next theorem can be found in \cite[Theorem~10.3]{Karagila:2016}
\begin{theorem}[The Generalized Balcar--Vop\v{e}nka--Monro Theorem]\label{thm:gen bal-vop-mon}
Suppose that $M$ and $N$ are two transitive models of $\ZF$ with the same $\alpha$-sets. If $M\models\KWP_\alpha$, then $M=N$.\qed
\end{theorem}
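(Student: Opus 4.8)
The plan is to prove the two inclusions $M\subseteq N$ and $N\subseteq M$ separately; only the first genuinely uses $\KWP_\alpha$ in $M$, and the second is then a rank induction. Observe at the outset that having the same $\alpha$-sets forces $M$ and $N$ to have the same ordinals and the same sets of ordinals, since the hierarchy $\power^\beta(\eta)$ is increasing in $\beta$, so every set of ordinals is already a $1$-set and hence an $\alpha$-set; this I would use freely.

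\emph{Proof that $M\subseteq N$.} Fix $x\in M$ and argue inside $M$. Put $t=\mathrm{trcl}(\{x\})$ and, by $\KWP_\alpha$, fix a bijection $f\colon t\to A$ with $A$ an $\alpha$-set, say $A\subseteq\power^\alpha(\eta)$. Carry the membership relation of $t$ and its rank function across $f$: let $R=\{\tup{f(u),f(v)}\mid u\in v\in t\}$ and let $\rho(a)=\rank(f^{-1}(a))$, a function from $A$ into $\rank(t)$. The coding conventions recorded just after the definition of $\alpha$-sets are exactly what is needed here: $R$ is (a copy of) a subset of $A^{<\omega}$, and $\rho$ is a subset of the product of $A$ with an ordinal, which is itself an $\alpha$-set, so each amalgamates into a subset of some $\power^\alpha(\eta')$ without raising the power-set level. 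Packaging $A$, $R$, $f(x)$ and $\rho$ into a single $\alpha$-set $B$ (again by the tupling of $\alpha$-sets), $B$ is an $\alpha$-set of $M$, hence an $\alpha$-set of $N$, hence $B\in N$. Now move to $N$ and decode $B$ as $\tup{A,R,a,\rho}$. The structure $(A,R)$ is extensional---a first-order property of the single set $(A,R)$, so the same in $M$ and in $N$---and $\rho$ witnesses in $N$ that $R$ is well-founded, so $N$ may form the Mostowski collapse $\pi\colon(A,R)\to(\bar t,\in)$. Since $\pi$ is defined by $R$-recursion from the parameters $A,R$, it is absolutely determined; and in $M$ the map $f^{-1}$ plainly satisfies the defining recursion of the collapse, so $\pi=f^{-1}$, whence $\bar t=t$ and $\pi(a)=f^{-1}(f(x))=x$. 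Therefore $x=\pi(a)\in N$.

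\emph{Proof that $N\subseteq M$.} I would show by induction on $\xi$ that $V_\xi^N\subseteq M$; limit stages are immediate. For the successor step $\xi=\gamma+1$, the inductive hypothesis gives $V_\gamma^N\subseteq M$, and since rank is absolute and $M\subseteq N$ is now available this upgrades to $V_\gamma^N=V_\gamma^M\in M$. Then $\KWP_\alpha$ in $M$ supplies a bijection $g\colon V_\gamma^M\to A$ with $A$ an $\alpha$-set, and $g\in M\subseteq N$. If $y\in V_{\gamma+1}^N$ then $y\subseteq V_\gamma^N=\dom g$, so $g[y]\subseteq A$; being a subset of an $\alpha$-set, $g[y]$ is an $\alpha$-set of $N$, hence an $\alpha$-set of $M$, hence $g[y]\in M$; and then $y=g^{-1}[g[y]]\in M$. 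This finishes the induction, so $N\subseteq M$ and $M=N$.

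\emph{Where the difficulty lies.} The one delicate point is the first inclusion: one must be sure that \emph{all} the data needed to rebuild $x$ from outside $M$---the transitive closure, its membership relation, and an explicit witness to well-foundedness---can be compressed into a single $\alpha$-set, and that $N$ then recovers exactly $x$ and nothing else. The recoding of relations and functions as $\alpha$-sets works precisely because amalgamating two subsets of $\power^{\alpha-1}(\eta)$ produces a subset of $\power^{\alpha-1}(\eta')$, so the power-set level never climbs; when $\alpha=0$ this is just G\"odel pairing of ordinals and one recovers the classical Balcar--Vop\v{e}nka theorem from $\KWP_0=\AC$. The remaining ingredients---absoluteness and uniqueness of the Mostowski collapse, absoluteness of rank, and closure of $\alpha$-sets under subsets---are routine.
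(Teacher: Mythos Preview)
The paper does not actually prove this theorem here: it is stated with a terminal \qed and the preceding sentence refers the reader to \cite[Theorem~10.3]{Karagila:2016} for the proof. So there is no in-paper argument to compare against, and your write-up stands on its own.

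Your proof is correct and is the expected one. The two-stage structure---first $M\subseteq N$ by coding $\mathrm{trcl}(\{x\})$ together with its $\in$-relation and a rank function as a single $\alpha$-set and Mostowski-collapsing in $N$, then $N\subseteq M$ by rank induction using $\KWP_\alpha$ in $M$ to pull subsets of $V_\gamma$ back through a fixed bijection---is exactly how the generalized Balcar--Vop\v{e}nka argument goes. Your explicit inclusion of the rank function $\rho$ is the right move: before either inclusion is established, well-foundedness of $(A,R)$ is not automatically absolute from $M$ to $N$, and $\rho$ (being codable as an $\alpha$-set) carries it across.

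Two cosmetic remarks. In your opening observation, a set of ordinals is already a $0$-set (it is a subset of $\eta=\power^0(\eta)$), not merely a $1$-set; nothing depends on this. And your parenthetical about ``two subsets of $\power^{\alpha-1}(\eta)$'' presumes $\alpha$ is a successor; for limit $\alpha$ the same coding works because $\power^\alpha(\eta)$ is the increasing union of the $\power^\beta(\eta)$, so pairing still lands in $\power^\alpha(\eta')$ without climbing a level.
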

We will also mention the principle $\SVC$ formulated by Andreas Blass in \cite{Blass:1979}, which states that there is some set $X$ such that for any set $A$, there is some ordinal $\alpha$ and a surjection from $\alpha\times X$ onto $A$. We say that $X$ is a \textit{seed} and write $\SVC(X)$ to explicitly mention it, so $\SVC$ would simply be $\exists X\ \SVC(X)$. Blass showed that $\SVC$ is equivalent to the statement that there is a forcing extension in which $\AC$ holds.
\begin{theorem}[Theorem 10.4 in \cite{Karagila:2016}]
If $M\models\SVC$, then $M\models\KWP$.\qed
\end{theorem}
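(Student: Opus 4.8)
The plan is to use $\SVC$ to put a single, uniform bound on the ``$\power$-height over the ordinals'' of every set. Fix a seed $X$, i.e.\ a set witnessing $\SVC$. The first, completely elementary, observation is that for any set $A$ a surjection $f\colon\alpha\times X\to A$ (with $\alpha$ an ordinal depending on $A$) yields an injection $A\to\power(\alpha\times X)$, namely $a\mapsto f^{-1}(a)$. Hence every set is equipotent to a subset of $\power(\alpha\times X)$ for some ordinal $\alpha$. Since a subset of a $\beta$-set is a $\beta$-set, it suffices to find one $\beta$ such that $\power(\alpha\times X)$ is equipotent to a $\beta$-set for every ordinal $\alpha$; using the standard coding of tuples of $\beta$-sets as $\beta$-sets, together with the elementary facts that every ordinal is equipotent to a $1$-set (via $\xi\mapsto\{\xi\}$) and hence to a $\beta$-set for every $\beta\ge 1$, and that $\power^\beta(\eta)$ is monotone in $\eta$, this reduces in turn to the single claim that the seed $X$ is itself equipotent to a $\beta$-set for some fixed $\beta$. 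Once that is known one gets $\alpha\times X\hookrightarrow\power^\beta(\eta)$ for a suitable ordinal $\eta$ (depending on $\alpha$), hence $A\hookrightarrow\power(\power^\beta(\eta))=\power^{\beta+1}(\eta)$, i.e.\ $\KWP_{\beta+1}$ holds and in particular $\KWP$ holds.

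So the whole weight of the argument falls on the claim that a seed may be taken to be (equipotent to) a $\beta$-set, and this is the main obstacle; it is genuinely where $\SVC$ must be used, since in plain $\ZF$ no such bound exists (the Bristol model has sets — hence initial segments $V_\gamma$ — equipotent to no $\alpha$-set whatsoever, which is exactly the failure of $\KWP$). The route I would take is through Blass's characterization quoted above: $\SVC$ is equivalent to the existence of a forcing $\PP\in M$ with $M[G]\models\AC$. Given such a $\PP$, for any $A\in M$ one well-orders $A$ in $M[G]$, picks a name $\dot g$ and a condition $p\in G$ with $p\forces\dot g\colon\check A\to\check\lambda$ injective, and then, working in $M$, the map $a\mapsto\{(q,\xi)\mid q\le p,\ q\forces\dot g(\check a)=\check\xi\}$ is an injection of $A$ into $\power(\PP\times\lambda)$. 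This re-proves the reduction above with the seed replaced by a set coding $\PP$, and the remaining task is the careful bookkeeping showing that for a suitably chosen $\PP$ — equivalently, for the symmetric-extension presentation of $M$ that $\SVC$ affords — the poset $\PP$ (or its regular open algebra) already sits at a bounded level $\power^\beta(\Ord)$. That last step is the technical core of \cite[Theorem~10.4]{Karagila:2016}, and it is the part I expect to require real work.

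To summarize the order of operations: (i) observe that every set injects into $\power(\alpha\times X)$ for the seed $X$ and some ordinal $\alpha$; (ii) reduce, via the standard $\alpha$-set coding, to bounding $X$ itself; (iii) replace $X$ by the forcing $\PP$ from Blass's theorem and bound $\PP$ inside some $\power^\beta(\Ord)$; (iv) assemble $A\hookrightarrow\power^{\beta+1}(\eta)$ and conclude $\KWP_{\beta+1}$, hence $\KWP$. In favourable presentations step (iii) should yield a very small $\beta$ — one can hope for $\beta=1$, giving $\KWP_2$ — but for the bare statement $\KWP$ any fixed $\beta$ suffices; obtaining that fixed $\beta$ in step (iii) is precisely the delicate point.
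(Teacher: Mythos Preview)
First, note that the paper does not prove this result; it is quoted from \cite{Karagila:2016} without argument, so there is no in-paper proof to compare against. I will assess your outline on its own merits.

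Your steps (i), (ii) and (iv) are correct and already constitute the entire proof. The step you single out as ``the main obstacle'' and ``the delicate point'' --- finding a $\beta$ such that the seed $X$ (or the forcing $\PP$) is equipotent to a $\beta$-set --- is a triviality of $\ZF$ requiring no use of $\SVC$ whatsoever. Simply observe that $V_\gamma=\power^\gamma(\varnothing)=\power^\gamma(0)$ for every ordinal $\gamma$ (immediate from the recursive definitions, taking unions at limit stages), so \emph{every} set $X$ satisfies $X\subseteq V_{\rank(X)}=\power^{\rank(X)}(0)$ and is therefore literally a $\rank(X)$-set with $\eta=0$. Taking $\beta=\rank(X)$ finishes step (iii) in one line; the detour through Blass's characterization and the supposed ``technical core'' are unnecessary.

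The source of your worry is a quantifier slip. You write that ``the Bristol model has sets \ldots\ equipotent to no $\alpha$-set whatsoever, which is exactly the failure of $\KWP$'', but this is false: the failure of $\KWP$ is $\forall\alpha\,\exists X$ ($X$ is not equipotent to any $\alpha$-set), not $\exists X\,\forall\alpha(\ldots)$, and the latter is refutable in $\ZF$ by the previous paragraph. In the Bristol model every individual set remains an $\alpha$-set for $\alpha$ equal to its rank; what fails is a \emph{uniform} $\alpha$. $\SVC$ is precisely what supplies that uniformity, via the fixed seed, and your steps (i)--(ii)--(iv) already extract it correctly.
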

All symmetric models satisfy $\SVC$, as do $L(x)$ and $\HOD(x)$ for any set $x$.
\subsection{Kinna--Wagner principles in the Bristol model}
We denote by $M$ the Bristol model, and by $M_\alpha$ the $\alpha$th stage of the construction (i.e., the names in $\IS_\alpha$). When we write a forcing statement, or $\IS$ without an index, we mean of course to the class forcing of the entire construction.
\begin{theorem}\label{thm:alpha sets}
Suppose that $A\in M$ is an $\alpha$-set, then $A\in M_{\alpha+1}$.
\end{theorem}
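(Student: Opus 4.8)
The plan is to derive the statement from the filtration version of the preservation theorem (the variant noted after the preservation theorem, ``for any filtration of the universe which has a robust definition''), using the cumulative hierarchy of $\alpha$-sets in place of the von Neumann hierarchy. For an ordinal $\eta$ put $W^\alpha_\eta=\power^{\alpha+1}(\eta)$, i.e.\ the set of all $A\subseteq\power^\alpha(\eta)$. The map $\eta\mapsto W^\alpha_\eta$ is uniformly definable once $\alpha$ is fixed (a single recursion), it is $\subseteq$-increasing in $\eta$ since $\eta_1\subseteq\eta_2$ implies $\power^k(\eta_1)\subseteq\power^k(\eta_2)$ for all $k$, and its union over all $\eta$ is exactly the class of $\alpha$-sets; so it is a robust filtration in the sense the preservation theorem requires. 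Each iterand of the class-length iteration is a homogeneous system by induction hypothesis~2(a), so all the hypotheses of the preservation theorem are met as soon as we know the following.

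\textbf{Key Lemma.} For every $\beta\geq\alpha+1$, $\forces^\IS_\beta\tup{\dot\QQ_\beta,\sG_\beta,\sF_\beta}^\bullet$ adds no new $\alpha$-set over $M_\beta$.

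Granting the Key Lemma, the preservation theorem (filtration version) tells us that whenever $\dot x\in\IS$ and $p\forces^\IS\dot x$ is an $\alpha$-set, there are $q\leq p$ and $\dot y\in\IS_{\alpha+1}$ with $q\forces^\IS\dot x=\dot y$. The set of such $q$ is symmetrically dense below any condition forcing ``$\dot x$ is an $\alpha$-set'', and since the Bristol generic is symmetrically generic we may take $q$ in it (alternatively, one runs this last step through the upwards homogeneity of $\PP_{\alpha+1}$ and avoids mentioning the generic). Hence $A\in M$ an $\alpha$-set forces $A=\dot y^G\in M_{\alpha+1}$, which is the theorem.

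It remains to prove the Key Lemma, and I would do this by induction on $\alpha$, working inside the $\ZF$-model $M_\beta$ and writing $\HS$ for the hereditarily $\sF_\beta$-symmetric $\dot\QQ_\beta$-names there. For $\alpha=0$, a new set of ordinals $A$ has a name $\dot A\in\HS$ which we may take to have only check-names of ordinals among its entries, with $\fix(\cB)\leq\sym(\dot A)$ for a bounded disjoint approximation $\cB$. Since $\fix(\cB)$ fixes the corresponding initial segment of $\varrho_\beta$, the argument of \autoref{lemma:initial segment general} shows that ``$\check\xi\in\dot A$'' depends only on that initial segment and its $\sym(\dot A)$-orbit; combining this with the fact that $\sG_\beta$ witnesses the homogeneity of $\dot\QQ_\beta$ — so that, after absorbing that initial segment, any two conditions can be made compatible by an automorphism lying in $\fix(\cB)$, using as in the Remark after \autoref{prop:countable sep. family} that implementing a permutation is insensitive to finite changes — one concludes that $1_{\dot\QQ_\beta}$ decides ``$\check\xi\in\dot A$'' for every $\xi$, so $A\in M_\beta$. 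For the inductive step $\alpha>0$, let $A\subseteq\power^\alpha(\eta)$ be a new $\alpha$-set; every element of $A$ lies in $\power^{\alpha'}(\eta)$ for some $\alpha'<\alpha$, hence is an $(\alpha'{-}1)$-set (or a set of ordinals when $\alpha'=1$), so by the induction hypothesis, applied at all stages $\geq\alpha'$ and all $\alpha'<\alpha$, it already lies in $M_\beta$; indeed $\power^{\alpha}(\eta)^{M_\beta}=\power^{\alpha}(\eta)^{M_{\beta+1}}$. Writing $X=\power^\alpha(\eta)^{M_\beta}\in M_\beta$ we get $A\subseteq X$, and replacing the name of $A$ by $\dot A'=\{\tup{p,\check a}\mid a\in X,\ p\forces\check a\in\dot A\}$ — which has the same symmetry group as $\dot A$ and only check-names of $M_\beta$-elements as entries — puts us in exactly the situation of the base case with ``$\in X$'' in place of ``$\in\Ord$'', and the same homogeneity argument yields $A\in M_\beta$.

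The main obstacle is the base case: making rigorous that the symmetric structure really does block every new subset of a ground-model set, even though the generic object $\varrho_\beta$ (a function of very high rank) does survive into $M_{\beta+1}$. The delicate point is that $\bigcup\cB$ need not be bounded in $\omega_\beta$, so the ``homogeneity above the support'' cannot simply be read off from disjointness of domains and must instead be extracted from the permutability of the fixed family (and, at limit iterands, from the permutable scale together with the definition of the derived filter via the groups $K_{\eta,f}$). Once that is in hand, the inductive step and the reduction to the preservation theorem are routine.
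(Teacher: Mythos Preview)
Your overall strategy—invoke the filtration version of the preservation theorem and reduce everything to the single-step Key Lemma—is exactly what the paper gestures at when it remarks that the preservation theorem ``will be used when we talk about $\alpha$-sets in \autoref{thm:alpha sets}''. The paper's own proof, however, does not cite the preservation theorem as a black box: it argues directly in the ground model, taking an $\IS$-name $\dot A$ with excellent support $\vec H$, arranging $p$ so that $\vec H$ and $p$ are coherent, and then using upwards homogeneity of the iteration above coordinate $\alpha+1$ to show that any $q\forces\dot a\in\dot A$ (with $\dot a\in\IS_\alpha$ by the induction on $\alpha$) can be truncated to $q\restriction\alpha{+}1$. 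So your route and the paper's differ in packaging, but the substantive work is the same: one must show that coordinates $\geq\alpha{+}1$ of a deciding condition can be erased.

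The genuine gap is in your proof of the Key Lemma. Working inside $M_\beta$ with the single system $\tup{\QQ_\beta,\sG_\beta,\sF_\beta}$, you claim that $\fix(\cB)$ suffices to make any two conditions compatible ``after absorbing the initial segment'', so that $1_{\QQ_\beta}$ decides each $\check\xi\in\dot A$. That is not so. An automorphism $\pi\in\fix(\cB)$ is the identity on $\bigcup\cB\subseteq\omega_\beta$, and the action of $\sG_\beta$ on $\QQ_\beta$ is by permuting domains; hence if two conditions $q_1,q_2$ disagree at some $\xi\in\bigcup\cB$, no $\pi\in\fix(\cB)$ makes $\pi q_1$ compatible with $q_2$. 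At best your argument shows that membership in $\dot A$ is decided by $q\restriction\bigcup\cB$—still a genuine, nontrivial condition, since $\bigcup\cB$ is typically unbounded in $\omega_\beta$—not by $1_{\QQ_\beta}$. Your appeal to \autoref{lemma:initial segment general} is off by one level: there the support $\fix(\cB)$ lives in $\sG_\beta$ (the automorphism group of the \emph{previous} iterand), and the ``initial segment'' $\dot\varrho_\alpha\restriction\eta$ is a condition in $\dot\QQ_\alpha=\dot\QQ_{\beta+1}$; the lemma exploits that $\sG_\beta$ acts on the \emph{values} $\dot R_\eta$ of conditions in the next forcing. You are instead trying to use the level-$\beta$ support to trivialize conditions in $\QQ_\beta$ itself, where the action is on domains and carries no such leverage. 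Since your inductive step explicitly reduces to ``exactly the situation of the base case'', this gap propagates.

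What actually makes the Key Lemma true is not a homogeneity fact internal to the single step, but the iteration structure seen from the ground model: once the names appearing in $\dot A$ are in $\IS_\alpha$, the excellent support together with upwards homogeneity lets one correct the coordinates above $\alpha$ of a deciding condition by automorphisms in $\vec H$ that are the identity on coordinates $\leq\alpha$, leaving both the name and $q\restriction\alpha{+}1$ untouched. That is the paper's direct argument, and it is precisely this ground-model work your Key Lemma would have to reproduce.
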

\begin{proof}
We prove the statement by induction on $\alpha$. Suppose that $\dot A$ is a name such that $p\forces^\IS``\dot A$ is an $\check\alpha$-set of ordinals''. By the Symmetry Lemma, we can assume $\dot A$ by a name such that all the names which appear in it are from $\IS_\alpha$ (so for $\alpha=0$ these are all canonical names for ordinals). 

Let $\vec H$ be an excellent support for $\dot A$. We can assume, without loss of generality that $p$ has the following property (otherwise shrink $\vec H$ and extend $p$ once):
\begin{itemize}
\item For every $\xi$, if $\dom p_{\xi+1}$ is some $\eta_{\xi+1}$, then $H_\xi=\fix(\cB_\xi)$ for some disjoint approximation of $\{A^\xi_\gamma\mid\gamma<\eta_\xi\}$ and $H_{\xi+1}$ fixes $p_{\xi+1}$.
\item For every limit $\xi$, $\dom p_\xi$ is $(f\da,\delta_\xi)$ where $f$ is an upper bound to $\vec H\restriction\xi$ and $\delta_\xi=\max C(\vec H\restriction\xi)+1$, namely for all $\gamma<\delta_\xi$, $H_\gamma=\fix(\cB_\gamma)$ where $\cB_\gamma$ is a disjoint approximation for $\{A^\gamma_\beta\mid\beta<f(\beta)\}$.
\end{itemize}
This property implies that if $\vec\pi\in\vec H$, then $\gaut{\vec\pi}p=p$. Moreover, for every $\gamma$, we can make any two extensions of $p_\gamma$ compatible using an automorphism from $H_\gamma$.

Suppose now that $q\leq p$ and $q\forces\dot a\in\dot A$, with $\dot a\in\IS_\alpha$, and let $q'$ be any extension of $q\restriction\alpha+1$, then we will find some $\vec\pi\in\vec H$ such that $\gaut{\vec\pi}q$ is compatible with $q'$ and $\vec\pi\restriction\alpha=\id$. This will then imply that $\dot A'=\{\tup{q\restriction\alpha+1,\dot a}\mid\tup{q,\dot a}\in\dot A\}$ is such that $p\forces\dot A=\dot A'$ and that $\dot A'\in\IS_{\alpha+1}$ as wanted.

To find such $\vec\pi$, proceed by induction on $\supp q\setminus\alpha+1$, and in each step find an automorphism in $\vec H\restriction\gamma$ which only ``corrects'' the incompatible part of the $\gamma$th coordinate.
\end{proof}
\begin{corollary}\label{cor:KW failures}
$M$ does not satisfy $\KWP$, and for every $\beta$ there is some $\alpha$ such that $M_\alpha\not\models\KWP_\beta$.
\end{corollary}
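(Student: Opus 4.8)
The plan is to read both assertions off \autoref{thm:alpha sets} and the Generalized Balcar--Vop\v{e}nka--Monro Theorem (\autoref{thm:gen bal-vop-mon}). The one preliminary point I would record is that, for every ordinal $\gamma$, the models $M$ and $M_{\gamma+1}$ have exactly the same $\gamma$-sets, and likewise $M_\alpha$ and $M_{\gamma+1}$ whenever $\gamma<\alpha$. Indeed, ``$A$ is a $\gamma$-set of ordinals'' is absolute between transitive models of $\ZF$ containing $A$ --- it is a statement about the $\in$-structure on the transitive closure of $A$ and the ordinals occurring in it --- so since $M_{\gamma+1}\subseteq M_\alpha\subseteq M$, every $\gamma$-set of $M_{\gamma+1}$ is a $\gamma$-set of $M_\alpha$ and of $M$; the reverse containment is precisely \autoref{thm:alpha sets}, which puts every $\gamma$-set of $M$ --- hence, since $M_\alpha\subseteq M$, every $\gamma$-set of $M_\alpha$ --- into $M_{\gamma+1}$. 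Each $M_\gamma$ is the realization of a set-length productive iteration of symmetric extensions and so models $\ZF$, which makes \autoref{thm:gen bal-vop-mon} applicable to these pairs.

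For the first clause I would argue by contradiction. If $M\models\KWP_\beta$ for some $\beta$, then $M$ and $M_{\beta+1}$ are transitive models of $\ZF$ with the same $\beta$-sets and $M\models\KWP_\beta$, so \autoref{thm:gen bal-vop-mon} gives $M=M_{\beta+1}$. This contradicts the fact that the iteration keeps growing: the iterand $\dot\QQ_{\beta+1}$ is non-trivial (its conditions are bounded restrictions of the generic sequence $\varrho_{\beta+1}$, and below each one there are incompatible extensions), so $\varrho_{\beta+1}\notin M_{\beta+1}$, while $\varrho_{\beta+1}\in M_{\beta+2}\subseteq M$. Hence $M\not\models\KWP_\beta$ for any $\beta$, which is exactly $M\not\models\KWP$.

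The second clause is the same argument applied one stage lower. Given $\beta$, take $\alpha=\beta+2$: if $M_{\beta+2}\models\KWP_\beta$, then since $M_{\beta+2}$ and $M_{\beta+1}$ are transitive models of $\ZF$ with the same $\beta$-sets, \autoref{thm:gen bal-vop-mon} forces $M_{\beta+2}=M_{\beta+1}$, contradicting $\varrho_{\beta+1}\in M_{\beta+2}\setminus M_{\beta+1}$; so $M_{\beta+2}\not\models\KWP_\beta$. I do not expect a serious obstacle: all of the real content sits in \autoref{thm:alpha sets} (and the rank-filtration machinery behind it) and in \autoref{thm:gen bal-vop-mon}, and the only thing that genuinely has to be checked is that consecutive stages of the construction are distinct --- i.e.\ that each $\dot\QQ_\gamma$ really does add $\varrho_\gamma$ rather than being trivial --- which is transparent from the definition of the iterands as reverse-inclusion forcings re-adding ever longer generic sequences.
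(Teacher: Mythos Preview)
Your proposal is correct and follows essentially the same approach as the paper: use \autoref{thm:alpha sets} to see that $M$ (respectively $M_{\beta+2}$) shares its $\beta$-sets with $M_{\beta+1}$, then apply \autoref{thm:gen bal-vop-mon} to collapse the two models, contradicting the fact that the iteration keeps producing new generics. The paper's version is terser and leaves the ``consecutive stages are distinct'' point implicit, whereas you spell it out via $\varrho_{\beta+1}\in M_{\beta+2}\setminus M_{\beta+1}$; otherwise the arguments coincide.
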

\begin{proof}
Assume otherwise, then there is some $\alpha$ such that $M_\beta\models\KWP_\alpha$ for all $\beta$. But then for all $\beta>\alpha+1$, $M_\beta$ has the same $\alpha$-sets as $M_{\alpha+1}$, and by \autoref{thm:gen bal-vop-mon} this means $M_\beta=M_{\alpha+1}$. The same argument implies that $M$ cannot satisfy $\KWP$.
\end{proof}
\begin{corollary}
There is no forcing $\PP\in M$ which forces $\AC$ over $M$.\qed
\end{corollary}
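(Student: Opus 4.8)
The statement is a one-line corollary obtained by chaining together three facts already recorded in the excerpt, so the proof I would give is a short proof by contradiction. Suppose, toward a contradiction, that there is a forcing notion $\PP\in M$ such that $1_\PP\forces_\PP^M\AC$, i.e.\ every $V$-generic extension of $M$ by $\PP$ satisfies the axiom of choice. The first step is to invoke Blass's theorem (cited just before the subsection on Kinna--Wagner principles): $\SVC$ is equivalent to the existence of a forcing extension satisfying $\AC$. Hence the hypothesis on $\PP$ immediately yields $M\models\SVC$.

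The second step is to apply Theorem~10.4 of \cite{Karagila:2016}, quoted in the excerpt, which says that $\SVC$ implies $\KWP$. Therefore $M\models\KWP$, i.e.\ $M\models\KWP_\alpha$ for some ordinal $\alpha$. The third and final step is to contradict this using \autoref{cor:KW failures}, which asserts precisely that $M$ does \emph{not} satisfy $\KWP$. This closes the argument.

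\textbf{Where the work really sits.} There is essentially no obstacle in the corollary itself; all the genuine content has already been discharged upstream. The only point deserving a moment's care is the formulation: ``$\PP$ forces $\AC$ over $M$'' must be read as ``$M$ has a forcing extension satisfying $\AC$'', which is exactly the hypothesis Blass's equivalence consumes, so the translation is immediate. The real mathematical weight is carried by \autoref{cor:KW failures}, whose proof in turn rests on \autoref{thm:alpha sets} (an $\alpha$-set of $M$ already lives in $M_{\alpha+1}$) together with the Generalized Balcar--Vop\v{e}nka--Monro Theorem (\autoref{thm:gen bal-vop-mon}): if some $M_{\alpha+1}$ satisfied $\KWP_\alpha$ it would coincide with every later stage, collapsing the construction, and the same reasoning rules out $\KWP$ in $M$ itself. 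Thus the corollary is best presented simply as the observation that the failure of $\KWP$ in a model of $\ZF$ is, via Blass and the $\SVC\Rightarrow\KWP$ implication, an obstruction to having any $\AC$-forcing extension at all.
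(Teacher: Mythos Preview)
Your argument is exactly the intended one: the paper marks this corollary with an immediate \qed\ because it follows at once from \autoref{cor:KW failures} via Blass's equivalence (a forcing restoring $\AC$ gives $\SVC$) and the cited implication $\SVC\Rightarrow\KWP$. There is nothing to add or correct.
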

\begin{corollary}
There is no $x\in M$ such that $M=L(x)$.
\end{corollary}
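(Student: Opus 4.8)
The plan is to contradict \autoref{cor:KW failures}, which states that $M\not\models\KWP$. It therefore suffices to observe that no model of the form $L(x)$ can fail $\KWP$, and then apply this to $M$.

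Concretely, I would argue as follows. Suppose toward a contradiction that $M=L(x)$ for some $x\in M$. As recalled above, every model of the form $L(x)$ satisfies $\SVC$: the transitive closure of $\{x\}$ serves as a seed, since in $L(x)$ every set is a surjective image of $\gamma\times\mathrm{TC}(\{x\})$ for a suitable ordinal $\gamma$. Hence $M\models\SVC$, and by Theorem~10.4 of \cite{Karagila:2016}, quoted above, if $M\models\SVC$ then $M\models\KWP$. So $M\models\KWP$, contradicting \autoref{cor:KW failures}. Thus there is no $x\in M$ with $M=L(x)$.

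I expect no genuine obstacle here: all of the substance has already been expended in \autoref{thm:alpha sets} and \autoref{cor:KW failures} (and, underneath the latter, the stability principle \autoref{thm:gen bal-vop-mon}). The only additional ingredient is the standard fact that constructibility-from-a-set models satisfy $\SVC$, which is stated without proof in the text just before this subsection. One could in principle give a more hands-on variant that avoids $\SVC$ — coding the $\in$-structure of $\mathrm{TC}(\{x\})$ by an $\alpha$-set, observing via \autoref{thm:alpha sets} that this code lies in some $M_{\alpha+1}$, and then concluding from \autoref{thm:gen bal-vop-mon} that $M$ would have to coincide with $M_{\alpha+1}$, an absurdity — but this merely unwinds the $\SVC\Rightarrow\KWP$ implication and is less transparent than the route above.
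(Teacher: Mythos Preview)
Your argument is correct and coincides with the paper's \emph{alternative} proof: the paper notes that $L(x)\models\KWP_\alpha$ for some $\alpha>\rank(x)$, contradicting \autoref{cor:KW failures}. You reach the same contradiction via the detour $L(x)\models\SVC\Rightarrow\KWP$, which is fine but slightly less direct than simply observing $\KWP_\alpha$ in $L(x)$.

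The paper's \emph{primary} argument is a different route worth noting: in $L(x)$, the forcing $\Col(\omega,\mathrm{TC}(x))$ restores $\AC$ (since afterwards every set becomes constructible from a real), and this contradicts the immediately preceding corollary that no forcing in $M$ can force $\AC$. This avoids any direct appeal to $\KWP$ and instead leverages the $\SVC$-style fact at the level of forcing; your approach and the paper's alternative instead unwind that same fact through the $\KWP$ hierarchy. Both are short and rest on the same underlying structure, so neither buys a real advantage over the other.
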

\begin{proof}
Given any $x$, then in $L(x)$ adding a bijection between the transitive closure of $x$ and $\omega$ will force the axiom of choice. In contradiction to the above corollary. Alternatively, note that $L(x)$ satisfies $\KWP_\alpha$ for some $\alpha>\rank(x)$.
\end{proof}
\section{In search for better grounds}
The original motivation for the construction was the relation between the HOD Conjecture and the Axiom of Choice Conjecture. We give a simplified version of the HOD Conjecture to avoid the technicalities.
\begin{definition}[Theorem~19 in \cite{WDR:2016}]
The \textit{HOD Conjecture} asserts, assuming the existence of an extendible cardinal $\delta$, that for every singular $\lambda>\delta$, $\lambda$ is singular in $\HOD$ and $\lambda^+=(\lambda^+)^{\HOD}$. 
\end{definition}
\begin{definition}[Definition~29 in \cite{WDR:2016}]
The \textit{Axiom of Choice Conjecture} asserts, that assuming $\ZF$ and that $\delta$ is an extendible cardinal, then the Axiom of Choice holds in $V[G]$, where $G$ is $V$-generic for collapsing $V_\delta$ to be countable.
\end{definition}

The Bristol workshop in 2011 was aimed at looking into the possibility that the Axiom of Choice Conjecture follows from the HOD Conjecture; and the Bristol model was a warm-up exercise into a possible way for disproving such statement. Of course, these conjectures rely on the existence of an extendible cardinal, which is certainly very far from anything we can get in $L$. And indeed extendible cardinals, and even the weaker supercompact cardinals, already imply that singular cardinals do not have weak squares or even permutable scales. So the construction of the Bristol model fails.

\subsection{Better grounds for Bristol models}
The construction as described by the Bristol group assumed $V=L$ for the ground model. Reviewing the details of the construction given in this paper, we can see that only three assumptions were actually used in the construction: $\GCH$, $\square^*_\lambda$ for singular $\lambda$ (or rather, the existence of permutable scales), and global choice. These three assumptions hold in much richer models than just $L$. They hold in any of the $\GCH$-preserving forcing extensions of $L$; and in canonical inner models of large cardinals like $L[U]$, or other extender based models.

Let $T$ denote $\ZFC+\GCH+\forall\lambda(\cf(\lambda)<\lambda\rightarrow\square^*_\lambda)$.\footnote{We can omit global choice from the assumption by the following argument: pass to the model of von Neumann--G\"odel--Bernays set theory by taking definable classes, add a generic global choice function without adding sets, repeat the construction, and now forget about the additional classes.}\textsuperscript{,}\footnote{The role of $\GCH$ is slightly trickier. It seems that it can be omitted in favor of allowing ``gaps'' in the cardinals used for each step of the construction. This, however, might cause sets to be added to low-ranked levels in an uncontrolled way. However, its use in the proof that permutable scales exist for inaccessible cardinals can also be reduced to $\mathfrak b_\lambda=\lambda^+$ which is known to be consistent with the failure of $\CH$ at $\lambda$.} Then what we actually showed in this paper so far is that if $V\models T$, and $c$ is a Cohen generic real over $V$, then there is a Bristol model intermediate to $V$ and $V[c]$. Of course, the real challenge is to preserve information from $V$ to Bristol models over $V$.

The following is an immediate corollary from the construction and \autoref{thm:alpha sets}.
\begin{theorem}
Let $\kappa$ be a large cardinal whose property is defined by $\alpha$-sets for a fixed $\alpha<\kappa$ and is preserved under forcings of rank $<\kappa$. Then $\kappa$ preserves the property when moving to the Bristol model.\qed
\end{theorem}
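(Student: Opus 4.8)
The statement is essentially a bookkeeping argument built on top of \autoref{thm:alpha sets}, so the plan is to trace through what it means for the large-cardinal property to be "defined by $\alpha$-sets" and to notice that both the property and its witnesses are already decided at stage $M_{\alpha+1}$ of the construction. First I would unpack the hypothesis: the property of $\kappa$ being the large cardinal in question is $\Phi(\kappa)$ where $\Phi$ is a formula whose quantifiers range over $\power^\alpha(\eta)$ for ordinals $\eta$ (equivalently, over $\alpha$-sets), for a fixed $\alpha<\kappa$; and "preserved under forcings of rank $<\kappa$" means that if $\Phi(\kappa)$ holds in a model $W\models\ZFC$ and $\PP\in W$ has rank $<\kappa$, then $\Phi(\kappa)$ holds in $W^\PP$. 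The ground model $V\models T$ satisfies $\ZFC$, and $\Phi(\kappa)$ holds there by assumption.

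The key observation is that the Bristol model $M$, although not a model of $\AC$, computes $\alpha$-sets exactly as $M_{\alpha+1}$ does: by \autoref{thm:alpha sets} every $\alpha$-set of $M$ lies in $M_{\alpha+1}$, and conversely $M_{\alpha+1}\subseteq M$, so $M$ and $M_{\alpha+1}$ have the same $\alpha$-sets (indeed the same $\beta$-sets for all $\beta\le\alpha$). Since $\Phi$ is absolute between models with the same $\alpha$-sets — this is exactly the point of phrasing the property in terms of $\alpha$-sets, and one checks it by induction on the complexity of $\Phi$, each quantifier over $\power^\alpha(\eta)$ being interpreted identically in both models — we have $M\models\Phi(\kappa)$ if and only if $M_{\alpha+1}\models\Phi(\kappa)$. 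So it suffices to verify $\Phi(\kappa)$ in $M_{\alpha+1}$.

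Now I would bring in the structure of the iteration. Up to stage $\alpha+1$, the construction is obtained from $V$ by a set-sized forcing: $\PP_{\alpha+1}$ has rank well below $\kappa$ provided $\alpha+1<\kappa$, which holds since $\alpha<\kappa$ and $\kappa$ is a limit ordinal (a cardinal). Moreover $M_{\alpha+1}$ sits between $V$ and the generic extension $V[G_{\alpha+1}]$ by this small forcing; and crucially, by the preservation theorems of Section~3 combined with the fact that the iteration adds no sets of rank $<\omega+\eta$ after stage $\eta+1$, the levels of $M_{\alpha+1}$ relevant to $\alpha$-sets — namely $\power^\alpha(\eta)$ for each $\eta$ — are already captured by a small-rank initial segment of the iteration. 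The cleanest route is: since $\Phi$ is $\alpha$-set-absolute and $V[G_{\alpha+1}]\models\ZFC$ is a forcing extension of $V$ by a poset of rank $<\kappa$, the preservation hypothesis gives $V[G_{\alpha+1}]\models\Phi(\kappa)$; and since $M_{\alpha+1}$ and $V[G_{\alpha+1}]$ have the same $\alpha$-sets (the symmetric submodel and the full extension agree below the relevant rank, by homogeneity and the no-new-small-sets property), $\Phi(\kappa)$ transfers down to $M_{\alpha+1}$, hence up to $M$.

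\textbf{Main obstacle.} The delicate point is justifying that $M_{\alpha+1}$, $V[G_{\alpha+1}]$, and $M$ genuinely share all $\alpha$-sets and that $\Phi$ is robustly absolute between them — i.e. pinning down precisely which instances of "the iteration adds no new sets of small rank" and "$\sG_\beta$ witnesses homogeneity" are needed so that passing to the symmetric submodel does not destroy or create $\alpha$-sets, and so that $\Phi$'s quantifiers, which may range over $\power^\alpha(\eta)$ rather than literal $\alpha$-sets of ordinals, are still interpreted uniformly. This is exactly the kind of argument sketched for $\KWP$ in \autoref{cor:KW failures} via \autoref{thm:gen bal-vop-mon}, and I expect it to go through with the same machinery, but it is where the real care is required rather than in the large-cardinal combinatorics, which is entirely inherited from the preservation hypothesis.
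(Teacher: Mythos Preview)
The paper gives no proof beyond the \qed, declaring the theorem an immediate corollary of \autoref{thm:alpha sets} and the construction. Your overall strategy---reduce $M$ to $M_{\alpha+1}$ via \autoref{thm:alpha sets}, then invoke preservation under small forcing---is exactly the intended one.

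There is, however, a genuine gap in your route through $V[G_{\alpha+1}]$. You assert that $M_{\alpha+1}$ and $V[G_{\alpha+1}]$ share the same $\alpha$-sets, citing ``homogeneity and the no-new-small-sets property''. This is false already at stage~$1$: the Cohen real $c$ is a $0$-set lying in $V[c]$ but not in $M_1$---losing $c$ is the entire point of the first symmetric extension. The no-new-small-sets clause in the induction hypothesis tells you that $M_\beta$ and $M_{\alpha+1}$ agree on low-rank sets for $\beta>\alpha$; it says nothing about agreement between the symmetric model $M_{\alpha+1}$ and the full generic extension. So you cannot transfer $\Phi(\kappa)$ from $V[G_{\alpha+1}]$ down to $M_{\alpha+1}$ by $\alpha$-set absoluteness, and the step you flagged as the ``main obstacle'' does not in fact go through with the machinery you cite.

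The repair is to bypass the full extension. Observe that $V\subseteq M_{\alpha+1}\subseteq V[c]$ and Cohen forcing has rank $\omega<\kappa$. The hypothesis ``preserved under forcings of rank $<\kappa$'' should be read in the Levy--Solovay sense: the ground-model witness (ultrafilter, extender, stationary set of regulars, \ldots) generates a witness in \emph{every} transitive intermediate model between $V$ and $V[c]$, not merely in $V[c]$ itself. This is how each example in the subsequent corollary actually behaves, and it is what makes the theorem ``immediate'' for the author. Under that reading $M_{\alpha+1}\models\Phi(\kappa)$ directly, and then your absoluteness step from $M_{\alpha+1}$ to $M$ via \autoref{thm:alpha sets} finishes the argument.
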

\begin{corollary}
If $\kappa$ is strong (characterized by the existence of extenders), measurable, Ramsey, weakly compact, Mahlo, inaccessible, then it also has the property in the Bristol model.\qed
\end{corollary}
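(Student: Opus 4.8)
The plan is to check, notion by notion, that each large cardinal property in the list meets the two hypotheses of the preceding theorem: that being such a cardinal is expressible purely in terms of $\alpha$-sets of ordinals for some fixed $\alpha<\kappa$, and that the property is preserved by forcing of rank $<\kappa$. Once both are in place for a given notion, the preceding theorem applies directly and yields that $\kappa$ keeps the property in the Bristol model.

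For the first hypothesis I would spell out the (entirely standard) $\alpha$-set characterisations, all of which use a \emph{finite} $\alpha$, hence trivially below $\kappa$. A $\kappa$-complete nonprincipal ultrafilter on $\kappa$ is a subset of $\power(\kappa)$, so after the canonical coding of ordinals, pairs and bounded sequences it is a $2$-set of ordinals; thus measurability of $\kappa$ is an existential statement about $2$-sets, and strength of $\kappa$ is the statement that for every ordinal $\lambda$ there is a $2$- or $3$-set coding a $(\kappa,\lambda)$-extender witnessing $\lambda$-strength. Being Ramsey is a partition relation quantifying over colourings of $[\kappa]^{<\omega}$, which are $1$-sets; being weakly compact is the tree property, quantifying over $\kappa$-trees and their cofinal branches, again $1$- or $2$-sets; being Mahlo asserts that the $1$-set of inaccessibles below $\kappa$ is stationary; and being inaccessible asserts regularity together with the choiceless strong-limit clause that no $\power(\lambda)$ with $\lambda<\kappa$ surjects onto $\kappa$ --- all of this is about $1$-sets. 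For the second hypothesis I would cite the L\'evy--Solovay theorem and its routine variants: a forcing of rank $<\kappa$ has size $<\kappa$, since $\kappa$ is inaccessible, and then ultrafilters and extenders generate, in any such extension, ultrafilters and extenders of the same completeness, trees and colourings keep their witnesses by $\kappa$-completeness, and a stationary set of inaccessibles stays stationary because the forcing adds no new club through $\kappa$.

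With both hypotheses verified, the corollary is just the preceding theorem applied to each notion in turn. The one point that I expect to need a little care is the step, internal to the proof of that theorem, where \autoref{thm:alpha sets} is invoked to replace the Bristol model by the set-generic symmetric extension $M_{\alpha+1}$ and then by a genuine forcing extension of $L$: one must observe that the preservation witness produced by the L\'evy--Solovay argument is definable from a ground-model object --- the original extender, ultrafilter, and so on --- together with the $\alpha$-sets of the intermediate model, so that it already lies in $M_{\alpha+1}$, and that any potential counterexample to it there would itself be an $\alpha$-set of $M_{\alpha+1}$ and hence already refuted. This bookkeeping is subsumed by the preceding theorem, so at the level of the corollary nothing beyond the two verifications above is required.
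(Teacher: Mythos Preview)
Your approach is correct and matches the paper's: the corollary is marked \qed\ with no argument given, since it is immediate once one observes that each listed property is witnessed by $\alpha$-sets for a finite $\alpha$ and is preserved by small forcing. Your detailed case-by-case verification of these two hypotheses is more than the paper supplies, and your final paragraph about the internals of the preceding theorem is unnecessary at the level of the corollary (as you yourself note), but none of this diverges from the intended argument.
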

One could argue that the correct definition of measurable, or even weakly compact, is using elementary embedding. We are optimistic about the conjecture that these embeddings extend to the Bristol model, if they exist in the ground model, in a way ``visible'' to the Bristol model. This should be provable by extending the work of the author with Yair Hayut on lifting elementary embeddings to symmetric extensions from \cite{Hayut-Karagila:2018}.\medskip

This construction, however, is not always doable. The assumptions that there exists a permutable scale is inevitably going to fail, as it implies the principle called $\mathrm{ADS}_\kappa$, which in turn fails above a supercompact (this is discussed in \cite{CFM:2001}). So even though the Bristol model seems to be quite capable of accommodating large cardinals, the construction, as given here, cannot go past a certain point when assuming very large cardinals, or even $\aleph_\omega$ under the assumption of Martin's Maximum (even ignoring the $\GCH$ requirement).

\section{Open problems}
We finish this paper by discussing some problems of interest for future work.
\subsection{Fragments of choice}
The first and foremost question is what fragments of choice still hold in the Bristol model. Does the Boolean Prime Ideal theorem hold? Can every set be linearly ordered? Does the axiom of choice for pairs hold? Does $\DC$ hold in the Bristol model, and can we modify the construction to obtain an arbitrarily $\DC_\kappa$?
\subsection{The genericity problem}
Assuming $\GCH$, the Boolean completion of the Cohen forcing has size $\aleph_1$, and therefore has at most $\aleph_2$ automorphisms, and therefore at most $\aleph_3$ normal filters of subgroups. This means that by $M_{\omega_4}$, most of the models cannot be obtained as symmetric extensions of the Cohen forcing itself.

This has an interesting consequence that by adding one Cohen real to $L$, most of the intermediate models we have added are certainly not symmetric extensions. And indeed, from a Platonic point of view, it means that if one takes the axiom $V=L[c]$ to be true, then we have in fact proved that an iteration of symmetric extensions over inner models need not be a symmetric extension of an inner model. However, if we look at set models, or at a far more ``flexible'' philosophical approach, we have to wonder, are these intermediate models symmetric extensions of the Levy collapse?
\subsection{The choice of permutable families and structure of Bristol models}
We began the construction by fixing a permutable family for every cardinal. It is easy to see that by taking an equivalent permutable family, we will invariably get the same model. But what if we take smaller permutable families, or larger permutable families? Is there a nice structure theorem, and can we perhaps code things into the maximality of the families we used in each step?

In the same breath, it is obvious that every real in the Bristol model is a Cohen real over the ground model. So we can re-interpret the Bristol model from that real. This, in conjunction with the above, leads to the question, what is the order of the Bristol models? Is there a proper-class of pairwise distinct Bristol models?
\subsection{Other type of Bristol models?}
The construction was made possible in part due to the fact that Cohen reals have a particular wealth when it comes to inner models, even those satisfying $\AC$. Can we begin the construction using a random real? If so, will the successive steps be Cohen-like, similar to this construction, or can we find random-Bristol models made of something akin to a generalized random forcing?
\subsection{What happens with very large cardinals?}
As we remarked, the historical motivation of the Bristol workshop was to see if the Axiom of Choice Conjecture follows from the HOD Conjecture. The Bristol model itself was an exercise to try and show that the Axiom of Choice Conjecture does not follow from the HOD Conjecture. The construction itself falters at the level of supercompact cardinals, as they imply the failure of square principles. So reaching an extendible cardinal is still quite a long way to go.

However, the argument that we conjecture can used to conclude that an ultrapower elementary embedding of a measurable cardinal extends to an amenable embedding of the Bristol model to the Bristol model of the target model, seems to be strong enough to preserve even elementary embeddings witnessing extendibility.

One can only ask whether or not it is possible that the failure of squares---or rather the nonexistence of permutable scales---implies some type of compactness which can be used to carry the construction through these problematic levels. If that is the case, then one has a refutation of the Axiom of Choice Conjecture. Adding this to the very strong evidence that the HOD Conjecture might imply the Axiom of Choice Conjecture, and we get a refutation of the HOD Conjecture.

The natural question, if so, is what sort of compactness one gets from the nonexistence of permutable scales for singular cardinals, and can that be used to handle the limit steps of the Bristol construction?
\section*{Acknowledgments}
Having spent nearly four years developing the needed tools and fine-tuning the construction, there are many to whom the author owes mention. First and foremost to the Bristol group, all of which shared their experiences, understandings, and helped whenever possible. Specific thanks go to Menachem Magidor, the author's advisor for his constant advice on this work, and for helping to see it through. To Yair Hayut for his interest in the project, and his many helpful suggestions. In addition to that, to the organizers of the HIF programme for their invitation to Cambridge in October 2015, and to Peter Koepke for his generous invitation to Bonn during April 2016 where the fresh spring air helped shape some of the most crucial lemmas and their proofs. The author would also like to thank the referee for their helpful remarks that made this paper better. A final acknowledgement goes to Matti Rubin who did not live to see this project through, for listening several times to the basic ideas, the outlined details, and his careful advice.
\bibliographystyle{amsplain}

\begin{thebibliography}{1}

\bibitem{Blass:1979}
Andreas Blass, \emph{Injectivity, projectivity, and the axiom of choice},
  Trans. Amer. Math. Soc. \textbf{255} (1979), 31--59. \MR{542870}

\bibitem{CFM:2001}
James Cummings, Matthew Foreman, and Menachem Magidor, \emph{Squares, scales
  and stationary reflection}, J. Math. Log. \textbf{1} (2001), no.~1, 35--98.
  \MR{1838355}

\bibitem{Grigorieff:1975}
Serge Grigorieff, \emph{Intermediate submodels and generic extensions in set
  theory}, Ann. Math. (2) \textbf{101} (1975), 447--490. \MR{0373889}

\bibitem{Hayut-Karagila:2018}
Yair Hayut and Asaf Karagila, \emph{Critical cardinals}, ArXiv e-prints
  \textbf{1805.02533} (2018), 1--15, submitted.

\bibitem{Jech:ST2003}
Thomas Jech, \emph{Set theory. {T}he third millennium edition, revised and
  expanded}, Springer Monographs in Mathematics, Springer-Verlag, Berlin, 2003.
  \MR{1940513}

\bibitem{Jech:AC1973}
Thomas~J. Jech, \emph{The axiom of choice}, North-Holland Publishing Co.,
  Amsterdam-London; Amercan Elsevier Publishing Co., Inc., New York, 1973,
  Studies in Logic and the Foundations of Mathematics, Vol. 75. \MR{0396271}

\bibitem{Karagila:2016}
Asaf Karagila, \emph{Iterating symmetric extensions}, ArXiv e-prints
  \textbf{1606.06718} (2016), 1--34, submitted.

\bibitem{WDR:2016}
H.~{Woodin}, J.~{Davis}, and D.~{Rodriguez}, \emph{{The {HOD} Dichotomy}},
  ArXiv e-prints \textbf{1605.00613} (2016), 1--19.

\end{thebibliography}
\providecommand{\bysame}{\leavevmode\hbox to3em{\hrulefill}\thinspace}
\providecommand{\MR}{\relax\ifhmode\unskip\space\fi MR }
\providecommand{\MRhref}[2]{%
  \href{http://www.ams.org/mathscinet-getitem?mr=#1}{#2}
}
\providecommand{\href}[2]{#2}

\end{document}